\newcommand\res{\mathop{\hbox{\vrule height 7pt width .5pt depth 0pt
			\vrule height .5pt width 6pt depth 0pt}}\nolimits}
\newcommand\LL{\res}
\newcommand\weakto{\rightharpoonup}
\newcommand\eps{\varepsilon}
\newcommand\Id{\mathrm{Id}}
\newcommand\rank{\mathrm{rank\,}}
\newcommand\sym{\mathrm{sym}}
\newcommand\loc{\mathrm{loc}}
\newcommand\Tr{\mathrm{Tr\,}}
\newcommand\dist{\mathrm{dist}}
\newcommand\qc{\mathrm{qc}}
\newcommand\qcinfty{{\mathrm{qc},\infty}}
\newcommand\conv{\mathrm{conv}}
\newcommand\scal{\mathrm{scal}}
\newcommand\simp{\mathrm{{simp}}}
\newcommand\SO{\mathrm{SO}}
\newcommand\R{\mathbb{R}}
\newcommand\N{\mathbb{N}}
\newcommand\Z{\mathbb{Z}}
\newcommand\calG{\mathcal{G}}
\newcommand\calU{\mathcal{U}}
\newcommand\calA{\mathcal{A}}
\newcommand\calL{\mathcal{L}}
\newcommand\calH{\mathcal{H}}
\newcommand\calF{\mathcal{F}}
\newcommand\calM{\mathcal{M}}
\newtheorem{theorem}{Theorem}[section]
\newtheorem{proposition}[theorem]{Proposition}
\newtheorem{lemma}[theorem]{Lemma}
\newtheorem{remark}[theorem]{Remark}
\newtheorem{corollary}[theorem]{Corollary}
\numberwithin{equation}{section}
\newcommand\Functeps{\mathcal F_\eps}
\newcommand\Functlim{\mathcal F_0}
\newcommand\dx{{\mathrm d}x}
\newcommand\dy{{\mathrm d}y}
\newcommand\ds{{\mathrm d}s}
\newcommand\dt{{\mathrm d}t}
\newcommand\dd{{\mathrm d}}
\newcommand\Psiinfty{{\Psi_\infty}}
\begin{document}
\begin{center}
%   {\Large
% {Vectorial, geometrically nonlinear, phase-field approximation of a cohesive fracture energy}}\\[5mm]
{\Large Phase-field approximation of 
a 
vectorial, geometrically nonlinear
cohesive fracture energy}\\[5mm]
{\today}\\[5mm]
Sergio Conti$^{1}$, Matteo Focardi$^{2}$ and Flaviana Iurlano$^{3}$\\[2mm]
{\em $^{1}$
 Institut f\"ur Angewandte Mathematik,
Universit\"at Bonn,\\ 53115 Bonn, Germany}\\[1mm]
 
{\em $^{2}$ DiMaI, Universit\`a di Firenze,\\ 50134 Firenze, Italy}\\[1mm]
{\em $^{3}$ Sorbonne Université, CNRS, Université de Paris,\\Laboratoire Jacques-Louis Lions, 75005 Paris, France}
\\[3mm]
    \begin{minipage}[c]{0.8\textwidth}
We consider a family of vectorial models for cohesive fracture, which may incorporate $\SO(n)$-invariance. The deformation belongs to the space of generalized functions of bounded variation and the energy contains an (elastic) volume energy, an opening-dependent jump energy concentrated on the fractured surface, and a Cantor part representing diffuse damage. We show that this type of functional can be naturally obtained as $\Gamma$-limit of an appropriate phase-field model. The energy densities entering the limiting functional can be expressed, in a partially implicit way, in terms of those appearing in the phase-field approximation.\end{minipage}
\end{center}

\tableofcontents 
\section{Introduction}

In variational models of nonlinear elasticity a hyper-elastic body with reference configuration $\Omega\subset\R^n$ ($n=2,3$) undergoes a deformation $u:\Omega\to\R^m$, whose stored energy reads as
\begin{equation}\label{elasticenergy}
\int_\Omega\Psi(\nabla u)\dx.
\end{equation}
External loads can be included, adding linear perturbations to this energy, and Dirichlet boundary conditions, restricting the set of admissible deformations $u$.
The energy density $\Psi:\R^{m\times n}\to[0,+\infty)$, acting on the deformation gradient $\nabla u$, 
is typically assumed to be minimized by matrices in the set of proper rotations $\SO(n)$  (with $m=n$) and to have $p$-growth at infinity, $p>1$. Correspondingly, the natural space for the deformation $u$ is a (subset of) the Sobolev space $W^{1,p}(\Omega;\R^m)$. There is an extensive literature on the theory of existence of minimizers of this type of functionals, and in particular the key property of weak lower semicontinuity of \eqref{elasticenergy} is closely related to the quasiconvexity of the energy density $\Psi$. 

Fracture phenomena, both brittle and cohesive, require a richer modeling framework.
Physically, cohesive fracture is often understood as
a gradual separation phenomenon: load-displacement curves usually exhibit an initial increase of the load up to a critical value, and a subsequent decrease to zero, which is the value indicating the complete separation \cite{BFM,dugdale,barenblatt,FokouaContiOrtiz2014}. See \cite{DelpieroTruskinovsky1998,DelpieroTruskinovsky2001} for discussions on different load-displacement behaviours. Evolutionary models (prescribing the crack path) have been studied in \cite{DMZ,BFM,Cagnetti,CT,LarsenSlastikov,Almi,ACFS,NegriScala,ThomasZanini,NegriVitali,CLO}, see also references therein. See \cite{DMG,CCF} for further results on the topic.

Variational models of fracture are typically formulated using the space $(G)BV$ of (generalised) functions of bounded variation 
\cite{FrancfortMarigo,BFM} and energy functionals of the form
\begin{equation}\label{fractureenergy}
\int_{\Omega}W(\nabla u)\dx+\int_\Omega l(\text{d}D^cu)+\int_{J_u}g([u],\nu_u)\text{d}\calH^{n-1}.
\end{equation}
The deformation $u\in (G)BV(\Omega;\R^m)$ may exhibit discontinuities along a $(n-1)$-dimensional set $J_u$. We denote by $[u]$ and $\nu_u$ the opening of the crack and the normal vector to the crack set $J_u$, respectively, while $D^cu$ represents the Cantor derivative of $u$ (see \cite{AFP} for the definition and the relevant properties of functions of bounded variation). Working within deformation theory, the functional \eqref{fractureenergy} contains both energetic and dissipative terms, which are physically distinct 
but need not be separated for this variational modeling.

The densities $W$, $l$, and $g$ entering \eqref{fractureenergy} 
need to satisfy suitable growth conditions. 
Lower semicontinuity of the functional imposes several restrictions, as for example that $l$ is positively one-homogeneous and quasiconvex, $W$ quasiconvex, and $g$ subadditive. 
Furthermore, $l$ needs to match, after appropriate scaling, both the behavior of $W$ at infinity and the behavior of $g$ near zero. These properties will be discussed in more detail below  (see, for example, Proposition~\ref{c:behaviour g at 0}).

The qualitative properties of $W$, $l$ and $g$ are selected 
according to the specific model of interest.
For instance, the brittle regime is modelled by a constant surface density $g$ and a superlinear bulk energy density $W$.
These choices in turn imply {that $l(\xi)=\infty$ for $\xi\ne0$}, so that $D^cu$ necessarily vanishes. The functional setting of the problem is then provided by the space of (generalised) special functions with bounded variation $(G)SBV(\Omega)$.
In contrast, in cohesive models $g$ is usually assumed to be approximately linear for small amplitudes and bounded.

The direct numerical simulation of functionals of the type \eqref{fractureenergy} is highly problematic,
due to the difficulty of finding good discretizations for $(G)BV$ functions and of differentiating the functional with respect to the coefficients entering the finite-dimensional approximation.
Therefore a number of regularizations have been proposed, of which one of the most successful {is given by} phase-field functionals. These are energies depending on a pair of variables $(u,v)$, having a Sobolev regularity, where $u$ {represents a}  regularization of a discontinuous displacement, while $v\in[0,1]$ can be interpreted as a damage parameter, indicating the amount of damage at each point of the body (where $v=1$ corresponds to the undamaged material and $v=0$ to the completely damaged material). The basic structure of a phase-field model is
\begin{equation}\label{phasefieldenergy}
\calF_\eps(u,v):=\int_\Omega \left( f_\eps^2(v) \Psi(\nabla u) + \frac{(1-v)^2}{4\eps} + \eps |\nabla v|^2 \right) \dx,
\end{equation} 
where $\eps>0$ is a small parameter, $f_\eps$ is a damage coefficient acting on the damage variable $v$, increasing from $0$ to $1$, and $\Psi$ is an elastic energy density, as in \eqref{elasticenergy}. The first term in \eqref{phasefieldenergy} represents the stored elastic energy, the other two terms represent the stored energy and dissipation due to the damage. 

Finding a variational approximation of the fracture model \eqref{fractureenergy} by phase-field models means to construct $f_\eps$ and $\Psi$ such that the functionals \eqref{phasefieldenergy}  converge, in the sense of $\Gamma$-convergence, to \eqref{fractureenergy} as $\eps\to0$. 
This is not an easy task in general. The brittle case ($g$ constant) in an antiplane shear, linear, framework ($m=1$, 
$\Psi$ quadratic) was the first outcome of this type \cite{AT90,AT}. 
It has been extended in several directions for different aims, giving rise to a very vast literature of both theoretical results \cite{Sh,
AFM,chambolle,addendum,HenaoMoracorralXu,amb-lem-roy,dm-iur,iur,FocardiIurlano,iur12,BEZ,CicaleseFocardiZeppieri2021phase} and numerical simulations \cite{bel-cos,BarSochenKiryati2006,bou,BFM,bur-ort-sul,bur-ort-sul2,BelzBredies} (for other regularizations, see also \cite{AFP,bra-dm-gar,Braides,Fusco2003,BraidesGelli2006} and references therein).
In particular, the extension of the results in \cite{AT} to the vector-valued (nonlinear) brittle case has been provided in \cite{Focardi2001}. The variational approximation of cohesive models is considerably more involved. The antiplane shear, linear, case was obtained through a double $\Gamma$-limit of energies with $1$-growth in \cite{AlicBrShah}, then generalized to the vector-valued case in \cite{AlFoc}. A drawback of these results is the $1$-growth with respect to $\nabla u$, which makes the approximants mechanically less meaningful and  numerically less helpful.

To overcome these problems, in \cite{ContiFocardiIurlano2016} 
we proposed a different approximation of \eqref{fractureenergy} 
in the antiplane shear case, with quadratic models of the form \eqref{phasefieldenergy}, based on a damage coefficient $f_\eps$ 
of the type
\begin{equation}\label{eqdamagecoeffintro}
f_\eps(s):=1\wedge\eps^{\sfrac12}\frac{\ell s}{1-s}\qquad s\in[0,1],\,\ell>0\,,
\end{equation}
and obtained $\Gamma$-convergence to a model 
of the type \eqref{fractureenergy} in the scalar ($m=1$) case.
We remark that $f_\eps$ is equal to $1$ when $v\sim1$ (elastic response) {and} to $0$ when $v\sim0$ (brittle fracture response). Moreover, the first addend in the energy in \eqref{phasefieldenergy} competes against the second term if $v$ is less than but close to $1$, and with all the terms of \eqref{phasefieldenergy} otherwise (pre-fracture response). 
This phase-field approximation of this scalar cohesive fracture 
% proposed and studied in \cite{ContiFocardiIurlano2016} 
was investigated numerically in \cite{FreddiIurlano}. A 1D cohesive quasistatic evolution (not prescribing the crack path) is presented in \cite{BCI} and related to the phase-field models of \cite{ContiFocardiIurlano2016}. A different approximation of \eqref{fractureenergy}, still in the scalar-valued framework, is obtained in \cite{DMOT} using elasto-plastic models.

In this paper we study the approximation of vector-valued cohesive models of the type \eqref{fractureenergy} via phase-field models of the type \eqref{phasefieldenergy} with the damage coefficient \eqref{eqdamagecoeffintro}, as proposed in \cite{ContiFocardiIurlano2016}.
In particular, this permits to extend the results of \cite{ContiFocardiIurlano2016} to a geometrically nonlinear framework, we refer to \eqref{e:f eps}-\eqref{eqpsipsiinf}
for the specific hypotheses on $\Psi$.
The main result is given in Theorem~\ref{t:finale}, the precise assumptions are discussed in Section~\ref{ss:data}.

In order to illustrate our result, let us consider the simplest model for the energy density $\Psi$ in finite kinematics and $m=n$,
\begin{equation}\label{PsiK}
\Psi_{2}(\xi):=\dist^2(\xi, \SO(n))=\min_{R\in \SO(n)}|\xi-R|^2.
\end{equation}
 With this choice, our main result Theorem~\ref{t:finale} states that the phase-field energies \eqref{phasefieldenergy} $\Gamma$-converge in the $L^1$-topology as $\eps\to0$ to the energy \eqref{fractureenergy}, with
\begin{equation}\label{eqdefWqc}
W(\xi):=(\dist^2(\cdot,\SO(n))\wedge\ell\,\dist(\cdot,\SO(n)))^\qc(\xi)\,,
\end{equation}
and
\[
l(\xi):=\ell|\xi|,\quad g(z,\nu):=g_\scal(|z|)\,,
\]
for every $\xi\in\R^{m\times n}$, $z\in\R^m$, $\nu\in S^{n-1}$, where $g_\scal$ is the surface energy density appearing in the {scalar} model (cf. formula \eqref{e:gscal} for the definition of $g_\scal$, item (iii) in Proposition~\ref{p:Psi1/2infty} 
with $W=h^\qc$ and $l=h^{\qc,\infty}$ {to justify the second equality}, 
{and Corollary \ref{euclidean} for the third equality}). 
As remarked above, $g$ coincides with $l$ asymptotically for infinitesimal amplitudes. 
Even in this simple case, the expression for $W$ is somewhat implicit, as it involves a quasiconvex envelope, which in most cases can only be approximately computed numerically. We remark that even $\Psi_2$ itself as defined in \eqref{PsiK} is not quasiconvex, we refer to \cite[Example~4.2]{Silhavy} for an explicit formula for its quasiconvex envelope $\Psi_2^{\qc}$ in the two-dimensional case.

We recall that in the scalar case several different choices for $f_\eps$ are possible without changing the overall {effect} of the approximation (cf. \cite[Section~4]{ContiFocardiIurlano2016}). A negative {power-law divergence} at $1$ however leads to a corresponding power-law behaviour of $g$ close to $0$ (cf. \cite[Theorem~7.4]{ContiFocardiIurlano2016}).
We expect these findings to have a natural generalization to the current vectorial setting, this requires additional technical ingredients that will be the object of future work \cite{ContiFocardiIurlanopgrowth}.

Let us now briefly discuss some aspects of the proof of Theorem~\ref{t:finale}.
One of the main difficulties is to identify the correct limit densities 
$W$, $g$, and $l$, given the density $\Psi$ and the damage coefficient $f_\eps$ of the phase-field \eqref{phasefieldenergy}. 
We do not expect that the cohesive energies that arise in the limit 
of our approximation exhaust all possible energies of the form \eqref{fractureenergy}, with densities $W$, $g$, and $l$ satisfying 
the growth conditions and matching properties specified above. Indeed, we prove that, even in the simplest case $\Psi(\xi):=|\xi|^2$,
$W$ is not convex (see Lemma~\ref{lemmahqcconb} below). 
{Thus,} at least in this case, the limit energy is not given by the relaxation 
of a functional defined on $SBV(\Omega)$ (cf. \cite[{Remark 2.2}]{BraidesCoscia}). Convex functions may be obtained as densities of the bulk term of the energy under more specific choices of the damage variable 
(see for example \cite{BIR}, where the damage variable is a characteristic function).

The effective surface energy density $g$ of the $\Gamma$-limit 
of the family $(\calF_\eps)$ is defined in an abstract fashion 
by an asymptotic minimization formula as the $\Gamma$-limit 
of a simpler family of functionals computed on functions jumping 
on a hyperplane (cf. \eqref{eqdefGsnu}). 
Alternative characterizations of $g$ useful along the proofs are provided both in Propositions~\ref{proplbboundary} and \ref{periodicity}, in which we show that the test sequences in the very definition of $g$ can be assumed to be periodic 
in $(n-1)$ mutually orthogonal directions and with $L^2$ integrability, and in Proposition~\ref{p:chargT}, where $g$ 
is represented in terms of an asymptotic homogenization formula.
Finally, the energy density $l$ of the Cantor part turns out to coincide with the recession function $W^\infty$ of $W$. Furthermore, 
an explicit characterization of $l$ in terms of $\Psi$ is given {in} Proposition~\ref{p:identification Cantor energy density}.

The proof of the lower bound in $BV$ is based on the blow-up technique. Roughly, to get the local estimate for the diffuse part 
given $(u_\eps,v_\eps)\to(u,v)$ in $L^1$, we analyze the 
asymptotic behaviour of the phase-field energies $\calF_\eps$ restricted on the $\delta$-superlevel sets of $v_\eps$, $\delta\in(0,1)$, and then let $\delta\uparrow 1$. 
More precisely, in Lemma~\ref{lemmaabdelta} we bound from below $\calF_\eps(u_\eps,v_\eps)$ in \eqref{phasefieldenergy} pointwise with a functional defined on $(G)SBV$, that is independent of $v_\eps$ and that is computed on a truncation of $u_\eps$ with the characteristic function of a suitable superlevel set of $v_\eps$ (depending on $\delta$). This is actually true up to an error related to the measure of the corresponding sublevel set of $v_\eps$, and up to prefactors depending on $\delta$ which are converging to $1$ as $\delta\uparrow 1$ for the volume term and vanishing for the surface term. 
The lower semicontinuity in $L^1$ of the diffuse part of such a functional then implies the lower bound. In addition, a slight variation of this argument shows directly that $(GBV(\Omega))^m$ 
is the domain of the $\Gamma$-limit.

Instead, to prove the local estimate for the surface part we show 
that under a surface scaling assumption we may
replace $v_\eps$ by its truncation at the threshold $\gamma_\eps$, being $\gamma_\eps$ the smallest $z\in[0,1]$ satisfying $f_\eps(z)=1$. The mentioned asymptotic minimization formula defining $g$ then provides a natural lower bound. 
The liminf inequality in $GBV$ is finally obtained by a further truncation argument.

The upper bound in $BV$ is proven through an integral representation argument. In particular, a direct computation provides a rough linear estimate from above, in fact optimal for the diffuse part. This allows to apply the representation result for linear functionals given in \cite{BouchitteFonsecaMascarenhas}. The sharp estimate for the surface density is obtained using the aforementioned characterization of $g$ involving periodic boundary conditions. The full upper bound in $GBV$ follows by a truncation argument.

The paper is structured as follows. In Section~\ref{ss:data} we present the model, introducing the main definitions and stating the 
$\Gamma$-convergence result {in} Theorem~\ref{t:finale}. In 
Section~\ref{simplified} we focus on a simplified model and we 
prove that in this case the limiting volume energy {density} 
$W$, obtained by quasiconvexification as in \eqref{eqdefWqc}, 
is not convex (Lemma~\ref{lemmahqcconb}). 
In Section~\ref{ss:properties g} several properties of the surface and Cantor densities are discussed. {In particular, 
Propositions~\ref{proplbboundary} and \ref{periodicity} deal with the change of boundary conditions within the minimum problem defining $g$. Proposition~\ref{p:chargT} provides an equivalent expression of 
$g$.}
Section~\ref{lowerbound} is devoted to the proof of the lower bound: Proposition~\ref{lowerboundBVsfc} proves the surface estimate in $BV$. The lower bound in $BV$ for the diffuse part is {addressed} in Proposition~\ref{lowerboundBVdff2}. {Finally, in 
Theorem~\ref{t:lbcomp}} the lower bound is extended to the full space $GBV$ {via a continuity argument (cf. Proposition~\ref{contF0})}. The proof of the upper bound is the object of Section~\ref{upperbound}, which concludes the proof of Theorem~\ref{t:finale}. Finally, Section~\ref{compandconv} addresses the problems of compactness and convergence of minimizers.

\section{Model}\label{model}
\subsection{General definitions}\label{ss:data}
In the entire paper $\Omega\subset\R^n$ is a bounded, open {set} with Lipschitz boundary, $\calA(\Omega)$ denotes 
the family of open subsets of $\Omega$ and $|\cdot|$ denotes the Euclidean norm, $|{\xi}|^2:=\sum_{ij}{\xi}_{ij}^2=\Tr\big({\xi}^T{\xi}\big)$ 
for ${\xi}\in\R^{m\times n}$.

For all $\eps>0$ we consider the functional $\Functeps:L^1(\Omega;\R^{m+1})\times\calA(\Omega)\to[0,\infty]$ given by
\begin{equation}\label{functeps}
 \Functeps(u,v;A):= \int_A \left( f_\eps^2(v) \Psi(\nabla u) + \frac{(1-v)^2}{4\eps} + \eps |\nabla v|^2 \right) \dx
\end{equation}
if $(u,v)\in W^{1,2}(\Omega;\R^m)\times W^{1,2}(\Omega;[0,1])$ and $\infty$ otherwise, where for every $s\in[0,1)$ we set
\begin{equation}\label{e:f eps}
 f(s):=\frac{\ell s}{1-s},\qquad
 f_\eps(s):= 1\wedge \eps^{\sfrac12} f(s),\qquad 
f_\eps(1):=1\,;
\end{equation} 
and $\ell>0$ is a parameter representing the critical yield stress. 
We write briefly $\Functeps(u,v):=\Functeps(u,v;\Omega)$, and analogously for all the functionals that shall be introduced in what follows.

We assume that $\Psi:\R^{m\times n}\to[0,\infty)$ is continuous and such that
\begin{equation}\label{e:Psi gc}
\Big(\frac1c |\xi|^2-c\Big)\vee0\le \Psi(\xi)\le c(|\xi|^2+1)
\hskip1cm\text{ for all }\xi\in\R^{m\times n}.
\end{equation}
We assume the ensuing limit to exist
\begin{equation}\label{eqdefPsiinfty}
 \Psiinfty(\xi):=\lim_{t\to\infty} \frac{\Psi{(t\xi)}}{t^2}\,,
\end{equation}
and that it is uniform on the set of $\xi$ with $|\xi|=1$. This means that for every $\delta>0$ there is $t_\delta>0$ such that $|\Psi(t\xi)/t^2-\Psiinfty(\xi)|\le \delta$ for all $t\ge t_\delta$ and all $\xi$ with $|\xi|=1$, which is the same as 
\begin{equation}\label{eqpsipsiinf}
|\Psi(\xi)-\Psiinfty(\xi)|\le \delta|\xi|^2\hskip5mm \text{ for all $|\xi|\ge t_\delta$}.
\end{equation}
By scaling, $\Psiinfty(t\xi)=t^2\Psiinfty(\xi)$ {and in particular $\Psiinfty(0)=0$.} Uniform convergence also implies $\Psiinfty\in C^0(\R^{m\times n})$.

We define $h:\R^{m\times n}\to[0,\infty)$ by
 \begin{equation}\label{e:h}
h(\xi):=\Psi(\xi)\wedge \ell{\Psi}^{\sfrac12}(\xi)
%=(\Psi_+(\xi)\wedge \ell\Psi_+^{\sfrac12}(\xi))-\Psi_-(\xi)
 \end{equation}
and denote by $h^\qc$ its quasiconvex envelope, 
\begin{equation}\label{eq:hqc}
 h^\qc(\xi):=\inf \Big\{\int_{(0,1)^n} h(\xi+\nabla \varphi) \dx: \varphi\in C^\infty_c((0,1)^n;\R^m) \Big\}.
\end{equation}
From \eqref{e:Psi gc} we infer that for every $\xi\in\R^{m\times n}$ 
\begin{equation}\label{e:h gc}
\Big(\frac1c |\xi|-c\Big)\vee0\leq h^\qc(\xi)\leq h(\xi)\leq c(|\xi|+1).
\end{equation}
Let $h^\qcinfty$ be its recession function,
\begin{equation}\label{eqdefqcinfty}
 h^\qcinfty(\xi):=\limsup_{t\to\infty} \frac{h^\qc(t\xi)}{t}.
\end{equation}
We remark that the definitions of $h^\qcinfty$ and $\Psiinfty$ differ, to reflect the different growth of the two functions, quadratic for $\Psi$ and linear for $h$. Recall that $h^\qcinfty$ is itself a quasiconvex function
\cite[Rem.~2.2 (ii)]{fonsecamueller1993relaxation}. 
Therefore, it is locally Lipschitz continuous (cf. for instance \cite[Theorem~5.3  (ii)]{Dacorogna}). 
Moreover, in Proposition~\ref{p:identification Cantor energy density} below we shall prove that 
\begin{equation}\label{eqhinftypsiinf}
 h^\qcinfty(\xi)=\ell(\Psi^{\sfrac12})^\qcinfty(\xi)\,,
\end{equation}
where the latter quantity is defined as in \eqref{eq:hqc}-\eqref{eqdefqcinfty}.
We remark that, at variance with the convex case, one cannot in general replace the $\limsup$ in \eqref{eqdefqcinfty} by a limit \cite[Theorem~2]{muller1992quasiconvex}.

For all open subsets $A\subseteq\R^n$, $u\in W^{1,2}(A;\R^m)$ and $v\in W^{1,2}(A;[0,1])$ 
it is convenient to introduce the functional 
\begin{equation}\label{Feps*}
\calF_\eps^{\infty}(u,v;A):=\int_A \Big(
%\frac{\eps {\ell^2} v^2}{(1-v)^2} 
{\eps f^2(v)}\Psiinfty(\nabla u) 
+ \frac{(1-v)^2}{4\eps}+\eps|\nabla v|^2 \Big)\dx.
\end{equation}
The first term is interpreted to be zero whenever $\nabla u=0$, even if $v=1$.
For {any} $\nu\in S^{n-1}$ we {fix a cube} $Q^\nu$ with side length 1, centered in the origin, and with one side parallel to $\nu$. 
We write $Q^\nu_r:=rQ^\nu$.
We
define $g:\R^m\times S^{n-1}\to[0,\infty)$ by
\begin{equation}\label{eqdefGsnu}
g(z,\nu):=\inf \{\liminf_{j\to\infty} 
\calF^{\infty}_{\eps_j}(u_j,v_j, Q^\nu): \|u_j- z\chi_{\{x\cdot\nu>0\}}\|_{L^1(Q^\nu)}\to0, \eps_j\to 0\}.
\end{equation}
Here $u_j\in W^{1,2}(Q^\nu;\R^m)$ and $v_j\in W^{1,2}(Q^\nu;[0,1])$; obviously one can restrict to sequences $v_j\to1$ in $L^1(Q^\nu)$.
We refer to Section~\ref{ss:properties g} for the discussion of several properties of $g$.

We will prove the following result.
\begin{theorem}\label{t:finale}
Let $\calF_\eps$ be the functional defined in \eqref{functeps}. Then for all $(u,v)\in L^1(\Omega;\R^{m+1})$ 
it holds
$$
\Gamma({L^1})\text{-}\lim_{\eps\to0}\calF_\eps(u,v)=\calF_0(u,v)\,,
$$
where
\begin{equation}\label{F0}
\calF_0(u,v):=\int_\Omega h^\qc(\nabla u)\dx +\int_\Omega h^{\qcinfty}(\mathrm d D^cu)
+ \int_{J_u}g([u],\nu_u)d\calH^{n-1},
\end{equation}
if $u\in (GBV\cap L^1(\Omega))^m$ and $v=1$ $\calL^n$-a.e., and $\calF_0(u,v):=\infty$ otherwise.  
\end{theorem}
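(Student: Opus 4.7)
The plan is to establish the two inequalities of $\Gamma$-convergence separately, treating the case $u\in BV$ first and then extending to $GBV$ by truncation. Since the target energy $\calF_0$ is $+\infty$ whenever $v\neq 1$ a.e., a preliminary remark is that any sequence $(u_\eps,v_\eps)$ with $\sup_\eps \calF_\eps(u_\eps,v_\eps)<\infty$ forces $v_\eps\to 1$ in $L^1$ (from the Modica--Mortola--type term $\int (1-v)^2/(4\eps)+\eps|\nabla v|^2$), so we may restrict attention to limits of the form $(u,1)$.

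For the $\liminf$ inequality I would split $\calF_0$ into its three natural pieces and exploit the blow-up technique of Fonseca--M\"uller. For the diffuse (absolutely continuous plus Cantor) part, fix $\delta\in(0,1)$, work on the superlevel set $\{v_\eps>\delta\}$ where $f_\eps^2(v_\eps)\geq (1\wedge \eps f^2(\delta))$, and compare the integrand pointwise with $h(\nabla u_\eps)$ (truncated by the characteristic function of the superlevel set) plus a controllable error concentrated on $\{v_\eps\leq \delta\}$, whose surface measure is in turn controlled by the $\eps$-dependent Modica--Mortola terms. Passing to the limit first in $\eps$ (using $L^1$-lower semicontinuity of the relaxed $SBV$-functional with density $h^\qc$) and then in $\delta\uparrow 1$ yields the bulk and Cantor lower bounds with densities $h^\qc$ and $h^{\qc,\infty}$. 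For the surface part, I would localize around $J_u$ and blow up at $\calH^{n-1}$-a.e.\ point $x_0\in J_u$ to reduce to the cube $Q^{\nu_u(x_0)}$ with target $[u](x_0)\chi_{\{x\cdot\nu>0\}}$; on such blow-ups we truncate $v_\eps$ at the threshold $\gamma_\eps$ (the smallest $z$ with $f_\eps(z)=1$), which replaces $f_\eps^2\Psi$ by $\eps f^2\Psiinfty$ up to a vanishing error, thereby matching $\calF_\eps^\infty$ exactly. The abstract definition \eqref{eqdefGsnu} of $g$ then supplies the surface lower bound directly.

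For the $\limsup$ inequality in $BV$, I would follow an integral representation strategy. A direct construction (for instance, the classical Ambrosio--Tortorelli one-dimensional profile for the surface and $v_\eps\equiv 1$ combined with gradient averaging on small cubes for the bulk) provides a rough upper bound of linear growth in $|Du|$. Together with standard measure-theoretic properties this places the $\Gamma$-$\limsup$ functional into the class addressed by Bouchitt\'e--Fonseca--Mascarenhas, yielding an integral representation $\int \tilde W(\nabla u)\dx + \int \tilde l(\dd D^c u) + \int_{J_u}\tilde g([u],\nu_u)\dd\calH^{n-1}$. The densities are then identified by matching each against its associated minimum problem: $\tilde W\leq h^\qc$ via constant-gradient recovery sequences on cubes (the quasiconvex envelope is attained by Dirichlet-type constructions with $v_\eps\equiv 1$), $\tilde l\leq h^{\qc,\infty}$ via rescaling and \eqref{eqhinftypsiinf} from Proposition~\ref{p:identification Cantor energy density}, and $\tilde g\leq g$ by extracting near-optimal competitors from the infimum defining $g$ and pasting them around the jump set. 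The pasting step is precisely where periodicity is essential: Proposition~\ref{periodicity} allows one to choose test sequences with periodic boundary conditions in $(n-1)$ orthogonal tangential directions, so copies glued together produce an admissible global competitor without creating spurious boundary contributions.

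The extension from $BV$ to $GBV$ is handled by a truncation argument at both endpoints. Given $u\in (GBV\cap L^1)^m$ with $\calF_0(u,1)<\infty$, I would consider componentwise or radial truncations $u_k\in (BV\cap L^\infty)^m$ converging to $u$, apply the $BV$ case to each $u_k$ to get recovery sequences $(u^k_\eps,v^k_\eps)$, and use continuity of $\calF_0$ along such truncations (Proposition~\ref{contF0}) together with a diagonal extraction to produce the final recovery sequence. The analogous truncation argument, applied to the lower bound (Theorem~\ref{t:lbcomp}), shows both that the $\Gamma$-liminf is finite only on $(GBV\cap L^1)^m\times\{1\}$ and that it dominates $\calF_0$ there. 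The main obstacle I anticipate is the surface upper bound: extracting a near-optimal competitor from the abstract infimum \eqref{eqdefGsnu}, converting it into a periodic one via Proposition~\ref{periodicity}, and then gluing many rescaled copies along $J_u$ while controlling simultaneously the bulk, damage, and gradient-damage terms on thin transition layers near $\partial J_u$ is delicate, and is the reason the equivalent characterizations of $g$ in Section~\ref{ss:properties g} are developed before the main construction.
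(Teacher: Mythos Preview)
Your outline matches the paper's architecture closely: blow-up with superlevel-set truncation for the diffuse lower bound (Lemma~\ref{lemmaabdelta}, Proposition~\ref{lowerboundBVdff2}), blow-up with the $\gamma_\eps$-cutoff for the surface lower bound (Proposition~\ref{lowerboundBVsfc}), Bouchitt\'e--Fonseca--Mascarenhas representation for the upper bound (Theorem~\ref{t:limsupndim}) with the periodic characterization of $g$ (Proposition~\ref{periodicity}, in fact Proposition~\ref{propuvjump}) used for the surface density, and truncation via Proposition~\ref{contF0} to pass from $BV$ to $GBV$.

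One step would fail as you have written it. The rough linear upper bound for the bulk cannot be obtained with $v_\eps\equiv 1$: in that case $f_\eps(v_\eps)=1$ and the energy reduces to $\int\Psi(\nabla u)\,\dx$, which has \emph{quadratic} growth, so you never reach the linear regime needed for the representation theorem. The branch $\ell\Psi^{1/2}$ in $h=\Psi\wedge\ell\Psi^{1/2}$ is produced by taking $v_\eps$ slightly below~$1$, namely $v_\eps=1-\sqrt{2\ell\eps}\,\Psi^{1/4}(\xi)$ on a region where $\nabla u=\xi$; balancing $\eps f^2(v_\eps)\Psi(\xi)$ against $(1-v_\eps)^2/(4\eps)$ yields $\ell\Psi^{1/2}(\xi)$ in the limit (Step~1 of Theorem~\ref{t:limsupndim}). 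This is exactly what creates the crossover and hence the linear growth. A second technical point you do not mention: $\widehat\calF$ by itself need not satisfy the coercivity hypothesis of \cite{BouchitteFonsecaMascarenhas}, so the paper adds a penalty $\lambda|Du|$, applies the representation to $\widehat\calF_\lambda$, and lets $\lambda\downarrow 0$ at the end.
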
	
\begin{remark}
{One can imagine several natural generalizations of 
Theorem~\ref{t:finale}.
For example, one could allow $\Psi$ 
to take negative values, replacing \eqref{e:Psi gc}  by 
\begin{equation*}
\frac 1c |\xi|^2-c\le \Psi(\xi)\le c(|\xi|^2+1).
%\hskip1cm\text{ for all }\xi\in\R^{m\times n}\,.
\end{equation*}
Whereas in purely elastic models like \eqref{elasticenergy} one can add a constant to the energy density without any change in the analysis, 
the presence of the prefactor $f_\eps^2(v)$ renders this modification nontrivial, and influences several steps in the proof. 
Indeed, the construction in Step 1 of the proof of Theorem~\ref{t:limsupndim} shows that the definition of $h$ in \eqref{e:h} needs to be replaced by
 \[
 h(\xi):=\Psi(\xi)\wedge\ell\Psi_+^{\sfrac12}(\xi)\,. \]
}{Alternatively, one could replace the quadratic growth of $\Psi$ in \eqref{e:Psi gc} by
$p$-growth, $p>1$. The requirement that the effective energy scales linearly for large strains leads to corresponding adaptations in the other parts of the functional.
}

{For simplicity we only address here 
the growth condition in \eqref{e:Psi gc}.}

% {
% The conclusion of Theorem~\ref{t:finale} holds even in case $\Psi$ 
% may take negative values. More precisely, if \eqref{e:Psi gc} is substituted by 
% \begin{equation*}
% \frac 1c |\xi|^2-c\le \Psi(\xi)\le c(|\xi|^2+1)
% %\hskip1cm\text{ for all }\xi\in\R^{m\times n}\,,
% \end{equation*}
% then the $\Gamma$-convergence of $(\calF_\eps)$ to $\calF_0$ can be proven provided that $h$ is defined by 
% \[
% h(\xi):=\Psi(\xi)\wedge\ell\Psi_+^{\sfrac12}(\xi)\,.
% \]
% We establish Theorem~\ref{t:finale} under the slightly milder growth condition in \eqref{e:Psi gc} only, 
% since otherwise one would have to deal with additional technicalities.
% 
% Finally, the quadratic growth of $\Psi$ in \eqref{e:Psi gc} is not essential for Theorem~\ref{t:finale}: 
% any $p$-growth, $p>1$, can be considered similarly.
% }
\end{remark}

\paragraph{Notation.}
For $A$ open we denote by $\calM^+(A)$ the set of positive Radon measures on the set $A$, and by
$\calM_b^+(A)$ the subset of bounded measures.
% We write $\calA(\Omega)$ for the family of open subsets of $\Omega$.
For $A\in\calA(\Omega)$, 
\begin{equation*}
\begin{split}
 \Gamma(L^1)\hbox{-}\liminf \calF_\eps(u,v;A):=
 \inf\Bigl\{ & \liminf_{\eps\to0}\calF_\eps(u_\eps,v_\eps;A): \\
 &
 (u_\eps,v_\eps)\to (u,v) \text{ in } L^1(\Omega;\R^{m+1})\Bigr\}
 \end{split}
\end{equation*}
and correspondingly for the $\Gamma\mbox{-}\limsup$. 
We drop the dependence on the reference set $A$ if $A=\Omega$.
We refer to Section~\ref{s:scalar case} for the definition of the vector measure $D^cu$ if $u\in (GBV(\Omega))^m$.

\subsection{Simplified model}\label{simplified}
In this Section we consider the simplified case 
$\Psi_\simp({\xi}):=|{\xi}|^2$, the corresponding unrelaxed energy density $h_\simp:\R^{m\times n}\to [0,\infty)$,
\begin{equation}\label{eqdefh}
 h_\simp(\xi):={|\xi|^2\wedge \ell |\xi|},
\end{equation}
its quasiconvex envelope $h_\simp^\qc$ as in \eqref{eq:hqc}, 
and its recession function $h_\simp^\qcinfty$ as in \eqref{eqdefqcinfty}.
These functions only depend on the space dimension and the single parameter $\ell>0$, which could be eliminated by scaling.

In this case it is possible to obtain simple closed-form expressions 
for several of the quantities defined above. However, 
an explicit characterization of the quasiconvex envelope in \eqref{eq:hqc} remains difficult. Indeed, we show 
in Lemma~\ref{lemmahqcconb}\ref{lemmahqcconb3}
below that even in this simplified setting the result is not convex. Since it has linear growth, lower bounds with polyconvexity cannot be used, and an explicit determination of $h^\qc_\simp$ seems difficult. We believe this to be a strong indication that in most cases of interest the function $h^\qc$ can only be approximated numerically, and not computed explicitly. Lemma~\ref{lemmahqcconb} and this observation are not used in the proof of Theorem~\ref{t:finale}.

\begin{lemma}\label{lemmahqc}
For $n,m\ge 1$ let $h_\simp:\R^{m\times n}\to[0,\infty)$ be defined as in \eqref{eqdefh}. Then:
\begin{enumerate}
\item\label{lemmahqcconv} its convex envelope is
 \begin{equation}\label{eqhcoonv}
 h_\simp^\conv(\xi)=\begin{cases}
              |\xi|^2 , & \text{ if } |\xi|\le \frac\ell2,\\
              \ell|\xi|-\frac{\ell^2}4,& \text{ if } |\xi|> \frac\ell{2};
             \end{cases}
             \end{equation}
\item\label{lemmahqcgrowth}
$\ell|\xi|-\frac{\ell^2}4\le h_\simp^\qc(\xi)\le \ell |\xi|$ for all $\xi\in\R^{m\times n}$;
\item\label{lemmahqcregr} $h_\simp^\qcinfty(\xi)=\ell |\xi|$ and the $\limsup$ in \eqref{eqdefqcinfty} is a limit.
% \item\label{lemmahqcqcnotc} If $n,m\ge 2$ then $h^\qc$ is not convex;
% \item\label{lemmahqcsubadd} there is $c>0$ such that
% \begin{equation}\label{eqhsubadditive}
%  h_\simp^\qc(\xi+\eta)\le c (h_\simp^\qc(\xi)+h_\simp^\qc(\eta)) \text{ for all } \xi,\eta\in\R^{m\times n}.
% \end{equation}
\end{enumerate}
\end{lemma}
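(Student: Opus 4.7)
The plan for (i) is to reduce the matrix-space convex envelope to a one-dimensional one via the radial structure $h_\simp(\xi)=\rho(|\xi|)$ with $\rho(t):=t^2\wedge\ell t$ on $[0,\infty)$. Extending $\rho$ evenly to $\phi(t):=\rho(|t|)$ on $\R$, I would identify the common tangent $y=\ell t-\ell^2/4$ to the parabola $y=t^2$ at $t=\ell/2$: it lies below $t^2$ (equivalent to $(t-\ell/2)^2\ge 0$) and trivially below $\ell|t|$, so $\phi^{**}(t)=t^2$ for $|t|\le \ell/2$ and $\phi^{**}(t)=\ell|t|-\ell^2/4$ for $|t|\ge \ell/2$. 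Optimality is immediate, since any convex minorant of $\phi$ must pass below the tangent at the touching point $(\ell/2,\ell^2/4)$.

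To lift this to $\R^{m\times n}$, I would check both inequalities directly. The candidate $\tilde h(\xi):=\phi^{**}(|\xi|)$ is convex, being the composition of the convex, nondecreasing function $\phi^{**}|_{[0,\infty)}$ (nondecreasing because it is convex, even, and attains its minimum value $0$ at the origin) with the convex map $|\cdot|$; and $\tilde h\le h_\simp$ pointwise. Conversely, for any convex $g\le h_\simp$ on $\R^{m\times n}$ and $\xi_0\ne 0$, the restriction $t\mapsto g(t\xi_0/|\xi_0|)$ is a one-variable convex minorant of $\phi(|\cdot|)$, hence at most $\phi^{**}$; evaluating at $t=|\xi_0|$ gives $g(\xi_0)\le \phi^{**}(|\xi_0|)=\tilde h(\xi_0)$, as desired.

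Parts (ii) and (iii) then follow essentially mechanically from (i). The chain $h_\simp^\conv\le h_\simp^\qc\le h_\simp$, combined with the formula from (i) (using $(|\xi|-\ell/2)^2\ge 0$ to infer $|\xi|^2\ge \ell|\xi|-\ell^2/4$ in the region $|\xi|\le \ell/2$) and with $h_\simp(\xi)\le \ell|\xi|$, yields (ii). Rescaling (ii) at the argument $t\xi$ by $t>0$ sandwiches $h_\simp^\qc(t\xi)/t$ between $\ell|\xi|-\ell^2/(4t)$ and $\ell|\xi|$, so as $t\to\infty$ the $\limsup$ in \eqref{eqdefqcinfty} is forced to be a genuine limit equal to $\ell|\xi|$, proving (iii). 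I do not foresee any real obstacle; the only point demanding a little care is the matrix-to-scalar reduction in (i), which hinges on the standard fact that composing $|\cdot|$ with a convex and nondecreasing function preserves convexity.
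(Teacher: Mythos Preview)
Your proposal is correct and follows essentially the same route as the paper: reduce to the scalar envelope $h_\scal^\conv(t)=t^2$ for $t\le\ell/2$, $\ell t-\ell^2/4$ otherwise, then lift to $\R^{m\times n}$ via the convexity of $\xi\mapsto h_\scal^\conv(|\xi|)$ (convex nondecreasing composed with $|\cdot|$) for one inequality and restriction to lines $t\mapsto t\,\xi_0/|\xi_0|$ for the other; parts (ii) and (iii) are the same sandwich $h_\simp^\conv\le h_\simp^\qc\le h_\simp\le\ell|\cdot|$ followed by dividing by $t$. The only cosmetic difference is that you spell out the tangent-line geometry for the 1D envelope, whereas the paper simply states the formula for $h_\scal^\conv$.
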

\begin{proof}
\ref{lemmahqcconv}:
To prove \eqref{eqhcoonv}
we consider $h_\scal:[0,\infty)\to[0,\infty)$ defined by
\begin{equation}\label{e:hascal}
h_\scal(t):= {t^2 \wedge \ell t}
\end{equation}
and compute its convex envelope
 \begin{equation}\label{e:hascalconv}
  {h^\conv_{\scal}}(t)=\begin{cases}
              t^2, & \text{ if } 0\le t\le \frac\ell{2},\\
               \ell t-\frac{\ell^2}4,& \text{ if } t> \frac\ell{2}.
             \end{cases}
 \end{equation}
 Let $\eta\in\R^{m\times n}$ with $|\eta|=1$. Then
 {$h_\simp(t\eta)=h_{\scal}(t)$}, hence {$h_\simp^\conv(t\eta)\le {h_\scal^\conv} (t)$}. This proves one inequality in 
 \eqref{eqhcoonv}. At the same time, 
 {$h^\conv_{\scal}(|\xi|)\le h_{\scal}(|\xi|)=h_\simp(\xi)$}, and  
 the function 
 {$\xi\mapsto{h_\scal^\conv}(|\xi|)$} is convex, since $h^\conv_{\scal}$ is convex and nondecreasing in $[0,\infty)$ and $\xi\mapsto |\xi|$ is convex.   This proves the second inequality in  \eqref{eqhcoonv}.
 
 \ref{lemmahqcgrowth}: This follows immediately from the fact that
 $\ell|\xi|-\frac{\ell^2}4\le h^\conv_{\simp}(\xi)\le h_{\simp}^\qc(\xi)\le h_{\simp}(\xi)\le \ell|\xi|$ for any $\xi\in\R^{m\times n}$.

 \ref{lemmahqcregr}: This follows immediately from the  definition and \ref{lemmahqcgrowth}.
 
% \ref{lemmahqcsubadd}: 
% We first show that
% \begin{equation}\label{eqhsubadd}
%  h(A+B)\le 2 ((h(A)+h(B)) \text{ for all } A,B\in\R^{m\times n}.
% \end{equation}
% We can assume $A,B\in\R^{m\times n}$ and $|A|\le|B|$.
% We distinguish three cases. 
% 
% If $|A|\wedge|B|\ge\ell$ then
% $h(A)+h(B)=\ell(|A|+|B|)\ge \ell |A+B|\ge h(A+B)$.
% 
% If $|A|\vee|B|\le\ell$ then
% $h(A)+h(B)=|A|^2+|B|^2\ge \frac12 |A+B|^2\ge \frac12 h(A+B)$.
% 
% If $|A|<\ell<|B|$ then $|A+B|\le 2|B|$, so that 
% $h(A)+h(B)\ge \ell|B|\ge\frac12 \ell|A+B|\ge \frac12 h(A+B)$. This concludes the proof of \eqref{eqhsubadd}.
% 
% We then show that \eqref{eqhsubadd} implies \eqref{eqhsubadditive}. Fix $\eps>0$. Let $\varphi$, $\psi\in W^{1,\infty}_0((0,1)^n;\R^m)$ be such that
% \begin{equation*}
% \int_{(0,1)^n} h(A+\nabla \varphi) \dx \le h^\qc(A)+\eps \text{ and }
% \int_{(0,1)^n} h(B+\nabla\psi) \dx \le h^\qc(B)+\eps.
% \end{equation*}
% Then $\varphi+\psi\in W^{1,\infty}_0((0,1)^n;\R^m)$, hence
% \begin{equation*}
% \begin{split}
% h^\qc(A+B)\le & \int_{(0,1)^n} h(A+B+\nabla(\varphi+\psi)) \dx \\
% \le& 2\int_{(0,1)^n} h(A+\nabla \varphi) \dx +\int_{(0,1)^n} h(B+\nabla\psi) \dx \\
% \le& 2(h^\qc(A)+h^\qc(B))+2\eps.
% \end{split}
% \end{equation*}
% Since $\eps$ was arbitrary, this concludes the proof (with $c=2$).
\end{proof}

We next prove that the quasiconvex envelope $h_\simp^\qc$ is not convex.
For this we need a linear algebra statement that we present first.
 
 \newcommand\Amatrix{{\xi}}   

\begin{lemma}\label{lemmaTSA}
 Let 
 \begin{equation}
  \R^{m\times n\times n}_\sym:=\{\Gamma\in \R^{m\times n\times n}:
  \Gamma_{ijk}=\Gamma_{ikj} \}
 \end{equation}
and consider for $\Amatrix\in\R^{m\times n}$ the {linear} map 
$T:\R^{m\times n\times n}_\sym\to \R^{m\times n\times n}$ 
of the form
 \begin{equation}
  (T\Gamma)_{ijk}:=\Gamma_{ijk}- \Amatrix_{ij}\sum_{a,b}\Amatrix_{ab}\Gamma_{abk}\,.
 \end{equation}
If $\rank \Amatrix\ge 2$, then
$T$ is injective. In particular, it has an inverse $S:T(\R^{m\times n\times n}_\sym)\to \R^{m\times n\times n}_\sym$.
\end{lemma}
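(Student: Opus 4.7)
The plan is to show directly that $\ker T=\{0\}$. Suppose $\Gamma\in\R^{m\times n\times n}_\sym$ satisfies $T\Gamma=0$. Setting
\[
 \lambda_k:=\sum_{a,b}\xi_{ab}\Gamma_{abk},\qquad k=1,\dots,n,
\]
the equation $T\Gamma=0$ reduces to the pointwise identity
\[
 \Gamma_{ijk}=\xi_{ij}\,\lambda_k \qquad\text{for all }i,j,k.
\]
The remaining task is to force $\lambda=0$, using the symmetry $\Gamma_{ijk}=\Gamma_{ikj}$ built into the domain together with the hypothesis $\rank\xi\ge 2$.

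Feeding the symmetry condition into the previous formula gives
\[
 \xi_{ij}\,\lambda_k=\xi_{ik}\,\lambda_j\qquad\text{for all }i,j,k.
\]
If $\lambda\equiv 0$ then $\Gamma=0$ and we are done, so assume there exists $k_0$ with $\lambda_{k_0}\ne 0$. Dividing by $\lambda_{k_0}$ yields
\[
 \xi_{ij}=c_i\,\lambda_j,\qquad c_i:=\frac{\xi_{ik_0}}{\lambda_{k_0}},
\]
i.e.\ $\xi=c\otimes\lambda$ with $c\in\R^m$, $\lambda\in\R^n$, contradicting $\rank\xi\ge 2$. Therefore $\lambda=0$, whence $\Gamma=0$; this proves injectivity of $T$, and the inverse $S$ on the image $T(\R^{m\times n\times n}_\sym)$ is then automatic.

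The argument is purely linear-algebraic, so there is no genuine obstacle. The only point worth underlining is that the symmetry of $\Gamma$ in $(j,k)$ is precisely what converts the relation $\Gamma_{ijk}=\xi_{ij}\lambda_k$ into a rank-one constraint on $\xi$: without that symmetry, $T$ (viewed on the larger space $\R^{m\times n\times n}$) would have a generically nontrivial kernel, and the rank hypothesis could not be exploited.
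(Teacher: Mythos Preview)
Your proof is correct and follows essentially the same approach as the paper: define the auxiliary vector from the contraction with $\xi$, use the symmetry of $\Gamma$ to obtain $\xi_{ij}\lambda_k=\xi_{ik}\lambda_j$, and then contradict the rank hypothesis. Your concluding step is in fact slightly more direct than the paper's: you fix $k_0$ with $\lambda_{k_0}\neq0$ and read off $\xi=c\otimes\lambda$ immediately, whereas the paper introduces an auxiliary vector $w\in\nu^\perp$ with $\xi w\neq0$ and contracts the identity against it to force $v_j(\xi w)_i=0$.
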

\begin{proof}
 It suffices to show that there is no  $\Gamma\in \R^{m\times n\times n}_\sym$ with $T\Gamma=0$ and $\Gamma\ne 0$. We assume it exists and define $v\in\R^n$ {componentwise} by
 \begin{equation}
  v_k:=\sum_{a,b} \Amatrix_{ab}\Gamma_{abk}.
 \end{equation}
Then $T\Gamma=0$ is equivalent to
\begin{equation*}
 \Gamma_{ijk}-\Amatrix_{ij}v_k=0,
\end{equation*}
hence $\Gamma_{ijk}=\Amatrix_{ij}v_k$, for all $i$, $j$, and $k$.  
{Moreover, $\Gamma\neq 0$ in turn implies that $v\neq 0$.}
From $\Gamma\in \R^{m\times n\times n}_\sym$ we obtain
\begin{equation*}
 \Amatrix_{ij}v_k=\Amatrix_{ik}v_j.
\end{equation*}
As $\rank \Amatrix\ge2$ there is a vector $w\in\R^n$ with $v\cdot w=0$ and ${\xi}w\ne0$. We take the scalar product of the previous equation with $w$ and obtain
\begin{equation*}
\sum_k \Amatrix_{ij}v_kw_k=\sum_k \Amatrix_{ik}v_jw_k
\end{equation*}
which gives $0=v_j (\Amatrix w)_i$ for all {$i$ and} $j$. As $v\ne0$ and $\Amatrix w\ne0$, this is a contradiction. 
\end{proof}

\begin{lemma}\label{lemmahqcconb}
Let $\Amatrix\in\R^{m\times n}$. 
\begin{enumerate}
 \item\label{lemmahqcconb1} If $|\Amatrix|\le \frac\ell2$, then $h_\simp(\Amatrix)=h_\simp^\qc(\Amatrix)=h_\simp^\conv(\Amatrix)$.
\item\label{lemmahqcconb2}
If $\rank \Amatrix\le 1$, then $h_\simp^\qc(\Amatrix)=h_\simp^\conv(\Amatrix)$.
\item\label{lemmahqcconb3}
If $\rank \Amatrix\ge 2$ and $|\Amatrix|>\frac\ell2$, then 
$h_\simp^\conv(\Amatrix)<h_\simp^\qc(\Amatrix)$.
\end{enumerate}
\end{lemma}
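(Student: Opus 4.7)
\emph{Plan.} Parts (i) and (ii) should follow from Lemma~\ref{lemmahqc} together with an explicit rank-one laminate construction, while part (iii) will require a contradiction argument invoking the injectivity of the operator $T$ from Lemma~\ref{lemmaTSA}. For part (i), $|\Amatrix|\le\ell/2$ gives $|\Amatrix|^2\le\ell|\Amatrix|$, so $h_\simp(\Amatrix)=|\Amatrix|^2$; Lemma~\ref{lemmahqc}\ref{lemmahqcconv} then yields $h_\simp^\conv(\Amatrix)=|\Amatrix|^2$ as well, and since $h_\simp^\conv\le h_\simp^\qc\le h_\simp$ always, the three coincide.

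For part (ii), by (i) I may assume $|\Amatrix|>\ell/2$ and $\Amatrix=a\otimes b\ne 0$. Writing $\hat\Amatrix:=\Amatrix/|\Amatrix|$, for each $t>\ell$ I would use the first-order rank-one decomposition
\[
\Amatrix=\lambda_t\,\tfrac{\ell}{2}\hat\Amatrix+(1-\lambda_t)\,t\hat\Amatrix,\qquad \lambda_t:=\frac{t-|\Amatrix|}{t-\ell/2},
\]
whose endpoints both lie on the rank-one line $\R\hat\Amatrix$. This laminate can be realized as a competitor $\varphi_{t,k}\in C^\infty_c((0,1)^n;\R^m)$ oscillating in direction $b$ with period $1/k$ and cut off in a thin boundary shell; because $h_\simp(\eta)\le \ell|\eta|$, the cut-off error is $o(1)$ as $k\to\infty$. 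The bulk cost equals $\lambda_t h_\simp(\tfrac{\ell}{2}\hat\Amatrix)+(1-\lambda_t)h_\simp(t\hat\Amatrix)=\ell|\Amatrix|-\lambda_t\ell^2/4$, and letting first $k\to\infty$ and then $t\to\infty$ gives $h_\simp^\qc(\Amatrix)\le \ell|\Amatrix|-\ell^2/4=h_\simp^\conv(\Amatrix)$; the reverse inequality is automatic.

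For part (iii) I would argue by contradiction, assuming $h_\simp^\qc(\Amatrix)=h_\simp^\conv(\Amatrix)=\ell|\Amatrix|-\ell^2/4$ with $\rank\Amatrix\ge 2$ and $|\Amatrix|>\ell/2$, and pick a minimizing sequence $u_k(x):=\Amatrix x+\varphi_k(x)$ with $\varphi_k\in C^\infty_c((0,1)^n;\R^m)$. Since $h_\simp-h_\simp^\conv\ge 0$ with equality only on $\{|\eta|\le\ell/2\}$, and $h_\simp^\conv$ saturates Jensen's inequality only on the radial affine faces through the origin, the gradients $\nabla u_k$ must develop a two-scale asymptotic structure: bulk values confined to $\{|\eta|\le\ell/2\}$ together with a rank-one singular concentration along $\hat\Amatrix$ carrying the mass needed so that the average still matches $\Amatrix$. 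Blowing up around a Lebesgue point of the limit bulk field and identifying the symmetric $3$-tensor $\Gamma\in\R^{m\times n\times n}_\sym$ encoding the infinitesimal second-order structure of the competitor, the constraints above should force $T\Gamma=0$ for the operator $T$ of Lemma~\ref{lemmaTSA} associated with $\hat\Amatrix$ (same rank as $\Amatrix$). The injectivity assertion of that lemma, valid precisely when $\rank\Amatrix\ge 2$, then yields $\Gamma\equiv 0$; the resulting rigidity of the competitor prevents the bulk part from averaging to $\Amatrix$ while respecting $|\nabla^a u|\le \ell/2$, contradicting $|\Amatrix|>\ell/2$.

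The hard part will be making this blow-up analysis rigorous: $h_\simp^\qc$ is not known a priori to be smooth at $\Amatrix$, and the only-linear growth of $h_\simp$ permits genuine gradient concentration at infinity, so the limit object must be formulated in a $BV$-relaxation framework (using Alberti's rank-one theorem for the singular part) and the symmetric $3$-tensor whose vanishing feeds Lemma~\ref{lemmaTSA} must be extracted from this limit with care, in particular controlling the interaction between the bulk and singular parts in the second-order expansion.
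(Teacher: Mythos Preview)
Your arguments for (i) and (ii) are correct; for (ii) the paper simply invokes rank-one convexity of $h_\simp^\qc$ along the line $\R\hat\Amatrix$ rather than building the laminate and estimating a cut-off, but the substance is the same.

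For (iii) there is a genuine gap. The mechanism you outline---blow up, extract a tensor $\Gamma\in\R^{m\times n\times n}_\sym$ ``encoding the infinitesimal second-order structure'', and infer $T\Gamma=0$---is never made concrete: you do not say what $\Gamma$ is, nor why the optimality constraints should force $T\Gamma=0$, and in a $BV$/Young-measure limit there is no natural second-order object for $T$ to act on. Observe also that your Young-measure picture by itself (mass near the set where $h_\simp=h_\simp^\conv$ plus concentration along $\R_+\hat\Amatrix$) does not distinguish $\rank\Amatrix\ge2$ from $\rank\Amatrix=1$, so an additional rigidity step is indispensable and you have not supplied one.

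The paper's route is different and avoids blow-up. Subtracting the supporting affine $L(\eta):=\ell\,\hat\Amatrix\cdot\eta-\ell^2/4$ and setting $g:=h_\simp-L\ge0$, the contradiction hypothesis gives $\int g(\nabla\varphi_j)\,\dx\to0$; an elementary pointwise bound shows $g(\eta)$ controls $|\eta^\perp|$, so $R_j:=(\nabla\varphi_j-\Amatrix)^\perp\to0$ in $L^1$. Lemma~\ref{lemmaTSA} is then applied \emph{on the sequence itself}: one checks the pointwise identity $\nabla R_j=T\bigl(\nabla^2(\varphi_j-\Amatrix x)\bigr)$, and injectivity of $T$ inverts this to $\nabla^2(\varphi_j-\Amatrix x)=S(\nabla R_j)$. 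In particular $\Delta(\varphi_j-\Amatrix x)$ is a fixed linear combination of components of $\nabla R_j$, so the Newtonian potential represents $\nabla\varphi_j-\Amatrix$ as a Calder\'on--Zygmund operator acting on $R_j$; the weak-$(1,1)$ bound yields $\|\nabla\varphi_j-\Amatrix\|_{w\text{-}L^1}\to0$. Since $h_\simp$ is continuous with $h_\simp(\Amatrix)>h_\simp^\conv(\Amatrix)$, it follows that $\calL^n(\{|\nabla\varphi_j-\Amatrix|<\delta\})\to1$, forcing $\liminf\int h_\simp(\nabla\varphi_j)>h_\simp^\conv(\Amatrix)$, a contradiction. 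The idea missing from your plan is precisely this singular-integral rigidity: Lemma~\ref{lemmaTSA} serves to invert a differential relation for the finite-$j$ competitors, not to constrain a limiting tensor.
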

\begin{proof}
 We work for $\ell=1$ (the general case can be reduced to this one by a rescaling), to shorten notation we write $h$ for $h_\simp$.
 
\ref{lemmahqcconb1}: It is clear that $h^\conv\le h^\qc\le h$. 
If $|\Amatrix|\le \frac12$ then $h^\conv(\Amatrix)=h(\Amatrix)$ 
{(cf. \eqref{eqhcoonv}),} and the assertion then follows.
 
 \ref{lemmahqcconb2}:
 If $\rank \Amatrix=1$ with $|\Amatrix|>\frac12$, then  for any 
 $t>|\Amatrix|$ one has
 \begin{equation*}
  \Amatrix = \frac{t-|\Amatrix|}{t-\frac12} \frac{\Amatrix}{2|\Amatrix|} + 
   \frac{|\Amatrix|-\frac12 }{t-\frac12} \frac{t\Amatrix}{|\Amatrix|}
 \end{equation*}
 and by rank-one convexity of $h^\qc$ we obtain
 \begin{equation*}
  h^\qc(\Amatrix)\le \frac{t-|\Amatrix|}{t-\frac12} 
  h\Big(\frac{\Amatrix}{2|\Amatrix|}\Big) + 
   \frac{|\Amatrix|-\frac12 }{t-\frac12} 
   h\Big(t\frac{\Amatrix}{|\Amatrix|}\Big)
   \le \frac{t-|\Amatrix|}{t-\frac12} \frac14 + 
   \frac{|\Amatrix|-\frac12 }{t-\frac12} t.
 \end{equation*}
Taking $t\to\infty$ shows that $h^\qc(\Amatrix)\le |\Amatrix|-\frac14=h^\conv(\Amatrix)$. 
Recalling $h^\conv\le h^\qc$ concludes the proof.

\ref{lemmahqcconb3}:
 We assume that $\rank \Amatrix\ge 2$ and $|\Amatrix|> \frac12$, and 
 show that $h^\conv(\Amatrix)<h^\qc(\Amatrix)$. From the explicit formulas given in Lemma~\ref{lemmahqc}\ref{lemmahqcconv} we know that $h^\conv(\Amatrix)<h(\Amatrix)$, from general theory $h^\conv\le h^\qc$.
 
 Assume by contradiction that $h^\conv(\Amatrix)=h^\qc(\Amatrix)$. Then there is 
a sequence $\varphi_j\in C^{\infty}((0,1)^n;\R^m)$ such that $\varphi_j(x)=\Amatrix x$ on $\partial(0,1)^n$ and
\begin{equation}\label{eqhid2}
 h^\conv(\Amatrix)=\lim_{j\to\infty} \int_{(0,1)^n} h(\nabla \varphi_j) \dx.
\end{equation}
We consider the affine function $L:\R^{m\times n}\to\R$, 
\begin{equation*}
L({\eta}):=\frac{{\eta}\cdot \Amatrix}{|\Amatrix|}-\frac14.
\end{equation*}
One easily
checks that $h^\conv(t\Amatrix)=L(t\Amatrix)=t|\xi|-\frac14$ 
for $t\ge \frac1{2|\Amatrix|}$ {(cf. \eqref{eqhcoonv}),} and 
since $|\Amatrix|>\frac12$ this in particular holds for $t= 1$.
 Linearity and the boundary values of $\varphi_j$ imply
\begin{equation*}
\int_{(0,1)^n}
L(\nabla \varphi_j)\dx 
=L\left(
\int_{(0,1)^n} \nabla \varphi_j\, \dx \right)
=L(\Amatrix).
\end{equation*}
Subtracting from \eqref{eqhid2},
and letting $g:=h-L$, leads to
\begin{equation}\label{eqghLid2}
\lim_{j\to\infty}\int_{(0,1)^n}
g(\nabla \varphi_j)\dx =0.
\end{equation}
We next show that $g({\eta})$ controls the distance of the matrix ${\eta}$ from the set $\R \Amatrix$. To do this, 
for ${\eta}\in\R^{m\times n}$ we define the orthogonal projections
\begin{equation*}
 {\eta}^\parallel := \frac{{\eta}\cdot \Amatrix}{|\Amatrix|} \in\R\hskip5mm\text{ and }\hskip5mm
 {\eta}^\perp:= {\eta}-\frac{\Amatrix}{|\Amatrix|} {\eta}^\parallel\in \R^{m\times n},
\end{equation*}
so that $|{\eta}|^2=|{\eta}^\parallel|^2+|{\eta}^\perp|^2$
and $L({\eta})= {\eta}^\parallel-\frac14$.

We first consider the case $|{\eta}|\ge 1$, so that $h({\eta})=|{\eta}|$.
Assume for a moment that both
${\eta}^\parallel$
and ${\eta}^\perp$ do not vanish.
Letting $\gamma:=|{\eta}^\perp|/|{\eta}^\parallel|$,
\begin{equation*}
 g({\eta})=|{\eta}|-L({\eta})\ge |{\eta}^\parallel| \sqrt{1+\gamma^2}-|{\eta}^\parallel|=
 \frac{\sqrt{1+\gamma^2}-1}{\gamma} |{\eta}^\perp|.
\end{equation*}
Let now $\eps\in(0,1]$. If $\gamma\le\eps$, then $|{\eta}^\perp|\le\eps |{\eta}^\parallel|$.
Otherwise, by monotonicity of $t\mapsto (\sqrt{1+t^2}-1)/t$ we have
$g({\eta})\ge (\sqrt{1+\eps^2}-1)|{\eta}^\perp|/\eps$. Therefore 
\begin{equation}\label{ghammage12}
 |{\eta}^\perp|\le \eps |{\eta}^\parallel| + \frac{\eps}{\sqrt{1+\eps^2}-1} g({\eta})%\quad
% \text{ for all } {\eta}\in\R^{m\times n} \text{ with }|{\eta}|\ge 1.
\end{equation}
{for all ${\eta}\in\R^{m\times n} \text{ with }|{\eta}|\ge 1$
(the two cases ${\eta}^\parallel=0$
and ${\eta}^\perp=0$ follow by continuity).}
% 
% If , then {$\eta=\eta^\perp$ and}
% $g({\eta})=|{\eta}|-L({\eta})=|{\eta}^\perp|+\frac14\ge |{\eta}^\perp|$. If $\eta^\perp=0$, then
% {$\eta=\eta^\parallel \xi/|\xi|$ and} $g(\eta)=
% |\eta^\parallel|-\eta^\parallel+
% \frac14\ge\frac14$.
% Otherwise, l
If instead $|{\eta}|\le 1$,
\begin{equation*}
 g({\eta})=|{\eta}|^2-L({\eta})=|{\eta}^\parallel|^2+|{\eta}^\perp|^2-{\eta}^\parallel+\frac14 \ge |{\eta}^\perp|^2.
\end{equation*}
Therefore for any $\eps\in(0,1]$ we have {for all 
${\eta}\in\R^{m\times n} \text{ with } |{\eta}|\le 1$}
\begin{equation}\label{ghammale12}
 |{\eta}^\perp|\le \eps +\frac1\eps |{\eta}^\perp|^2\le \eps + \frac1\eps g({\eta})\,. 
 %\text{ for all }  {\eta}\in\R^{m\times n} \text{ with } |{\eta}|\le 1.
\end{equation}
Combining \eqref{ghammage12} and \eqref{ghammale12} we see that for any $\eps\in(0,1]$ there is $C_\eps>0$ such that
{for all ${\eta}\in\R^{m\times n}$}
\begin{equation*}
 |{\eta}^\perp|\le \eps (|{\eta}^\parallel|+1)+C_\eps g({\eta})\,. 
 %\text{ for all }  {\eta}\in\R^{m\times n} .
\end{equation*}
In particular, for any $j$ we have
\begin{equation*}
 |\nabla \varphi_j^\perp|\le \eps (|\nabla \varphi_j^\parallel|+1)+C_\eps g(\nabla \varphi_j).
\end{equation*}
We integrate over $(0,1)^n$, take the limit $j\to\infty$ and
recall that  $g(\nabla \varphi_j)\to0$ in $L^1$ by \eqref{eqghLid2}. We obtain
\begin{equation*}
 \limsup_{j\to\infty} \int_{(0,1)^n} |\nabla \varphi_j^\perp| \dx \le 
 \eps \limsup_{j\to\infty} \int_{(0,1)^n}  (|\nabla \varphi_j^\parallel| +1) \dx 
\end{equation*}
for any $\eps\in(0,1]$. 
By \eqref{eqhid2} and Lemma~\ref{lemmahqc}\ref{lemmahqcgrowth} the sequence 
$\nabla \varphi_j$ is bounded in $L^1$, and since $\eps$ was arbitrary we conclude that
\begin{equation}\label{eqephijperpzero2}
 \limsup_{j\to\infty} \int_{(0,1)^n} |\nabla \varphi_j^\perp| \dx=0.
\end{equation}

We next prove that \eqref{eqephijperpzero2} implies that $\nabla \varphi_j$ converges to the constant $\Amatrix$ strongly in weak-$L^1$. 
To do this we show that standard singular integral estimates imply rigidity. To simplify notation, we write $u_j(x):=\varphi_j(x)-\Amatrix x$ and
% $p:=(\partial_1\varphi_1+\partial_2\varphi_2)/2$ and 
$R_j:=\nabla \varphi_j^\perp= \nabla u_j^\perp$, both
extended by zero to the rest of $\R^n$,
in the next steps. 
We observe that
\begin{equation*}
 R_j=\nabla u_j- \Amatrix \frac{\Amatrix\cdot \nabla u_j}{|\Amatrix|^2}
 =\nabla u_j-  \tilde \Amatrix(\tilde  \Amatrix\cdot \nabla u_j)
\end{equation*}
where $\tilde \Amatrix:=\frac\Amatrix{|\Amatrix|}$.
Taking a derivative, and writing components, we obtain
\begin{equation*}
(\nabla R_j)_{cdk}=(\nabla^2 u_j)_{cdk}- \tilde \Amatrix_{cd} \sum_{a,b} \tilde \Amatrix_{ab}( \nabla^2 u_j)_{abk} = (T (\nabla^2u_j){)_{cdk}},
\end{equation*}
with $T$ obtained from $\tilde \Amatrix$ as in Lemma~\ref{lemmaTSA}.
Let $S$ be the inverse operator. Then
\begin{equation*}
 \nabla^2u_j = S(\nabla R_j),
\end{equation*}
so that in particular $\Delta u_j$ is given by a linear combination of the components of $\nabla R_j$, with coefficients which depend only on $\Amatrix$.
As $u_j(x)=0$ outside $(0,1)^n$, we obtain, denoting by $N$ the fundamental solution of Laplace's equation in $\R^n$
{(which solves $-\Delta N=\delta_0$),}
\begin{equation*}
{-}\partial_r u_j
{=\partial_r (N \ast \Delta u_j)=}
\partial_r (N\ast \Tr S(\nabla R_j))
 =\Tr S(\Lambda_{{r}}(R_j)), %\quad \text{for } r=1,\dots,n,
\end{equation*}
{for every $r=1,\dots,n$,}
where we have set ${(\Lambda_r(R_j))}_{cdk}:={\partial_r\partial_k} N*(R_j)_{cd}$ {(recall that $R_j=0$ outside of $(0,1)^n$)}, 
and $(\Tr\Gamma)_l:=\sum_{i=1}^n\Gamma_{lii}$, 
for every $l=1,\dots,m$ and $\Gamma\in\R^{m\times n\times n}$.
By \cite[Theorem~4(b), page~42]{Steinsmall} we see that the operator $R\mapsto\Lambda_{{r}}(R)$ is of weak type 
$(1,1)$, so that
\begin{equation*}
 \|\nabla u_j\|_{w-L^1((0,1)^n)}\le c \|R_j\|_{L^1((0,1)^n)},
\end{equation*}
with $c$ depending only on $\Amatrix$.
Recalling the definition of $u_j$ and $R_j$ as well as \eqref{eqephijperpzero2},
\begin{equation*}
\lim_{j\to\infty}
 \|\nabla \varphi_j-\Amatrix\|_{w-L^1((0,1)^n)}\le c \lim_{j\to\infty} \|\nabla \varphi_j^\perp\|_{L^1((0,1)^n)}=0.
\end{equation*}
To conclude the proof we choose $z\in (h^\conv(\Amatrix), h(\Amatrix))$ (here we use again that $|\Amatrix|>\frac12$).
By continuity of $h$, there is
$\delta>0$ such that
$h(\eta)\ge z$ for all $\eta\in \R^{m\times n}$ with $|\eta-\Amatrix|<\delta$. By definition of the weak-$L^1$ norm,
\begin{equation*}
\limsup_{j\to\infty} \calL^n(\{x\in (0,1)^n: |\nabla \varphi_j-\Amatrix|\ge\delta\}) \le \limsup_{j\to\infty} \frac{ \|\nabla \varphi_j-\Amatrix\|_{w-L^1}}{\delta} =0\,.
\end{equation*}
Therefore, recalling that $h\ge 0$ pointwise,
\begin{equation*}
\begin{split}
 \liminf_{j\to\infty} 
 \int_{(0,1)^n} h(\nabla \varphi_j)\dx \ge&
 \liminf_{j\to\infty} 
 z\calL^n(\{x\in (0,1)^n: |\nabla \varphi_j-\Amatrix|{<}\delta\}) \\
=& z>h^\conv(\Amatrix).
\end{split}
 \end{equation*}
This contradicts \eqref{eqhid2} and concludes the proof.
\end{proof}

\clearpage
\section{Energy densities of the surface and Cantor part}\label{ss:properties g}

In this section we discuss several properties of the energy densities $g$ and $h^\qcinfty$.
{We warn the reader that while the results dealing with $g$ contained in subsections~\ref{ss:g} 
{and~\ref{subsecstructpropg}}
will be crucial in the proof of ¸Theorem~\ref{t:finale}, those in subsection \ref{ss:hqcinfty} will not be employed in that proof. Actually, Proposition~\ref{p:behaviour g at 0} and Corollary~\ref{c:behaviour g at 0} take advantage of Theorem~\ref{t:finale} itself (in particular of the lower semicontinuity of $\Gamma$-limits).}
%\MFF{La rappresentazione di $h^\qcinfty$ in termini di $\Psi$  non usa il Theorem~\ref{t:finale}!}
\subsection{Equivalent characterizations of $g(z,\nu)$}\label{ss:g}
{We show below that
	we may reduce the test sequences in the definition of $g(z,\nu)$ in \eqref{eqdefGsnu} to those converging
	in $L^2$ and satisfying periodic boundary conditions in $(n-1)$ directions orthogonal to $\nu$ and mutually 
	orthogonal to each other. This is the content of the next two propositions, {which will be crucial in the proof of the upper bound for the surface part (Theorem~\ref{t:limsupndim} Step 2). The proof draws inspiration from that of \cite[Lemma 4.2]{BarrosoFonseca}.}}
We fix a mollifier {$\varphi_1\in C^\infty_c(B_1)$, with $\int_{B_1}\varphi_1\dx=1$, and set $\varphi_\eps(x):=\eps^{-n}\varphi_1(x/\eps)$ in $B_\eps$}. 
% \MFF{vogliamo citare Barroso-Fonseca come fonte di ispirazione?}
\begin{proposition}\label{proplbboundary}
	Assume {an} optimal sequence in \eqref{eqdefGsnu} converges in $L^2(Q^\nu;\R^{m+1})$. 
	Then there are $\eps_j\to0$, $(u_j^*,v_j^*)\to (z\chi_{\{x\cdot\nu>0\}},1)$ in $L^2(Q^\nu;\R^{m+1})$, {with $v_j^*\in[0,1]$
		$\calL^n$-a.e. in $\Omega$,} such that
	\begin{equation*}
	\lim_{j\to\infty} 
	\calF^{\infty}_{\eps_j}({{u_j^*,v_j^*}}; Q^\nu)\le  g(z,\nu)
	\end{equation*}
	and
	\begin{equation}\label{eqpropboundrybv}
	u_j^*=(z\chi_{\{x\cdot\nu>0\}})\ast\varphi_{\eps_j}\,,\hskip1cm
	v_j^*=\chi_{\{|x\cdot\nu|\ge2\eps_j\}}\ast\varphi_{\eps_j} \hskip1cm
	\text{ on } \partial Q^\nu.
	\end{equation}
\end{proposition}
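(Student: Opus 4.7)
The plan is to cut off and glue the given optimal sequence to the smooth profiles $\bar u_j:=(z\chi_{\{x\cdot\nu>0\}})\ast\varphi_{\eps_j}$ and $\bar v_j:=\chi_{\{|x\cdot\nu|\ge 2\eps_j\}}\ast\varphi_{\eps_j}$ inside a thin collar of $\partial Q^\nu$, in the spirit of a De Giorgi slicing argument. Let $(u_j,v_j)$ be the optimal sequence given by the hypothesis, with $\eps_j\to 0$, $(u_j,v_j)\to(z\chi_{\{x\cdot\nu>0\}},1)$ in $L^2(Q^\nu;\R^{m+1})$, and $\calF^{\infty}_{\eps_j}(u_j,v_j;Q^\nu)\to g(z,\nu)$. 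The explicit choice of $(\bar u_j,\bar v_j)$ yields $\bar v_j\in[0,1]$, $\bar v_j\equiv 0$ on $\{|x\cdot\nu|\le\eps_j\}$, $\bar v_j\equiv 1$ on $\{|x\cdot\nu|\ge 3\eps_j\}$, while $\nabla\bar u_j$ is supported in $\{|x\cdot\nu|\le\eps_j\}$; hence the first term of $\calF^{\infty}_{\eps_j}(\bar u_j,\bar v_j;\cdot)$ vanishes identically and
\begin{equation*}
\calF^{\infty}_{\eps_j}(\bar u_j,\bar v_j;B)\le\frac{C}{\eps_j}\calL^n\bigl(B\cap\{|x\cdot\nu|\le 3\eps_j\}\bigr),\qquad B\in\calA(Q^\nu),
\end{equation*}
which is $O(\delta)$ when $B$ is a $\delta$-collar of $\partial Q^\nu$. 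Moreover $\bar u_j\to z\chi_{\{x\cdot\nu>0\}}$ and $\bar v_j\to 1$ in $L^2(Q^\nu)$.

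Fix $\delta\in(0,\tfrac14)$ and $N\in\N$; set $A_r:=\{\dist(\cdot,\partial Q^\nu)<r\}$, $s_i:=i\delta/N$, partition $A_\delta$ into the shells $B_i:=A_{s_i}\setminus\overline A_{s_{i-1}}$, and pick smooth cutoffs $\eta_i:Q^\nu\to[0,1]$ equal to $0$ on $\overline A_{s_{i-1}}$, to $1$ on $Q^\nu\setminus A_{s_i}$ and with $|\nabla\eta_i|\le 2N/\delta$. Summing over $i$ the density of $\calF^{\infty}_{\eps_j}(u_j,v_j;\cdot)$ augmented by tension terms of order $(N/\delta)^2|u_j-\bar u_j|^2$ and $\eps_j(N/\delta)^2|v_j-\bar v_j|^2$, pigeonhole produces an index $i_j$ for which the integral on $B_{i_j}$ is at most $1/N$ times the sum; the tension part of the latter vanishes as $j\to\infty$ (with $N,\delta$ fixed) by $L^2$-convergence. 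Setting
\begin{equation*}
u_j^*:=\eta_{i_j} u_j+(1-\eta_{i_j})\bar u_j,\qquad v_j^*:=\eta_{i_j} v_j+(1-\eta_{i_j})\bar v_j,
\end{equation*}
one obtains a $W^{1,2}$-pair with $v_j^*\in[0,1]$, coinciding with $(\bar u_j,\bar v_j)$ on $A_{s_{i_j-1}}\supset\partial Q^\nu$ (hence satisfying \eqref{eqpropboundrybv} in trace sense) and with $(u_j,v_j)$ on $Q^\nu\setminus A_{s_{i_j}}$, and converging to $(z\chi_{\{x\cdot\nu>0\}},1)$ in $L^2$.

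The energy splits over the three open sets $Q^\nu\setminus A_{s_{i_j}}$, $B_{i_j}$ and $A_{s_{i_j-1}}$: the first piece is bounded by $\calF^{\infty}_{\eps_j}(u_j,v_j;Q^\nu)=g(z,\nu)+o(1)$, and the third by $C\delta$ from the collar estimate. For the transition piece one expands $\nabla u_j^*=\eta_{i_j}\nabla u_j+(1-\eta_{i_j})\nabla\bar u_j+\nabla\eta_{i_j}(u_j-\bar u_j)$ and similarly for $\nabla v_j^*$, applies the quadratic growth $\Psiinfty(\xi)\le C|\xi|^2$, the monotonicity-based bound $f(v_j^*)\le f(v_j)+f(\bar v_j)$, and the gradient bound $|\nabla\eta_{i_j}|\le 2N/\delta$, reducing the transition energy to a combination of $\calF^{\infty}_{\eps_j}(u_j,v_j;B_{i_j})\le(g(z,\nu)+o(1))/N$ and the tension terms selected by the averaging.

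The main technical obstacle is the singularity of $f$ as $v\to 1^-$, which could make $\eps_j f^2(v_j^*)$ diverge in the transition region. It is handled by a case splitting on $B_{i_j}$: on $\{\bar v_j=1\}$ the identity $\nabla\bar u_j=0$ kills the term $(1-\eta_{i_j})\nabla\bar u_j$; on $\{v_j\ge 1-\sqrt{\eps_j}\}$ one has $\eps_j f^2(v_j)\ge \ell^2(1-o(1))$, so the original energy bounds $\Psiinfty(\nabla u_j)$ uniformly in $\eps_j$; on the complementary subset $\{v_j<1-\sqrt{\eps_j}\}$ the product $\eps_j f^2(v_j)$ is $\le \ell^2$, and together with the analogous dichotomy for $\bar v_j$ this yields the required control of $\eps_j f^2(v_j^*)\Psiinfty(\nabla u_j^*)$. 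Iterating the limits $j\to\infty$, $N\to\infty$, $\delta\to 0$ and diagonalizing yields $\limsup_j\calF^{\infty}_{\eps_j}(u_j^*,v_j^*;Q^\nu)\le g(z,\nu)$, completing the proof.
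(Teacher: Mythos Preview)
Your overall strategy---De Giorgi averaging over thin shells near $\partial Q^\nu$ and gluing to the mollified profiles---is the right one, and most of the ingredients are correct. But there is a genuine gap in the treatment of the elastic term on the transition shell, precisely at the point you flag as ``the main technical obstacle''.

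The issue is the cutoff-gradient contribution $\nabla\eta_{i_j}(u_j-\bar u_j)$ in $\nabla u_j^*$, which your case-splitting never addresses (you only discuss $\nabla u_j$ and $\nabla\bar u_j$). This term contributes
\[
\int_{B_{i_j}} \eps_j f^2(v_j^*)\,\Big(\frac{N}{\delta}\Big)^2|u_j-\bar u_j|^2\,\dx
\]
to the transition energy. Your pigeonhole only controls $(N/\delta)^2\int_{B_{i_j}}|u_j-\bar u_j|^2\dx$, \emph{without} the singular weight $\eps_j f^2(v_j^*)$. With the convex-combination choice $v_j^*=\eta_{i_j}v_j+(1-\eta_{i_j})\bar v_j$, nothing prevents $v_j^*$ from being arbitrarily close to $1$ on the bulk of $B_{i_j}$: on $\{|x\cdot\nu|\ge 3\eps_j\}$ one has $\bar v_j=1$, hence $1-v_j^*=\eta_{i_j}(1-v_j)$ and $\eps_j f^2(v_j^*)=\eps_j\ell^2(v_j^*)^2/(\eta_{i_j}^2(1-v_j)^2)$, which admits no upper bound in terms of the quantities you have averaged. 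Neither the bound $f(v_j^*)\le f(v_j)+f(\bar v_j)$ (useless where $\bar v_j=1$) nor the dichotomy $\{v_j\gtrless 1-\sqrt{\eps_j}\}$ helps here: on $\{v_j\ge 1-\sqrt{\eps_j}\}$ you correctly get a \emph{lower} bound on $\eps_j f^2(v_j)$, which controls $\int\Psiinfty(\nabla u_j)$, but you need an \emph{upper} bound to handle the tension term.

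The paper's proof avoids this by \emph{not} interpolating $v$ as a convex combination. Instead it takes $v_j^*$ to be the pointwise minimum of $v_j$ (suitably extended), $V_j$ (suitably extended), and an auxiliary cap $\hat v_j$ which equals $1-\eta_j$ throughout the transition shell. This forces $v_j^*\le 1-\eta_j$ there, so $\eps_j f^2(v_j^*)\le \ell^2\eps_j/\eta_j^2$ uniformly, and the tension term becomes $\le C\eps_j\eta_j^{-2}\eps_j^{-2}\int_{R_j}|u_j-U_j|^2\dx$. One then has to choose the number of shells $K_j\sim\eta_j/\eps_j$ and the cap level $\eta_j$ jointly so that (i) the averaged tension term $\|u_j-U_j\|_{L^2}^2/(K_j\eps_j\eta_j^2)\sim\|u_j-U_j\|_{L^2}^2/\eta_j^3\to 0$, and (ii) the extra phase-field cost of the cap, $\calF^\infty_{\eps_j}(0,\hat v_j;Q^\nu)=O(\eta_j)$, also vanishes. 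This three-way balance $(\eps_j,\eta_j,\|u_j-U_j\|_{L^2})$ is the missing idea in your argument; once you insert such a cap and tune the parameters accordingly, the rest of your outline goes through.
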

\begin{proof}
	{\bf Step 1. Construction of $u_j^*$ and $v_j^*$.}
	Pick {$\eps_j\to0$, $v_j$ and} $u_j\to z\chi_{\{x\cdot\nu>0\}}$ in $L^2(Q^\nu;\R^m)$ such that
	\begin{equation*}
	g(z,\nu)= \lim_{j\to\infty} 
	\calF^{\infty}_{\eps_j}(u_j,v_j; Q^\nu).
	\end{equation*}
	To simplify {the} notation we write
	\begin{equation}\label{eqdefUjVj}
	U_j:=(z\chi_{\{x\cdot\nu>0\}})\ast\varphi_{\eps_j}, \hskip1cm
	V_j:=\chi_{\{|x\cdot\nu|\ge2\eps_j\}}\ast\varphi_{\eps_j}.
	\end{equation}
	Obviously $\|U_j-z\chi_{\{x\cdot\nu>0\}}\|_{L^2(Q^\nu)}\to0$, 
	so that $ \|u_j-U_j\|_{L^2(Q^\nu)}\to0$. 
	Moreover, by construction $U_j=z\chi_{\{x\cdot\nu>0\}}$ if $|x\cdot\nu|\ge\eps_j$, 
	$V_j=0$ if $|x\cdot\nu|\le\eps_j$, and $V_j=1$ if $|x\cdot\nu|\ge3\eps_j$. 
	Therefore, by $\Psiinfty(0)=0$ and $f(0)=0$, we have
	\[
	\calF^{\infty}_{\eps_j}(U_j,V_j;Q^\nu)=\calF^{\infty}_{\eps_j}(0,V_j;Q^\nu)
	\leq c+\eps_j\int_{\{x\in Q^\nu:\eps_j< |x\cdot\nu|<3\eps_j\}}|\nabla V_j|^2\dx\leq c\,,
	\]
	as $\|\nabla V_j\|_{L^{\infty}(\R^m)}\leq \frac c{\eps_j}$, where $c$ is a constant 
	independent of $j\in\N$.
	
	Next, we choose a sequence $\eta_j\to0$ such that
	\begin{equation}\label{eqconvujUj}
	\frac{\eps_j+\|u_j-U_j\|_{L^2(Q^\nu)}^{2/3}}{\eta_j}\to0
	\end{equation}
	and set
	$K_j:=\lfloor \eta_j/\eps_j\rfloor$, we can assume $K_j\ge 4$.
	We let $\hat R_k^j:=Q^\nu_{1-k\eps_j}\setminus Q^\nu_{1-(k+1)\eps_j}$, where we write for brevity $Q^\nu_r:=rQ^\nu$ for the scaled cube.
	We select $k_j\in \{{K_j+1},\dots, {2K_j}\}$ such that, 
	writing $R_j:=\hat R_{k_j}^j$,
	\begin{equation}\label{eqchoice1}
	\|u_j-U_j\|_{L^2(R_j)}^2\le \frac c{K_j}\|u_j-U_j\|_{L^2(Q^\nu)}^2
	\end{equation}
	and 
	%  C{ADD explicit computation for $\calF^*_{\eps_j}(U_j,V_j,R_j)
	% $******************************************}
	\begin{equation}\label{eqchoice2}
	\calF^{\infty}_{\eps_j}(u_j,v_j;R_j)+
	\calF^{\infty}_{\eps_j}(U_j,V_j;R_j)
	\le \frac c{K_j}  .
	\end{equation}
	We fix $\theta_j\in C^1_c(Q^\nu_{1-k_j\eps_j})$ with
	$\theta_j=1$ on $Q^\nu_{1-(k_j+1)\eps_j}$
	and $|\nabla \theta_j|\le 3/\eps_j$, 
	and define
	\begin{equation*}
	u_j^*:= \theta_j u_j  + (1-\theta_j) U_j.
	\end{equation*}
	The construction of $v_j^*$ is more complex. In the interior part, it should match $v_j$. In the exterior, $V_j$. In the interpolation region, it should be not larger than $v_j$ and $V_j$, but also not larger than $1-\eta_j$. Therefore we first define
	\begin{equation}\label{eqdefhatv}
	\hat  v_j(x):= \min\{1, 1-\eta_j+\frac1{\eps_j} \dist(x, {R_j})\},
	\end{equation}
	{which coincides with $1-\eta_j$ in the interpolation region $R_j$, and with 1 at distance larger than {$\eta_j\eps_j$} from it,} then
	\begin{equation}\label{eqdefhatvv}
	\hat  V_j(x):= \min\{1, V_j(x)+\frac1{\eps_j} \dist(x, Q^\nu\setminus Q^\nu_{1-(k_j+1)\eps_j})\}
	\end{equation}
	which coincides with $V_j$ outside $Q^\nu_{1-(k_j+1)\eps_j}$, and with 1 inside $Q^\nu_{1-(k_j+3)\eps_j}$ {as well as for $|x\cdot\nu|\geq3\eps_j$ (cf. the definition of $V_j$)}, and finally
	\begin{equation}
	\tilde v_j:=\min\{1,v_j
	+\frac2{k_j\eps_j} \dist(x, Q^\nu_{1-k_j\eps_j})\}\,.
	\end{equation}
	We then combine these three ingredients to obtain
	\begin{equation*}
	v_j^*:=\min\{\tilde v_j, \hat V_j, \hat v_j\}.
	\end{equation*}
	On $\partial Q^\nu$ the first and the last term are {equal to} $1$, hence $v_j^*=\hat V_j=V_j$.
	
	{\bf Step 2. Estimate of the elastic energy.} By the definition of $u_j^*$, 
	\begin{equation*}
	|\nabla u_j^*|\le |\nabla u_j|+|\nabla U_j| + \frac{3}{\eps_j} |u_j-U_j| 
	\end{equation*}
	therefore in $R_j$
	\begin{equation*}
	{\Psiinfty}(\nabla u_j^*)\le c{\Psiinfty}(\nabla u_j)+c{\Psiinfty}(\nabla U_j) + \frac{c}{\eps_j^2} |u_j-U_j|^2 .
	\end{equation*}
	{We recall that  $v_j^*\le \min\{v_j, V_j, 1-\eta_j\}$ in $R_j$ 
		and that ${[0,1)\ni}t\mapsto t/(1-t)$ is increasing. 
		Since by construction $v_j^*=V_j=0$ on $\{\nabla U_j\ne0\}{\cap R_j}$ the term $\Psiinfty(\nabla U_j)$ can be ignored. Therefore}
	\begin{equation*}
	\frac{\eps_j(v_j^*)^2}{(1-v_j^*)^2} {\Psiinfty}(\nabla u_j^*)
	\le c\frac{\eps_j v_j^2}{(1-v_j)^2} {\Psiinfty}(\nabla u_j)
	%+c\frac{\eps_j V_j^2}{(1-V_j)^2} {\Psiinfty}(\nabla U_j)
	+ c\frac{\eps_j}{\eta_j^2} \frac{|u_j-U_j|^2}{\eps_j^2}.
	\end{equation*}
	Integrating over $R_j$ and using
	\eqref{eqchoice2} in the first term, 
	\eqref{eqchoice1} in the {second} one,
	\begin{equation*}
	\int_{R_j}\frac{\eps_j(v_j^*)^2}{(1-v_j^*)^2} {\Psiinfty}(\nabla u_j^*)\dx
	\le \frac c{K_j}+
	c\frac {\|u_j-U_j\|_{L^2(Q^\nu)}^2}{K_j\eps_j \eta_j^2}.
	\end{equation*}
	Using first that 
	the definition of $K_j$ implies $\lim_{j\to\infty} K_j\eps_j/\eta_j=1$ and then \eqref{eqconvujUj},
	\begin{equation*}
	\limsup_{j\to\infty} 
	\frac {\|u_j-U_j\|_{L^2(Q^\nu)}^2}{K_j\eps_j \eta_j^2}
	=
	\limsup_{j\to\infty}
	\frac {\|u_j-U_j\|_{L^2(Q^\nu)}^2}{\eta_j^3}
	=0.
	\end{equation*}
	Therefore
	\begin{equation*}
	\limsup_{j\to\infty} \int_{R_j}\frac{\eps_j(v_j^*)^2}{(1-v_j^*)^2} {\Psiinfty}(\nabla u_j^*)\dx
	=0.
	\end{equation*}
{Using again that the supports of {$\nabla U_j$} and $V_j$ are disjoint, we have}
	\begin{equation*}
	\int_{Q^\nu\setminus Q^\nu_{1-k_j\eps_j}}\frac{\eps_jV_j^2}{(1-V_j)^2} {\Psiinfty}(\nabla U_j)\dx
	=0. %\le c \eta_j.
	\end{equation*}
	Therefore
	\begin{equation}\label{eqfinelastcu}
	\limsup_{j\to\infty} \int_{Q^\nu}\frac{\eps_j(v_j^*)^2}{(1-v_j^*)^2} {\Psiinfty}(\nabla u_j^*)\dx
	\le \limsup_{j\to\infty} \int_{Q^\nu}\frac{\eps_jv_j^2}{(1-v_j)^2} {\Psiinfty}(\nabla u_j)\dx.
	\end{equation}
	
	{\bf Step 3. Estimate of the energy of the phase field.} By the definition of $v^*_j$,
	\begin{equation}\label{eqcalFst0}
	\calF^{\infty}_{\eps_j}(0,v_j^*; Q^\nu)
	\le 
	\calF^{\infty}_{\eps_j}(0,\tilde v_j; Q^\nu)+
	\calF^{\infty}_{\eps_j}(0,\hat V_j; Q^\nu)+
	\calF^{\infty}_{\eps_j}(0,\hat v_j; Q^\nu).
	\end{equation}
	From \eqref{eqdefhatv} we have $|1-\hat v_j|\le \eta_j$
	with $|\{\hat v_j\ne 1\}|\le c\eps_j$ and
	$|\nabla\hat v_j|\le 1/\eps_j$ 
	with $|\{\nabla\hat v_j\ne 0\}|\le c\eps_j\eta_j$, so that
	\begin{equation*}
	\calF^{\infty}_{\eps_j}(0,\hat v_j; Q^\nu)=
	\int_{Q^\nu}\Big(\frac{(1-\hat v_j)^2}{4\eps_j}+\eps_j|\nabla\hat v_j|^2\Big) \dx \le c \eta_j.
	\end{equation*}
	From the definition of $V_j$ and $\hat V_j$, 
	we see that $|\{\hat V_j\ne 1\}|\le c\eta_j\eps_j$ 
	and $\eps_j|\nabla\hat V_j|\le c$, so that
	\begin{equation*}
	\calF^{\infty}_{\eps_j}(0,\hat V_j; Q^\nu)\le c \eta_j. 
	\end{equation*}
	{Similarly, 
		$\tilde v_j=v_j$ in ${Q^\nu_{1-k_j\eps_j}}$,
		$|\tilde v_j-1|\le |v_j-1|$,
		{and $|\nabla \tilde v_j|\le |\nabla v_j|+2/(k_j\eps_j)$ in $Q^\nu \setminus Q^\nu_{1-k_j\eps_j}$}
		lead to 
		\begin{multline*}
		\calF^{\infty}_{\eps_j}(0,\tilde v_j; Q^\nu)
		\le 
		\calF^{\infty}_{\eps_j}(0,v_j; Q^\nu)
		+\frac{4\eps_j \calL^n(Q^\nu\setminus Q^\nu_{1-k_j\eps_j})}{k_j^2\eps_j^2}\\
		{+ \frac{4}{k_j\eps_j^{1/2}}} 
		{
		\calF^\infty_{\eps_j}(0,v_j;Q^\nu)^{1/2}\calL^n({Q^\nu}\setminus Q^\nu_{1-k_j\eps_j})^{{1/2}}}
		\le 
		\calF^{\infty}_{\eps_j}(0,v_j; Q^\nu)
		+{\frac{c}{k_j^{1/2}}}.
		\end{multline*}
		Recalling $k_j\ge {K_j+1}\to\infty$ and}
	$\eta_j\to0$, \eqref{eqcalFst0} leads to
	\begin{equation*}
	\limsup_{j\to\infty} 
	\calF^{\infty}_{\eps_j}(0,v_j^*; Q^\nu)
	\le 
	\limsup_{j\to\infty} 
	\calF^{\infty}_{\eps_j}(0,v_j; Q^\nu).
	\end{equation*}
	Combining this with \eqref{eqfinelastcu}  concludes the proof.
\end{proof}

{We are now ready to perform the claimed reduction on the test sequences in the definition of $g(\cdot,\nu)$ in \eqref{eqdefGsnu}. 
	%	by means of the functions $\mathcal{T}_k$ defined in \eqref{e:Tk} and a truncation argument.
	{To this aim we fix a sequence $(a_k)_k\subset(0,\infty)$ such that $a_k<a_{k+1}$, $a_k\uparrow\infty$, and such that 
		{there are functions} $\mathcal{T}_k\in {C^1_c}(\R^m;\R^m)$ satisfying
		\begin{equation}\label{e:Tk}
		\mathcal{T}_k(z):=\begin{cases}
		z, & \text{ if }|z|\leq a_k,\cr
		0, & \text{ if } |z|\geq a_{k+1} 
		\end{cases}
		\end{equation}            
		and $\|\nabla \mathcal{T}_k\|_{L^\infty(\R^m)}\leq 1$.
		Following De Giorgi's averaging/slicing procedure on the codomain, the family $\mathcal{T}_k$ will be used in several instances along the paper to obtain {from} a sequence converging in $L^1$ to a limit belonging to $L^\infty$, a sequence with the same $L^1$ limit which is in addition equi-bounded in $L^\infty$. 
		Moreover, this substitution can be done up to paying an error in energy which can be made arbitrarily small.} 
	\begin{proposition}\label{periodicity}
		For any $(z,\nu)\in \R^m\times S^{n-1}$ and
		any $\eps_j^*\downarrow0$ there is $(u_j^*,v_j^*)\to (z\chi_{\{x\cdot\nu>0\}},1)$ in $L^2(Q^\nu;\R^{m+1})$, 
		{with $v_j^*\in[0,1]$ $\calL^n$-a.e. in $\Omega$,} such that
		\begin{equation}\label{e:g periodic sequence}
		\lim_{j\to\infty} 
		\calF^{\infty}_{\eps_j^*}({{u_j^*,v_j^*}}; Q^\nu){=}  g(z,\nu)
		\end{equation}
		and
		\begin{equation*}
		u_j^*=(z\chi_{\{x\cdot\nu>0\}})\ast\varphi_{\eps_j^*}\,,\hskip1cm
		v_j^*=\chi_{\{|x\cdot\nu|\ge2\eps_j^*\}}\ast\varphi_{\eps_j^*} \hskip1cm
		\text{ on } \partial Q^\nu.
		\end{equation*}
	\end{proposition}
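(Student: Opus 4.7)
The plan is to combine a diagonal extraction from the infimum definition of $g(z,\nu)$ with a normal-direction reparametrisation that allows one to match an arbitrary prescribed sequence of scales $\eps_j^*\downarrow 0$.

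First, from \eqref{eqdefGsnu} I would extract, for each $n\in\N$, functions $u_k^{(n)}\to z\chi_{\{x\cdot\nu>0\}}$ in $L^1(Q^\nu)$, $v_k^{(n)}\in W^{1,2}(Q^\nu;[0,1])$, and scales $\eta_k^{(n)}\downarrow 0$ such that
\[
 \calF^\infty_{\eta_k^{(n)}}(u_k^{(n)},v_k^{(n)};Q^\nu)\le g(z,\nu)+\tfrac1n
\]
asymptotically in $k$. I would then truncate the codomain by $\mathcal T_{m}$ from \eqref{e:Tk}, with $m=m(k,n)$ selected through a De Giorgi slicing argument over the truncation level. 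Since $|z|<\infty$, the limit $z\chi_{\{x\cdot\nu>0\}}$ is unaffected when $m$ is large; the $L^\infty$-bound provided by $\mathcal T_m$ upgrades the $L^1$ convergence to $L^2$ convergence; and the energy increment on $\{|u_k^{(n)}|>a_m\}$ is absorbed by averaging in $m$, using that $\|\nabla\mathcal T_m\|_\infty\le 1$ and that the total energy is bounded. Proposition~\ref{proplbboundary} is then applicable and produces $(\hat u_k^{(n)},\hat v_k^{(n)})$ satisfying the boundary conditions~\eqref{eqpropboundrybv} at scale $\eta_k^{(n)}$, with asymptotic energy bounded by $g(z,\nu)+2/n$.

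Second, for the prescribed sequence $\eps_j^*$ I would diagonalise: choose $n(j)\to\infty$ and $k(j)$ with $\eta_{k(j)}^{(n(j))}\le\eps_j^*$ and with the corresponding energy within $3/n(j)$ of $g(z,\nu)$. I would then transfer the profile from scale $\eta_{k(j)}^{(n(j))}$ to scale $\eps_j^*$ by the normal-direction reparametrisation
\[
 u_j^*(x):=\hat u_{k(j)}^{(n(j))}\bigl(x-(x\cdot\nu)\nu+\sigma_j(x\cdot\nu)\nu\bigr),
\]
and the analogous formula for $v_j^*$, where $\sigma_j\colon[-\tfrac12,\tfrac12]\to[-\tfrac12,\tfrac12]$ is a piecewise-linear bijection equal to the identity in a neighbourhood of $\pm\tfrac12$ and with slope $\eta_{k(j)}^{(n(j))}/\eps_j^*$ on an inner interval of length $\eps_j^*$ around $0$ (large enough to contain the rescaled transition of $\hat v_{k(j)}^{(n(j))}$). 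The identity portion near $\pm\tfrac12$ preserves the boundary conditions \eqref{eqpropboundrybv} at scale $\eps_j^*$. A one-dimensional change of variable $t=\sigma_j(s)$, combined with the two-homogeneity of $\Psiinfty$, shows that on the inner strip each of the three integrands of $\calF^\infty_{\eps_j^*}(u_j^*,v_j^*)$ coincides with the corresponding integrand of $\calF^\infty_{\eta_{k(j)}^{(n(j))}}(\hat u,\hat v)$; outside the strip $\hat v\simeq1$ and $\nabla\hat u\simeq0$ (because $\eta_{k(j)}^{(n(j))}\ll\eps_j^*$ places the $\eta$-thin transitions of both variables well inside the strip), so the outer contribution is $o(1)$.

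The main obstacle is the correct bookkeeping of the bulk term when $n>1$: the reparametrisation $\sigma_j$ only dilates the component of $\nabla\hat u$ along $\nu$, leaving the tangential components untouched, so $\Psiinfty(\nabla u_j^*)$ does not factor as a pure scalar multiple of $\Psiinfty(\nabla\hat u)$. This is resolved by using that the boundary data in \eqref{eqpropboundrybv} are translation-invariant along $\nu^\perp$, so the construction of $\hat u_k^{(n)}$ in Proposition~\ref{proplbboundary} localises its tangential gradient to a thin boundary layer whose energy contribution is $o(1)$ as $j\to\infty$; combined with the inner-strip identity and the quadratic growth bounds on $\Psiinfty$, this yields $\lim_j\calF^\infty_{\eps_j^*}(u_j^*,v_j^*;Q^\nu)=g(z,\nu)$ and proves \eqref{e:g periodic sequence}.
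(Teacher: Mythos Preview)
Your Step~1 (truncation to $L^2$ followed by Proposition~\ref{proplbboundary}) is essentially the paper's Step~1 and is fine. The problem is Step~2.

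The normal-direction reparametrisation does \emph{not} leave the energy invariant when $n>1$. Write $\mu_j:=\eta_{k(j)}^{(n(j))}/\eps_j^*\le 1$ for the slope of $\sigma_j$ on the inner interval. On that strip the change of variables $y=x_\tau+\sigma_j(x\cdot\nu)\nu$ gives $dx=\mu_j^{-1}dy$ and $\nabla u_j^*=\bigl(\nabla_\tau\hat u,\,\mu_j\partial_\nu\hat u\bigr)$, so for instance the gradient term of $v$ transforms as
\[
\int \eps_j^*\,|\nabla v_j^*|^2\,dx
=\int\Bigl(\frac{\eps_j^*}{\mu_j}|\nabla_\tau\hat v|^2+\eta_{k(j)}^{(n(j))}|\partial_\nu\hat v|^2\Bigr)\,dy.
\]
The normal part matches $\calF^\infty_{\eta}$, but the tangential part carries a prefactor $\eps_j^*/\mu_j=(\eps_j^*)^2/\eta_{k(j)}^{(n(j))}\to\infty$. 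The same blow-up hits the elastic term $\eps_j^*f^2(v)\Psiinfty(\nabla u_j^*)$; two-homogeneity of $\Psiinfty$ helps only after an \emph{isotropic} rescaling. So your claim that ``each of the three integrands coincides'' is false as soon as $\nabla_\tau\hat u$ or $\nabla_\tau\hat v$ is nonzero.

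Your proposed resolution---that the tangential gradient of $\hat u$ is confined to a thin boundary layer---is also not justified. Proposition~\ref{proplbboundary} modifies an \emph{arbitrary} near-optimal sequence only in a collar near $\partial Q^\nu$; in the bulk the sequence is untouched, and nothing forces it to be one-dimensional. In fact the existence of one-dimensional optimal profiles is exactly the content of Corollary~\ref{euclidean}/Remark~\ref{sliceable}, which require extra structural hypotheses on $\Psiinfty$ and are \emph{derived from} the present proposition, not available for its proof.

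The paper avoids this by rescaling \emph{isotropically}: it chooses $k(j)$ with $\lambda_j:=\eps_j^*/\eps_{k(j)}\to0$, sets
\[
u_j^*(x):=\hat u_{k(j)}\!\bigl(\tfrac{x-x_i}{\lambda_j}\bigr),\qquad v_j^*(x):=\hat v_{k(j)}\!\bigl(\tfrac{x-x_i}{\lambda_j}\bigr)
\]
on each of $\lfloor\lambda_j^{-1}\rfloor^{n-1}$ disjoint translates $x_i+Q^\nu_{\lambda_j}$ tiling $Q^\nu$, and fills the leftover sliver with the mollified data $U_j^*,V_j^*$ at scale $\eps_j^*$. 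The isotropic scaling makes $\Psiinfty(\nabla u_j^*)=\lambda_j^{-2}\Psiinfty(\nabla\hat u)$ exactly, and one reads off $\calF^\infty_{\eps_j^*}(u_j^*,v_j^*;x_i+Q^\nu_{\lambda_j})=\lambda_j^{n-1}\calF^\infty_{\eps_{k(j)}}(\hat u_{k(j)},\hat v_{k(j)};Q^\nu)$; summing over the $\approx\lambda_j^{1-n}$ tiles gives the desired limit. Replace your anisotropic stretch by this rescale-and-tile construction.
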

	\begin{proof} {{\bf Step 1. Reduction to an optimal sequence in \eqref{eqdefGsnu} converging in $L^2(Q^\nu;\R^{m+1})$.}
			Let $\eps_j\to 0$, $(u_j,v_j)\to (z\chi_{\{x\cdot\nu>0\}},1)$ in $L^1(Q^\nu;\R^{m+1})$ be such that
			\begin{equation*}
			g(z,\nu)= \lim_{j\to\infty}\calF^{\infty}_{\eps_j}(u_j,v_j; Q^\nu).
			\end{equation*}
			Recall that $v_j\in[0,1]$ $\calL^n$-a.e. in $\Omega$, therefore $v_j\to 1$ in $L^2(Q^\nu)$.
			We claim that for all $j,\,M\in\N$ there is $k_{M,j}\in\{M+1,\ldots,2M\}$ such that 
			\begin{equation}\label{e:truncation g}
			\calF^{\infty}_{\eps_j}(\mathcal{T}_{k_{M,j}}(u_j), v_j; Q^\nu)\leq\Big(1+\frac cM\Big)\calF^{\infty}_{\eps_j}(u_j, v_j; Q^\nu)\,,
			\end{equation}
			where $c>0$ is a constant independent of $M$ and $j$.
			% Given this for granted we conclude as follows. First, we may extract a subsequence of $j\in\N$, not relabeled for convenience, 
			% such that for all $M\in\N$ the index $k_{M,j}$ is actually independent of $j$. Then denote  $k_{M,j}$ simply by $k_M$.
			If ${a_M}>1+|z|=1+\|z\chi_{\{x\cdot\nu>0\}}\|_{L^\infty(Q^\nu)}$ then
			%$\mathcal{T}_{k<_M}(u_j)=z\chi_{\{x\cdot\nu>0\}}$ on $\partial Q^\nu$ and 
			${\mathcal{T}_{k_M,j}}(u_j)\to z\chi_{\{x\cdot\nu>0\}}$ in $L^2(Q^\nu;\R^m)$, and \eqref{e:truncation g} yields
			\[
			\limsup_{j\to\infty}\calF^{\infty}_{\eps_j}(\mathcal{T}_{k_M,j}(u_j), v_j; Q^\nu)\leq\Big(1+\frac cM\Big)g(z,\nu)\,,
			\]
			in turn implying by the arbitrariness of $M\in\N$
			\begin{equation*}
			g(z,\nu)=\inf \{\liminf_{j\to\infty} 
			\calF^{\infty}_{\eps_j}(u_j,v_j; Q^\nu): \|u_j- z\chi_{\{x\cdot\nu>0\}}\|_{L^2(Q^\nu)}\to0, \eps_j\to 0\}.
			\end{equation*}
			We are left with establishing \eqref{e:truncation g}. To this aim consider $\mathcal{T}_k(u_j)$ and note that 
			\begin{align}\label{e:Feps* Tk}
			\calF^{\infty}_{\eps_j}&(\mathcal{T}_k(u_j),v_j;Q^\nu)=\calF^{\infty}_{\eps_j}(u_j,v_j;\{|u_j|\leq a_k\})\notag\\
			&+\calF^{\infty}_{\eps_j}(\mathcal{T}_k(u_j),v_j;\{a_k<|u_j|< a_{k+1}\})%\notag\\&
			+\calF^{\infty}_{\eps_j}(0,v_j;\{|u_j|\geq a_{k+1}\})\,.
			\end{align}
			We estimate the {second term in \eqref{e:Feps* Tk}}. The growth conditions on $\Psi$ (cf. \eqref{e:Psi gc}) and
			$\|\nabla \mathcal{T}_k\|_{L^\infty(\R^m)}\leq 1$ yield for a constant $c>0$
			\begin{align}\label{e:Feps* Tk 2}
			\calF^{ \infty}_{\eps_j}&(\mathcal{T}_k(u_j),v_j;\{a_k<|u_j|<a_{k+1}\})\notag\\
			&\leq c\int_{\{a_k<|u_j|< a_{k+1}\}}{\eps_j}f^2(v_j)\Psiinfty(\nabla u_j)\dx
			+
			{\mathcal F^{\infty}_{\eps_j}(0,v_j;\{a_k<|u_j|< a_{k+1}\})}\,.
			\end{align}
			% and  
			% \begin{align}\label{e:Feps* Tk 3}
			%  \calF^*_{\eps_j}(0,v_j;\{|u_j|\geq a_{k+1}\})=
			%  {\mathcal F^*_{\eps_j}(0,}v_j;\{|u_j|\geq a_{k+1}\})\,.
			% \end{align}
			Collecting \eqref{e:Feps* Tk} and \eqref{e:Feps* Tk 2} 
			and using $\calF_{\eps_j}^\infty(u_j,v_j;A)+\calF_{\eps_j}^\infty(0,v_j;B)\le \calF_{\eps_j}^\infty(u_j,v_j;A\cup B)$ for $A$ and $B$ disjoint
			we conclude that 
			\begin{equation*}
			\calF^{\infty}_{\eps_j}(\mathcal{T}_k(u_j),v_j;Q^\nu)\leq \calF^{\infty}_{\eps_j}(u_j,v_j;Q^\nu)
			+ c\int_{\{a_k<|u_j|< a_{k+1}\}}{\eps_j}f^2(v_j)\Psiinfty(\nabla u_j)\dx\,.
			\end{equation*}
			Let now $M\in\N$, by averaging there exists $k_{M,j}\in\{M+1,\ldots,2M\}$ such that 
			\begin{align*}
			\calF^{\infty}_{\eps_j}(\mathcal{T}_{k_{M,j}}(u_j),v_j;Q^\nu)&\leq\frac 1M\sum_{k=M+1}^{2M}\calF^{\infty}_{\eps_j}(\mathcal{T}_k(u_j),v_j;Q^\nu)\\
			&\leq\Big(1+\frac cM\Big)\calF^{\infty}_{\eps_j}(u_j,v_j;Q^\nu)\,,
			\end{align*}
			i.e. \eqref{e:truncation g}.
			\medskip
			
			{\bf Step 2. Conclusion.}
			%Estimate \eqref{e:g periodic sequence} holds assuming the optimal sequence in \eqref{eqdefGsnu} converges in $L^2$.}}
			In view of Step 1 there is an optimal sequence for $g(z,\nu)$ in \eqref{eqdefGsnu} converging in 
			$L^2(Q^\nu;\R^{m+1})$.} Let $(\eps_k, u_k, v_k)$ be the sequence from Proposition~\ref{proplbboundary}. 
{
Since $\lim_{k\to\infty}\lim_{j\to0} \eps_j^*/\eps_k=0$, we can
 select a nondecreasing sequence $k(j)\to\infty$}
% 			For every $j$ there is $k(j)\ge j$ 
such that {$\lambda_j:=\eps_j^*/\eps_{k(j)}\to0$.}
		We 
let $\tilde Q^\nu:=(\Id-\nu\otimes \nu)Q^\nu\subset \nu^\perp\subset\R^n$ {and}
		select $x_1, \dots, x_{I_j}\in \tilde Q^\nu$, with $I_j:=
		\lfloor1/\lambda_j\rfloor^{n-1}$,
		such that $x_i+\tilde Q^\nu_{\lambda_j}$ are pairwise disjoint subsets of $\tilde Q^\nu$. We set
		\begin{equation*}
		u_j^*(x):=\begin{cases}
		u_{k(j)}(\frac{x-x_i}{{\lambda_j}}), & \text{ if } x-x_i\in Q^\nu_{{\lambda_j}} \text{ for some $i$},\\
		{U_j^*}(x), & \text{ otherwise in $Q^\nu$}
		\end{cases}
		\end{equation*}
		and
		\begin{equation*}
		v_j^*(x):=\begin{cases}
		v_{k(j)}(\frac{x-x_i}{{\lambda_j}}), & \text{ if } x-x_i\in Q^\nu_{{\lambda_j}} \text{ for some $i$},\\
		{V_j^*}(x),  & \text{ otherwise in $Q^\nu$},
		\end{cases}
		\end{equation*}
		where {$U_j^*$ and $V_j^*$} are defined as in \eqref{eqdefUjVj}
{using $\eps_j^*$. One easily verifies that
$U_j^*(x)=U_{k(j)}(\frac{x-y}{\lambda_j})$ for all $y\in \nu^\perp$, and the same for $V$.}
		By the boundary conditions \eqref{eqpropboundrybv}, these functions are continuous and therefore in $W^{1,2}(Q^\nu;\R^{m+1})$. We further estimate
		\begin{equation*}
		\calF^{\infty}_{\eps_j^*}(u_j^*, v_j^*; Q^\nu)\le
		I_j \lambda_j^{n-1} \calF^{\infty}_{\eps_{k(j)}}(u_{k(j)}, v_{k(j)}; Q^\nu)
		+ c \calH^{n-1}({\tilde  Q}^\nu\setminus \cup_i (x_i+{\tilde  Q} ^\nu_{\lambda_j}){)}.
		\end{equation*}
		Taking $j\to\infty$, and recalling that 
		$\limsup_{j} \calF^{\infty}_{\eps_{k(j)}}(u_{k(j)}, v_{k(j)}; Q^\nu)\le g(z,\nu)$, concludes the proof.
	\end{proof}
	
	{In what follows we provide an equivalent characterization for the surface energy $g$ 
		{in the spirit of \cite[Proposition~4.3]{ContiFocardiIurlano2016}}.
		\begin{proposition}\label{p:chargT}
			For any $(z,\nu)\in \R^m\times S^{n-1}$ one has
			\begin{equation}\label{e:characg}
			g(z,\nu)={\lim_{T\to\infty}}\inf_{(u,v)\in \calU_{z,\nu}^T}\frac{1}{T^{n-1}}{\calF^{\infty}_{1}(u,v;Q^\nu_T)}\,,
			%\int_{Q^\nu_T} \Big(\frac{ {\ell^2} v^2}{(1-v)^2} \Psiinfty(\nabla u) + \frac{(1-v)^2}{4}+|\nabla v|^2\Big)\dx,
			\end{equation}
			where
			\begin{multline*}\calU^T_{z,\nu}:=\Big\{(u,v)\in W^{1,2}(Q^\nu_T;\R^{m+1})\colon 0\leq v\leq 1,\  
			v=\chi_{\{|x\cdot\nu|\ge 2\}}\ast\varphi_{1}\, \text{ and }\\
			u=(z\chi_{\{x\cdot\nu>0\}})\ast\varphi_{1}
			\text{ on } \partial Q^\nu_{{T}} \Big\}.
			\end{multline*}
		\end{proposition}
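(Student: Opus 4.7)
The plan is to prove \eqref{e:characg} by a rescaling that identifies $\calF^\infty_\eps$ on $Q^\nu$ with $\calF^\infty_1$ on the dilated cube $Q^\nu_{1/\eps}$, and then to invoke Proposition~\ref{periodicity}. Given $(u,v)\in W^{1,2}(Q^\nu_T;\R^{m+1})$, set $(\tilde u,\tilde v)(x):=(u(Tx),v(Tx))$ for $x\in Q^\nu$. The $2$-homogeneity $\Psiinfty(T\xi)=T^2\Psiinfty(\xi)$ and the change of variables $y=Tx$ yield
\[
\calF^\infty_{1/T}(\tilde u,\tilde v;Q^\nu)=T^{1-n}\calF^\infty_1(u,v;Q^\nu_T),
\]
and, since convolution commutes with dilations, the boundary conditions defining $\calU^T_{z,\nu}$ translate under this scaling into $\tilde u=(z\chi_{\{x\cdot\nu>0\}})\ast\varphi_{1/T}$ and $\tilde v=\chi_{\{|x\cdot\nu|\ge 2/T\}}\ast\varphi_{1/T}$ on $\partial Q^\nu$, exactly the conditions appearing in Proposition~\ref{periodicity} at $\eps=1/T$.

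\emph{Upper bound.} Write $G(T)$ for the right-hand side of \eqref{e:characg}. For any $T_j\uparrow\infty$, Proposition~\ref{periodicity} applied with $\eps_j^*:=1/T_j$ produces $(u_j^*,v_j^*)\to(z\chi_{\{x\cdot\nu>0\}},1)$ in $L^2(Q^\nu;\R^{m+1})$ with the required boundary values and $\calF^\infty_{1/T_j}(u_j^*,v_j^*;Q^\nu)\to g(z,\nu)$. The inverse dilation $(\hat u_j,\hat v_j)(y):=(u_j^*(y/T_j),v_j^*(y/T_j))$ lies in $\calU^{T_j}_{z,\nu}$, so
\[
G(T_j)\le T_j^{1-n}\calF^\infty_1(\hat u_j,\hat v_j;Q^\nu_{T_j})=\calF^\infty_{1/T_j}(u_j^*,v_j^*;Q^\nu)\longrightarrow g(z,\nu),
\]
whence $\limsup_{T\to\infty}G(T)\le g(z,\nu)$.

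\emph{Lower bound.} Conversely, pick $T_j\uparrow\infty$ and $(u_j,v_j)\in\calU^{T_j}_{z,\nu}$ almost realising $\liminf_T G(T)$, and rescale via $(\tilde u_j,\tilde v_j)(x):=(u_j(T_jx),v_j(T_jx))$. By the scaling identity $\calF^\infty_{1/T_j}(\tilde u_j,\tilde v_j;Q^\nu)$ stays bounded, and the Modica--Mortola term forces $\tilde v_j\to 1$ in $L^2(Q^\nu)$. If $\tilde u_j\to z\chi_{\{x\cdot\nu>0\}}$ in $L^1(Q^\nu)$ holds, the definition \eqref{eqdefGsnu} of $g$ delivers
\[
g(z,\nu)\le\liminf_j\calF^\infty_{1/T_j}(\tilde u_j,\tilde v_j;Q^\nu)=\liminf_T G(T),
\]
and combined with the upper bound this yields both the existence of the limit in \eqref{e:characg} and the claimed equality.

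The main obstacle is exactly this $L^1$ convergence $\tilde u_j\to z\chi_{\{x\cdot\nu>0\}}$, since the phase-field energy imposes no direct control of $\tilde u_j$ on the damaged region $\{\tilde v_j\approx 0\}$. To handle it I would first replace $\tilde u_j$ by the truncation $\mathcal{T}_{k_j}(\tilde u_j)$ with $k_j$ selected by the De~Giorgi averaging argument used in Step~1 of Proposition~\ref{periodicity}, adding only $o(1)$ to the energy and producing a uniformly $L^\infty$-bounded sequence. The boundary trace $(z\chi_{\{x\cdot\nu>0\}})\ast\varphi_{1/T_j}$ converges to $z\chi_{\{x\cdot\nu>0\}}$, and on the large-measure set where $\tilde v_j$ is bounded away from $0$ the lower bound $\Psiinfty(\xi)\gtrsim|\xi|^2-c$ combined with the energy bound provides $L^2$-control of $\nabla\tilde u_j$; a slicing argument in the $\nu$-direction then propagates the boundary values into $Q^\nu$ up to errors concentrated in the vanishing transition layer $\{|x\cdot\nu|\lesssim 1/T_j\}$, allowing one to identify the $L^1$-limit after passing to a subsequence.
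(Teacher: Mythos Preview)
Your upper bound is correct and coincides with the paper's argument.

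The lower bound has a real gap at the point you yourself flag. The claim that the energy bound yields ``$L^2$-control of $\nabla\tilde u_j$'' on $\{\tilde v_j\ge\delta\}$ is incorrect: from $\int_{Q^\nu}\eps_j f^2(\tilde v_j)\Psiinfty(\nabla\tilde u_j)\,\dx\le C$ one only gets $\|\nabla\tilde u_j\|_{L^2(\{\tilde v_j\ge\delta\})}^2\le C/(c_\delta\eps_j)\to\infty$. Young's inequality does give $\int\ell\tilde v_j|\nabla\tilde u_j|\le C$ and hence, after truncation, $BV$-type compactness; but compactness alone does not identify the subsequential limit as $z\chi_{\{x\cdot\nu>0\}}$. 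The boundary data fix only the trace of $\tilde u_j$ on $\partial Q^\nu$, and the ``vanishing transition layer $\{|x\cdot\nu|\lesssim 1/T_j\}$'' you invoke describes only the boundary profile of $\tilde v_j$; nothing prevents a near-optimal competitor from placing its damage surface elsewhere in the interior. In dimension $n=1$ this is transparent: the optimal profile for $\calF^\infty_1$ on $(-T/2,T/2)$ is localized on an interval of length $O(1)$ and may be translated freely inside $(-T/2,T/2)$ while remaining in $\calU^T_{z,\nu}$; the rescaled sequence then converges to a step function jumping at an arbitrary point of $(-\tfrac12,\tfrac12)$.

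The paper avoids this difficulty by reversing the construction. For the inequality $g(z,\nu)\le\liminf_T g_T(z,\nu)$ it fixes $T$ and a near-optimal $(u_T,v_T)\in\calU^T_{z,\nu}$, then for any $\eps_j\to0$ builds a test sequence for \eqref{eqdefGsnu} on $Q^\nu$ by tiling: on each translate $\eps_j(Q^\nu_T+d)\subset Q^\nu$, $d\in\Z^{n-1}\times\{0\}$, it places the rescaled copy $(u_T,v_T)\bigl((\cdot)/\eps_j-d\bigr)$, and elsewhere the mollified step data. The boundary conditions in $\calU^T_{z,\nu}$ make the resulting pair $W^{1,2}$, the tiled strip has width $\eps_jT\to0$ so $u_j\to z\chi_{\{x\cdot\nu>0\}}$ in $L^1$ \emph{by construction}, and the energy count $\eps_j^{n-1}\,\#I_{\eps_j}\,\calF^\infty_1(u_T,v_T;Q^\nu_T)\to T^{1-n}\calF^\infty_1(u_T,v_T;Q^\nu_T)$ delivers $g(z,\nu)\le g_T(z,\nu)+\rho$.
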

		\begin{proof}
			For every $(z,\nu)\in \R^m\times S^{n-1}$ and $T>0$ set 
				\[
				g_T(z,\nu):=\inf_{(u,v)\in \calU_{z,\nu}^T}\frac{1}{T^{n-1}}\calF^{\infty}_{1}(u,v;Q^\nu_T)\,.
				% \int_{Q^\nu_T} \Big(\frac{ {\ell^2} v^2}{(1-v)^2} \Psiinfty(\nabla u) + \frac{(1-v)^2}{4}+|\nabla v|^2 \Big)\dx\,.
				\]
				We first prove that
				\begin{equation}\label{e:limsup leq g}
				\limsup_{T\to\infty}g_T(z,\nu)\leq g(z,\nu)\,.
				\end{equation}
				Indeed, if $T_j\uparrow\infty$ is a sequence achieving the superior limit on the left{-}hand side above, 
				thanks to Proposition~\ref{periodicity} we may
				%set $\eps_j:=\frac1{T_j}$ and 
				consider %a sequence $(u_j,v_j)$ as in : that is 
				$(u_j,v_j)\in W^{1,2}(Q^\nu;\R^{m+1})$ with $0\leq v_j\leq 1$, $(u_j,v_j)\to (z\chi_{\{x\cdot\nu>0\}},1)$ 
				in $L^2(Q^\nu;\R^{m+1})$, 
				\begin{equation}\label{e:bound}
				u_j=(z\chi_{\{x\cdot\nu>0\}})\ast\varphi_{\frac1{T_j}}\,,\qquad
				v_j=\chi_{\{|x\cdot\nu|\ge \frac2{T_j}\}}\ast\varphi_{\frac1{T_j}}
				\text{ on } \partial Q^\nu,
				\end{equation}
				and
				\begin{equation}\label{e:estf}
				\lim_{j\to\infty}\calF^{\infty}_{\frac1{T_j}}(u_j,v_j; Q^\nu)=g(z,\nu).
				\end{equation}
				Then, define $(\tilde{u}_j(y),\tilde{v}_j(y)):=\big(u_j(\frac y{T_j}),v_j(\frac y{T_j})\big)$ for $y\in Q^\nu_{T_j}$, 
				and note that by a change of variable it is true that
				\begin{equation*}
				\frac{1}{T_j^{n-1}}\calF^{\infty}_{1}(\tilde{u}_j,\tilde{v}_j;Q^\nu_{T_j})=
				\calF^{\infty}_{\frac1{T_j}}(u_j,v_j;Q^\nu)\,,
				\end{equation*}
				and that $(\tilde{u}_j,\tilde{v}_j)\in\calU^{T_j}_{z,\nu}$ 
				in view of \eqref{e:bound}. Then, by \eqref{e:estf}, the choice of 
				$T_j$ and the definition of $g_T(z,\nu)$ we conclude straightforwardly \eqref{e:limsup leq g}.
				
				In order to prove the converse inequality
				\begin{equation}\label{e:liminf geq g}
				\liminf_{T\to\infty}g_T(z,\nu)\geq g(z,\nu)\,,
				\end{equation}
				we assume for the sake of notational simplicity $\nu=e_n$.
				We then fix $\rho>0$ and take $T>6$, depending on $\rho$, 
				and $(u_T,v_T)\in \calU^T_{z,e_n}$ such that
				\begin{equation}\label{e:estf2}
				\frac{1}{T^{n-1}}\calF^{\infty}_{1}(u_T,v_T;Q^{e_n}_T)\leq
				%\tilde g(z,\nu)+{\rho}.
				\liminf_{T\to\infty}g_T(z,{e_n})+\rho\,.
				\end{equation}
			Let $\eps_j\to0$ and set
			\[
			u_j(y):=\left\{ \begin{array}{ll} 
			{\displaystyle u_T\left(\frac{y}{\eps_{j}}-d\right)}, 
			& \textrm{if } y\in %\eps_j (Q^{\nu}_T+d), \text{ for some }  
			\eps_j (Q^{e_n}_T+d)\subset\subset Q^{e_n},\\
			{{(z\chi_{\{x\cdot {e_n} >0\}}\ast \varphi_{1})(\frac{y}{\eps_j})}}, & \text{otherwise in }Q^{e_n},
			\end{array} \right.
			\]
			\[
			v_j(y):=\left\{ \begin{array}{ll} 
			{\displaystyle v_T\left(\frac{y}{\eps_{j}}-d\right)} ,
			& \textrm{if } y\in %\eps_j (Q^{\nu}_T+d), \text{ for some } 
			\eps_j (Q^{e_n}_T+d)\subset\subset Q^{e_n},\\
			{{(\chi_{\{|x\cdot {e_n}| >2\}}\ast \varphi_{1})(\frac{y}{\eps_j})}}, & \text{otherwise in }Q^{e_n},
			\end{array} \right.
			\]
			with $d\in\Z^{n-1}\times\{0\}$.
			Then, $(u_j,v_j)\to(z\chi_{\{x\cdot{e_n} >0\}},1)$ in {$L^1(Q^{e_n};\R^{m+1})$}, and {letting $I_{\eps_j}:=\{d\in \Z^{n-1}\times\{0\}:\,\eps_j (Q^{{e_n}}_T+d)\subset\subset Q^{e_n}\}$}, a change of variable yields 
			(cf. also the discussion after \eqref{eqdefUjVj})
			% \MFF{Continuo ad avere dei dubbi: scriverei 
			% $(z\chi_{\{x\cdot{e_n}>0\}}\ast\varphi_{1})(\frac y{\eps_j})$ per $u_T$ e $(\chi_{\{|x\cdot{e_n}|\ge 2\}}\ast\varphi_{1})(\frac y{\eps_j})$ per $v_T$ perch\'e trovo pi\'u chiaro che $u_j,\,v_j$ siano Sobolev, anche se le funzioni si corrispondono con un cambio di variabile. Inoltre, la stima del termine Modica-Mortola non \`e proprio quella di  \eqref{eqdefUjVj}: alla fine dovrebbe esserci un errore di ordine $\eps_j\calH^{n-2}(\partial'(Q^{e_n}\cap\{x_n=0\}))$, il bordo preso nella topologia dell'iperpiano. Mi pare che manchi $\eps_j$ a dividere nel termine al terzo rigo della formula successiva.}
			\begin{align*}
			g(z,{e_n})\leq&\limsup_{j\to\infty}\calF^\infty_{\eps_j}(u_j,v_j;Q^{{e_n}})\\
			\leq&\limsup_{j\to\infty}\Big(\sum_{d\in I_{\eps_j}}
			% {\scriptsize 
			% 		\begin{array}{c}
			% 		d\!\in\! \Z^{n-1}\!\times\!\{0\}\\ 
			% 		{\eps_j (Q^{\nu}_T\!+\!d)}\!\subset\!\subset\! Q^{\nu}
			% 		\end{array}}}
			\calF_{\eps_j}^\infty(u_j,v_j;\eps_j(Q^{{e_n}}_T+d))\\
			&+{\frac{c}{\eps_j}}\calL^{n}\Big({Q^{{e_n}}\cap}\{{\eps_j\leq}|{x_n}|\leq 3\eps_j\}\setminus \!\!\bigcup_{d\in I_{{\eps_j}}}
			% {\scriptsize 
			% 		\begin{array}{c}
			% 		d\!\in\! \Z^{n-1}\!\times\!\{0\}\\ 
			% 		{\eps_j (Q^{\nu}_T\!+\!d)}\!\subset\!\subset\! Q^{\nu}
			% 		\end{array}}}\!\!
			\eps_j (Q^{{e_n}}_T+d)\Big)\Big)\\
			=& \limsup_{j\to\infty}
			%\sum_{{\scriptsize 
			%\begin{array}{c}
			%d\!\in\! \Z^{n-1}\!\times\!\{0\}\\ 
			%{\eps_j (Q^{\nu}_T\!+\!d)}\!\subset\!\subset\! Q^{\nu}
			%\end{array}}}
			%\int_{ Q^{\nu}_T}\eps_j^{n-1} \left( f^2(v_T) \Psiinfty\left(\nabla u_T\right) + \frac{(1-v_T)^2}{4} + |\nabla v_T|^2 \right) \dx\\
			\eps_j^{n-1}
			% \#\{d\in\Z^{n-1}\!\times\!\{0\}:\,{\eps_j (Q^{\nu}_T\!+\!d)}\!\subset\!\subset\! Q^{\nu}\}
			%\end{array}}}
			%		\}
			\#I_{\eps_j}\,\calF_{1}^\infty(u_T,v_T;Q^{{e_n}}_T) \\
			%=\limsup_{j\to\infty}\frac{1}{\eps_j^{n-1}T^{n-1}}
			%\int_{ Q^{\nu}_T}\eps_j^{n-1} \left( f^2(v_T) \Psiinfty\left(\nabla u_T\right) + \frac{(1-v_T)^2}{4} + |\nabla v_T|^2 \right) \dx\\
			\leq&\frac1{T^{n-1}}\calF_{1}^\infty(u_T,v_T;Q^{{e_n}}_T)\leq%\tilde g(z,\nu)
			\liminf_{T\to\infty}g_T(z,{e_n})+\rho\,,	
			\end{align*}
			by the choice of $(u_T,v_T)$ and $T$ {(cf. \eqref{e:estf2})}. 
			As $\rho\to 0$ we get {\eqref{e:liminf geq g}}.
			
			Estimates \eqref{e:limsup leq g} and \eqref{e:liminf geq g} yield the existence of the limit of $g_T(z,\nu)$ as $T\uparrow\infty$ and equality \eqref{e:characg}, as well.
		\end{proof}

With this representation {of $g$} at hand we can obtain a version of Proposition~\ref{periodicity} which also accounts for {a} regularization term 
{of the form} $\eta_\eps\int \Psi(\nabla u)\dx$.
		\begin{proposition}\label{propuvjump}
			For any $\eps_j\downarrow0$ and $\eta_j\downarrow0$
			with $\eta_j/\eps_j\to0$, and any $(z,\nu)\in \R^m\times S^{n-1}$
			there is $(u_j,v_j)\to (z\chi_{\{x\cdot\nu>0\}},1)$ in 
			$L^2(Q^\nu;\R^{m+1})$, with $v_j\in[0,1]$ $\calL^n$-a.e. 
			in $Q^\nu$, such that
			\begin{equation*}%\label{e:g periodic sequence}
			\lim_{j\to\infty} 
			\calF^{\infty}_{\eps_j}({{u_j,v_j}}; Q^\nu) = g(z,\nu)\,
			\end{equation*}
			\[
			\lim_{j\to\infty} \eta_j \int_{Q^\nu} |\nabla u_j|^2\dx=0\,,
			\]
			and
			\begin{equation*}
			u_j=(z\chi_{\{x\cdot\nu>0\}})\ast\varphi_{\eps_j}\,,\hskip1cm
			v_j=\chi_{\{|x\cdot\nu|\ge2\eps_j\}}\ast\varphi_{\eps_j} \hskip1cm
			\text{ on } \partial Q^\nu.
			\end{equation*}
		\end{proposition}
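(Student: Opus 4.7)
My plan is to follow the tiled–rescaling construction underlying Proposition~\ref{p:chargT} (equivalently, Proposition~\ref{periodicity}), adapted to yield the additional Dirichlet control. The key scaling observation is that $\int_{\eps(Q^\nu_T+d)}|\nabla u(\cdot/\eps-d)|^2\dx=\eps^{n-2}\int_{Q^\nu_T}|\nabla u|^2\dx$, so that summation over the $\sim(\eps_jT_j)^{1-n}$ tiles will produce
\[
\int_{Q^\nu}|\nabla u_j|^2\dx\lesssim \frac{1}{\eps_j T_j^{n-1}}\int_{Q^\nu_{T_j}}|\nabla u_{T_j}|^2\dx,
\]
so the desired bound $\eta_j\int|\nabla u_j|^2\dx\to 0$ reduces to a suitable Dirichlet bound at scale $1$, which one then tunes against the rate $\eta_j/\eps_j\to 0$ by choosing $T_j$ appropriately.

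Concretely, I first want the auxiliary claim: for every sufficiently large $T$ there exists $(u_T,v_T)\in\calU^T_{z,\nu}$ satisfying simultaneously $\tfrac1{T^{n-1}}\calF^\infty_1(u_T,v_T;Q^\nu_T)\le g(z,\nu)+o_T(1)$ and $\int_{Q^\nu_T}|\nabla u_T|^2\dx\le C_0\,T^{\alpha}$, for some exponent $\alpha\ge n-1$ and $C_0=C_0(|z|)$. To obtain this I would start from any near-optimal competitor $(\bar u_T,\bar v_T)\in\calU^T_{z,\nu}$, apply a De Giorgi slicing on the codomain through the cutoffs $\mathcal{T}_k$ of \eqref{e:Tk}---identical to Step 1 of the proof of Proposition~\ref{periodicity}---to reduce, at arbitrarily small energy cost, to a competitor with $\|\bar u_T\|_{L^\infty}\le a_{k+1}$. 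Then, on the sublevel set $\{\bar v_T<\gamma_0\}$, for a threshold $\gamma_0=\gamma_0(T)\in(0,1)$, I would replace $\bar u_T$ by a Lipschitz extension of its boundary values on the complement, glued smoothly via a partition of unity. The Modica--Mortola terms in the functional force $\{\bar v_T<\gamma_0\}$ to sit in a slab around $\{x\cdot\nu=0\}$ of bounded width and volume $\le cT^{n-1}$; on this slab the elastic prefactor satisfies $f^2(\bar v_T)\le f^2(\gamma_0)$, so the elastic-energy change is bounded by $Cf^2(\gamma_0)T^{n-1}$ while the Dirichlet integral on the good set $\{\bar v_T\ge\gamma_0\}$ is controlled via the functional by $CT^{n-1}/f^2(\gamma_0)$. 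Tuning $\gamma_0(T)\to 0$ at an appropriate rate yields the claim.

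Granted the auxiliary claim, the remainder reproduces the proof of Proposition~\ref{p:chargT}: pick $T_j\uparrow\infty$ slowly enough that $T_j\eps_j\to 0$, $\tfrac1{T_j^{n-1}}\calF^\infty_1(u_{T_j},v_{T_j};Q^\nu_{T_j})\to g(z,\nu)$, and $\eta_j T_j^{\alpha-(n-1)}/\eps_j\to 0$ (feasible thanks to $\eta_j/\eps_j\to 0$), tile $Q^\nu$ by pairwise disjoint cubes $\eps_j(Q^\nu_{T_j}+d)\subset Q^\nu$ with $d\in T_j(\Z^{n-1}\times\{0\})$, set $(u_j,v_j):=(u_{T_j},v_{T_j})(\cdot/\eps_j-d)$ inside each tile, and extend outside by $(z\chi_{\{x\cdot\nu>0\}})\ast\varphi_{\eps_j}$ and $\chi_{\{|x\cdot\nu|\ge 2\eps_j\}}\ast\varphi_{\eps_j}$; the boundary conditions built into $\calU^{T_j}_{z,\nu}$ guarantee continuity across tile boundaries and on $\partial Q^\nu$. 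Both the $L^2$-convergence and the energy convergence $\calF^\infty_{\eps_j}(u_j,v_j;Q^\nu)\to g(z,\nu)$ then follow from the scaling identity, the choice of $T_j$, and the fact that the interstitial frame contributes a vanishing amount (its intersection with the transition layer $\{|x\cdot\nu|\le 3\eps_j\}$ has volume $O(\eps_j^2T_j)$ and energy density $O(1/\eps_j)$, hence total energy $O(\eps_jT_j)\to 0$). The main obstacle is the auxiliary claim: the functional controls only $f^2(v)|\nabla u|^2$ and is blind to $\nabla u$ in the region $\{v\sim 0\}$, so the unweighted $L^2$ gradient bound must be installed by hand via a modification that preserves the boundary conditions and does not spoil near-optimality; the delicate balancing between the cutoff threshold $\gamma_0(T)$, the Dirichlet exponent $\alpha$, and the $o_T(1)$ near-optimality tolerance is what dictates the admissible joint rates for $T_j$, $\eps_j$, $\eta_j$.
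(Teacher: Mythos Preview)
Your tiling framework, the scaling identity, and the diagonalization in $T_j$ are all correct and match the paper. Where you go astray is in positing the ``auxiliary claim'' --- a quantitative Dirichlet bound $\int_{Q^\nu_T}|\nabla u_T|^2\dx\le C_0 T^\alpha$ --- as the main obstacle. It is not needed, and your sketch of its proof is not solid: the Modica--Mortola terms bound the \emph{volume} of $\{\bar v_T<\gamma_0\}$ by $cT^{n-1}$ but do not force it to lie in a slab of bounded width, and ``Lipschitz extension of the boundary values'' on an a priori irregular sublevel set is not a well-defined operation without further work.

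The paper sidesteps all of this. For each fixed $T$ one has chosen some near-optimal $(u_T,v_T)\in\calU^T_{z,\nu}$; by the very definition of $\calU^T_{z,\nu}$ this pair lies in $W^{1,2}$, so the number $C_T:=\|\nabla u_T\|_{L^2(Q^\nu_T)}^2$ is automatically finite. No rate in $T$ is claimed or required. Your own scaling then yields $\eta_j\int_{Q^\nu}|\nabla u_j|^2\dx\le \eta_j C_T/\eps_j+o(1)$, and since $\eta_j/\eps_j\to0$ this vanishes as $j\to\infty$ for every fixed $T$. A plain diagonal argument now lets one send $T_j\to\infty$ slowly enough that $\eta_j C_{T_j}/\eps_j\to0$ while preserving the remaining requirements (energy convergence to $g(z,\nu)$, frame error $O(\eps_jT_j)\to0$, $L^2$ convergence, boundary data). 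That is the paper's entire proof, in two lines.
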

		\begin{proof}
			We use the same construction as above {({without} loss of generality, explicitly written only for $\nu=e_n$),} 
			and compute similarly
			\begin{align*}
			\|\nabla u_j\|_{L^2(Q^{{e_n}})}^2
			&\leq\sum_{d\in I_{\eps_j}}
			\|\nabla u_j\|_{L^2(\eps_j(Q^{{e_n}}_T+d))}^2
			+\frac{c}{\eps_j^2}\calL^{n}\left(Q^{{e_n}}\cap\{|{x_n}|\leq \eps_j\}%\setminus \!\!\bigcup_{d\in I_\eps}\eps_j (Q^{\nu}_T+d)
			\right)\\
			&= 
			\eps_j^{n-1}\# I_{\eps_j}\,\|\nabla u_T\|_{L^2(Q_T)}^2
			+\frac{c}{\eps_j}\le \frac{C_T}{\eps_j}.
			\end{align*}
			To conclude the proof it suffices to choose $T_j\to\infty$ so slow that 
			$\eta_j C_{T_j}/\eps_j\to0$.
		\end{proof}

		\begin{corollary}\label{euclidean}
			If $\Psi{{_\infty}}(\xi)=|\xi|^2$, then $g(z,\nu)=g_\scal(|z|)$ 
			for all $(z,\nu)\in \R^m\times S^{n-1}$, {where 
			$g_\scal$ is defined as the right-hand side of equation \eqref{e:characg} with $n=m=1$.} 
		\end{corollary}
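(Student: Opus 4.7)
The plan is to reduce the $n$-dimensional, $\R^m$-valued minimization defining $g(z,\nu)$ to the one-dimensional scalar problem defining $g_\scal(|z|)$, exploiting that $\Psiinfty(\xi)=|\xi|^2$ is isotropic. I work throughout with the $L^1$-convergence definition \eqref{eqdefGsnu}; by Proposition~\ref{p:chargT} applied with $n=m=1$ it coincides, on the $1$-dimensional side, with the asymptotic minimization \eqref{e:characg} that defines $g_\scal$. The case $z=0$ is trivial, as $(u_j,v_j)\equiv(0,1)$ gives vanishing energy, so I assume $z\neq 0$ and set $\hat z:=z/|z|$. I denote by $B\subset \nu^\perp$ the unit base of $Q^\nu$ perpendicular to $\nu$, so that $Q^\nu=B+(-1/2,1/2)\nu$ and $\calH^{n-1}(B)=1$.

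\textbf{Upper bound $g(z,\nu)\le g_\scal(|z|)$.} I pick $\eps_j\downarrow 0$ and $(u_j,v_j)\in W^{1,2}((-1/2,1/2);\R\times[0,1])$ converging in $L^1$ to $(|z|\chi_{\{t>0\}},1)$ such that $\calF_{\eps_j}^{\infty}(u_j,v_j;(-1/2,1/2))\to g_\scal(|z|)$. Define $\tilde u_j(x):=u_j(x\cdot\nu)\,\hat z$ and $\tilde v_j(x):=v_j(x\cdot\nu)$ on $Q^\nu$. A direct computation yields $\nabla\tilde u_j=u_j'(x\cdot\nu)\,\hat z\otimes\nu$, hence $\Psiinfty(\nabla\tilde u_j)=|u_j'(x\cdot\nu)|^2$, and $|\nabla\tilde v_j|^2=|v_j'(x\cdot\nu)|^2$. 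Fubini combined with $\calH^{n-1}(B)=1$ gives $\calF_{\eps_j}^{\infty}(\tilde u_j,\tilde v_j;Q^\nu)=\calF_{\eps_j}^{\infty}(u_j,v_j;(-1/2,1/2))$, while clearly $\tilde u_j\to z\chi_{\{x\cdot\nu>0\}}$ in $L^1(Q^\nu;\R^m)$ and $\tilde v_j\to 1$. The conclusion follows from \eqref{eqdefGsnu}.

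\textbf{Lower bound $g_\scal(|z|)\le g(z,\nu)$.} I take $(u_j,v_j)\to(z\chi_{\{x\cdot\nu>0\}},1)$ in $L^1(Q^\nu;\R^{m+1})$ with $\eps_j\downarrow 0$ and $\calF_{\eps_j}^{\infty}(u_j,v_j;Q^\nu)\to g(z,\nu)$. For $x'\in B$ define the scalar slices $\bar u_{x',j}(t):=u_j(x'+t\nu)\cdot\hat z$ and $\bar v_{x',j}(t):=v_j(x'+t\nu)$. Since $|\hat z|=|\nu|=1$, Cauchy--Schwarz produces the pointwise bounds $|\partial_t\bar u_{x',j}|^2=|((\nabla u_j)\nu)\cdot\hat z|^2\le |\nabla u_j|^2=\Psiinfty(\nabla u_j)$ and $|\partial_t\bar v_{x',j}|^2\le |\nabla v_j|^2$, so that Fubini yields
\begin{equation*}
\int_B \calF_{\eps_j}^{\infty}(\bar u_{x',j},\bar v_{x',j};(-1/2,1/2))\,\dd x' \le \calF_{\eps_j}^{\infty}(u_j,v_j;Q^\nu).
\end{equation*}
Fubini and passage to a subsequence also guarantee that for a.e.\ $x'\in B$ one has $\bar u_{x',j}\to |z|\chi_{\{t>0\}}$ and $\bar v_{x',j}\to 1$ in $L^1(-1/2,1/2)$. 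For each such $x'$ the $1$-dimensional instance of \eqref{eqdefGsnu} gives $\liminf_j\calF_{\eps_j}^{\infty}(\bar u_{x',j},\bar v_{x',j};(-1/2,1/2))\ge g_\scal(|z|)$. Combining Fatou's lemma with $\calH^{n-1}(B)=1$ concludes $g_\scal(|z|)\le g(z,\nu)$.

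\textbf{Main obstacle.} The argument is essentially routine once the scalar projection $u\mapsto u\cdot\hat z$ is identified; the isotropy of $\Psiinfty$ is used both in the identity $|u'\hat z\otimes\nu|^2=|u'|^2$ (upper bound) and in the bound $|(\nabla u)\nu\cdot\hat z|^2\le |\nabla u|^2$ (lower bound). The only mild technical point is the extraction of a subsequence ensuring slicewise $L^1$-convergence for $\calH^{n-1}$-a.e.\ $x'\in B$, which is immediate from Fubini. No Dirichlet boundary-layer correction is needed because I use the $L^1$-convergence definition \eqref{eqdefGsnu} rather than the convolved Dirichlet formulation \eqref{e:characg}.
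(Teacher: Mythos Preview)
Your proof is correct and follows essentially the same strategy as the paper's: both directions rely on the projection $u\mapsto u\cdot\hat z$ and the identity $\Psiinfty(\xi)=|\xi|^2$, using a one-dimensional extension for the upper bound and a slicing/Fubini argument for the lower bound. The only cosmetic difference is that the paper works with the $T$-boundary-value characterization of Proposition~\ref{p:chargT} (so that slices automatically lie in $\calU^T_{|z|}$ and no subsequence is needed), whereas you work directly with the $L^1$-definition \eqref{eqdefGsnu} and close with a subsequence plus Fatou; both are equally valid.
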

	{For an equivalent definition of $g_\scal$ see equation \eqref{e:gscal} below and \cite[Proposition 4.3]{ContiFocardiIurlano2016}.}
		\begin{proof}
			By \cite[Proposition~4.3]{ContiFocardiIurlano2016} {or by Proposition~\ref{p:chargT}}, the following characterization holds for $g_\scal$:
			$$g_\scal(s)=\lim_{T\uparrow\infty }\inf_{(\alpha,\beta)\in\calU^T_s}\mathscr{F}^\infty_1(\alpha,\beta;(-\sfrac T2,\sfrac T2)),$$	
			with
			$$\mathscr{F}^\infty_1(\alpha,\beta;(-\sfrac T2,\sfrac T2)):=\int_{-\frac T2}^{\frac T2}(f^2(\beta)|\alpha'|^2+\frac{(1-\beta)^2}{4}+|\beta'|^2)\dx$$
			and
			\begin{multline*}
			\calU^T_s:=\{\alpha,\beta\in {W^{1,2}}((-\sfrac T2,\sfrac T2))\colon  
			0\leq \beta\leq 1,\ \beta(\pm\sfrac T2)=1\\
			\alpha(-\sfrac T2)=0\,,\ \alpha(\sfrac T2)=s\}.
			\end{multline*}
			Let $(z,\nu)\in \R^m\times S^{n-1}$, $z\ne0$. We first prove that
			\begin{equation}\label{ggscal}
			g(z,\nu)\geq g_\scal(|z|).
			\end{equation}
			If $T>0$ and $(u,v)\in \calU^T_{z,\nu}$ (see Proposition~\ref{p:chargT} for the definition of $ \calU^T_{z,\nu}$), then for $\calH^{n-1}$-a.e. $y\in \tilde Q^\nu_T:=(\Id-\nu\otimes \nu)Q^\nu_T\subset \nu^\perp$ the slices
			$$u^\nu_y(t):=\frac{z}{|z|}\cdot u(y+t\nu),\quad v^\nu_y(t):=v(y+t\nu)$$
			belong to $\calU^T_{|z|}$ and satisfy by Fubini's theorem
			\begin{multline*}
			\frac{1}{T^{n-1}}\calF^\infty_1(u,v;Q^\nu_T)\geq \frac{1}{T^{n-1}}\int_{\tilde Q^\nu_T}\mathscr{F}^\infty_1(u^\nu_y,v^\nu_y;(-\sfrac T2,\sfrac T2))\, \mathrm{d}\calH^{n-1}(y)\\
			\geq \inf_{(\alpha,\beta)\in\calU^T_{|z|}}\mathscr{F}^\infty_1(\alpha,\beta;(-\sfrac T2,\sfrac T2)).	
			\end{multline*}
			Taking the infimum over $(u,v)\in\calU^T_{z,\nu}$ and passing to the limit $T\to\infty$ we get \eqref{ggscal}.	
			
			Let us show now that
			\begin{equation}\label{gscalg}
			g(z,\nu)\leq g_\scal(|z|).
			\end{equation}
			Let $T>0$ and $(\alpha,\beta)\in \calU^{T}_{|z|}$. Fixed $\eps_j\to0$, we will construct a competitor $(u_j,v_j)$ for the problem \eqref{eqdefGsnu} defining $g$. We set
			\[
			u_j(x):=\left\{ \begin{array}{ll} 
			{\displaystyle \alpha\Big(\frac{T}{\eps_j}x\cdot\nu\Big)\frac{z}{|z|}}, 
			& \textrm{if } |x\cdot \nu|\leq \frac{\eps_j}{2},\ x\in Q^\nu,\\
			z\chi_{\{x\cdot \nu >0\}}, & \text{otherwise in }Q^\nu,
			\end{array} \right.
			\]
			\[
			v_j(x):=\left\{ \begin{array}{ll} 
			{\displaystyle \beta\Big(\frac{T}{\eps_j}x\cdot\nu\Big)}, 
			& \textrm{if } |x\cdot \nu|\leq \frac{\eps_j}{2},\ x\in Q^\nu,\\
			1, & \text{otherwise in }Q^\nu.
			\end{array} \right.
			\]
			Hence by a change of variables we have $\|u_j-z\chi_{\{x\cdot\nu>0\}}\|_{L^1(Q^\nu)}\to0$ and 
			\[
			\calF^\infty_{\sfrac{\eps_j}{T}}(u_j,v_j;Q^\nu)= \mathscr{F}^\infty_1(\alpha,\beta;(-\sfrac T2,\sfrac T2))\,.
			\]
			Therefore, we conclude that
			\[
			g(z,\nu)\leq\mathscr{F}^\infty_1(\alpha,\beta;(-\sfrac T2,\sfrac T2))\,.
			%\liminf_{j\to\infty}\calF^\infty_{\sfrac{\eps_j}{T}}(u_j,v_j,Q^\nu).	
			\]
			As $(\alpha,\beta)\in \calU^T_{|z|}$ varies, we obtain \eqref{gscalg}.
		\end{proof}
		
		\begin{remark}\label{sliceable}
			The same argument shows that if $\Psi$ satisfies 
			$\Psi{_\infty}(\xi)\geq\Psi{_\infty}(\xi\nu\otimes\nu)$ for every 
			$\xi\in\R^{m\times n}$ and $\nu\in S^{n-1}$, then for all 
			$(z,\nu)\in \R^m\times S^{n-1}$
			\begin{equation*}%\label{e:Psi sliceable}
			g(z,\nu)=\lim_{T\uparrow\infty}
			\inf_{\widetilde{\calU}_z^T}
			\int_{-\sfrac T2}^{\sfrac T2}\Big(f^2(\beta(t))\Psi_\infty\big({\alpha'(t)}\otimes\nu\big)+\frac{(1-\beta(t))^2}{4}+|\beta'(t)|^2\Big)\dd t
			\end{equation*}
			where
			\begin{multline*}
			\widetilde{\calU}^T_z:=\{(\alpha,\beta)\in {W^{1,2}}((-\sfrac T2,\sfrac T2);\R^{m+1})\colon  0\leq \beta\leq 1,\, \beta(\pm\sfrac T2)=1\\ 
			\alpha(-\sfrac T2)=0,\,\alpha(\sfrac T2)=z\}.
			\end{multline*}
		\end{remark}
		\subsection{Structural properties of $g(z,\nu)$}\label{subsecstructpropg}
		We {next deduce} the coercivity properties of $g$.
		\begin{lemma}\label{lemmapropg}
			There is $c>0$ such that, for all $z,\nu\in \R^m\times S^{n-1}$,
			\begin{equation*}
			\frac1c (|z|\wedge 1) \le g(z,\nu) \le c (|z|\wedge 1).
			\end{equation*}
		\end{lemma}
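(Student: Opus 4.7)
The plan is to obtain the upper bound by an explicit one-dimensional construction and the lower bound by a direct comparison with the Euclidean (scalar) case already handled in Corollary~\ref{euclidean}.

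The growth condition \eqref{e:Psi gc} and the definition \eqref{eqdefPsiinfty} yield a universal $c>0$ with $c^{-1}|\xi|^2\le \Psi_\infty(\xi)\le c|\xi|^2$ on $\R^{m\times n}$ (the lower bound from $\Psi(t\xi)\ge c^{-1}|t\xi|^2-c$ after dividing by $t^2$ and letting $t\to\infty$; $\Psi_\infty(0)=0$ handles the origin).

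\textbf{Upper bound.} I would build competitors depending only on $t:=x\cdot\nu$. The case $z=0$ is trivial (take $u_\eps\equiv 0$, $v_\eps\equiv 1$; the elastic term is zero by convention). For $|z|\ge 1$ I would construct a brittle-type competitor: a Modica-Mortola profile $v_\eps$ making a $1\to 0\to 1$ transition on a layer of width $O(\eps)$ around $\{t=0\}$, together with $u_\eps$ carrying the full jump $0\to z$ inside the set $\{v_\eps=0\}$, so that the elastic term vanishes. A direct computation then yields $\calF^\infty_\eps(u_\eps,v_\eps;Q^\nu)\le c$. For $0<|z|\le 1$ I would use a cohesive competitor: fix $v_0\in(0,1)$ with $(1-v_0)^2=\ell|z|$, set $v_\eps\equiv v_0$ on the layer $\{|t|\le \eps/2\}$ with smooth Modica-Mortola transitions connecting $v_0$ to $1$ outside, and let $u_\eps$ be the linear interpolation $0\to z$ on the same layer. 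The three contributions balance to orders $f^2(v_0)|z|^2\asymp \ell^2|z|^2/(1-v_0)^2=\ell|z|$, $(1-v_0)^2=\ell|z|$ and the $v$-transition cost $O((1-v_0)^2)=O(\ell|z|)$, hence $g(z,\nu)\le c|z|$.

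\textbf{Lower bound.} Pointwise we have $\calF^\infty_\eps(u,v;Q^\nu)\ge c^{-1}\calG^\infty_\eps(u,v;Q^\nu)$ where $\calG^\infty_\eps$ denotes the same functional with $\Psi_\infty$ replaced by $|\cdot|^2$. Since the class of admissible sequences in \eqref{eqdefGsnu} does not depend on $\Psi_\infty$, taking the infimum yields
\[
g(z,\nu)\ge c^{-1}g^{\mathrm{eucl}}(z,\nu),
\]
where $g^{\mathrm{eucl}}$ is the surface density for $|\cdot|^2$. Corollary~\ref{euclidean} identifies $g^{\mathrm{eucl}}(z,\nu)=g_\scal(|z|)$, and the classical scalar 1D bound $g_\scal(s)\ge c^{-1}(s\wedge 1)$ (cf.\ \cite{ContiFocardiIurlano2016}) concludes the argument. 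This scalar bound itself follows from AM--GM applied to $\eps f^2(\beta)|\alpha'|^2+(1-\beta)^2/(4\eps)\ge f(\beta)(1-\beta)|\alpha'|=\ell\beta|\alpha'|$, combined with a Modica--Mortola estimate on $\beta$ to discriminate between the regime where $\beta$ stays bounded away from $0$ (yielding the $|z|$-behaviour) and the regime where $\beta$ collapses near $0$ (yielding the brittle $O(1)$ contribution).

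The most delicate step will be the cohesive competitor in the regime $|z|\le 1$: the choice $(1-v_0)^2\asymp\ell|z|$ sits at the exact balance between the elastic and Modica--Mortola costs, and the transitions of $v_\eps$ between $v_0$ and $1$ must be arranged so that their energetic contribution does not dominate. This is technical but standard; once performed carefully, both bounds combine into the stated estimate.
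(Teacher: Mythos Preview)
Your argument is correct. The upper bound construction is essentially the one the paper carries out (the paper writes both regimes at once via $v_0=(1-(\ell|z|)^{1/2})_+$, so the natural threshold is $\ell|z|=1$ rather than $|z|=1$, but this only shifts the regime boundary and does not affect the conclusion).

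For the lower bound you take a genuinely different route from the paper's own proof. The paper argues directly by slicing: it picks a good one-dimensional fibre $t\mapsto(u_j(y+t\nu),v_j(y+t\nu))$, uses the growth of $\Psi_\infty$ on that fibre, and then distinguishes by hand the case $\inf v_j^*\le\tfrac12$ (Modica--Mortola gives a universal constant) from the case $v_j^*\ge\tfrac12$ (AM--GM gives the $\ell|z|$ contribution). Your approach instead uses $\Psi_\infty\ge c^{-1}|\cdot|^2$ globally to bound $\calF^\infty_\eps$ below by the corresponding Euclidean functional, then invokes Corollary~\ref{euclidean} to identify the Euclidean surface density with $g_\scal(|z|)$, and finally quotes the scalar bound from \cite{ContiFocardiIurlano2016}. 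This works and is precisely the alternative the paper mentions in the sentence following the statement of the lemma. The trade-off is that your route is shorter but leans on Corollary~\ref{euclidean} (and hence on Propositions~\ref{proplbboundary}--\ref{p:chargT}), whereas the paper's direct slicing argument is self-contained and avoids that machinery.
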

		{We provide here a direct proof of the lemma. Alternatively, these bounds may be derived estimating $\calF_\eps$ by its 1D counterpart (as in \eqref{eqfepsfeps1} below) and recalling the bounds holding for $g_\scal$, see \cite[Prop.~4.1]{ContiFocardiIurlano2016}.}
		%These bounds {could be derived} from {the fact that $\calF_\eps$ can be estimated by its 1D counterpart (see \eqref{eqfepsfeps1}) and from} \cite[Prop.~4.1]{ContiFocardiIurlano2016}. For completeness we provide a self-contained proof.
		
		\begin{proof}{
				We start with the lower bound.
				Let $z\in\R^m$, $\nu\in S^{n-1}$, and fix sequences $\eps_j\to0$, $v_j$ and $u_j\to z\chi_{{\{x\cdot\nu>0\}}}$ {in $L^1(Q^\nu;\R^m)$} such that $\calF^{\infty}_{\eps_j}(u_j,v_j; Q^\nu)\to g(z,\nu)$. {For every $j$ and $y_j\in \nu^\perp\cap Q^\nu$ we define}
				$v_j^*\in W^{1,2}((-\frac12,\frac12);[0,1])$
				and $u_j^*\in W^{1,2}((-\frac12,\frac12);\R^m)$ by
				$v_j^*(t):=v_j(y_j+t\nu)$ and
				$u_j^*(t):=u_j(y_j+t\nu)$. {The set of $y_j\in \nu^\perp\cap Q^\nu$ such that}
				\begin{equation*}
				\|u_j^*-z\chi_{{\{t\ge0\}}}\|_{L^1((-\frac12,\frac12))}\le 3
				\|u_j-z\chi_{{\{x\cdot\nu\ge0\}}}\|_{L^1(Q^\nu)} 
				\end{equation*} 
				{has measure at least} $\frac23$ and, using \eqref{e:Psi gc} to estimate 
				$\frac1c|(u_j^*)'|^2(t) \le \Psiinfty(\nabla u_j)(y_j+t\nu)$, 
				{the set of $y_j\in \nu^\perp\cap Q^\nu$ such that}
				\begin{equation*}
				\int_{(-\frac12,\frac12)} \Big(\frac{\eps_j {\ell^2} (v_j^*)^2}{(1-v_j^*)^2} \frac{|(u_j^*)'|^2}c + \frac{(1-v_j^*)^2}{4\eps_j}+\eps_j|( v_j^*)'|^2 
				\Big)\dt \le 3\calF_{\eps_j}^{\infty}(u_j,v_j;Q^\nu)
				\end{equation*}
				{also has measure at least $\frac23$. Therefore we can fix $y_j$ such that both inequalities hold.}
				If $g(z,\nu)<\infty$, then necessarily $v_j^*\to1$ in $L^2((-\frac12,\frac12))$, and it has a continuous representative. We can therefore assume that $\sup v_j^*\ge \frac34$ for large $j$.
				If $\inf v_j^*\le \frac12$ then
				\begin{equation*}
				\begin{split}
				\left. \frac12 (1-v)^2\right|_{1/2}^{3/4}\le &\int_{(-\frac12,\frac12)} |(1-v_j^*)(v_j^*)'| \dt\\
				\le& 
				\int_{(-\frac12,\frac12)}  \frac{(1-v_j^*)^2}{4\eps_j}+\eps_j|( v_j^*)'|^2 
				\dt\le {3}\calF_{\eps_j}^{\infty}(u_j,v_j;Q^\nu).
				\end{split}
				\end{equation*}}
			{Otherwise, $v_j^*\ge\frac12$ pointwise and
				\begin{equation*}
				\begin{split}
				\int_{(-\frac12,\frac12)} \Big(\frac{\eps_j {\ell^2} (v_j^*)^2}{(1-v_j^*)^2} \frac{|(u_j^*)'|^2}c + \frac{(1-v_j^*)^2}{4\eps_j}
				\Big)\dt \ge & \frac1{2c^{1/2}} \ell \int_{(-\frac12,\frac12)} |(u_j^*)'| \dt.
				%  \ge& \frac1{2c^{1/2}} \ell |z|-c \|u_j^*-z\chi_{t\ge0}\|_{L^1((-\frac12,\frac12))}. 
				\end{split}
				\end{equation*}
				{Since $ \|u_j^*-z\chi_{t\ge0}\|_{L^1((-\frac12,\frac12))}\to0$, 
					there are $t_j, t'_j$ such that
					$u_j^*(t_j)\to0$, $u_j^*(t_j')\to z$, 
					and therefore
					$\liminf_{j\to\infty} \int_{(-\frac12,\frac12)} |(u_j^*)'| \dt
					\ge \liminf_{j\to\infty} |u_j^*(t_j)-u_j^*(t_j')|=|z|$.}
				We conclude that $\liminf_{j\to\infty}\calF_{\eps_j}^{\infty}({u_j,v_j};Q^\nu)\ge c (1\wedge \ell |z|)$.
			}
			
			{We turn to the upper bound.
				We define
				$u_j(x):=u_j^*(x\cdot\nu)$, 
				$v_j(x):=v_j^*(x\cdot\nu)$,  where, denoting by $AI$ the affine interpolation between 
				the boundary data in the relevant segments,
				\begin{equation*}
				u_j^*(t):=\begin{cases}
				0, & \text{ if } t\le-\eps_j,\\
				z, & \text{ if } t\ge \eps_j,\\
				AI, %\frac{t+\eps_j}{2\eps_j} z 
				& \text{ if } -\eps_j<t<\eps_j,
				\end{cases}
				\hskip2mm
				v_j^*(t):=\begin{cases}
				(1-(\ell{|z|})^{1/2})_+, & \text{ if } |t|\le \eps_j,\\
				1, & \text{ if } |t|\ge 2\eps_j,\\
				AI, & \text{ if } |t|\in (\eps_j,2\eps_j).
				\end{cases}
				\end{equation*}
				If $\ell {|z|}<1$, then the upper bound in \eqref{e:Psi gc} leads to
				\begin{equation*}
				\calF_{\eps_j}^{\infty}(u_j,v_j;Q^\nu)
				\le 2\eps_j \frac{\eps_j\ell^2 c ({|z|}/2\eps_j)^{2}}{\ell {|z|}} + 4\eps_j \frac{\ell {|z|}}{4\eps_j}
				+ 2\eps_j \eps_j \frac{\ell {|z|}}{\eps_j^2}
				=(\frac12c+1+2) \ell {|z|}.
				\end{equation*}
				If instead $\ell {|z|}\ge 1$ the first term vanishes, and
				\begin{equation*}
				\calF_{\eps_j}^{\infty}(u_j,v_j;Q^\nu)
				\le 0+4\eps_j \frac{1}{4\eps_j}
				+ 2\eps_j \eps_j \frac{1}{\eps_j^2}
				=3.\qedhere
				\end{equation*}
			}
		\end{proof}
		
		We prove next the subadditivity and continuity of $g$.
		
		{\begin{lemma}\label{lemmapropg2}
				\begin{enumerate}
					\item\label{lemmapropgsubadd}
					For any $\nu\in S^{n-1}$ and $z^1,z^2\in\R^m$ one has 
					\[
					g(z^1+z^2,\nu)\le g(z^1,\nu)+g(z^2,\nu).
					\]
					\item\label{lemmapropgcont}
					$g\in C^0(\R^m\times S^{n-1})$.
				\end{enumerate}
			\end{lemma}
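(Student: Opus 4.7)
The plan is to establish assertion \ref{lemmapropgsubadd} by an explicit two-slab tiling construction, and then to deduce \ref{lemmapropgcont} from subadditivity, Lemma~\ref{lemmapropg}, and a rotation argument.

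For \ref{lemmapropgsubadd}, I would apply Proposition~\ref{periodicity} to obtain sequences $(u^i_j,v^i_j)\to(z^i\chi_{\{x\cdot\nu>0\}},1)$ in $L^2(Q^\nu;\R^{m+1})$ with small parameters $\eps_j\to 0$, energies converging to $g(z^i,\nu)$ for $i=1,2$, and boundary values on $\partial Q^\nu$ depending only on $x\cdot\nu$. Pick $\lambda_j\to 0$ with $1/\lambda_j\in\N$ and $\lambda_j\le 1/2$, and set $\delta_j:=\eps_j\lambda_j$. On $Q^\nu$ construct $(w_j,\omega_j)$ as follows: in the slab $\{x\cdot\nu\in(-\lambda_j,0)\}$ place the $(1/\lambda_j)^{n-1}$ isotropically rescaled (to cubes of side $\lambda_j$) and laterally tiled copies of $(u^1_j,v^1_j)$, and symmetrically in the slab $\{x\cdot\nu\in(0,\lambda_j)\}$ the tiled copies of $(z^1+u^2_j,v^2_j)$; outside the two slabs set $w_j=0$ below, $w_j=z^1+z^2$ above, and $\omega_j=1$. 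The $x\cdot\nu$-only dependence of the boundary data from Proposition~\ref{periodicity} ensures $W^{1,2}$-regularity: adjacent tiles match laterally, and at $x\cdot\nu=0$ the top boundary value of $u^1_j$ is $z^1$ whereas the bottom value of $u^2_j$ is $0$, so the two pieces glue continuously at the common value $z^1$; matching at $x\cdot\nu=\pm\lambda_j$ is analogous. Since the two slabs have total thickness $2\lambda_j\to 0$ and $\omega_j=1$ outside them, $(w_j,\omega_j)\to((z^1+z^2)\chi_{\{x\cdot\nu>0\}},1)$ in $L^1(Q^\nu;\R^{m+1})$, and a direct scaling computation that exploits the 2-homogeneity of $\Psi_\infty$ yields
\[
\calF^\infty_{\delta_j}(w_j,\omega_j;Q^\nu)=\calF^\infty_{\eps_j}(u^1_j,v^1_j;Q^\nu)+\calF^\infty_{\eps_j}(u^2_j,v^2_j;Q^\nu),
\]
whose limit is $g(z^1,\nu)+g(z^2,\nu)$; the definition \eqref{eqdefGsnu} of $g$ then gives \ref{lemmapropgsubadd}.

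Assertion \ref{lemmapropgcont} splits into Lipschitz continuity in $z$ and continuity in $\nu$. For the former, writing $z^1=(z^1-z^2)+z^2$, subadditivity combined with the linear bound of Lemma~\ref{lemmapropg} yields $g(z^1,\nu)-g(z^2,\nu)\le g(z^1-z^2,\nu)\le c|z^1-z^2|$ and the symmetric bound, with $c$ independent of $\nu$. For the latter, fix $z$, pick $\nu_k\to\nu$ and $R_k\in\SO(n)$ with $R_k\nu=\nu_k$ and $R_k\to\Id$, and given a near-optimal $(u_j,v_j)$ for $g(z,\nu)$, define $(\tilde u_j,\tilde v_j):=(u_j\circ R_k^{-1},v_j\circ R_k^{-1})$ on $Q^{\nu_k}=R_kQ^\nu$; a change of variables shows $\tilde u_j\to z\chi_{\{x\cdot\nu_k>0\}}$ in $L^1(Q^{\nu_k})$. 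Continuity and 2-homogeneity of $\Psi_\infty$, together with compactness of the unit sphere in $\R^{m\times n}$, provide a modulus $\omega(R)\to 0$ as $R\to\Id$ such that $|\Psi_\infty(\xi R^{-1})-\Psi_\infty(\xi)|\le\omega(R)|\xi|^2$; combined with $|\xi|^2\le c\Psi_\infty(\xi)$ (from \eqref{e:Psi gc} and 2-homogeneity) and a rotational change of variables (which preserves both the Lebesgue measure and $|\nabla v|^2$), this yields
\[
\calF^\infty_{\eps_j}(\tilde u_j,\tilde v_j;Q^{\nu_k})\le (1+c\,\omega(R_k))\,\calF^\infty_{\eps_j}(u_j,v_j;Q^\nu),
\]
whence $g(z,\nu_k)\le(1+c\,\omega(R_k))g(z,\nu)$. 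Swapping the roles gives the reverse inequality, so $g(z,\cdot)\in C^0(S^{n-1})$. Joint continuity finally follows by combining the $\nu$-uniform Lipschitz bound in $z$ with pointwise continuity in $\nu$.

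The main delicacy lies in the subadditivity construction: the exact matching of boundary data at $x\cdot\nu=0$ and across laterally adjacent tiles crucially exploits the $x\cdot\nu$-only dependence of the boundary conditions provided by Proposition~\ref{periodicity}, and the additive scaling identity for $\calF^\infty_{\delta_j}$ relies on the precise interplay between the isotropic rescaling factor $\lambda_j$, the choice $\delta_j=\eps_j\lambda_j$, and the 2-homogeneity of $\Psi_\infty$. By contrast, continuity in $\nu$ proceeds smoothly once uniformity of $\Psi_\infty$ on the unit sphere is exploited.
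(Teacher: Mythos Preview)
Your argument for \ref{lemmapropgsubadd} is correct and coincides with the paper's: both stack two rescaled profiles from Proposition~\ref{periodicity} in adjacent $\nu$-slabs, using the $x\cdot\nu$-only boundary data to glue and tile, and the scaling identity $\calF^\infty_{\lambda\eps}=\lambda^{n-1}\calF^\infty_\eps$ on a $\lambda$-rescaled cube. The Lipschitz bound in $z$ is likewise identical to the paper's.

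For continuity in $\nu$, however, there is a genuine gap. You write ``on $Q^{\nu_k}=R_kQ^\nu$'', but this equality is not available: in the setup of the paper the cube $Q^{\nu'}$ is \emph{fixed} once and for all for each $\nu'\in S^{n-1}$, with no continuity in $\nu'$ assumed (and for $n\ge3$ no continuous choice exists globally). Your rotated competitors $(u_j\circ R_k^{-1},v_j\circ R_k^{-1})$ live on $R_kQ^\nu$, which in general is a different unit cube with a face orthogonal to $\nu_k$, so they are not admissible in the definition \eqref{eqdefGsnu} of $g(z,\nu_k)$. Equivalently, you have not excluded that $g(z,\nu)$ depends on the particular cube $Q^\nu$; showing this independence is essentially equivalent to the missing step.

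The paper fixes exactly this point by combining the rotation with a further rescaling: it extends the Proposition~\ref{periodicity} sequence periodically in the $\nu^\perp$-directions, sets $\tilde u_j(x):=u_j(M_jRx)$, $\tilde v_j(x):=v_j(M_jRx)$ with $M_j\to\infty$, and then observes that although $RQ^{\tilde\nu}\ne Q^\nu$ in general, the $(n-1)$-square $\nu^\perp\cap M_jRQ^{\tilde\nu}$ can be covered by at most $M_j^{n-1}+cM_j^{n-2}$ disjoint translates of $\nu^\perp\cap Q^\nu$. This yields
\[
\calF^\infty_{\eps_j/M_j}(\tilde u_j,\tilde v_j;Q^{\tilde\nu})\le (1+\omega_{|R-\Id|})\Bigl(1+\frac{c}{M_j}\Bigr)\calF^\infty_{\eps_j}(u_j,v_j;Q^\nu),
\]
and letting $j\to\infty$ gives $g(z,\tilde\nu)\le(1+\omega_{c|\nu-\tilde\nu|})g(z,\nu)$. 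Incorporating the same rescaling-and-tiling step into your rotation argument closes the gap; without it the domain mismatch is fatal.
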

		}
		\begin{proof}
			{\ref{lemmapropgsubadd}: 
				Fix $z^1,z^2\in\R^m$, $\nu\in S^{n-1}$.
				Let $(u_j^i, v_j^i)$ be the sequences from Proposition~\ref{periodicity} corresponding to $\eps_j:=1/j$ and the pair 
				$(\nu,z^i)$, for $i=1,2$. We implicitly extend both periodically in the directions of $\nu^\perp\cap Q^\nu$, 
				and constant in the direction $\nu$. In particular, for $\{x\cdot\nu\ge \frac12\}$ we have $u_j^i=z^i$ and 
				$v_j^i=1$; for $\{x\cdot\nu\le-\frac12\}$ we have $u_j^i=0$ and $v_j^i=1$ for $i\in\{1,2\}$ and all $j$.
				
				We use a rescaling similar to the one of Proposition~\ref{periodicity}.
				We fix a sequence $M_j\in\N$, $M_j\to\infty$,  and define $(u_j,v_j)\in W^{1,2}(\R^n;\R^m\times[0,1])$ by 
				\begin{equation*}
				u_j(x):=\begin{cases}
				u_j^1(M_jx+\frac12\nu), & \text{ if } x\cdot\nu<0,\\
				z^1+u_j^2(M_jx-\frac12\nu), & \text{ if } x\cdot\nu\ge0,
				\end{cases}
				\end{equation*}
				and, correspondingly,
				\begin{equation*}
				v_j(x):=\begin{cases}
				v_j^1(M_jx+\frac12\nu), & \text{ if } x\cdot\nu<0,\\
				v_j^2(M_jx-\frac12\nu), & \text{ if } x\cdot\nu\ge0.
				\end{cases}
				\end{equation*}
				By the periodicity of $(u^i_j,v^i_j)$  in the directions of $\nu^\perp\cap Q^\nu$, 
				these maps belong to $W^{1,2}(Q^\nu;\R^m)$. Furthermore, $u_j=0$ and $v_j=1$ if $x\cdot\nu\leq-\frac1{M_j}$,
				$u_j=z^1+z^2$ and $v_j=1$ if $x\cdot\nu\geq\frac1{M_j}$, and $(u_j,v_j)$ is 
				$\frac1{M_j}$-periodic in the directions of $\nu^\perp\cap Q^\nu$.
				Therefore, by changing variables we find
				\begin{equation*}
				\begin{split}
				& \|u_j- (z^1+z^2)\chi_{\{x\cdot\nu\ge0\}}\|_{L^1(Q^\nu)}
				=\|u_j- (z^1+z^2)\chi_{\{x\cdot\nu\ge0\}}\|_{L^1(Q^\nu\cap\{|x\cdot\nu|\leq\frac1{M_j}\})}\\
				&=\frac1{M_j^n}\|u_j^1\|_{L^1(M_jQ^\nu\cap\{|x\cdot\nu|\leq\frac12\})}
				+\frac1{M_j^n}\|u_j^2-z^2\|_{L^1(M_jQ^\nu\cap\{|x\cdot\nu|\leq\frac12\})}\\
				&=\frac1{M_j}\|u_j^1\|_{L^1(Q^\nu)}
				+\frac1{M_j}\|u_j^2-z^2\|_{L^1(Q^\nu)}\\
				&\le \frac1{M_j} 
				\|{u_j^1}- z^1\chi_{\{x\cdot\nu\ge0\}}\|_{L^1(Q^\nu)}+
				\frac{|z^1|}{2M_j} +\frac{1}{M_j}
				\|u_j^2- z^2\chi_{\{x\cdot\nu\ge0\}}\|_{L^1(Q^\nu)}
				+\frac{|z^2|}{2M_j},
				\end{split}
				\end{equation*}
				so that $u_j\to(z^1+z^2)\chi_{\{x\cdot\nu\ge0\}}$ in $L^1(Q^\nu;\R^m)$. Arguing similarly, we infer
				\begin{equation*}
				\begin{split}
				\calF^{\infty}_{\eps_j/M_j} (u_j, v_j; Q^\nu)=
				\calF^{\infty}_{\eps_j} (u^1_j, v_j^1; Q^\nu)+
				\calF^{\infty}_{\eps_j} (u^2_j, v_j^2; Q^\nu).
				\end{split}
				\end{equation*}
				The conclusion follows taking the limit $j\to\infty$.
			}
			
			{\ref{lemmapropgcont}: 
				By \ref{lemmapropgsubadd} and Lemma~\ref{lemmapropg}   we have $g(z,\nu)\le g(z',\nu)+c\ell|z-z'|$, which implies that for any $\nu\in S^{n-1}$ the function $g(\cdot,\nu)$ is $c\ell$-Lipschitz continuous. Therefore it suffices to prove continuity in $\nu$ at any fixed $z$.
				
				Since $\Psiinfty$ is continuous and positive on the compact set $S^{nm-1}\subseteq\R^{m\times n}$, there is a monotone modulus of continuity
				$\omega:[0,\infty)\to[0,\infty)$, with $\omega_\rho\to0$ as $\rho\to0$, such that
				\begin{equation*}
				\Psiinfty(\xi)\le (1+\omega_{|\xi-\xi'|})\Psiinfty(\xi') \text{ for } |\xi|=|\xi'|=1.
				\end{equation*}
				This implies that
				\begin{equation}\label{eqpsiinfunifcont}
				\Psiinfty(\eta)\le (1+\omega_{|R-\Id|})\Psiinfty(\eta R) \text{ for any } \eta\in\R^{m\times n}, R\in O(n)
				\end{equation}
				(it suffices to insert $\eta/|\eta|$ and $\eta R/|\eta|$ in the above expression).}
			
			{
				Fix $\nu\in S^{n-1}$,
				a sequence $\eps_j\to0$, and let $(u_j, v_j)$ be as in  Proposition~\ref{periodicity}, extended periodically in the directions of 
				$\nu^\perp\cap Q^\nu$ and constant along $\nu$, as in the proof of 
				\ref{lemmapropgsubadd}.
				Let $\tilde\nu\in S^{n-1}$, $\tilde\nu\ne \nu$,
				and choose $R\in O(n)$ such that {$\nu=R\tilde\nu$} and $|R-\Id|\le c |\nu-\tilde\nu|$
				{(for example, $R$ can be the identity on vectors orthogonal to both $\nu$ and $\tilde \nu$, and map $(\tilde\nu,\tilde\nu^\perp)$ to $(\nu,\nu^\perp)$ in this two-dimensional subspace)}.
				We fix a sequence $M_j\to\infty$ (for example, $M_j:=j$) and
				define 
				\begin{equation*}
				\tilde u_j(x):=u_j(M_jRx)\,,\hskip5mm
				\tilde v_j(x):=v_j(M_jRx)\,.
				\end{equation*}
				From $u_j\to z\chi_{\{x\cdot\nu\ge0\}}$ in $L^1_\loc(\R^n;\R^m)$ we obtain
				$\tilde u_j\to z\chi_{{\{x\cdot\tilde\nu\ge0\}}}$. 
				Further,  $\nabla \tilde u_j(x)=M_j\nabla u_j (M_jRx){R}$, which implies, recalling \eqref{eqpsiinfunifcont},
				\begin{equation*}
				\Psiinfty(\nabla \tilde u_j)(x)
				=M_j^2\Psiinfty(\nabla u_j R)(M_jRx)
				\le M_j^2(1+\omega_{|R-\Id|})\Psiinfty(\nabla u_j)(M_jRx).
				\end{equation*}}
			{
				Inserting in the definition of 
				$\calF_{\eps_j}^{\infty}(\tilde u_j, \tilde v_j; Q^{\tilde\nu})$ and using a change of variables leads to
				\begin{equation*}
				\calF_{\sfrac{\eps_j}{M_j}}^{\infty}(\tilde u_j, \tilde v_j; Q^{\tilde\nu})
				\le 
				(1+\omega_{|R-\Id|})
				M_j^{1-n}\calF_{\eps_j}^{\infty}(u_j, v_j; M_j{R}Q^{\tilde\nu}).
				\end{equation*}
				We observe that, although ${R}\tilde\nu=\nu$, we cannot in general expect ${R}Q^{\tilde\nu}=Q^\nu$.
				However, as $(u_j,v_j)$ are periodic in the directions orthogonal to $\nu$, the $(n-1)$-dimensional square 
				$\nu^\perp\cap M_j{R}Q^{\tilde\nu}$ can be covered by
				at most $M_j^{n-1} + c M_j^{n-2}$ disjoint translated copies of 
				the $(n-1)$-dimensional unit square $\nu^\perp\cap Q^\nu$. Therefore
				\begin{equation*}
				\begin{split}
				g(z, \tilde\nu)\le &
				\limsup_{j\to\infty} \calF_{\sfrac{\eps_j}{M_j}}^{\infty}(\tilde u_j, \tilde v_j; Q^{\tilde\nu})\\
				\le &
				(1+\omega_{|R-\Id|})\limsup_{j\to\infty}  (1+\frac c{M_j})
				\calF_{\eps_j}^{\infty}(u_j, v_j; Q^{\nu})\\
				= &  (1+\omega_{|R-\Id|}) g(z,\nu)
				\le (1+\omega_{c|\nu-\tilde\nu|}) g(z,\nu).\qedhere
				\end{split}
				\end{equation*}
			}
		\end{proof}
		\subsection{Density of the Cantor part}\label{ss:hqcinfty}
		
		We study now the behaviour of the surface energy density $g$ at small jump amplitudes.
		The next result is probably well known to experts. Despite this, we give a self-contained proof since we have not found a precise reference in the literature. Similar constructions are performed in \cite[Proposition 5.1]{AFP} for isotropic functionals defined on vector-valued measures. {The $L^1$ lower semicontinuity of $\calF_0$ is assumed to hold in Proposition~\ref{p:behaviour g at 0} below, as already mentioned at the beginning of Section \ref{ss:properties g}. Such a property follows, for instance, from the validity of Theorem~\ref{t:finale}. We stress again that Proposition~\ref{p:behaviour g at 0} is not used in the proof of Theorem~\ref{t:finale}, rather it provides a further piece of information on $g$ showing its linear behavior at small amplitudes.}
		
		\begin{proposition}\label{p:behaviour g at 0}
			{Assume that the functional $\calF_0$ defined in \eqref{F0}
				is $L^1(\Omega;\R^m)$ lower semicontinuous. Then, for all %{$z\in\R^m$ and} 
				$\nu\in S^{n-1}$} we have
			\[
			\lim_{z\to 0}\frac{g(z,\nu)}{h^\qcinfty(z\otimes \nu)}=1\,.
			\]
		\end{proposition}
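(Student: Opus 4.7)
The plan is to verify separately that $\limsup_{z\to 0} g(z,\nu)/h^{\qcinfty}(z\otimes \nu) \le 1$ and $\liminf_{z\to 0} g(z,\nu)/h^{\qcinfty}(z\otimes \nu) \ge 1$, both as applications of the assumed $L^1$ lower semicontinuity of $\calF_0$ to essentially dual constructions that exploit the defining relation $h^{\qcinfty}(\xi) = \limsup_{t\to\infty} h^{\qc}(t\xi)/t$ from \eqref{eqdefqcinfty}. In the calculations below, the constant bulk contribution $h^{\qc}(0)|Q^\nu|$ appears on both sides of every inequality and cancels; I suppress it for readability.

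For the upper bound, fix $\nu \in S^{n-1}$ and $z \in \R^m \setminus \{0\}$ and let $u := z\chi_{\{x\cdot\nu > 0\}}$, which satisfies $\calF_0(u;Q^\nu) = g(z,\nu)$ (modulo the cancelled constant). Approximate $u$ by the Lipschitz profile $u_\delta(x) := z\rho_\delta(x\cdot\nu)$ with $\rho_\delta(s) := (s/\delta \vee 0)\wedge 1$, so that $u_\delta \to u$ in $L^1(Q^\nu;\R^m)$ as $\delta\to 0^+$ and $\nabla u_\delta = \delta^{-1}z\otimes\nu$ on the strip $\{0<x\cdot\nu<\delta\}\cap Q^\nu$. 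A direct computation gives $\calF_0(u_\delta;Q^\nu) = \delta\, h^{\qc}(\delta^{-1}z\otimes\nu) = \lambda^{-1}h^{\qc}(\lambda z\otimes\nu)$ with $\lambda := 1/\delta$, and the assumed $L^1$ lower semicontinuity yields
$g(z,\nu) \le \liminf_{\lambda\to\infty} h^{\qc}(\lambda z\otimes\nu)/\lambda \le h^{\qcinfty}(z\otimes\nu)$
for every $z$ and $\nu$, which gives the $\limsup$ inequality.

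For the lower bound I reverse the construction. Fix $\hat z\in S^{m-1}$ and $\nu\in S^{n-1}$, and for arbitrary $\lambda>2$ and $N\in\N$ define the sawtooth map $u_N^\lambda(x) := N^{-1}\hat z\,\lfloor N\lambda (x\cdot\nu)\rfloor$ on the slab $\{0<x\cdot\nu<1/\lambda\}\cap Q^\nu$, extended by $0$ for $x\cdot\nu\le 0$ and by $\hat z$ for $x\cdot\nu\ge 1/\lambda$. This has $N$ parallel jumps of amplitude $\hat z/N$ on hyperplanes orthogonal to $\nu$, each of unit $(n-1)$-dimensional area inside $Q^\nu$, so $\calF_0(u_N^\lambda;Q^\nu) = N\,g(\hat z/N,\nu)$. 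Letting $N\to\infty$ at $\lambda$ fixed, $u_N^\lambda$ converges in $L^1(Q^\nu;\R^m)$ to the affine map $u_\infty^\lambda$ with $\nabla u_\infty^\lambda = \lambda\hat z\otimes\nu$ on the slab and zero elsewhere, whose energy is $\calF_0(u_\infty^\lambda;Q^\nu) = \lambda^{-1}h^{\qc}(\lambda\hat z\otimes\nu)$. Lower semicontinuity then gives $\lambda^{-1}h^{\qc}(\lambda\hat z\otimes\nu) \le \liminf_{N\to\infty} N\,g(\hat z/N,\nu)$ for every $\lambda > 2$; taking the supremum in $\lambda$ and invoking \eqref{eqdefqcinfty} yields $\liminf_{s\to 0^+} g(s\hat z,\nu)/s \ge h^{\qcinfty}(\hat z\otimes\nu)$.

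Combining the two bounds, $\lim_{s\to 0^+} g(s\hat z,\nu)/(s\,h^{\qcinfty}(\hat z\otimes\nu)) = 1$ for every $\hat z\in S^{m-1}$. To upgrade this pointwise statement to convergence as $z\to 0$ along arbitrary sequences, decompose $z_k = t_k\hat z_k$ with $t_k = |z_k|$, extract a convergent subsequence $\hat z_k\to\hat z^*\in S^{m-1}$, and conclude via the Lipschitz continuity of $g(\cdot,\nu)$ near the origin (which follows from the subadditivity in Lemma~\ref{lemmapropg2}\ref{lemmapropgsubadd} combined with the linear bound in Lemma~\ref{lemmapropg}), together with the continuity of $h^{\qcinfty}$ and its strict positivity on $\{\hat z\otimes\nu : \hat z\in S^{m-1}\}$ (both ensured by \eqref{e:h gc}). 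The main technical delicacy is that $h^{\qcinfty}$ is defined as a $\limsup$ rather than a limit: each of the two main estimates takes the form $\liminf \le \limsup = h^{\qcinfty}$ or $\sup \ge \limsup = h^{\qcinfty}$, which aligns with the direction of the respective inequality but would be incompatible if one tried to run the same argument with the roles of $\limsup$ and $\liminf$ exchanged.
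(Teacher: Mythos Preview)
Your approach matches the paper's: affine ramp to get $g(z,\nu)\le h^{\qcinfty}(z\otimes\nu)$, a staircase approximating the ramp for the reverse inequality, and compactness of $S^{m-1}$ to pass from radial to full limits. The upper bound is essentially identical to the paper's.

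There is, however, a gap in your lower bound. From the staircase $u_N^\lambda$ you correctly obtain
\[
\lambda^{-1}h^{\qc}(\lambda\hat z\otimes\nu)\le \liminf_{N\to\infty} N\,g(\hat z/N,\nu),
\]
but this only controls the sequence $s=1/N$, and you then assert $\liminf_{s\to 0^+} g(s\hat z,\nu)/s \ge h^{\qcinfty}(\hat z\otimes\nu)$ without justification. That step does not follow: a priori nothing prevents $g(s\hat z,\nu)/s$ from dipping below the threshold along sequences $s_k\to0$ avoiding the reciprocals of integers. The paper circumvents this by building the staircase with \emph{arbitrary} step heights $t_j z_j$ (taking $M_j:=\lfloor t_j^{-1}\rfloor$ steps and absorbing the leftover in one extra jump whose amplitude tends to zero), which directly yields
\[
\tfrac1k h^{\qc}(kz\otimes\nu)\le \liminf_{(t,z')\to(0,z)} \frac{g(tz',\nu)}{t}
\]
for every $k$, hence the full $\liminf$ bound in one stroke. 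Your gap can alternatively be repaired with the Lipschitz continuity of $g(\cdot,\nu)$ that you invoke later: for general $s\to0$ set $N:=\lfloor 1/s\rfloor$, use $|g(s\hat z,\nu)-g(\hat z/N,\nu)|\le L|s-1/N|$ and $Ns\to1$, $|s-1/N|/s\to0$ to transfer the bound from the integer sequence to all $s$. But this argument is needed and should be stated; as written the sentence ``taking the supremum in $\lambda$ \dots\ yields $\liminf_{s\to0^+}\dots$'' is a non sequitur.
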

		\begin{proof}
			With fixed $\nu\in S^{n-1}$, {let $x_0\in\Omega$ and $\rho>0$ be such that $Q^\nu_\rho(x_0)\subset\Omega$. Upon translating and scaling, it is not restrictive to assume $x_0=0$ and $\rho=1$}.
			For every $z\in\R^m$ consider the sequence
			\begin{equation}\label{eqdefwjx}
			w_j(x):=\varphi(j x\cdot\nu)z\,,\qquad {x\in Q^\nu,}
			\end{equation}
			where $\varphi(t):=(t\wedge 1)\vee 0$ for every $t\in\R$. 
			Clearly, $w_j\to u_z(x):=z\chi_{\{x\cdot\nu\geq 0\}}$ in $L^1(Q^\nu;\R^m)$, and thus by the 
			$L^1(Q^\nu;\R^m)$ lower semicontinuity of $\calF_0$ we conclude that
			\begin{align}\label{e:g leq hqcinfty}
			g(z,\nu)&=\calF_0(u_z,1;Q^\nu)\leq\liminf_{j\to\infty}\calF_0(w_j,1;Q^\nu)
			=\liminf_{j\to\infty}\int_{Q^\nu}h^\qc(\nabla w_j)\dx\notag\\
			&=\liminf_{j\to\infty}\int_{\{x\in Q^\nu:\,0\leq x\cdot\nu\leq\sfrac1j\}}h^\qc(j z\otimes\nu)\dx
			=\liminf_{j\to\infty}\frac{h^\qc(j z\otimes\nu)}{j}\notag\\
			&\leq h^\qcinfty(z\otimes\nu)\,.
			\end{align}
			
			On the other hand, given $z\in\R^m$ and any couple of sequences $z_j\to z$ and $t_j\to 0{^+}$, denote by $M_j$ the integer 
			part of $t_j^{-1}$ and define for every $k\in\N$, {$k\geq3$}, 
			\[
			u_{j,k}(x):=\sum_{i=0}^{M_j-1}i t_jz_j\chi_{[\frac{i}{kM_j},\frac{i+1}{kM_j})}(x\cdot\nu)
			+{z}\chi_{[\frac1k,\frac12]}(x\cdot\nu)\,.
			\]
			We show that $u_{j,k}$ converges, as $j\to\infty$, to $w_k$ as defined in \eqref{eqdefwjx} {for every {$k\geq3$}}. Indeed, 
			for $s:=x\cdot\nu\in 
			[\frac{i}{kM_j},\frac{i+1}{kM_j})\subseteq{[0,\frac1k)}$
			we have 
			\begin{align*}
			|it_jz_j-zks|\le|z-z_j|+|z_j|\,|it_j-ks|&\le
			|z-z_j|+{|z_j|\left(\frac{i}{M_j}\left|M_jt_j-1\right|+\frac1{M_j} \right)}\\&
			{\le|z-z_j|+|z_j|\left(\left|M_jt_j-1\right|+\frac1{M_j} \right)}\to0
			\end{align*}
			uniformly in $i$, hence $\|w_k-u_{j,k}\|_{L^\infty(Q^\nu;\R^m)}\to0$
			%   \[
			%   \|w_k-u_{j,k}\|_{L^\infty(Q^\nu;\R^m)}\leq t_j|z_j-z|+|z|(2M_j^{-1}-t_j)\to 0\,,
			%  \]
			as $j\to\infty$. Further,
			\begin{multline*}
			Du_{j,k}= D^ju_{j{,k}}=(t_jz_j\otimes \nu)
			\calH^{n-1}\res\cup_{i=1}^{M_j-{1}}\{x\in Q^\nu:\,x\cdot\nu=\textstyle{\frac{i}{k M_j}}\}\\{+((z-(M_j-1)t_jz_j)\otimes \nu)
				\calH^{n-1}\res\{x\in Q^\nu:\,x\cdot\nu=\textstyle{\frac{1}{k}}\}}\,.
			\end{multline*}
			Therefore, by the $L^1(Q^\nu;\R^m)$ lower semicontinuity of $\calF_0$ we conclude that
			\begin{align*}
			\frac1k h^\qc(kz\otimes\nu)&=\calF_0(w_k,1;Q^\nu)\leq\liminf_{j\to\infty}\calF_0(u_{j,k},1;Q^\nu)\\
			&=\liminf_{j\to\infty}\int_{J_{u_{j,k}}}g([u_{j,k}](x),\nu)\,d \calH^{n-1}(x)\\
			&=\liminf_{j\to\infty}(M_j-{1})g(t_jz_j,\nu)=\liminf_{j\to\infty}\frac {g(t_jz_j,\nu)}{t_j}\,.
			\end{align*}
			As this holds for every sequence, this implies
			% in turn implying 
			\begin{equation}\label{e:hqcinfty leq g parziale}
			h^\qcinfty(z\otimes\nu)\leq\liminf_{(t,z')\to(0,z)}\frac {g(tz',\nu)}{t}\,.
			\end{equation}
Indeed, the superior limit in the definition of $h^\qcinfty$ is actually a limit on rank-$1$ directions being $h^\qcinfty$ convex on those directions.
			
			Let now $\widetilde{z}_j\to 0$ be a sequence for which
			\[
			\liminf_{z\to 0}\frac{g(z,\nu)}{h^\qcinfty(z\otimes\nu)}=
			\lim_{j\to \infty}\frac{g(\widetilde{z}_j,\nu)}{h^\qcinfty(\widetilde{z}_j\otimes\nu)}\,.
			\]
			Upon setting $z_j:=\frac{\widetilde{z}_j}{{|}\widetilde{z}_j{|}}$,   
			up to subsequences we may assume that $z_j\to z_\infty\in S^{n-1}$. In addition, $t_j:={|}\widetilde{z}_j{|}\to 0$. Therefore, being
			$h^\qcinfty$ one-homogeneous we have that
			\[
			\frac{g(\widetilde{z}_j,\nu)}{h^\qcinfty(\widetilde{z}_j\otimes\nu)}=
			\frac{g(t_jz_j,\nu)}{t_j}\frac1{h^\qcinfty(z_j\otimes\nu)}\,.
			\]
			By the latter equality, by \eqref{e:hqcinfty leq g parziale} and by the continuity of $h^\qcinfty$ we infer
			\begin{equation}\label{e:hqcinfty leq g}
			\liminf_{z\to 0}\frac{g(z,\nu)}{h^\qcinfty(z\otimes\nu)}\geq 1\,.
			\end{equation}
			The conclusion follows at once from \eqref{e:g leq hqcinfty} and \eqref{e:hqcinfty leq g}.
		\end{proof}

		We now identify $h^\qcinfty$ explicitly as stated in \eqref{eqhinftypsiinf}.
		\begin{proposition}\label{p:identification Cantor energy density} 
			For all $\xi\in\R^{m\times n}$
			\begin{equation}\label{eqhinftypsiinf2}
			h^\qcinfty(\xi)=\ell(\Psi^{\sfrac12})^\qcinfty(\xi)\,.
			\end{equation}
		\end{proposition}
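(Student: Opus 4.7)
The plan is a short, two-sided sandwich argument exploiting that $h=\Psi\wedge \ell\Psi^{1/2}$ agrees with $\ell\Psi^{1/2}$ up to a bounded error, so that after dividing by $t$ and sending $t\to\infty$ the two recession functions must coincide. I will use two standard facts: (a) the quasiconvex envelope is monotone, i.e.\ $F_1\le F_2$ implies $F_1^\qc\le F_2^\qc$, and (b) it is compatible with nonnegative scaling and with translation by constants, i.e.\ $(\lambda F)^\qc=\lambda F^\qc$ for $\lambda\ge0$ and $(F-c)^\qc=F^\qc-c$ for $c\in\R$.

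\textbf{Upper bound.} Directly from the definition \eqref{e:h} we have $h(\xi)\le \ell\Psi^{\sfrac12}(\xi)$ for every $\xi\in\R^{m\times n}$. Monotonicity and scaling compatibility of the quasiconvex envelope give $h^\qc(\xi)\le \ell(\Psi^{\sfrac12})^\qc(\xi)$. Evaluating at $t\xi$, dividing by $t>0$, and taking $\limsup_{t\to\infty}$ yields
\begin{equation*}
h^\qcinfty(\xi)\le \ell(\Psi^{\sfrac12})^\qcinfty(\xi).
\end{equation*}

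\textbf{Lower bound.} The AM--GM-type inequality $(\Psi^{\sfrac12}(\xi)-\ell/2)^2\ge 0$ rearranges to $\Psi(\xi)\ge \ell\Psi^{\sfrac12}(\xi)-\ell^2/4$, and trivially $\ell\Psi^{\sfrac12}(\xi)\ge \ell\Psi^{\sfrac12}(\xi)-\ell^2/4$. Taking the minimum we conclude
\begin{equation*}
h(\xi)=\Psi(\xi)\wedge\ell\Psi^{\sfrac12}(\xi)\ge \ell\Psi^{\sfrac12}(\xi)-\frac{\ell^2}{4}
\qquad\text{for all }\xi\in\R^{m\times n}.
\end{equation*}
Applying monotonicity of the quasiconvex envelope together with its compatibility with additive constants we obtain
\begin{equation*}
h^\qc(\xi)\ge \ell(\Psi^{\sfrac12})^\qc(\xi)-\frac{\ell^2}{4}.
\end{equation*}
Evaluating at $t\xi$, dividing by $t$, and taking $\limsup_{t\to\infty}$, the constant term disappears and we obtain $h^\qcinfty(\xi)\ge \ell(\Psi^{\sfrac12})^\qcinfty(\xi)$.

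Combining the two inequalities gives \eqref{eqhinftypsiinf2}. There is no significant obstacle: the only thing to check is that the recession operation commutes with the bounded error $\ell^2/4$, which is immediate since $\limsup_{t\to\infty}(a(t)/t-c/t)=\limsup_{t\to\infty}a(t)/t$ for every constant $c$. Note that this argument does not rely on explicit knowledge of $\Psi^\infty$ or $(\Psi^{\sfrac12})^\qcinfty$; it only uses the pointwise sandwich of $h$ between $\ell\Psi^{\sfrac12}-\ell^2/4$ and $\ell\Psi^{\sfrac12}$, together with the general properties of the quasiconvex envelope recalled above.
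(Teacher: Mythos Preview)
Your proof is correct and follows essentially the same strategy as the paper: both directions rest on sandwiching $h$ between $\ell\Psi^{1/2}$ and $\ell\Psi^{1/2}$ minus a fixed constant, then passing to quasiconvex envelopes and recession. Your lower bound is a slight streamlining of the paper's argument---where the paper splits the test cube into $\{\Psi^{1/2}\le\ell\}$ and its complement to obtain $h^\qc(t\xi)\ge \ell(\Psi^{1/2})^\qc(t\xi)-\ell^2-\eps$, your pointwise AM--GM inequality $\Psi\ge \ell\Psi^{1/2}-\ell^2/4$ yields the sharper constant $\ell^2/4$ directly, avoiding the need to unpack the quasiconvexification with explicit test functions.
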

		\begin{proof}
			With fixed $\xi\in\R^{m\times n}$, the very definition of $h$ in \eqref{e:h} and the growth condition \eqref{e:Psi gc} easily imply 
			\[
			h^\qcinfty(\xi)=\limsup_{t\to\infty}\frac{h^\qc(t\xi)}{t}\leq
			\limsup_{t\to\infty}\frac{\ell(\Psi^{\sfrac12})^\qc(t\xi)}{t}=\ell(\Psi^{\sfrac12})^\qcinfty(\xi)\,.
			\]
			Let $\eps>0$, then for every $t>0$ consider $\varphi_t\in {C^\infty_c}(Q_1;\R^m)$ such that 
			\begin{equation}\label{e:phit}
			h^\qc(t\xi)\geq\int_{Q_1}h(t\xi+\nabla\varphi_t(x))\dx-\eps\,.
			\end{equation}
			Note that
			\begin{align*}
			E_t:=&\{x\in Q_1:\,h(t\xi+\nabla\varphi_t(x))=\Psi(t\xi+\nabla\varphi_t(x))\}\\
			=&
			\{x\in Q_1:\,\Psi^{\sfrac12}(t\xi+\nabla\varphi_t(x))\leq\ell\}\,,
			\end{align*}
			so that
			\[
			\int_{E_t}\ell\Psi^{\sfrac12}(t\xi+\nabla\varphi_t(x))\dx\leq\ell^2\,.
			\]
			Therefore, being $h\geq 0$ (cf. again \eqref{e:Psi gc}) from \eqref{e:phit} we infer that
			\[
			h^\qc(t\xi)\geq\int_{Q_1}\ell\Psi^{\sfrac12}(t\xi+\nabla\varphi_t(x))\dx-\ell^2-\eps
			\geq \ell(\Psi^{\sfrac12})^\qc(t\xi)-\ell^2-\eps\,,
			\]
			from which we conclude that
			\[
			h^\qcinfty(\xi)\geq%\limsup_{t\to\infty}\frac{h^\qc(t\xi)}{t}\geq
			\limsup_{t\to\infty}\frac{\ell(\Psi^{\sfrac12})^\qc(t\xi)}{t}=
			\ell(\Psi^{\sfrac12})^\qcinfty(\xi)\,.\qedhere
			\]
		\end{proof}
		From Propositions~\ref{p:behaviour g at 0} and \ref{p:identification Cantor energy density} 
		we deduce straightforwardly the ensuing statement.
		\begin{corollary}\label{c:behaviour g at 0}
			For all $\nu\in S^{n-1}$ we have
			\[
			\lim_{z\to 0}\frac{g(z,\nu)}{\ell(\Psi^{\sfrac12})^\qcinfty(z\otimes \nu)}=1\,.
			\]
		\end{corollary}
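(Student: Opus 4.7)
The plan is simply to chain the two preceding results. Both Proposition~\ref{p:behaviour g at 0} and Proposition~\ref{p:identification Cantor energy density} have already been established in the excerpt, so no new analytic work is required; the corollary is a purely algebraic consequence.

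First I would invoke Proposition~\ref{p:identification Cantor energy density}, applied to the rank-one matrix $\xi=z\otimes\nu$, to rewrite the denominator as
\[
\ell(\Psi^{\sfrac12})^\qcinfty(z\otimes\nu) = h^\qcinfty(z\otimes\nu).
\]
This is valid for every $z\in\R^m$ and $\nu\in S^{n-1}$. Note that by the lower bound in \eqref{e:h gc} one has $h^\qcinfty(z\otimes\nu)\ge \frac1c|z|$, so the denominator is strictly positive for $z\ne 0$ and the ratio is well defined in a punctured neighborhood of the origin.

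Next I would substitute this identity into the ratio appearing in the corollary:
\[
\frac{g(z,\nu)}{\ell(\Psi^{\sfrac12})^\qcinfty(z\otimes\nu)}
= \frac{g(z,\nu)}{h^\qcinfty(z\otimes\nu)}.
\]
Passing to the limit $z\to 0$ and applying Proposition~\ref{p:behaviour g at 0} (which is the genuine content, relying on the $L^1$-lower semicontinuity of $\calF_0$ via Theorem~\ref{t:finale}) gives the value $1$, concluding the proof.

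There is no real obstacle: the only point that could require a line of commentary is the non-vanishing of the denominator for $z\ne 0$, which, as noted, is immediate from the coercivity bound \eqref{e:h gc} transferred to $h^\qcinfty$ via the definition \eqref{eqdefqcinfty}. All the hard work has already been absorbed into the two propositions being cited.
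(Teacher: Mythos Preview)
Your proposal is correct and matches the paper's own proof exactly: the paper simply states that the corollary follows straightforwardly from Propositions~\ref{p:behaviour g at 0} and~\ref{p:identification Cantor energy density}, which is precisely the substitution you describe. Your additional remark about the non-vanishing of the denominator via \eqref{e:h gc} is a harmless clarification not spelled out in the paper.
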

		
		% 
		% \MFF{$(\Psi^{\sfrac12})^\qcinfty$  \`e definita da un limite per le ipotesi che abbiamo su $\Psi$? Vorrei capire se (iii) del  Lemma~\ref{lemmahqc} vale in generale o meno.}
		
		{We conclude this section by proving that, under our hypotheses, the superior limit in the definition of $\Psi^{\sfrac12}$ is in fact a limit and that the operations of quasi-convexification and of recession for 
			$\Psi^{\sfrac12}$ commute. 
			
			\begin{proposition}\label{p:Psi1/2infty}
				We have that
				\begin{enumerate}
					\item\label{lemmapsi12lim} $
					\displaystyle(\Psi^{\sfrac12})^\infty(\xi)=
					{\displaystyle(\Psi_\infty)^{\sfrac12}(\xi)}=
					\lim_{t\to\infty}\frac{\Psi^{\sfrac12}(t\xi)}{t},$ for all $\xi\in\R^{m\times n}$;
					\item\label{lemmapsi12qcinfty}	$(\Psi^{\sfrac12})^{\qc,\infty}=(\Psi^{\sfrac12})^{\infty,\qc}.$
					\item\label{lemmapsi2} In the special case $\Psi_2(\xi):=\dist^2(\xi,\SO(n))$ one obtains $h^{\qc,\infty}(\xi)=\ell|\xi|$
					for all $\xi\in\R^{m\times n}$.
				\end{enumerate}
			\end{proposition}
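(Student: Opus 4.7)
The plan is to handle the three claims in order, using (i) to provide the quantitative control needed for (ii), and then combining (ii) with Proposition~\ref{p:identification Cantor energy density} to get (iii).

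For (i), the key observation is that \eqref{eqpsipsiinf} gives $|\Psi(\xi)-\Psi_\infty(\xi)|\le \delta|\xi|^2$ for $|\xi|\ge t_\delta$. The lower bound in \eqref{e:Psi gc} together with the scaling identity $\Psi_\infty(t\xi)=t^2\Psi_\infty(\xi)$ shows that both $\Psi^{\sfrac12}(\xi)$ and $\Psi_\infty^{\sfrac12}(\xi)$ grow linearly in $|\xi|$. Applying the factorization $|a-b|=|a^2-b^2|/(a+b)$ with $a:=\Psi^{\sfrac12}(\xi)$ and $b:=\Psi_\infty^{\sfrac12}(\xi)$, this upgrades the quadratic closeness of $\Psi$ and $\Psi_\infty$ to the \emph{linear} bound
\[
 |\Psi^{\sfrac12}(\xi)-\Psi_\infty^{\sfrac12}(\xi)|\le C\delta\,|\xi| \qquad\text{for $|\xi|\ge t_\delta'$.}
\]
Dividing by $t$ and applying this at $t\xi$, the $\limsup$ in the definition of $(\Psi^{\sfrac12})^\infty(\xi)$ becomes a limit, equal to $\Psi_\infty^{\sfrac12}(\xi)$. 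The case $\xi=0$ is trivial from \eqref{e:Psi gc}.

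For (ii), write $G:=\Psi^{\sfrac12}$. One inequality is immediate: since $G^\qc\le G$, one has $(G^\qc)^\infty\le G^\infty$, and as $(G^\qc)^\infty$ is quasiconvex (being the recession function of a quasiconvex function, cf.\ the remark recalled after \eqref{eqdefqcinfty}), this gives $(G^\qc)^\infty\le (G^\infty)^\qc$. For the converse, fix $\eps>0$ and choose $\varphi\in C^\infty_c((0,1)^n;\R^m)$ with $\int_{(0,1)^n}G^\infty(\xi+\nabla\varphi)\dx\le (G^\infty)^\qc(\xi)+\eps$. Testing the quasiconvexification of $G$ at $t\xi$ with the rescaled function $t\varphi$ yields
\[
 \frac{G^\qc(t\xi)}{t}\le \int_{(0,1)^n}\frac{G(t(\xi+\nabla\varphi))}{t}\dx.
\]
By part (i) the integrand converges pointwise to $G^\infty(\xi+\nabla\varphi)$, and the linear bound from part (i) combined with the upper bound $G(\eta)\le c(|\eta|+1)$ furnished by \eqref{e:Psi gc} provides a dominating integrable majorant. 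Dominated convergence then yields $(G^\qc)^\infty(\xi)\le (G^\infty)^\qc(\xi)+\eps$, and $\eps>0$ was arbitrary.

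For (iii), a direct computation from $\Psi_2(\xi)=\dist^2(\xi,\SO(n))$ gives $\Psi_{2,\infty}(\xi)=|\xi|^2$, so by (i), $(\Psi_2^{\sfrac12})^\infty(\xi)=|\xi|$, which is already convex (hence quasiconvex). Thus $(\Psi_2^{\sfrac12})^{\infty,\qc}(\xi)=|\xi|$, and (ii) gives $(\Psi_2^{\sfrac12})^{\qc,\infty}(\xi)=|\xi|$. Substituting into the identity \eqref{eqhinftypsiinf} from Proposition~\ref{p:identification Cantor energy density} yields $h^{\qc,\infty}(\xi)=\ell|\xi|$.

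The main obstacle is the dominated convergence step in (ii), which relies crucially on the \emph{uniform} convergence built into assumption \eqref{eqpsipsiinf}: without it, the linear closeness of $\Psi^{\sfrac12}$ to $\Psi_\infty^{\sfrac12}$ would not hold, and the integrand $G(t(\xi+\nabla\varphi))/t$ could a priori fail to have an integrable majorant as $t\to\infty$.
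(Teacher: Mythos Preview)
Your treatment of (i) and (iii) is correct and essentially matches the paper's. The problem is in (ii): your ``converse'' argument proves the \emph{same} inequality you already established, not the opposite one.

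Indeed, you first show $G^{\qc,\infty}\le G^{\infty,\qc}$ via $G^\qc\le G$ and quasiconvexity of $G^{\qc,\infty}$; that is fine. But then, for the converse, you pick $\varphi$ near-optimal for $G^{\infty,\qc}(\xi)$, bound
\[
\frac{G^\qc(t\xi)}{t}\le \int_{(0,1)^n}\frac{G(t(\xi+\nabla\varphi))}{t}\,\dx,
\]
and pass to the limit. Taking $\limsup_{t\to\infty}$ on the left gives $G^{\qc,\infty}(\xi)$, while dominated convergence on the right gives $\int G^\infty(\xi+\nabla\varphi)\,\dx\le G^{\infty,\qc}(\xi)+\eps$. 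So you have shown $G^{\qc,\infty}(\xi)\le G^{\infty,\qc}(\xi)+\eps$ --- again the easy direction. The inequality $G^{\infty,\qc}\le G^{\qc,\infty}$ is never addressed.

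The difficulty is structural: testing $G^\qc(t\xi)$ with any fixed $\varphi$ only ever produces \emph{upper} bounds on $G^{\qc,\infty}(\xi)$. To bound $G^{\infty,\qc}(\xi)$ from above by $G^{\qc,\infty}(\xi)$ you need instead a pointwise comparison of $G^\infty$ with $G$. Your own estimate from (i), namely $|G(\xi)-G^\infty(\xi)|\le C\delta|\xi|$ for $|\xi|\ge t_\delta'$, together with the linear lower bound on $G$ from \eqref{e:Psi gc}, yields $G^\infty(\xi)\le(1+\delta)G(\xi)+C_\delta$ for all $\xi$ (after adjusting constants). Integrating against $\xi+\nabla\varphi$ and taking the infimum over $\varphi$ gives $G^{\infty,\qc}(\xi)\le(1+\delta)G^\qc(\xi)+C_\delta$; now replace $\xi$ by $t\xi$, divide by $t$, use the $1$-homogeneity of $G^{\infty,\qc}$, and take $\limsup_{t\to\infty}$ to obtain $G^{\infty,\qc}(\xi)\le(1+\delta)G^{\qc,\infty}(\xi)$. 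This is precisely the paper's argument.
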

			\begin{proof}
				{The second equality in \ref{lemmapsi12lim} follows immediately from \eqref{eqdefPsiinfty}. Then, the first is a consequence of the very definition of recession function.} 
				Alternatively, by \eqref{eqpsipsiinf} we infer that, for all $\delta>0$, there is $C_\delta>0$ satisfying
				\begin{equation}\label{psi12liminf}(\Psi^{\sfrac12})^\infty(\xi)\leq(1+\delta)\Psi^{\sfrac12}(\xi)+C_\delta,\qquad \text{for all }\xi\in\R^{m\times n}.
				\end{equation}
				This, together with the definition of recession function, implies \ref{lemmapsi12lim}.
				
				\ref{lemmapsi12qcinfty}
				Since $(\Psi^{\sfrac12})^{\qc}\leq \Psi^{\sfrac12}$, we immediately deduce $(\Psi^{\sfrac12})^{\qc,\infty}\leq(\Psi^{\sfrac12})^{\infty}$. 
				By \cite[Rem.~2.2(ii)]{fonsecamueller1993relaxation}, 
				$(\Psi^{\sfrac12})^{\qc,\infty}$  is quasiconvex, hence
				$(\Psi^{\sfrac12})^{\qc,\infty}\leq(\Psi^{\sfrac12})^{\infty,\qc}$. 
				
				Let us check the converse inequality.
				Let $\xi\in\R^{m\times n}$. By definition of quasi-convexification and \eqref{psi12liminf} we have
				\[(\Psi^{\sfrac12})^{\infty,\qc}(\xi)\leq\int_{(0,1)^n}(\Psi^{\sfrac12})^\infty(\xi+\nabla\varphi)\dx\leq (1+\delta)\int_{(0,1)^n}\Psi^{\sfrac12}(\xi+\nabla\varphi)\dx+C_\delta,\]
				for all $\varphi\in C^\infty_c((0,1)^n;\R^m)$. Hence, taking the infimum over $\varphi$ gives
				\[(\Psi^{\sfrac12})^{\infty,\qc}(\xi)\leq(1+\delta)(\Psi^{\sfrac12})^\qc(\xi)+C_\delta.\]
				Since $(\Psi^{\sfrac12})^{\infty}$ and therefore $(\Psi^{\sfrac12})^{\infty,\qc}$ are positively one-homogeneous, we obtain
				\[(\Psi^{\sfrac12})^{\infty,\qc}\leq(\Psi^{\sfrac12})^{\qc,\infty},\]
				which yields the thesis.
				
				\ref{lemmapsi2} From the definition of $\Psi_2$ one easily obtains $(\Psi_2^{\sfrac12})^\infty(\xi)=|\xi|$. As this function is quasiconvex, it coincides with 	
				$(\Psi_2^{\sfrac12})^{\infty,\qc}$, the assertion follows then from \ref{lemmapsi12qcinfty}
				and Proposition~\ref{p:identification Cantor energy density} .
			\end{proof}
			
			\clearpage

			\section{Lower bound}\label{lowerbound}
			
			%In what follows we shall study the $\Gamma(L^1(\Omega,\R^{m+1}))\hbox{-}$convergence of the family $(\calF_\eps)$.
			%To this aim it will be convenient to prove several intermediate results on the 
			%$\Gamma(L^{2}(\Omega;\R^m)\times L^1(\Omega))\hbox{-}\liminf_\eps\calF_\eps$ and the $\Gamma(L^{2}(\Omega;\R^m)\times L^1(\Omega))\hbox{-}\limsup_\eps\calF_\eps$.
			%For notational convenience we shall simply write $\Gamma(L^1)$ for $\Gamma(L^1(\Omega,\R^{m+1}))$ and 
			%$\Gamma(L^2)$ for $\Gamma(L^{2}(\Omega;\R^m)\times L^1(\Omega))$.
			%Being the $L^{2}(\Omega;\R^m)\times L^1(\Omega)$ topology finer than the $L^{1}(\Omega,\R^{m+1})$,
			%we have $\Gamma(L^2)\hbox{-}\liminf_\eps\calF_\eps\leq\Gamma(L^1)\hbox{-}\liminf_\eps\calF_\eps$ and 
			%$\Gamma(L^2)\hbox{-}\limsup_\eps\calF_\eps\leq\Gamma(L^1)\hbox{-}\limsup_\eps\calF_\eps$.
			
			\subsection{Domain of the limits}\label{s:scalar case}

			In order to characterize the compactness properties and the space in which the limit is finite 
			it is useful to consider the scalar simplification of functional,
			$\calF_\eps^\scal:W^{1,2}(A;\R\times[0,1])\to[0,\infty]$,
			\begin{equation}\label{Feps1}
			\calF_\eps^\scal(u,v;A):=\int_A \Big(
			%\big(1\wedge\frac{\eps {\ell^2} v^2}{(1-v)^2} \big) 
			{f_\eps^2(v)}|\nabla u|^2 + \frac{(1-v)^2}{4\eps}+\eps|\nabla v|^2 
			\Big)\dx.
			\end{equation}
			From \eqref{e:Psi gc}, one immediately obtains  that for any $(u,v)\in W^{1,2}(A;\R^m\times[0,1])$
			\begin{equation}\label{eqfepsfeps1}
			\frac1{c}\max_{i=1,\dots, m}
			\calF_\eps^\scal(u_i,v;A){-c\calL^n(A)}\le 
			\calF_\eps(u,v;A)\le c \sum_{i=1}^m
			\calF_\eps^\scal(u_i,v;A){+c\calL^n(A)}
			\end{equation}
			with the same constant $c\ge 1$ as in \eqref{e:Psi gc}.
			{In particular, \cite[Prop.~6.1]{ContiFocardiIurlano2016}
				implies that if $(u_\eps,v_\eps)\to (u,v)$ in $L^1(\Omega;\R^{m+1})$ with
				\[
				\liminf_{\eps\to0}\calF_\eps(u_\eps,v_\eps)<\infty
				\]
				then $u\in(GBV(\Omega))^m$ and $v=1$ $\calL^n$-a.e. $\Omega$ 
				(for a different proof see Remark~\ref{r:domain direct}).
				In addition, for every $i\in\{1,\ldots,m\}$ 
				\begin{equation}\label{e:one-dim estimate}
				\int_\Omega h_\scal^{\conv}({|\nabla u_i|})\dx
				+\int_{J_{u_i}}g_\scal(|[u_i](x)|)\dd\mathcal{H}^{n-1}
				+\ell|D^cu_i|(\Omega)<\infty\,.
				\end{equation}
				Here $h_\scal^{\conv}:[0,\infty)\to[0,\infty)$ is the convex function 
				explicitly defined by
				\[
				h_\scal^{\conv}(t):={(\ell t\wedge t^2)^\conv=}\begin{cases}
				t^2, & \textrm{ if } t\in[0,\frac\ell 2], \cr
				\ell t-\frac{\ell^2}4, & \textrm{ otherwise,}
				\end{cases}
				\]
				{(cf. 
				\eqref{e:hascal}-\eqref{e:hascalconv}).}
				We remark that it coincides with the simplified model $h_\simp$ for $m=1$ (cf. Lemma~\ref{lemmahqc}). Further,
				{$g_\scal:[0,\infty)\to[0,1]$ is the function implicitly defined by 
					\begin{equation}\label{e:gscal}
					g_\scal(t):=\inf_{\mathcal U_t}\int_0^1|1-\beta|
					\sqrt{f^2(\beta)|\alpha'|^2+|\beta'|}\dd s
					\end{equation}
					{where} $\mathcal U_t:=\{\alpha,\,\beta\in W^{1,2}((0,1)):\,
					\alpha(0)=0,\,\alpha(1)=t,\,0\leq\beta\leq 1,\,\beta(0)=
					\beta(1)=1\}$. In particular, $g_\scal$ satisfies
					\begin{enumerate}
						\item $g_\scal$ is subadditive: $g_\scal(t_1+t_2)\leq g_\scal(t_1)+g_\scal(t_2)$ for every $t_1,\,t_2\in[0,\infty)$, 
						\item $0\leq g_\scal(t)\leq 1\wedge\ell t$, 
						\item $\frac{g_\scal(t)}t\to\ell$ as $t\to 0^+$ 
					\end{enumerate}
					(cf. formula (1.6) in \cite[Theorem~1.1]{ContiFocardiIurlano2016} for the definition of $g_\scal$, and \cite[Section~4]{ContiFocardiIurlano2016} for further properties).
					
					In formula \eqref{e:one-dim estimate} the total variation of the Cantor part of the scalar function $u_i\in GBV(\Omega)$, $|D^cu_i|(\Omega)$, is defined as the least upper bound of the family of measures $|D^c\big((u_i\wedge k)\vee(-k)\big)|(\Omega)$, for $k>0$ (cf. \cite[Definition~4.33, Theorem~4.34]{AFP}). A similar construction can be performed for every 
					$u\in(GBV(\Omega))^m$.
					
					{Precisely}, \cite[Lemma~2.10]{AlFoc} implies that for every 
					$u\in(GBV(\Omega))^m$ for which $|D^cu|$ is a finite measure
					on $\Omega$, 
{one can construct} %{it can be built} 
a vector measure on 
					$\Omega$ with total variation coinciding exactly with 
					$|D^cu|(B)$ for every Borel subset $B$ of $\Omega$.
					For this reason such a vector measure, is denoted by $D^cu$.
					Let us briefly recall the construction of $D^cu$. 
					{To this aim, the family of truncations $\mathcal{T}_k$ defined in \eqref{e:Tk} is employed. Indeed,}
					for every $u\in(GBV(\Omega))^m$ such that $|D^cu|$ is a finite measure on $\Omega$, 
					{it is possible to show that} the following limit exists for every Borel subset $B$ of $\Omega$
					\begin{equation}\label{e:D^cu GBV}
					\lambda(B):=\lim_{k\to\infty}D^c(\mathcal{T}_k(u))(B)\,.
					\end{equation}
					{In addition, $\lambda$ is actually independent from the chosen family 
						of truncations.}
					The set function $\lambda$ turns out to be a vector Radon measure 
					on $\Omega$, and moreover equality $|\lambda|(B)=|D^cu|(B)$ is true 
					for every $B$ as above.
					
					Finally, for functions $u\in(GBV(\Omega))^m$ satisfying 
					estimate \eqref{e:one-dim estimate} it is also true that 
					\begin{equation}\label{e:Juinfty}
					\mathcal{H}^{n-1}(\{x\in J_u:\,u^+(x)=\infty \text{ or } u^-(x)=\infty\})=0
					\end{equation}
					(cf. \cite[Proposition~2.12, Remark~2.13]{AlFoc}), here one works with the one-point compactification of $\R^m$.} 
				We remark that we {deal} with $(GBV(\Omega))^m$
				and not with the strictly larger space $GBV(\Omega;\R^m)$, which is not even a vector space, see \cite[Remark~4.27]{AFP}.
				
				\subsection{Surface energy {in $BV$}}\label{ss:surface lb}
				We prove below the lower bound in $BV$ for the surface term. 
				We recall that the definition of the surface energy {density} $g$ has been given in \eqref{eqdefGsnu}.
				
				\begin{proposition}\label{lowerboundBVsfc}
					Let $u\in BV(\Omega;\R^m)$,
					and $(u_\eps,v_\eps)\to(u,1)$ in $L^1(\Omega;\R^{m+1})$.
					Then {for all $A\in\calA(\Omega)$}
					\begin{equation}\label{lbjump}
					\int_{J_u{\cap A}}g([u],\nu_u)\dd\calH^{n-1}  \le
					\liminf_{\eps\to0}\Functeps(u_\eps,v_\eps;A)
					% 	{\Gamma(L^1)\hbox{-}}\liminf_{\eps\to0} \Functeps(u, 1{;A}),
					\end{equation}
					where $g$ has been defined in \eqref{eqdefGsnu}. 
				\end{proposition}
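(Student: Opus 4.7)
My plan follows the Fonseca--M\"uller blow-up scheme. If $\liminf_{\eps\to0}\calF_\eps(u_\eps,v_\eps;A)=\infty$ the claim is trivial, so I pass to a realizing subsequence and view the energies $\mu_\eps:=\calF_\eps(u_\eps,v_\eps;\cdot)\LL A$ as a bounded family in $\calM^+(A)$; up to a further subsequence $\mu_\eps\weakto^*\mu$ in $\calM^+(A)$. By Radon--Nikodym it suffices to prove
\[
\frac{d\mu}{d\calH^{n-1}\LL J_u}(x_0)\ge g([u](x_0),\nu_u(x_0))\qquad\text{for $\calH^{n-1}$-a.e.\ $x_0\in J_u\cap A$.}
\]
Fix such $x_0$, write $z:=[u](x_0)$, $\nu:=\nu_u(x_0)$, and choose $\rho_j\downarrow 0$ with $\mu(\partial Q^\nu_{\rho_j}(x_0))=0$ so that $\rho_j^{1-n}\mu(Q^\nu_{\rho_j}(x_0))$ converges to the left-hand density and the $BV$ blow-ups $u(x_0+\rho_j\cdot)$ converge to $u^-(x_0)+z\chi_{\{y\cdot\nu>0\}}$ in $L^1(Q^\nu;\R^m)$. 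A standard diagonal choice $\eps_j\downarrow 0$, with $\eps_j':=\eps_j/\rho_j\to 0$, then yields rescaled pairs $u_j(y):=u_{\eps_j}(x_0+\rho_j y)-u^-(x_0)$ and $v_j(y):=v_{\eps_j}(x_0+\rho_j y)$ converging in $L^1(Q^\nu;\R^{m+1})$ to $(z\chi_{\{y\cdot\nu>0\}},1)$.

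The crux is to bound $\calF^\infty_{\eps_j'}(u_j,\bar v_j;Q^\nu)$ from above by the rescaled original energy, for a suitable truncation $\bar v_j$ of $v_j$, so that $(u_j,\bar v_j)$ becomes an admissible competitor in \eqref{eqdefGsnu}. Two mismatches appear. First, $f_{\eps_j}^2(v)=1\wedge\eps_jf^2(v)$ saturates at $1$ on $\{v\ge\gamma_{\eps_j}\}$, where $\gamma_{\eps_j}:=1/(1+\ell\eps_j^{\sfrac12})$ is the threshold with $f_{\eps_j}(\gamma_{\eps_j})=1$, whereas the coefficient $\eps_j'f^2(v)$ in $\calF^\infty_{\eps_j'}$ is unbounded as $v\to 1$. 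Following the strategy anticipated in the introduction I set $\bar v_{\eps_j}:=v_{\eps_j}\wedge\gamma_{\eps_j}$, so that $f_{\eps_j}(\bar v_{\eps_j})=\eps_j^{\sfrac12}f(\bar v_{\eps_j})$ identically; the elastic integrand is unchanged by this truncation (both $f_{\eps_j}(v_{\eps_j})$ and $f_{\eps_j}(\bar v_{\eps_j})$ equal $1$ on $\{v_{\eps_j}>\gamma_{\eps_j}\}$), the Dirichlet term $\eps_j|\nabla v|^2$ only decreases, and the potential excess is pointwise bounded by $(1-\gamma_{\eps_j})^2/(4\eps_j)\le\ell^2/4$, so that its contribution over $Q^\nu_{\rho_j}(x_0)$ is $O(\rho_j^n)=o(\rho_j^{n-1})$. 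The inequality $|\bar v_j-1|\le|v_j-1|+(1-\gamma_{\eps_j})$ preserves the convergence $\bar v_j\to 1$ in $L^1(Q^\nu)$.

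Second, after the change of variables the truncated elastic term becomes $\eps_j'\rho_j^2 f^2(\bar v_j)\Psi(\nabla u_j/\rho_j)$, to be compared with $\eps_j' f^2(\bar v_j)\Psi_\infty(\nabla u_j)$. The uniform recession hypothesis \eqref{eqpsipsiinf} delivers, for every $\delta>0$ and a suitable $C(\delta)$, the pointwise bound $\rho_j^2\Psi(\xi/\rho_j)\ge\Psi_\infty(\xi)-\delta|\xi|^2-C(\delta)\rho_j^2$. The $C(\delta)\rho_j^2$ term integrated against $\eps_j' f^2(\bar v_j)$ is at most $C(\delta)\rho_j$, using $\int_{Q^\nu}\eps_j'f^2(\bar v_j)\dy\le 1/\rho_j$, itself a consequence of $\eps_jf^2(\bar v_j)\le 1$ after truncation. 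The $\delta|\xi|^2$ term is absorbed via the coercivity in \eqref{e:Psi gc}, which dominates $\delta\eps_j'f^2(\bar v_j)|\nabla u_j|^2$ by $c\delta$ times the rescaled elastic LHS plus another $O(\rho_j)$ remainder. Altogether
\[
\calF^\infty_{\eps_j'}(u_j,\bar v_j;Q^\nu)\le (1+c\delta)\,\rho_j^{1-n}\calF_{\eps_j}(u_{\eps_j},v_{\eps_j};Q^\nu_{\rho_j}(x_0))+o(1)\qquad(j\to\infty),
\]
for each fixed $\delta>0$. Since $(u_j,\bar v_j)$ is admissible in \eqref{eqdefGsnu} (the constant translation by $u^-(x_0)$ is invisible to $\Psi_\infty$ and to $f$), passing to the liminf in $j$ and then letting $\delta\downarrow 0$ yields $g(z,\nu)\le \frac{d\mu}{d\calH^{n-1}\LL J_u}(x_0)$.

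The principal difficulty is the bookkeeping carried out in the third paragraph: demonstrating that the truncation at $\gamma_{\eps_j}$ loses only $o(\rho_j^{n-1})$ at surface scale, and that the replacement of $\Psi$ by $\Psi_\infty$ produces only a $\delta$-fractional error controlled by the rescaled energy itself, jointly exploiting the coercivity \eqref{e:Psi gc} and the uniform convergence \eqref{eqpsipsiinf}.
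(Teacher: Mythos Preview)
Your argument is correct and follows essentially the same blow-up strategy as the paper: truncate $v_\eps$ at the threshold $\gamma_{\eps}$ (incurring only an $O(\rho^n)$ error from the potential term), replace $\Psi$ by $\Psi_\infty$ via the uniform-recession hypothesis \eqref{eqpsipsiinf} together with the bound $\eps f^2(\bar v_\eps)\le 1$, rescale, and diagonalize into a competitor for \eqref{eqdefGsnu}. The only cosmetic differences are that the paper writes the $\Psi\to\Psi_\infty$ comparison in the multiplicative form $\Psi+C(\delta)\ge(1-\delta)\Psi_\infty$ (absorbing coercivity implicitly) and couples $\delta=\delta_\rho$ to $\rho$ before diagonalizing, whereas you use the additive form $\Psi_\infty\le\rho^2\Psi(\cdot/\rho)+\delta|\cdot|^2+C(\delta)\rho^2$, invoke \eqref{e:Psi gc} explicitly to absorb $\delta|\cdot|^2$, and send $\delta\downarrow0$ after $j\to\infty$; both routes are equivalent.
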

				\begin{proof}
					Let $(u_\eps,v_\eps)\to(u,1)$ in {$L^1(\Omega;\R^{m+1})$} be such that 
					\[
					\liminf_{\eps\to0}\Functeps(u_\eps,v_\eps;A)<\infty.
					\]
					Up to subsequences and with a small abuse of notation, we can assume that the previous lower limit is in fact a limit. 
					Let us define the measures $\mu_\eps\in \calM^+_b(A)$
					\[{\mu_\eps}:=\left(f_\eps^2(v_\eps) \Psi(\nabla u_\eps) + \frac{(1-v_\eps)^2}{4\eps}+\eps|\nabla v_\eps|^2\right) \calL^n{\LL {A}}.\] 
					Extracting a further subsequence, we can assume that 
					\begin{equation}\label{convmeps}
					\mu_\eps\rightharpoonup \mu \quad weakly^*\text{ in }\calM (A)=(C_c^0(A))'
					\end{equation}
					as $\eps\to0$, for some $\mu\in \calM^+_b(A)$, so that
					\[\liminf_{\eps\to0}\Functeps(u_\eps,v_\eps;A)\geq \mu(A).\]
					
					Equation \eqref{lbjump} will follow once we have proved that
					\begin{equation}\label{densg}\frac{\dd \mu}{\dd\calH^{n-1}\res J_u}(x_0)\geq g([u](x_0),\nu_u(x_0)), \quad \calH^{n-1} \text{-a.e.\ } x_0\in J_u\cap A\,.
					\end{equation}
					We will prove the last inequality for points $x_0\in J_u\cap A$ such that
					\[\frac{\dd \mu}{\dd\calH^{n-1}\res J_u}(x_0)=\lim_{\rho\to0}\frac{\mu(Q^\nu_\rho(x_0))}{\calH^{n-1}(J_u\cap Q^\nu_\rho(x_0))} \quad \text{exists finite},\]
					\[\lim_{\rho\to 0}\frac{\mathcal{H}^{n-1}(J_u\cap Q^\nu_\rho(x_0))}{\rho^{n-1}}=1,\]
					where $\nu:=\nu_u(x_0)$ and $Q^\nu_\rho(x_0){:=x_0+\rho Q^\nu}$ is the cube centred 
					in $x_0$, with side length $\rho$, and one face orthogonal to $\nu$. We remark that such conditions define a set of full measure in $J_u\cap A$.
					
					For $x_0\in J_u\cap A$ as above, we get
					\begin{align*}%\label{blowupgam}
					&\frac{\dd \mu}{\dd\calH^{n-1}\res J_u}(x_0)=\lim_{\rho\to0}\frac{\mu(Q^\nu_\rho(x_0))}{\rho^{n-1}}
					=\lim_{{\scriptsize \begin{array}{c}
							\rho\!\!\in\!\! I\\ \!\!\rho\!\!\to\!\!0\end{array}}}\lim_{\eps\to0}\frac{\mu_\eps(Q^\nu_\rho(x_0))}{\rho^{n-1}}
					\end{align*} 
					where we used \eqref{convmeps} and 
					\[I:=\Big\{\rho\in(0,\frac{2}{\sqrt n}\mathrm{dist}(x_0,\partial A)):\,
					\mu(\partial Q^\nu_\rho(x_0))=0\Big\}.\]
					We introduce
					\[\gamma_\eps:=\inf\{z\in[0,1]:f(z)\geq \eps^{-\sfrac12}\}
					{=\frac{1}{1+\ell\eps^{1/2}}}
					,\]
					\[\tilde v_\eps:=\min\{v_\eps,\gamma_\eps\}\]
					and compute
					\begin{align*}
					\calF_\eps(u_\eps,\tilde v_\eps;Q^\nu_\rho(x_0))&=\calF_\eps(u_\eps,v_\eps;Q^\nu_\rho(x_0)\setminus\{v_\eps>\gamma_\eps\})\\
					&+\int_{{Q^\nu_\rho(x_0)\cap }\{v_\eps>\gamma_\eps\}}{\Psi(\nabla u_\eps)}\dx+{\frac{(1-\gamma_\eps)^2}{4\eps}} \calL^n(Q^\nu_\rho(x_0)\cap\{v_\eps>\gamma_\eps\})\\
					&\leq\calF_\eps(u_\eps,v_\eps;Q^\nu_\rho(x_0))+\frac{\ell^2}4\rho^n\,,
					\end{align*}
					{where in the last step we used that the definition of {$\gamma_\eps$} implies $1-\gamma_\eps=\ell\gamma_\eps\eps^{1/2}{\le\ell\eps^{1/2}}$}.
					Therefore
					\begin{align}\label{blowupgam}
					\frac{\dd \mu}{\dd\calH^{n-1}\res J_u}(x_0)
					\geq \limsup_{{\scriptsize \begin{array}{c}
							\rho\!\!\in\!\! I\\ \!\!\rho\!\!\to\!\!0\end{array}}}\limsup_{\eps\to0}
					{\frac{\calF_\eps(u_\eps,\tilde v_\eps;{Q^\nu_\rho}(x_0))}{\rho^{n-1}}}\,.
					%\int_{Q^\nu_\rho(x_0)}\left(\eps f^2(\tilde v_\eps) \Psi(\nabla u_\eps) + \frac{(1-{\tilde v_\eps})^2}{4\eps}+\eps|D\tilde v_\eps|^2\right)dx,		
					\end{align} 
					% {To justify \eqref{blowupgam} simply note that 
					
					By \eqref{eqpsipsiinf}, for every $\delta\in(0,1)$ one has
					$\Psi(\xi)\ge (1-\delta)\Psiinfty(\xi)$ for $\xi$ sufficiently large. As $\Psiinfty$ is continuous, 
					there is $C(\delta)>0$ such that
					\begin{equation*}
					\Psi (\xi)+ C(\delta) \ge (1-\delta)\Psiinfty(\xi)\hskip5mm \text{ for all }\xi.
					%\Psiinfty(\xi)\le (1+\delta)\Psi(\xi)+ C(\delta) \hskip5mm \text{ for all }\xi.
					\end{equation*}
					We choose $\delta_\rho\to0$ such that $\rho C(\delta_\rho)\to0$.
					As $\eps f^2(\tilde v_\eps)\le 1$, we have
					\begin{equation*}
					\eps f^2(\tilde v_\eps) \Psi(\nabla u_\eps)  
					\ge (1-\delta_\rho)\eps f^2(\tilde v_\eps) \Psiinfty  (\nabla u_\eps) -C(\delta_\rho)
					% \eps f^2(\tilde v_\eps) \Psiinfty (\nabla u_\eps)  
					%  \le (1+\delta_\rho)\eps f^2(\tilde v_\eps) \Psi (\nabla u_\eps) +C(\delta_\rho)
					\end{equation*}
					with $\rho^{1-n} \calL^n(Q_\rho^\nu) C(\delta_\rho)=\rho C(\delta_\rho)\to0$ as $\rho\to0$.
					% Since
					% \[\rho^{1-n}\int_{Q^\nu_\rho(x_0)\cap \{v_\eps>\gamma_\eps\}}\frac{(1-\gamma_\eps)^2}{4\eps}dx\leq \rho,\] 
					We conclude by \eqref{blowupgam} that
					\begin{equation}\label{gamm}
					\frac{\dd \mu}{\dd\calH^{n-1}\res J_u}(x_0)
					% \geq \lim_{{\scriptsize \begin{array}{c}
					% 		\rho\!\!\in\!\! I\nonumber\\ \!\!\rho\!\!\to\!\!0\end{array}}}\lim_{\eps\to0}\frac{\mu_\eps(Q^\nu_\rho(x_0))}{\rho^{n-1}}\nonumber\\
					\geq \limsup_{{\scriptsize \begin{array}{c}
							\rho\!\!\in\!\! I\\ \!\!\rho\!\!\to\!\!0\end{array}}}\limsup_{\eps\to0} \frac{\calF_\eps^{\infty}(u_\eps,\tilde v_\eps; Q^\nu_\rho(x_0))}{\rho^{n-1}},
					\end{equation}
					where $\calF_\eps^{\infty}$ has been defined in \eqref{Feps*}. 
					%C{At this point this contains already the recession function!}
					Setting $y:=(x-x_0)/\rho\in Q^\nu$, and changing variable in the previous expression we get
					\begin{equation*}%\label{eqdmudlimsuplimsupf}
					\frac{\dd \mu}{\dd\calH^{n-1}\res J_u}(x_0)\geq \limsup_{{\scriptsize \begin{array}{c}
							\rho\!\!\in\!\! I\\ \!\!\rho\!\!\to\!\!0\end{array}}}\limsup_{\eps\to0} \calF_\eps^{\infty}(u_\eps^\rho,\tilde v_\eps^\rho;Q^\nu),
					\end{equation*}
					where $w^\rho(y):=w(\rho y+x_0)$ for $y\in Q^\nu$. 
					Recalling that $u_\eps\to u$ in $L^1(\Omega;\R^m)$,
					by diagonalization we can find subsequences $\{\rho_k\}_k$ and $\{\eps(\rho_k)\}_k$ such that $u_{\eps(\rho_k)}^{\rho_k}\to [u](x_0)\chi_{\{y\cdot\nu>0\}}+u^-(x_0)$ in $L^1(Q^\nu;\R^m)$ and
					{\begin{equation*}
						\frac{\dd \mu}{\dd\calH^{n-1}\res J_u}(x_0)\geq \lim_{k\to\infty}\calF_{{\eps(\rho_k)}}^{\infty}(u_{{\eps(\rho_k)}}^{\rho_k},\tilde v_{{\eps(\rho_k)}}^{\rho_k};Q^\nu).
						\end{equation*}}
					%{$\calF_{{\eps(\rho_k)}}^{\infty}(u_{{\eps(\rho_k)}}^{\rho_k},\tilde v_{{\eps(\rho_k)}}^{\rho_k};Q^\nu)\to \frac{\dd \mu}{\dd\calH^{n-1}\res J_u}(x_0)$ as $k\to\infty$. }
					Being $\calF_\eps^{\infty}$ invariant for translations of the first argument, we find 
					\[\frac{\dd \mu}{\dd\calH^{n-1}\res J_u}(x_0)\geq \liminf_{k\to\infty}\calF^{\infty}_{\eps{(\rho_k)}}(u_{\eps(\rho_k)}^{\rho_k},\tilde v_{\eps(\rho_k)}^{\rho_k};Q^\nu)\geq g([u](x_0),\nu_u(x_0)),\]
					that is \eqref{densg}, and this concludes the proof. 
				\end{proof}

				\subsection{Diffuse part in $BV$}\label{ss:lb diffuse}
				
				\begin{proposition}\label{lowerboundBVdff2}
					Let $u\in BV(\Omega;\R^m)$, 
					$(u_\eps,v_\eps)\to (u,1)$ in $L^1(\Omega;\R^{m+1})$,
					$A\in \calA(\Omega)$. Then
					\begin{equation}\label{e:stima dal basso2}
					\int_A h^\qc(\nabla u)\dx + \int_A h^\qcinfty (\dd D^cu) \le  \liminf_{\eps\to0} \Functeps(u_\eps, v_\eps; A)
					%   \Gamma(L^1)\hbox{-}\liminf_{\eps\to0} \Functeps(u, 1, A),
					\end{equation}
					where {$h^\qc$ and $h^\qcinfty$ have been defined in \eqref{e:h}-\eqref{eqdefqcinfty}.}
				\end{proposition}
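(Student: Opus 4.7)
The strategy follows the blow-up approach outlined in the introduction, relying on the pointwise control of $\calF_\eps$ by a $GSBV$ functional announced in the forthcoming Lemma~\ref{lemmaabdelta}. The starting point is an elementary inequality: since $f_\eps^2(v)=\min(1,\eps f^2(v))$, Young's inequality applied to the second branch together with the Modica--Mortola term gives
\[
\eps f^2(v)\Psi(\xi)+\frac{(1-v)^2}{4\eps}\ge f(v)(1-v)\Psi^{\sfrac12}(\xi)=\ell v\,\Psi^{\sfrac12}(\xi),
\]
where the algebraic identity $f(v)(1-v)=\ell v$ reproduces exactly the linear branch of $h$. Hence
\[
f_\eps^2(v)\Psi(\xi)+\frac{(1-v)^2}{4\eps}\ge\min\bigl(\Psi(\xi),\ell v\,\Psi^{\sfrac12}(\xi)\bigr).
\]
On the superlevel set $\{v_\eps\ge\delta\}$ the right-hand side is bounded below by $h_\delta(\xi):=\min(\Psi(\xi),\ell\delta\,\Psi^{\sfrac12}(\xi))$, and $h_\delta\nearrow h$ monotonically as $\delta\uparrow 1$.

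The next step is to convert this into a $GSBV$ bound. Consider the truncation $\tilde u_\eps:=u_\eps\chi_{E_\eps}$, with $E_\eps:=\{v_\eps>\delta'\}$ for a level $\delta'$ chosen in a small interval $(\delta,(1+\delta)/2)$ via Fubini applied to the coarea-type inequality
\[
\int_A\Bigl(\eps|\nabla v_\eps|^2+\frac{(1-v_\eps)^2}{4\eps}\Bigr)\dx\ge 2\int_0^1(1-t)\,\calH^{n-1}\bigl(A\cap\partial^*\{v_\eps>t\}\bigr)\dt,
\]
so that $\calH^{n-1}(A\cap\partial^*E_\eps)$ is controlled by $(1-\delta)^{-1}$ times the Modica--Mortola mass of $\calF_\eps$. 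Since $J_{\tilde u_\eps}\subset\partial^*E_\eps$ and $\nabla\tilde u_\eps=\nabla u_\eps\chi_{E_\eps}$, combining the pointwise bound with this coarea estimate yields, after replacing $h_\delta$ by its quasiconvex envelope (which only decreases the functional) and up to an $o_\eps(1)$ error,
\[
\calF_\eps(u_\eps,v_\eps;A)\ge\int_A h_\delta^\qc(\nabla\tilde u_\eps)\dx+c_\delta\!\!\int_{J_{\tilde u_\eps}\cap A}\!\!(1\wedge|[\tilde u_\eps]|)\dd\calH^{n-1},
\]
with a prefactor $c_\delta\to 0$ as $\delta\uparrow 1$.

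Since $|A\setminus E_\eps|\to 0$ (because $(1-v_\eps)^2/(4\eps)$ is equi-bounded in $L^1$), we have $\tilde u_\eps\to u$ in $L^1(A;\R^m)$. Applying Ambrosio's lower semicontinuity theorem in $GSBV(A;\R^m)$, valid since $h_\delta^\qc$ is quasiconvex with linear growth (cf.\ \eqref{e:h gc}) and the surface density is subadditive, one-homogeneous and symmetric, we deduce
\[
\int_A h_\delta^\qc(\nabla u)\dx+\int_A(h_\delta^\qc)^\infty\dd D^cu+\int_{J_u\cap A}\Theta_\delta([u],\nu_u)\dd\calH^{n-1}\le\liminf_{\eps\to 0}\calF_\eps(u_\eps,v_\eps;A),
\]
where $\Theta_\delta\ge 0$ is the appropriate relaxed surface density. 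Discarding the nonnegative jump contribution and letting $\delta\uparrow 1$, monotone convergence yields $h_\delta^\qc\uparrow h^\qc$ and $(h_\delta^\qc)^\infty\uparrow h^{\qc,\infty}$, which gives \eqref{e:stima dal basso2}.

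The main obstacle is the coordinated control of the perimeter of $E_\eps$ and the vanishing surface prefactor $c_\delta$: the Fubini-type selection of the threshold $\delta'$ must be sharp enough that the $GSBV$ surface term stays harmless in the limit. A secondary technical point is the passage to the limit in the quasiconvex envelope and its recession, which is routine thanks to the Lipschitz continuity of quasiconvex envelopes of functions with linear growth, but the stability of the recession function under monotone convergence requires a bit of care (and is partly addressed by Proposition~\ref{p:identification Cantor energy density}).
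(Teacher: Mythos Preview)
Your strategy—pass through Lemma~\ref{lemmaabdelta} to a $GSBV$ bound and then invoke lower semicontinuity in one shot—is a legitimate alternative to the paper's blow-up argument, and the paper even remarks that a route via Fonseca--Leoni \cite{FonsecaLeoni2001} is available. However, the execution has a real gap at the lsc step.

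The crucial point is the form of the surface term. The coarea argument yields a \emph{brittle} penalty $\beta_\delta\,\calH^{n-1}(J_{\tilde u_\eps}\cap A)$, not the cohesive density $c_\delta(1\wedge|[\tilde u_\eps]|)$ you write (which, incidentally, is not one-homogeneous, so the hypotheses you list do not apply). This distinction is not cosmetic: for a functional on $SBV$ with quasiconvex bulk of linear growth and surface density $g$, the Cantor density of the lsc envelope on $BV$ is governed by $\min\bigl((h_\delta^\qc)^\infty(z\otimes\nu),\,\limsup_{t\downarrow0}g(tz,\nu)/t\bigr)$. With your cohesive $g$ that slope is $c_\delta\to0$, so the Cantor term would vanish as $\delta\uparrow1$ and \eqref{e:stima dal basso2} would not follow. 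With the brittle density the slope at $0$ is $+\infty$ and this obstruction disappears, but ``Ambrosio's lower semicontinuity theorem in $GSBV$'' still does not give what you claim: those results address superlinear bulk growth and say nothing about the Cantor part when the bulk has linear growth.

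The paper resolves this by blow-up rather than a global lsc theorem. Lemma~\ref{lemmaabdelta} produces the brittle functional $H_\delta$; at Lebesgue (resp.\ Cantor) points the rescaling sends the surface prefactor $\beta_\delta/\rho_i$ (resp.\ $\beta_\delta/(\rho_i t_i)$, using $\rho_i t_i\to0$ from Lemma~\ref{choicesequence}) to infinity, forcing $\calH^{n-1}(J_{w_i}\cap Q_1)\to0$. After an $L^\infty$ truncation via $\mathcal{T}_k$ this kills $|D^s\hat w_i|$ entirely, and Theorem~\ref{theoambdm92} (Ambrosio--Dal Maso, which \emph{does} give the recession density $h^{\qc,\infty}$) applies pointwise. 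To salvage your approach you would need either to reproduce this blow-up or to cite a Fonseca--Leoni type result that directly handles linear-growth bulk together with a bound on $\calH^{n-1}(J_{u_j})$.

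A smaller issue: you use the full term $(1-v_\eps)^2/(4\eps)$ both in the Young step producing $\ell v\Psi^{\sfrac12}$ and in the coarea inequality controlling the perimeter of $E_\eps$; this is double counting. The paper splits it as $\delta^2(1-v)^2/(4\eps)+(1-\delta^2)(1-v)^2/(4\eps)$, which is why $\alpha_\delta$ and $\beta_\delta$ both carry a $\delta$-dependence (and why $\beta_\delta\to0$ is unavoidable).
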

				
				We remark that this statement can be proven using the 
				lower-semicontinuity result by Fonseca
				and Leoni  \cite[Th. 1.8]{FonsecaLeoni2001}, following an argument similar to that used in \cite[Subsection~4.1]{AlFoc}.
				Instead, our proof is based on the following result from
				\cite[Theorem~4.1]{AmbrosioDalmaso1992}, see also \cite[Theorem~5.47]{AFP}.
				\begin{theorem}[Ambrosio-Dal Maso]\label{theoambdm92}
					Let $\phi:\R^{m\times n}\to[0,\infty)$ be quasiconvex and such that
					\begin{equation*}
					0\le \phi(\xi)\le c(1+|\xi|) \quad\text{ for all } \xi\in\R^{m\times n},
					\end{equation*}
					and define $F:L^1(\Omega;\R^m)\to\R$ by
					\begin{equation*}
					F(u):=\begin{cases}
					\displaystyle\int_\Omega \phi(\nabla u) \dx, & \text{ if } u\in W^{1,1}(\Omega;\R^m),\\
					\infty, & \text{ otherwise in }
					L^{1}(\Omega;\R^m).
					\end{cases}
					\end{equation*}
					Then for any $u\in BV(\Omega;\R^m)$ we have
					\begin{equation*}
					sc^-(L^1)\hbox{-}F(u)=\int_\Omega \phi(\nabla u) \dx + \int_\Omega \phi^\infty( \dd D^su),
					\end{equation*}
					where $\phi^\infty(\xi):=\limsup_{t\to\infty} \phi(t\xi)/t$. 
					In particular the latter functional is lower semicontinuous with respect to the strong $L^1(\Omega;\R^m)$ convergence.
				\end{theorem}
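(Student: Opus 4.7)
The assertion splits into two inequalities: a lower semicontinuity estimate for sequences $u_j\to u$ in $L^1$ (the $\ge$ direction) and the construction of a recovery sequence for every $u\in BV(\Omega;\R^m)$ (the $\le$ direction). The recovery part is the easier one: I would take $u_\eps:=u\ast\rho_\eps\in W^{1,1}(\Omega;\R^m)$ for a standard family of mollifiers, so that $\nabla u_\eps=Du\ast\rho_\eps=(\nabla u)\ast\rho_\eps+D^su\ast\rho_\eps$. Using the linear growth of $\phi$ together with the majorization $\phi(\xi)\le\phi^\infty(\xi)+C$ and Jensen's inequality applied along the rank-one direction of $D^su$ (which is $|D^su|$-a.e.\ rank one by Alberti's theorem, so $\phi^\infty$ restricted to that direction is convex), one obtains $\limsup_\eps F(u_\eps)\le\int_\Omega\phi(\nabla u)\dx+\int_\Omega\phi^\infty(\dd D^su)$. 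A cleaner presentation uses a Besicovitch decomposition of $\Omega$ into small cubes on which either $\nabla u\calL^n$ dominates $|D^su|$ or vice versa, and builds local recovery sequences in each regime before gluing.

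For the lower bound, let $u_j\in W^{1,1}(\Omega;\R^m)$ with $u_j\to u$ in $L^1$ and $\sup_j F(u_j)<\infty$. Extract a subsequence and a nonnegative Radon measure $\mu$ on $\Omega$ with $\phi(\nabla u_j)\calL^n\weakto\mu$ weakly$^*$ in $\calM(\Omega)$, so that $\mu(\Omega)\le\liminf_j F(u_j)$. By the Radon-Nikodym theorem $\mu=\theta_a\calL^n+\theta_s|D^su|+\mu^{\perp}$ with $\mu^\perp$ mutually singular with $\calL^n+|D^su|$. It therefore suffices to prove the pointwise lower bounds
\begin{equation*}
\theta_a(x_0)\ge\phi(\nabla u(x_0))\ \calL^n\text{-a.e.\ }x_0,\qquad \theta_s(x_0)\ge\phi^\infty\Bigl(\tfrac{\dd D^su}{\dd|D^su|}(x_0)\Bigr)\ |D^su|\text{-a.e.\ }x_0.
\end{equation*}
Both estimates are obtained by the blow-up technique of Fonseca-Müller. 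For the regular part one picks $x_0$ that is simultaneously a Lebesgue point of $\nabla u$, a point of approximate differentiability of $u$, and a point of null $|D^su|$-density; then one rescales $u_j$ by $v_{j,r}(y):=(u_j(x_0+ry)-u(x_0))/r$ on the unit cube and uses a diagonal extraction so that $v_{j(r),r}(y)\to\nabla u(x_0)y$ in $L^1$. The quasiconvex inequality in integral form, valid for test fields after a truncation producing equi-integrable gradients, then yields $\theta_a(x_0)\ge\phi(\nabla u(x_0))$.

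For the singular part one rescales differently, setting $w_{j,r}(y):=(u_j(x_0+ry)-c_{j,r})/\lambda_r$ with $\lambda_r$ proportional to $|D^su|(Q_r(x_0))/r^{n-1}$, so that the blow-up target is (along a subsequence) a jump profile whose amplitude encodes $\xi_0:=\dd D^su/\dd|D^su|(x_0)$. By Alberti's rank-one theorem $\xi_0$ has rank at most one, and the definition $\phi^\infty(\xi_0)=\limsup_{t\to\infty}\phi(t\xi_0)/t$ combined with the positive one-homogeneity and quasiconvexity of $\phi^\infty$ delivers $\theta_s(x_0)\ge\phi^\infty(\xi_0)$. The hard part is precisely this singular case: the linear growth of $\phi$ gives only $L^1$ (not reflexive) bounds on the rescaled gradients, and the limit measure concentrates near the jump hyperplane, so standard weak compactness is unavailable. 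The key technical device, which is the main obstacle to formalize, is a De Giorgi-style averaging/truncation on the codomain (in the spirit of the family $\mathcal{T}_k$ of \eqref{e:Tk}) that replaces the rescaled maps by bounded ones with vanishing energy loss, so that the quasiconvex characterization of $\phi^\infty$ can be effectively applied on the blow-up profile.
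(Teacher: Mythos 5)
This is not a result proved in the paper: Theorem~\ref{theoambdm92} is stated as a direct citation of Ambrosio--Dal~Maso (\cite[Theorem~4.1]{AmbrosioDalmaso1992}, see also \cite[Theorem~5.47]{AFP}), and the authors explicitly say they use it as a black box instead of re-deriving it from Fonseca--Leoni. So there is no ``paper's own proof'' to compare against; what you have produced is a sketch of a proof of a classical theorem that the paper simply quotes.

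As a sketch, the lower-bound half is on the right track: weak$^*$ compactness of the energy measures, Radon--Nikodym decomposition with respect to $\calL^n$ and $|D^su|$, Fonseca--M\"uller blow-up, and the observation that equi-integrability failures with merely $L^1$ gradient bounds are handled by a De~Giorgi-type truncation on the codomain are exactly the right ingredients, and you correctly flag the loss of reflexivity at linear growth as the key technical obstruction. The upper-bound half, however, has a genuine gap. The mollification argument rests on the majorization $\phi(\xi)\le\phi^\infty(\xi)+C$ and a pointwise application of Jensen's inequality ``along the rank-one direction of $D^su$''. The first inequality is a feature of convex functions (there one has $\phi(\xi)\le\phi^\infty(\xi)+\phi(0)$ by monotonicity of $t\mapsto(\phi(t\xi)-\phi(0))/t$), but for a quasiconvex $\phi$ defined through a $\limsup$ recession it must be proved, not asserted. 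More seriously, Jensen's inequality cannot be invoked this way: $\phi^\infty$ is only rank-one convex, and mollifying $u$ near $J_u$ or a Cantor set mixes rank-one matrices with different directions, so the resulting averages of $Du$ are not rank-one in general and rank-one convexity gives no Jensen-type control. This is precisely what Reshetnyak's continuity theorem does in the convex case and what makes the quasiconvex upper bound nontrivial; Ambrosio and Dal~Maso (and \cite[Theorem~5.47]{AFP}) circumvent it with a more elaborate local approximation and covering argument rather than naive convolution. So your outline is directionally sensible for the lower bound, but the recovery-sequence step as written would not close.
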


				We start with a truncation result.
				\begin{lemma}\label{lemmaabdelta}
					There are two functions $\alpha,\beta:(0,1)\to(0,1)$, with 
					$\lim_{\delta\uparrow 1}\alpha_\delta=1$
					and
					$\lim_{\delta\uparrow 1}\beta_\delta=0$, such that 
					for any $\eps>0$, $(u_\eps,v_\eps)\in W^{1,2}(\Omega;\R^m\times[0,1])$, $\delta\in(0,1)$ 
					{and} $A\in\calA(\Omega)$
					there is $\tilde u_\eps^\delta\in GSBV(A;\R^m)$ such that
					\begin{equation*}
					H_\delta(\tilde u_\eps^\delta;A)\le \Functeps(u_\eps,v_\eps;A) 
					+ {h(0) \calL^n(A\cap\{v_\eps\leq \delta\})},
					\end{equation*}
					where $H_\delta$
					%:L^1(\Omega;\R^m\times [0,1])\times \calA(\Omega)\to[0,\infty]$ 
					is defined 
					for $A\in \calA(\Omega)$ and $w\in L^1(A;\R^m)$ by
					\begin{equation}\label{eqdefHdelta}
					H_\delta(w;A):=
					\begin{cases}
					\displaystyle \alpha_\delta\int_A h^\qc(\nabla w) \dx + \beta_\delta
					\calH^{n-1}(A\cap J_w),
					& \text{ if }w\in GSBV(A;\R^m),\\
					\infty, &\text{ otherwise}.
					\end{cases}
					\end{equation}
					If one has
					$(u_\eps,v_\eps)\to(u,1)$  
					in $L^1(\Omega;\R^{m+1})$ {as $\eps\to0$,}
					then $\tilde u_\eps^\delta\to u$ in $L^1(A;\R^{m})$
					as $\eps\to0$, for {any fixed} $\delta\in(0,1)$.
				\end{lemma}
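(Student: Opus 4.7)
The plan is to construct $\tilde u_\eps^\delta$ by zeroing out $u_\eps$ on a sublevel set of $v_\eps$ selected by a coarea argument, matching the bulk and surface parts of $H_\delta$ against the phase-field energy through an AM--GM splitting of the damage term $(1-v_\eps)^2/(4\eps)$.

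\textbf{Step 1: constants and pointwise bulk bound.} I fix $\lambda_\delta := 1-(1-\delta)^2$ and set $\alpha_\delta := \delta\sqrt{\lambda_\delta}$, $\beta_\delta := \tfrac14(1-\delta)^3$, so that $\alpha_\delta \to 1$ and $\beta_\delta \to 0$ as $\delta \uparrow 1$. I split the damage term as $\lambda_\delta\tfrac{(1-v_\eps)^2}{4\eps}+(1-\lambda_\delta)\tfrac{(1-v_\eps)^2}{4\eps}$ and allocate the first piece to the bulk. On $\{v_\eps \ge \gamma_\eps\}$ one has $f_\eps(v_\eps)=1$, hence already $f_\eps^2(v_\eps)\Psi(\nabla u_\eps) \ge h(\nabla u_\eps) \ge \alpha_\delta h(\nabla u_\eps)$. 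On $\{v_\eps < \gamma_\eps\}$, $f_\eps^2(v_\eps) = \eps\ell^2 v_\eps^2/(1-v_\eps)^2$, and AM--GM yields
\[
f_\eps^2(v_\eps)\Psi(\nabla u_\eps) + \lambda_\delta\tfrac{(1-v_\eps)^2}{4\eps}
\ge \sqrt{\lambda_\delta}\,\ell\, v_\eps\,\Psi^{1/2}(\nabla u_\eps)
\ge \alpha_\delta\,\ell\,\Psi^{1/2}(\nabla u_\eps)
\]
whenever $v_\eps > \delta$. Since $h = \Psi \wedge \ell\Psi^{1/2}$ and $h^\qc \le h$, in both cases I get $\alpha_\delta h^\qc(\nabla u_\eps) \le f_\eps^2(v_\eps)\Psi(\nabla u_\eps) + \lambda_\delta (1-v_\eps)^2/(4\eps)$ on $\{v_\eps > \delta\}$.

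\textbf{Step 2: coarea selection.} The remaining damage fraction pairs with the Modica--Mortola gradient term via AM--GM: $(1-\lambda_\delta)(1-v_\eps)^2/(4\eps) + \eps|\nabla v_\eps|^2 \ge \sqrt{1-\lambda_\delta}(1-v_\eps)|\nabla v_\eps|$. Setting $\delta' := (1+\delta)/2$ and applying the coarea formula to $v_\eps \in W^{1,2}$, integrating over the slab $[\delta,\delta']$ (where $1-t \ge 1-\delta'$) and averaging, I select a level $t^*_\eps \in [\delta,\delta']$ with
\[
\calH^{n-1}(\{v_\eps = t^*_\eps\}\cap A) \le \frac{\calF_\eps(u_\eps,v_\eps;A)}{\sqrt{1-\lambda_\delta}\,(1-\delta')(\delta'-\delta)} = \frac{\calF_\eps(u_\eps,v_\eps;A)}{\beta_\delta}.
\]

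\textbf{Step 3: definition and estimate.} I then set
\[
\tilde u_\eps^\delta := u_\eps\,\chi_{\{v_\eps > t^*_\eps\}}.
\]
Since $\{v_\eps > t^*_\eps\}$ has finite perimeter in $A$ by Step 2 and $u_\eps \in W^{1,2}(A;\R^m)$, standard truncation/chain-rule arguments place $\tilde u_\eps^\delta$ in $(GSBV(A))^m$ with $\nabla \tilde u_\eps^\delta = \nabla u_\eps\,\chi_{\{v_\eps > t^*_\eps\}}$ and $J_{\tilde u_\eps^\delta} \subseteq \partial^*\{v_\eps > t^*_\eps\} \subseteq \{v_\eps = t^*_\eps\}$ up to $\calH^{n-1}$-null sets. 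Plugging Step 1 on $\{v_\eps > t^*_\eps\} \subseteq \{v_\eps>\delta\}$, using $\alpha_\delta h^\qc(0) \le h(0)$ on the complement, and using Step 2 for the jump term gives
\[
H_\delta(\tilde u_\eps^\delta;A) \le \calF_\eps(u_\eps,v_\eps;A) + h(0)\,\calL^n(\{v_\eps \le t^*_\eps\}\cap A),
\]
where the three portions of the damage term distribute without overlap because $\lambda_\delta + (1-\lambda_\delta) = 1$.

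\textbf{Step 4: reduction to $\{v_\eps \le \delta\}$ and convergence.} Since $t^*_\eps \in [\delta,\delta']$ and $(1-v_\eps)^2 \ge (1-\delta')^2$ on $\{\delta < v_\eps \le \delta'\}$,
\[
\calL^n(\{\delta < v_\eps \le t^*_\eps\}\cap A) \le \tfrac{16\eps}{(1-\delta)^2}\calF_\eps(u_\eps,v_\eps;A),
\]
which I absorb by building a third, $\eps$-sized residual fraction into the damage splitting (i.e. replacing $\lambda_\delta$ by $\lambda_\delta - \eta$ with $\eta$ small enough that the constants $\alpha_\delta,\beta_\delta$ above are merely replaced by slightly smaller ones of the same order). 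The convergence $\tilde u_\eps^\delta \to u$ in $L^1(A;\R^m)$ for fixed $\delta$ follows because $v_\eps \to 1$ a.e. forces $\calL^n(\{v_\eps \le \delta'\}) \to 0$, while the $L^1$-convergence of $(u_\eps)$ yields equi-integrability, so that $\|u_\eps \chi_{\{v_\eps \le t^*_\eps\}}\|_{L^1(A)} \to 0$. The chief technical obstacle is Step 4: reconciling the coarea-selected level $t^*_\eps \in [\delta,\delta']$ with the clean set $\{v_\eps \le \delta\}$ on the right-hand side without letting $\alpha_\delta,\beta_\delta$ depend on $\eps$.
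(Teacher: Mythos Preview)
Your approach is essentially the paper's: split the damage term, use AM--GM for the bulk, coarea for the surface, and set $\tilde u_\eps^\delta=u_\eps\chi_{\{v_\eps>t^*_\eps\}}$. The paper phrases the coarea step via $\Phi(v_\eps)$ with $\Phi(t)=t-t^2/2$, but this is equivalent to your direct coarea on $v_\eps$ with weight $(1-v_\eps)$.

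The gap is Step~4, and your proposed fix does not close it. Reserving a third fraction $\eta$ of the damage term to control $h(0)\,\calL^n(\{\delta<v_\eps\le t^*_\eps\})$ requires pointwise
\[
h(0)\le \eta\,\frac{(1-\delta')^2}{4\eps},
\]
i.e.\ $\eta\ge 4\eps h(0)/(1-\delta')^2$, so $\eta$---and with it $\alpha_\delta=\delta\sqrt{\lambda_\delta-\eta}$---must depend on $\eps$, contradicting the hypothesis that $\alpha,\beta$ are functions of $\delta$ alone. Equivalently, your Step~3 yields at best $H_\delta\le(1+C_\delta\eps)\calF_\eps+h(0)\calL^n(\{v_\eps\le\delta\})$, and the factor $(1+C_\delta\eps)$ cannot be absorbed into $\eps$-independent constants.

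The remedy is to avoid Step~4 entirely by selecting the coarea level \emph{below} $\delta$ rather than above it---this is precisely what the paper does, choosing $\bar t\in(\Phi(\delta^2),\Phi(\delta))$, i.e.\ the $v$-level in $(\delta^2,\delta)$. In your notation: take $t^*_\eps\in[\delta^2,\delta]$. Then $\{v_\eps\le t^*_\eps\}\subseteq\{v_\eps\le\delta\}$ automatically. Your Step~1 now needs the pointwise bulk bound only on $\{v_\eps>\delta^2\}$, which merely changes $\alpha_\delta$ to $\delta^2\sqrt{\lambda_\delta}$ (still $\to1$); Step~2 runs over $[\delta^2,\delta]$, where $1-t\ge1-\delta$ and the interval has length $\delta(1-\delta)$, giving $\beta_\delta=(1-\delta)\cdot(1-\delta)\cdot\delta(1-\delta)=\delta(1-\delta)^3$ (still $\to0$). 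Step~3 is unchanged and Step~4 is no longer needed.
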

				{We stress that, for the sake of notational simplicity, we will omit here and below the explicit dependence of $\tilde u^\delta_\eps$ on the set $A$.}
				\begin{proof}
					We fix $\delta\in(0,1)$ and {$\eps>0$. 
						We} compute, for any pair $(u,v)\in W^{1,2}(\Omega;\R^{m}\times[0,1])$, 
					\begin{align}\label{eqFdelta}
					\Functeps&(u,v;A)\geq\int_{\{\eps f^2(v)>1\}\cap A}\Psi(\nabla u)\dx
					+\int_{\{\eps f^2(v)\leq 1\}\cap A}\Big(\eps f^2(v) \Psi(\nabla u) +\delta^2 \frac{(1-v)^2}{4\eps}\Big)\dx\notag\\ 
					&+ \int_A\Big((1-\delta^2)\frac{(1-v)^2}{4\eps} + \eps |\nabla v|^2\Big)\dx\notag \\
					&\geq\int_{\{\eps f^2(v)>1\}\cap A}\Psi(\nabla u)\dx
					+\delta \int_{\{\eps f^2(v)\leq 1\}\cap A}v\ell{\Psi}^{\sfrac12}(\nabla u)\dx\notag\\
					&+\sqrt{1-\delta^2}\int_A |\nabla(\Phi(v))|\dx\notag\\
					&\geq
					\delta \int_{A}\Big(\Psi(\nabla u)\wedge v\ell{\Psi}^{\sfrac12}(\nabla u)\Big)\dx
					+\sqrt{1-\delta^2}\int_A |\nabla(\Phi(v))|\dx\notag\\
					&\geq  \delta \int_{A}v h (\nabla u)\dx
					+\sqrt{1-\delta^2} \int_A|\nabla(\Phi(v))|\dx,
					\end{align}
					where $h$ has been introduced in \eqref{e:h} and  $\Phi:[0,1]\to[0,\frac12]$ is defined by 
					\begin{equation}\label{e:Phi}
					\Phi(t):=\int_0^t (1-s) \ds = t-\frac12 t^2\,.
					\end{equation}
					We observe that $\Phi$ is strictly increasing, $\Phi(1)=\frac12$ and in particular $\Phi$ is bijective. By the coarea formula,
					\begin{equation*}
					\int_A |\nabla (\Phi(v))|\dx = \int_0^{1/2}
					\calH^{n-1}(A\cap \partial^*\{\Phi(v)>t\}) \dt.
					\end{equation*}
					Therefore there is $\bar t\in (\Phi(\delta^2),\Phi(\delta))$ such that
					\begin{equation*}
					(\Phi(\delta)-\Phi(\delta^2))
					\calH^{n-1}(A\cap \partial^*\{\Phi(v)>\bar t\}) \le \int_A |\nabla (\Phi(v))|\dx.
					\end{equation*}
					We define 
					\begin{equation*}
					\tilde u:=u\chi_{\{\Phi(v)>\bar t\}\cap A} \in GSBV(A;\R^m)
					\end{equation*}
					{(dropping the dependence on both $\eps$ and $\delta$ from $\tilde u$)} and obtain from \eqref{eqFdelta},
					\begin{align*}
					\Functeps(u,v;A)\ge  &\delta\Phi^{-1}(\bar t)\int_{A} h(\nabla \tilde u) \dx 
					+ \sqrt{1-\delta^2}(\Phi(\delta)-\Phi(\delta^2))
					\calH^{n-1}(A\cap J_{\tilde u})\\
					& {-h(0) \calL^n(\{\Phi(v){\leq}\bar t\}{\cap A})}.
					\end{align*}
					We recall that $\bar t\ge \Phi(\delta^2)$ and that $\Phi$ is increasing, 
					define $\alpha_\delta:=\delta^3$, $\beta_\delta:=\sqrt{1-
						\delta^2}(\Phi(\delta)-\Phi(\delta^2))$, and conclude
					{\begin{align*}
						\Functeps&(u,v;A)\ge  \alpha_\delta\int_{A} h(\nabla \tilde u) \dx 
						+ \beta_\delta
						\calH^{n-1}(A\cap J_{\tilde u}){-h(0)\calL^n(\{v\leq\delta\}{\cap A})}.
						\end{align*}
					}
					% {Therefore}
					% $\Functeps(u,v)\ge H_\delta(\tilde u){-\rho_j^\delta}$  where
					% $H_\delta$ was defined in \eqref{eqdefHdelta}
					% {and 
					% $\rho_j^\delta:=c (1-\delta^3)|A|+c |\{\Phi(v)>\bar t\}|$ 
					% obeys
					% $\lim_{\delta\to0}\limsup_{j\to\infty} \rho_j^\delta=0$.}
					
					We also remark that $\|\tilde u-u\|_{L^1(A)}
					\leq\|u\|_{L^1(\{v\le \Phi^{-1}(\bar t)\})}$, hence, if the sequence $u_\eps$  is equiintegrable and $v_\eps\to1$ in $L^1(A)$, we obtain that $u_\eps-\tilde u_\eps\to0$ in $L^1(A;\R^m)$.
				\end{proof}
The next lemma is a minor reformulation of \cite[Lemma~5.1]{Larsen}. The latter improves the statement of \cite[Theorem~3.95]{AFP} on the convergence of the blow-ups of a $BV$-function in a Cantor point. A more general version of this result can be found in \cite[Lemma 10.6]{Rindler}.
\begin{lemma}\label{choicesequence} Let $u\in BV(\Omega;\R^m)$ and let $\eta:\Omega\to S^{m-1}$, $\xi:\Omega\to S^{n-1}$ be Borel maps such that $D^cu=\eta\otimes\xi|D^cu|$. Then, for $|D^cu|$-a.e. $x\in\Omega$ and for all given $\mu\in\mathcal{M}^+(\Omega)$, there exists a sequence $\rho_i\to0$, as $i\to \infty$, such that
\begin{eqnarray}
&\mu(\partial Q_{\rho_i}^{\xi(x)}(x))=0, \qquad \text{ for all $i\geq1$,}\label{chargeboundary}\\	
& \displaystyle t_{\rho_i}:=\frac{|Du|(Q^{\xi(x)}_{\rho_i}(x))}{\rho_i^n}\to\infty, \qquad t_{\rho_i}\rho_i\to0,\label{cantorscaling}\\
&\displaystyle \frac{ u(x+\rho_i y)-u_{Q^{\xi(x)}_{\rho_i}(x)}}{t_{\rho_i} \rho_i}  
\to \eta(x) \chi( y\cdot \xi(x)) \qquad {\text{strictly-} BV(Q^{\xi(x)};\R^m),\label{blowupcantor}}
\end{eqnarray}
as $i\to\infty$,
for some nondecreasing function $\chi:(-\sfrac12,\sfrac12)\to\R$ with \begin{equation}\label{variation1}
|D\chi|((-\sfrac12,\sfrac12))=1,
\end{equation}
where $u_{Q^{\xi(x)}_{\rho_i}(x)}$ denotes the average of $u$ over $Q^{\xi(x)}_{\rho_i}(x)$.
	\end{lemma}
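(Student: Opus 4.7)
The statement is a refinement of the classical blow-up theorem for $BV$ functions at Cantor points, \cite[Theorem~3.95]{AFP}, with two additions: the radii may be chosen so as not to charge the boundary of the cubes for the assigned measure $\mu$, and the resulting convergence is strict in $BV$ rather than merely in $L^1$. The proof follows the lines of \cite[Lemma~5.1]{Larsen} and proceeds in three steps.

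\emph{Step 1 (baseline blow-up).} By \cite[Theorem~3.95]{AFP}, for $|D^cu|$-a.e.\ $x\in\Omega$ the rescaled maps
\begin{equation*}
u_{\rho}(y):=\frac{u(x+\rho y)-u_{Q^{\xi(x)}_{\rho}(x)}}{t_{\rho}\rho}
\end{equation*}
admit a subsequential $L^1(Q^{\xi(x)};\R^m)$ limit of the form $\eta(x)\chi(y\cdot\xi(x))$ as $\rho\to 0^+$, for some nondecreasing $\chi:(-\sfrac12,\sfrac12)\to\R$ satisfying \eqref{variation1}. The scalings \eqref{cantorscaling} hold for $|D^cu|$-a.e.\ $x$ because such points are neither Lebesgue points for the density $|Du|/\calL^n$ (whence $t_\rho\to\infty$) nor jump points (whence $t_\rho\rho\to 0$).

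\emph{Step 2 (matching of total variations).} A direct change of variables yields, for every open $E\subseteq Q^{\xi(x)}$,
\begin{equation*}
|Du_{\rho}|(E)=\frac{|Du|(x+\rho E)}{t_{\rho}\rho^{n}}.
\end{equation*}
Specialising to $E=Q^{\xi(x)}$ and to radii $\rho$ with $|Du|(\partial Q^{\xi(x)}_\rho(x))=0$, the very definition of $t_\rho$ gives $|Du_\rho|(Q^{\xi(x)})=1$, which coincides with $|Du_\infty|(Q^{\xi(x)})$ for $u_\infty(y):=\eta(x)\chi(y\cdot\xi(x))$, thanks to \eqref{variation1}, $|\eta(x)|=1$, and the unit $(n-1)$-dimensional cross-section of $Q^{\xi(x)}$ orthogonal to $\xi(x)$. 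Convergence of total variations combined with the $L^1$ convergence of Step~1 yields strict $BV$ convergence, i.e.\ \eqref{blowupcantor}.

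\emph{Step 3 (avoiding boundaries).} The set
\begin{equation*}
\mathcal{N}_x:=\{\rho>0:\mu(\partial Q^{\xi(x)}_\rho(x))+|Du|(\partial Q^{\xi(x)}_\rho(x))>0\}
\end{equation*}
is at most countable, as the boundaries of concentric cubes of distinct radii are pairwise disjoint and both $\mu$ and $|Du|$ are finite on compact subsets of $\Omega$. Extracting a further subsequence $\rho_i\to 0$ with $\rho_i\notin\mathcal{N}_x$ secures \eqref{chargeboundary} while preserving the conclusions of Steps~1 and~2. The only substantive input is the blow-up theorem invoked in Step~1, which I would use as a black box (it ultimately rests on Alberti's rank-one theorem); the strict-$BV$ refinement and the boundary condition on $\mu$ are then elementary consequences of matching the total variations on $Q^{\xi(x)}$ and of a routine countability argument.
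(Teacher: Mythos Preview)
Your overall strategy is sound, but there are two issues, one minor and one substantive.

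\emph{Minor (ordering in Step~3).} As written, you first fix the blow-up subsequence $(\rho_i)$ in Step~1 and then try to pass to a ``further subsequence'' avoiding $\mathcal N_x$. This need not be possible: the sequence produced by Step~1 could lie entirely inside the countable set $\mathcal N_x$. The easy fix is to reverse the order: start with any sequence $\rho_j\to0$ avoiding $\mathcal N_x$ (possible since $\mathcal N_x$ is countable), and then apply the blow-up compactness to extract the convergent subsequence. This is in fact simpler than the paper's approach, which keeps the Larsen subsequence and perturbs each $\rho_i$ to a nearby $s_i\rho_i\notin\mathcal N_x$ via a continuity argument for $s\mapsto u_x^{s\rho_i}$ and $s\mapsto|Du_x^{s\rho_i}|$.

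\emph{Substantive (the claim $|D\chi|=1$).} Your Step~2 hinges on matching $|Du_{\rho_i}|(Q^{\xi(x)})=1$ with $|Du_\infty|(Q^{\xi(x)})=|D\chi|((-\tfrac12,\tfrac12))$, and for this you invoke \eqref{variation1}. But \eqref{variation1} is precisely what needs to be proved: \cite[Theorem~3.95]{AFP} only yields $|D\chi|((-\tfrac12,\tfrac12))\le 1$ (via lower semicontinuity of the total variation under $L^1$ convergence), and nothing prevents mass from escaping to $\partial Q^{\xi(x)}$ along the blow-up. So Step~2 is circular as stated. The paper closes this gap by invoking \cite[Lemma~5.1]{Larsen}, which gives in addition $|Du_x^{\rho_i}|\rightharpoonup\gamma$ weakly$^*$ with $\gamma(Q_1)=1$; combining this with the Radon--Nikodym differentiation $Du_x^{\rho_i}(Q_1)/|Du_x^{\rho_i}|(Q_1)\to\eta(x)\otimes\xi(x)$ and an inner-approximation argument, one obtains $Du_\infty(Q_1)=\eta(x)\otimes\xi(x)$, whence $D\chi((-\tfrac12,\tfrac12))=1$ and \eqref{variation1} follows by monotonicity of $\chi$. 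Only then does the strict-$BV$ convergence drop out.
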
	
\begin{proof}
For simplicity we will denote $Q_1:=Q^{\xi(x)}$, $Q_\rho(x):=x+\rho Q_1$, and
\[u^{\rho}_x(y):=\frac{ u(x+\rho y)-u_{Q^{\xi(x)}_{\rho}(x)}}{t_{\rho} \rho}, \qquad \text{for }y\in Q_1.\] 
By general properties of $BV$ functions \eqref{cantorscaling} holds for the entire family $\rho\to0$ and by Radon-Nikodym differentiation
\begin{equation}\label{differentiation}
\lim_{\rho\to 0}\frac{D^cu^{\rho}_x(Q_1)}{|D^cu^{\rho}_x|(Q_1)}=\eta(x)\otimes \xi(x),
\end{equation}
$|D^cu|$-a.e. $x\in \Omega$.
Up to a further $|D^cu|$-negligible set, \cite[Theorem~3.95]{AFP} and \cite[Lemma~5.1]{Larsen} provide a sequence $\rho_i\to 0$ such that
\begin{eqnarray}
&|Du^{\rho_i}_x|\rightharpoonup \gamma\qquad \text{weakly*-}\calM(Q_1),\label{convvar}\\
& u^{\rho_i}_x(y)\to u_x(y):=\eta(x)\chi(y\cdot \xi(x))\qquad \text{weakly*-} BV(Q_1;\R^m)\label{convfunc},
\end{eqnarray}
as $i\to\infty$, for some $\gamma\in \calM^+(Q_1)$ with $\gamma(Q_1)=1$ and some nondecreasing function $\chi:(-\sfrac12,\sfrac12)\to\R$ with $|D\chi|((-\sfrac12,\sfrac12))\leq1$. 

Let us check that the sequence $\rho_i\to0$ can be chosen such that \eqref{chargeboundary} holds. Indeed, fixed $i\in \mathbb{N}\setminus \{0\}$, we have $\mu(\partial Q_{s\rho_i}(x_0))=0$ for $\calL^1$-a.e. $s\in (0,\sfrac{1}{\rho_i})$. Moreover, the maps 
\begin{eqnarray*}
&s\in (0,\sfrac{1}{\rho_i})\mapsto u^{s\rho_i}_x\in L^1(Q_1,\R^m),\\
&s\in (0,\sfrac{1}{\rho_i})\mapsto |Du^{s\rho_i}_x|\in\calM^+(Q_1)
\end{eqnarray*}
are continuous as $s\to 1^-$, respectively for the convergences $L^1(Q_1;\R^m)$ and weak*-$\calM(Q_1)$, by definition of $u^{\rho}_x$ and $t_{\rho}$. Hence, we can find $s_i\in(0,1)$ such that \eqref{chargeboundary}, \eqref{cantorscaling} and the $L^1(Q_1,\R^m)$ convergence in \eqref{blowupcantor} hold for $s_i\rho_i$ in place of $\rho_i$.

We next check \eqref{variation1}. By \eqref{convvar} and \eqref{convfunc} we have that $|Du_x|\leq\gamma$. Hence, for $t\in(0,1)$ such that $\gamma(\partial Q_t)=0$, recalling that $|Du^{\rho_i}_x|(Q_1)=\gamma(Q_1)=1$, we obtain
\begin{eqnarray*}
&|Du^{\rho_i}_x|(Q_t)\to\gamma(Q_t), \qquad
|Du^{\rho_i}_x|(Q_1\setminus Q_t)\to\gamma(Q_1\setminus Q_t),\\
& Du^{\rho_i}_x(Q_t)\to Du_x(Q_t).
\end{eqnarray*}
We infer that
\[
\limsup_{i\to\infty}|Du^{\rho_i}_x(Q_1)- Du_x(Q_1)|\leq 2\gamma(Q_1\setminus Q_t),
\]
and letting $t\to 1^-$ gives $Du^{\rho_i}_x(Q_1)\to Du_x(Q_1)$ as $i\to\infty$. In conclusion
\[Du_x(Q_1)=\lim_{i\to\infty}Du^{\rho_i}_x(Q_1)=\lim_{i\to\infty}\frac{Du^{\rho_i}_x(Q_1)}{|Du^{\rho_i}_x|(Q_1)}=\eta(x)\otimes\xi(x),\]
and then $D\chi(-\sfrac12,\sfrac12)=1$. This gives \eqref{variation1} by monotonicity of $\chi$. Finally, $|Du_x|(Q_1)=1$ provides the strict-$BV(Q_1;\R^m)$ convergence in \eqref{blowupcantor}.
\end{proof}
				\begin{proof}[Proof of Proposition~\ref{lowerboundBVdff2}]
					{\em Step 0: Preparation.}
					We assume $(u_\eps,v_\eps)\to (u,1)$ in $L^1(\Omega;\R^{m+1})$ for some $u\in BV(\Omega;\R^m)$. 
					Let $A\subseteq\calA(\Omega)$, $\delta\in(0,1)$ and let $\tilde u_\eps^\delta$ be as in Lemma~\ref{lemmaabdelta}. We define the measure
					\begin{equation*}
					\mu_\eps^\delta:=\alpha_\delta h^\qc(\nabla \tilde u_\eps^\delta){\calL^n\LL A} + \beta_\delta \calH^{n-1}\LL  
					(A\cap J_{\tilde u_\eps^\delta}),
					\end{equation*}
					{so that $\mu_\eps^\delta(A)=H_\delta(\tilde u_\eps^\delta;A)\le \calF_\eps(u_\eps,v_\eps;A)
						+ {h(0)\calL^n(A\cap\{v_\eps\leq\delta\})}$.}
					Passing to a subsequence we can assume that 
					$\displaystyle{\lim_{\eps\to0}}\Functeps({u_\eps,v_\eps};A)$ exists finite and that 
					$\mu_\eps^\delta\weakto\mu^\delta$ weakly{$^*$} in the sense of measures on $A$ as $\eps\to0$, for some $\mu^\delta\in\calM^+_b(A)$. 
					If we can show that
					\begin{equation}\label{eqlbvolulmepart}
					\frac{\dd \mu^\delta}{\dd\calL^{n}}(x_0)
					\geq \alpha_\delta h^\qc(\nabla u(x_0))\quad\text{ 
						for $\calL^n$-a.e. $x_0\in A$}
					\end{equation}
					and
					\begin{equation}\label{eqlbvcantorpart}
					\frac{\dd \mu^\delta}{\dd|Du|}(x_0)
					\geq \alpha_\delta  h^{\qcinfty}\left(\frac{\dd Du}{\dd|Du|}(x_0)\right)
					\quad\text{ for $|D^cu|$-a.e. $x_0\in A$}
					\end{equation}
					for all $\delta\in(0,1)$, then the conclusion follows.

					{\em Step 1: Absolutely continuous part.}
					We prove \eqref{eqlbvolulmepart}. 
					We can assume that the left-hand side is finite.
					%For simplicity we drop $\delta$ from the notation.
					First we observe that for $\calL^n$-a.e. $x_0\in A$ one has
					\begin{equation*}
					\frac{\dd \mu^{\delta}}{\dd\calL^{n}}(x_0)
					=\lim_{\rho\to0} \frac{\mu^{\delta}(Q_\rho(x_0))}{\rho^n}
					=\lim_{\rho\to0\atop \rho\in I} \lim_{\eps\to0}\frac{\mu_\eps^{\delta}(Q_\rho(x_0))}{\rho^n}
					\end{equation*}
					where $Q_\rho(x_0):=x_0+(-\frac12\rho,\frac12\rho)^n$ and
					$I:=\{\rho\in (0, {\frac{2}{\sqrt n}}\dist(x_0,\partial A)):\,
					\mu^\delta(\partial Q_\rho(x_0))=0\}$.
					We define {$u^\rho:Q_1\to\R^m$ by}
					\begin{equation*}
					u^\rho(y):=\frac{  u(x_0+\rho y)- u(x_0)} {\rho}.
					\end{equation*}
					By the properties of $BV$, for $\calL^n$-a.e. $x_0\in A$, after possibly extracting a further subsequence, 
					$u^\rho(y)\to \nabla u(x_0)y$ in $L^1(Q_1;\R^m)$ {as $\rho\to0$}.
					We further define
					\begin{equation*}
					u^\rho_\eps(y):=\frac{ \tilde u_\eps^\delta(x_0+\rho y)- u(x_0)} {\rho}
					\end{equation*}
					so that $u^\rho_\eps\to u^\rho$ in $L^1(Q_1;\R^m)$ {as $\eps\to0$} for any fixed $\rho>0$ {(and $\delta\in(0,1)$)}. 
					We take a diagonal subsequence so that $w_i(y):=u^{\rho_i}_{\eps_i}(y)\to \nabla u(x_0)y$ in $L^1(Q_1;\R^m)$ and
					\begin{equation}\label{eqdmudlx0abdsdf}
					\frac{\dd \mu^{\delta}}{\dd\calL^{n}}(x_0)
					=\lim_{i\to\infty}\left[
					\int_{Q_1} \alpha_\delta h^\qc(\nabla w_i) \dx + \frac{\beta_\delta}{\rho_i} \calH^{n-1}(J_{w_i}\cap Q_1)\right]\,.
					\end{equation}
					We fix $M\in\N$ and for every $i$, {by averaging} we choose $k_i\in\{M+1,\dots, 2M\}$ such that
					\begin{equation}\label{e:average}
					\int_{\{a_{k_i}<|w_i|<a_{k_i+1}\}} h^\qc(\nabla w_i) \dx 
					\le \frac 1M
					\int_{Q_1} h^\qc(\nabla w_i) \dx\,, 
					\end{equation}
					which implies that $\hat w_i:=\mathcal{T}_{k_i}(w_i)$, {with $\mathcal{T}_{k_i}$ defined in \eqref{e:Tk}}, obeys
					\begin{equation}\label{eqthatkwcm} 
					\begin{split}
					&\int_{Q_1} h^\qc(\nabla \hat w_i) \dx 
					%  + \frac{\beta_{\delta}}{\rho_{\eps_i}} \calH^{n-1}(J_{\mathcal{T}_{k_i}(w_i)}\cap Q_1)
					\le (1+\frac CM) 
					\int_{Q_1} h^\qc(\nabla w_i) \dx {+C\calL^n(\{|w_i|\ge a_{k_i}\})}.
					%  + \frac{\beta_{\delta}}{\rho_{\eps_i}} \calH^{n-1}(J_{w_i})
					%  +\alpha_\delta h^\qc(0) \calL^n(\{|w_i|\ge a_{M+2}\})
					\end{split}
					\end{equation}
					{
						Indeed, in view of \eqref{e:h gc} and $\|\nabla\mathcal{T}_{k_i}\|_{L^\infty(\R^m)}\leq1$ we have
						\begin{align*}
						\int_{Q_1}& h^\qc(\nabla \hat w_i) \dx \leq\int_{\{|w_i|\leq a_{k_i}\}} h^\qc(\nabla w_i) \dx\\
						&+\int_{\{a_{k_i}<|w_i|< a_{k_i+1}\}} h^\qc(\nabla \hat w_i)  \dx +h(0)\calL^n(\{|w_i|\ge a_{k_i+1}\})\\
						&\leq\int_{Q_1} h^\qc(\nabla w_i) \dx+C\int_{\{a_{k_i}<|w_i|< a_{k_i+1}\}} h^\qc(\nabla w_i) +C\calL^n(\{|w_i|\ge a_{k_i}\})\,.
						\end{align*}
						The inequality in \eqref{eqthatkwcm} then follows from \eqref{e:average}.}
					
					Moreover, note that if ${a_M}>\|\nabla u(x_0)y\|_{L^\infty(Q_1)}+1$ then 
					$w_i\to \nabla u(x_0)y$ implies 
					$\hat w_i\to \nabla u(x_0)y$ in $L^1(Q_1;\R^m)$.
					
					We recall that $\mathcal{T}_{k_i}\in C^1$ implies
					$\calH^{n-1}(J_{\hat w_i}\cap Q_1)\le
					\calH^{n-1}(J_{w_i}\cap Q_1)$.
					From \eqref{eqdmudlx0abdsdf} and $\rho_i\to0$ we deduce $\calH^{n-1}(J_{w_i}\cap Q_1)\to0$ and, 
					with $|\hat w_i|\le a_{M+1}$ pointwise, 
					\begin{equation*}
					|D^s\hat w_i|(Q_1)=\int_{J_{\hat w_i}\cap Q_1} |[\hat w_i]| \dd\calH^{n-1} \le {2 a_{M+1}} \calH^{n-1}(J_{w_i}\cap Q_1)\to0
					\end{equation*}
					and therefore
					\begin{equation*}
					\int_{Q_1} h^{\qcinfty}(\dd D^s\hat w_i)
					\le 
					c|D^s\hat w_i|(Q_1)\to0.
					\end{equation*}
					With \eqref{eqdmudlx0abdsdf} and \eqref{eqthatkwcm}, {using that $w_i\to \nabla u(x_0)y$ in measure, we get}
					\begin{equation*}
					\alpha_\delta \lim_{i\to\infty} \left[
					\int_{Q_1} h^\qc(\nabla \hat w_i) \dx +
					\int_{Q_1} h^{\qcinfty}(\dd D^s\hat w_i) \right]
					\le 
					(1+\frac CM)\frac{\dd \mu^{\delta}}{\dd\calL^{n}}(x_0).
					\end{equation*}
					By the lower semicontinuity of the functional in the left-hand side
					(Theorem~\ref{theoambdm92})
					and $\hat w_i\to \nabla u(x_0)y$ {in $L^1(Q_1;\R^m)$} we deduce
					\begin{equation*}
					\alpha_\delta 
					h^\qc(\nabla u(x_0)) \le
					(1+\frac CM)\frac{\dd \mu^{\delta}}{\dd\calL^{n}}(x_0)
					\end{equation*}
					for $\calL^n$-a.e. $x_0$,
					every $M$, and  every $\delta$.
					This proves \eqref{eqlbvolulmepart}.
					
					{\em Step 2: Cantor part.}
					We prove \eqref{eqlbvcantorpart}.
					By Alberti's rank-one theorem we can assume without loss of generality that
					\begin{equation}\label{e:cantor point}
					\frac{\dd Du}{\dd|Du|}(x_0)=\eta(x_0)\otimes \xi(x_0)
					\end{equation}
					with $\eta(x_0)\in S^{m-1}$, $\xi(x_0)\in S^{n-1}$
					{for $|D^cu|$-a.e. $x_0\in A$.}
					We fix a unit cube $Q_1:=Q^{\xi(x_0)}$ with one face orthogonal to $\xi(x_0)$, write $Q_\rho(x_0):=x_0+\rho Q_1$,  {and select a sequence $\rho_i\to0$ as in Lemma \ref{choicesequence},
					applied for the given $u\in BV(\Omega,\R^m)$ and $\mu:=\mu^\delta$.} %and define

					As above, for $|D^cu|$-a.e. $x_0$ one has
					\begin{equation*}
					\frac{\dd \mu^\delta}{\dd|Du|}(x_0)
					=\lim_{\rho\to0} \frac{\mu^\delta(Q_\rho(x_0))}{|Du|(Q_\rho(x_0))}
					=\lim_{{i\to\infty}} \lim_{\eps\to0}\frac{\mu_\eps^\delta(Q_{{\rho_i}}(x_0))}{|Du|(Q_{{\rho_i}}(x_0))}.
					\end{equation*}
					We define
					\begin{equation*}
					u_\eps^\rho(y):=
					\frac{ \tilde u_\eps^{\delta}(x_0+\rho y)-u_{Q_\rho(x_0)}}{t_\rho \rho},
					\end{equation*}
					so that,
defining $u_{x_0}(y):=\eta(x_0)\chi_{x_0}(y\cdot \xi(x_0))$,
					{$\lim_{\rho\to0}\lim_{\eps\to0 }u_\eps^\rho= u_{x_0}$ in $L^1(Q_1;\R^m)$ (for every $\delta\in (0,1)$) and}
					\begin{equation*}
					\begin{split}
					\frac{\dd \mu^\delta}{\dd|Du|}(x_0)
					&=\lim_{i\to\infty} \lim_{\eps\to0}
					\left[\frac{\alpha_\delta}{{\rho_i^n} t_{\rho_i}}
					\int_{Q_{\rho_i}(x_0)} h^\qc(\nabla \tilde u_\eps^\delta) \dx
					+ \frac{\beta_\delta}{{\rho_i^n} t_{\rho_i}}
					\calH^{n-1}(J_{\tilde u_\eps^\delta}\cap Q_{\rho_i}(x_0))
					\right]\\
					&=\lim_{i\to\infty} \lim_{\eps\to0}
					\left[\frac{\alpha_\delta}{t_{\rho_i}}
					\int_{Q_1} h^\qc(t_{\rho_i} \nabla u_\eps^{{\rho_i}}) \dy 
					+ \frac{\beta_\delta}{{\rho_i} t_{\rho_i}}
					\calH^{n-1}(J_{u_\eps^{{\rho_i}}}\cap Q_1)
					\right].
					\end{split}
					\end{equation*}
					% We define
					% \begin{equation}
					%  u^\rho(y):=\frac{ \tilde u(x_0+\rho y)- u(x_0)} {\rho}.
					% \end{equation}
					Taking a diagonal subsequence we see that there {is 
						$\eps_i\to0$} such that
					\begin{equation*}
					w_i:= u^{\rho_i}_{\eps_i} \to u_{x_0} \text{ in } L^1(Q_1;\R^m)
					\end{equation*}
					with $|Du_{x_0}|(Q_1)=1$, and setting $t_i:=t_{\rho_i}\to\infty$,
					\begin{equation*}
					\frac{\dd \mu^\delta}{\dd|Du|}(x_0)
					=\lim_{i\to\infty}
					\left[\frac{\alpha_\delta}{t_{i}}
					\int_{Q_1} h^\qc(t_i \nabla w_i) \dy 
					+ \frac{\beta_\delta}{\rho_i t_{i}}
					\calH^{n-1}(J_{w_i}\cap Q_1)
					\right].
					\end{equation*}
					We fix $M>0$ and, by averaging,
					for every $i$ choose $k_i\in\{M+1,\dots, 2M\}$ such that
					\begin{equation*}
					\int_{Q_1 \cap \{a_{k_i}<|w_i|<a_{k_i+1}\}} h^\qc(t_i\nabla w_i) \dx 
					\le \frac 1M
					\int_{Q_1} h^\qc(t_i \nabla w_i) \dx,
					\end{equation*}
					which implies that $\hat w_i:={\mathcal{T}_{k_i}(w_i)}\in SBV\cap L^\infty(Q_1;\R^m)$ obeys, arguing as in Step 1 above and by taking into account that $t_i\to\infty$,
					\begin{align}\label{eqlb2lis1cm}
					&\limsup_{i\to\infty}
					\int_{Q_1} {\frac{{ \alpha_\delta}}{t_i}}h^\qc(t_i \nabla \hat w_i)\dy + \frac{\beta_\delta}{\rho_i t_i} \calH^{n-1}(J_{\hat w_i}\cap Q_1)
					\notag\\
					&\le(1+\frac CM)
					\lim_{i\to\infty}
					\int_{Q_1} {\frac{{\alpha_\delta} }{t_i}}h^\qc(t_i \nabla w_i)\dy + \frac{\beta_\delta}{\rho_i t_i} \calH^{n-1}(J_{w_i}\cap Q_1)+ \frac{C}{{t_i}}\calL^n(\{|w_i|>a_{k_i}\})
					\notag\\
					&= (1+\frac CM)
					\frac{\dd \mu^\delta}{\dd|Du|}(x_0).
					\end{align}
					Further, {since $\chi$ is bounded}, {for $M$ sufficiently large we have}
					$r_i:=\|\hat w_i-u_{x_0}\|_{L^1(Q_1)} \to0$.
					For every $i$ we select $q_i\in(1-r_i^{1/2},1)$ such that
					\begin{equation*}
					\int_{\partial Q_{q_i}} |\hat w_i^--u_{x_0}^+|\dd\calH^{n-1}\le \frac{1}{r_i^{1/2}} \|\hat w_i-u_{x_0}\|_{L^1(Q_1)} =r_i^{1/2}\to0,
					\end{equation*}
					where $\hat w_i^-$ and $u_{x_0}^+$ denote the inner and outer trace, respectively, and define 
					\begin{equation*}
					w_i^*:=\begin{cases} 
					\hat w_i, &\text{ in } Q_{q_i},\\
					u_{x_0}, &\text{ in } Q_1\setminus Q_{q_i}.
					\end{cases}
					\end{equation*}
					Then, the choice of $q_i$, {\eqref{eqlb2lis1cm}, and $\rho_i t_{i}\to 0$ yield}
					\begin{equation}\label{e:wistar sing}
					\int_{Q_1} h^{\qcinfty} (\dd D^sw_i^*)\le
					c r_i^{1/2}
					+ c_M \calH^{n-1}(J_{\hat w_i}\cap Q_1)
					+{\int_{Q_1\setminus Q_{q_i}} h^{\qcinfty}(\dd D^su_{x_0})}
					\to0.
					\end{equation}
					{In addition}, we get {from \eqref{e:h gc} and $t_i\to\infty$}
					\begin{equation}\label{e:wistar ass}
					\lim_{i\to\infty} \int_{Q_1\setminus Q_{q_i}} 
					{\frac{1}{t_i} h^\qc\left({t_i} \nabla u_{x_0}\right)\dy}
					\le \lim_{i\to\infty} c \int_{{Q_1\setminus Q_{q_i}}} \dd |Du_{x_0}|=0\,.
					\end{equation}
					Further, $w_i^*\in BV(Q_1;\R^m)$ 
					and $\mathrm{supp}(w_i^*-u_{x_0})\subset\subset Q_1$.
					By \cite[Lemma~5.50]{AFP} {and Theorem \ref{theoambdm92}}
					\[
					\int_{Q_1} {\frac{1}{t_i}}h^\qc\left({{t_i}}\nabla w_i^*\right)\dy
					+ \int_{Q_1} h^{\qcinfty}(\dd D^sw_i^*)
					\geq {\frac{1}{t_i}  h^\qc\left({{t_i}}
						Du_{x_0}(Q_1)
						\right).
					}
					\]
					Therefore, {being $Du_{x_0}(Q_1)=\eta(x_0)\otimes \xi(x_0) D\chi((-\sfrac 12,\sfrac 12))$ a rank-one matrix, the latter estimate together with \eqref{e:wistar sing} and \eqref{e:wistar ass} yield that}
					\begin{equation*}\begin{split}
					h^{\qcinfty}(Du_{x_0}(Q_1))=&
					\lim_{i\to\infty} \frac1{t_i}
					h^\qc(t_i Du_{x_0}(Q_1))\\
					\le & \liminf_{i\to\infty} \left[
					\int_{Q_1}  \frac1{t_i} h^\qc(t_i \nabla w_i^*)\dy 
					+ \int_{Q_1} h^{\qcinfty}( \dd D^sw_i^*)\right]\\
					\le & \liminf_{i\to\infty} 
					\int_{Q_1}  \frac1{t_i} h^\qc(t_i \nabla \hat w_i)\dy.
					\end{split}
					\end{equation*}
					Recalling \eqref{eqlb2lis1cm}, we infer that
					\begin{equation*}
					\alpha_\delta h^{\qcinfty}(Du_{x_0}(Q_1))
					\le  (1+\frac CM)
					\frac{\dd \mu^\delta}{\dd|Du|}(x_0),
					\end{equation*} 
					for {every $M$ sufficiently large}. Therefore, by letting $M\to\infty$ we conclude that
					\begin{equation*}
					\alpha_\delta h^{\qcinfty}(Du_{x_0}(Q_1))
					\le
					\frac{\dd \mu^\delta}{\dd|Du|}(x_0).
					\end{equation*} 
					As $Du_{x_0}(Q_1)=\eta(x_0)\otimes \xi(x_0) D\chi((-\sfrac12,\sfrac12))=
					\eta(x_0)\otimes \xi(x_0)$, this and \eqref{e:cantor point}
					conclude the proof of \eqref{eqlbvcantorpart}.
				\end{proof}
				{
					The lower bound in $BV$ follows at once from the lower bounds for the surface and the diffuse parts. 
					\begin{theorem}\label{t:lbcompBV}
						Let $u\in BV(\Omega;\R^m)$. Then, {for all $A\in \calA(\Omega)$}
						\begin{equation}\label{e:lbBV}
						\calF_0(u,1;{A}) \le \Gamma(L^1)\hbox{-}\liminf_{\eps\to0} \Functeps(u, 1;{A}),
						\end{equation}
						where $\calF_\eps$ and $\calF_0$ have been defined in \eqref{functeps} and \eqref{F0}. 
					\end{theorem}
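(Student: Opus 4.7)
The plan is to glue the local estimates of Propositions~\ref{lowerboundBVsfc} and \ref{lowerboundBVdff2} via the usual measure-theoretic localisation. Fix a sequence $(u_\eps,v_\eps)\to (u,1)$ in $L^1(\Omega;\R^{m+1})$ realising the $\Gamma(L^1)$-liminf of $\Functeps(\cdot,\cdot;A)$ at $(u,1)$. We may assume, passing if necessary to a subsequence (not relabelled), that the liminf is in fact a limit, that $v_\eps\to 1$ $\calL^n$-a.e.\ on $A$, and that the nonnegative bounded Radon measures
\[
\mu_\eps:=\Bigl(f_\eps^2(v_\eps)\Psi(\nabla u_\eps)+\frac{(1-v_\eps)^2}{4\eps}+\eps|\nabla v_\eps|^2\Bigr)\calL^n\res A
\]
converge weakly-$*$ in $\calM_b^+(A)$ to some $\mu\in\calM_b^+(A)$. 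Then
\[
\lim_{\eps\to0}\Functeps(u_\eps,v_\eps;A)\ge\mu(A).
\]

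Next, I apply the blow-up estimates already established in the proofs of Propositions~\ref{lowerboundBVsfc} and~\ref{lowerboundBVdff2}, which in fact yield pointwise lower bounds on the Radon--Nikodym derivatives of $\mu$ with respect to the three mutually singular reference measures appearing in $\calF_0$. More precisely, for the same limit measure $\mu$ one has
\[
\frac{\dd\mu}{\dd\calL^n}(x_0)\ge h^{\qc}(\nabla u(x_0))\quad\text{for }\calL^n\text{-a.e.\ }x_0\in A,
\]
\[
\frac{\dd\mu}{\dd|D^cu|}(x_0)\ge h^{\qc,\infty}\!\Bigl(\frac{\dd D^cu}{\dd|D^cu|}(x_0)\Bigr)\quad\text{for }|D^cu|\text{-a.e.\ }x_0\in A,
\]
\[
\frac{\dd\mu}{\dd\calH^{n-1}\res J_u}(x_0)\ge g([u](x_0),\nu_u(x_0))\quad\text{for }\calH^{n-1}\text{-a.e.\ }x_0\in J_u\cap A.
\]
The first two follow from Steps~1 and~2 in the proof of Proposition~\ref{lowerboundBVdff2} (taking the supremum over $\delta\in(0,1)$ at the end, and noting that $\alpha_\delta\uparrow 1$), while the third follows from the blow-up in the proof of Proposition~\ref{lowerboundBVsfc}. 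A standard diagonal extraction ensures that a single subsequence can be used for all three densities.

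Since $\calL^n\res A$, $|D^cu|\res A$ and $\calH^{n-1}\res(J_u\cap A)$ are mutually singular (recall that $u\in BV(\Omega;\R^m)$), the three lower bounds above combine additively to give
\[
\mu(A)\ge \int_A h^{\qc}(\nabla u)\dx+\int_A h^{\qc,\infty}(\dd D^cu)+\int_{J_u\cap A} g([u],\nu_u)\dd\calH^{n-1}=\calF_0(u,1;A),
\]
which proves \eqref{e:lbBV}. The only potentially delicate point is the joint extraction of a subsequence realising all three densities simultaneously; this is a routine diagonal argument since each of the three bounds only requires a countable family of conditions at a.e.\ point of the respective reference measure.
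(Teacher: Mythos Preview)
Your overall strategy is correct and is a standard alternative to the paper's, but there is a genuine gap in how you justify the diffuse density bounds, and your ``diagonal extraction'' remark does not address it.

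The paper's proof never passes to a single weak-$*$ limit measure. Instead it defines the \emph{set function} $\mu(A):=\liminf_{\eps}\Functeps(u_\eps,v_\eps;A)$, observes that it is increasing and superadditive on disjoint open sets, and then invokes \cite[Proposition~1.16]{Braides}: since Propositions~\ref{lowerboundBVsfc} and~\ref{lowerboundBVdff2} give $\mu(A)\ge\int_A\psi_i\,\dd\lambda$ for the two densities $\psi_1,\psi_2$ and all open $A$, one gets $\mu(\Omega)\ge\int_\Omega(\psi_1\vee\psi_2)\,\dd\lambda=\calF_0(u,1)$. This uses the two propositions purely as black boxes.

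Your route instead fixes a weak-$*$ limit $\mu$ of the energy densities and claims Radon--Nikodym lower bounds for $\mu$. For the surface part this is exactly what the proof of Proposition~\ref{lowerboundBVsfc} shows, so that claim is fine. For the diffuse part, however, Steps~1 and~2 of the proof of Proposition~\ref{lowerboundBVdff2} establish density bounds for a \emph{different} measure $\mu^\delta$, namely the weak-$*$ limit of $\mu_\eps^\delta:=\alpha_\delta h^\qc(\nabla\tilde u_\eps^\delta)\calL^n+\beta_\delta\calH^{n-1}\res J_{\tilde u_\eps^\delta}$, built from the truncations $\tilde u_\eps^\delta$ of Lemma~\ref{lemmaabdelta}. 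There is no direct comparison $\mu^\delta\le\mu$ on arbitrary Borel subsets, because the threshold $\bar t$ in Lemma~\ref{lemmaabdelta}---and hence $\tilde u_\eps^\delta$ and $\mu_\eps^\delta$ themselves---depends on the ambient set $A$; the lemma only yields $\mu_\eps^\delta(A)\le\mu_\eps(A)+o(1)$ for that fixed $A$, not for cubes $Q_\rho(x_0)\subset A$. So ``taking the supremum over $\delta$'' does not transfer the bounds from $\mu^\delta$ to $\mu$.

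The clean fix, if you want to keep your structure, is to derive the diffuse density bounds for $\mu$ from the \emph{statements} of the propositions rather than their proofs: for $\rho$ with $\mu(\partial Q_\rho(x_0))=0$ one has $\mu(Q_\rho(x_0))=\lim_\eps\Functeps(u_\eps,v_\eps;Q_\rho(x_0))$, and applying Proposition~\ref{lowerboundBVdff2} to each $Q_\rho(x_0)$ gives the lower bound; dividing by $\rho^n$ (resp.\ $|D^cu|(Q_\rho(x_0))$) and letting $\rho\to0$ yields the desired densities by Besicovitch differentiation and mutual singularity. With this correction your argument goes through; it is equivalent to the paper's but trades the superadditivity lemma for an explicit blow-up on the limit measure.
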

					\begin{proof} {For simplicity, we will prove the statement for $A=\Omega$.}
						We argue by localization. Assume that $(u_\eps,v_\eps)\to(u,1)$ in $L^1(\Omega;\R^{m+1})$, with $u\in BV(\Omega;\R^m)$, and that
						\[\liminf_{\eps\to0}\calF_\eps(u_\eps,v_\eps)<\infty.\]
						Set
						\begin{eqnarray*}&\mu(A):=\displaystyle\liminf_{\eps\to0}\calF_\eps(u_\eps,v_\eps;A),\qquad \text{for all } A\in \calA(\Omega),\\
							&\lambda:=\calL^n\res\Omega+\calH^{n-1}\res J_u+|D^cu|\\
							&\psi_1:=g([u],\nu_u),\qquad \psi_2:=h^{\qc}(\nabla u)+h^{\qc,\infty}(\frac{\text{d} D^cu}{\text{d} |D^c u|}),	
						\end{eqnarray*}	
						and notice that $\mu$ is a monotone set function which is superadditive on disjoint open sets, $\lambda$ is a positive Borel measure and $\psi_i$ are positive Borel functions satisfying 
						\[\mu(A)\geq\int_A\psi_i \text{d} \lambda, \qquad \text{for }i=1,2\text{ and }A\in\calA(\Omega)\]
						thanks to Propositions \ref{lowerboundBVsfc} and \ref{lowerboundBVdff2}.
						By \cite[Proposition 1.16]{Braides} we conclude 
						\[\mu(\Omega)\geq\int_\Omega(\psi_1\vee\psi_2)\text{d}\lambda,\]
						which gives the thesis.	
				\end{proof}}
				\begin{remark}\label{r:domain direct}
					From the argument in Lemma~\ref{lemmaabdelta} one can also deduce directly that $u\in (GBV(\Omega))^m$.
					Indeed, consider $(u_\eps,v_\eps)\to(u,v)$ in $L^1(\Omega;\R^{m+1})$ with $\sup_\eps\calF_\eps(u_\eps,v_\eps)<\infty$.
					Necessarily $ v=1$ $\calL^n$-a.e. on $\Omega$. Moreover, with fixed $\delta\in(0,1)$, 
					keeping the notation introduced in Lemma~\ref{lemmaabdelta}, 
					using the growth conditions on $h$ (see \eqref{e:h gc})  we get
					\begin{align*}
					\int_\Omega|\nabla \tilde{u}_\eps^\delta|\dx+\calH^{n-1}(J_{\tilde{u}_\eps^\delta})
					\leq c(\calF_\eps(u_\eps,v_\eps)+1)\,,
					\end{align*}
					for some positive constant $c$ depending on $\delta$ and on $\calL^n(\Omega)$. 
					In particular, for each component $(\tilde{u}_\eps^\delta)_i$ of $\tilde{u}_\eps^\delta$, $i\in\{1,\ldots,n\}$, 
					we have $(\tilde{u}_\eps^\delta)_i\in GSBV(\Omega)$ and
					\begin{align*}
					\int_\Omega|\nabla (\tilde{u}_\eps^\delta)_i|\dx+\calH^{n-1}(J_{(\tilde{u}_\eps^\delta)_i})
					\leq c(\calF_\eps(u_\eps,v_\eps)+1)\,.
					\end{align*}
					Then, %the truncation argument used several times throughout the paper, for instance in Proposition~\ref{lowerboundBVdff2}, 
					if $k>0$ and $\tau_k(s):=(s\vee k)\wedge(-k)$, from the estimate above we infer that
					$|D(\tau_k((\tilde{u}_\eps^\delta)_i))|(\Omega)\leq C_k$,
					with $C_k>0$ depending on $k$ and on the sequence, but not on $\eps$. Therefore, there is a subsequence that converges weakly in 
					$BV(\Omega)$.
					This implies, recalling that $\tilde{u}_\eps^\delta\to u$ in $L^1(\Omega;\R^m)$ as $\eps\to0$ for all 
					$\delta\in(0,1)$, that $\tau_k(u_i)\in BV(\Omega)$ for all $k$. In conclusion, we deduce that $u_i\in GBV(\Omega)$, for 
					all $i\in\{1,\ldots,n\}$, and thus $u\in (GBV(\Omega))^m$.
				\end{remark}
				
				{
					\subsection{Lower bound in $GBV$}\label{ss:GBV}
					In this section we extend the validity of the lower bound {Theorem~\ref{t:lbcompBV}} to every $u\in (GBV(\Omega))^m$. We first prove that the functional $\calF_0$ is continuous under truncations.
					\begin{proposition}\label{contF0}
						Let $\calF_0$ and $\mathcal{T}_k$ be defined as in \eqref{F0} and \eqref{e:Tk}, respectively. 
						Then, for all $u\in (GBV(\Omega))^m$ with 
						$\calF_0(u,1)<\infty$ we have	
						\[
						\lim_{k\to\infty}\calF_0(\mathcal{T}_k(u),1)=\calF_0(u,1)\,.
						\]
					\end{proposition}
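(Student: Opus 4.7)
The plan is to show that each of the three terms in $\calF_0(\mathcal{T}_k(u),1)$ converges, as $k\to\infty$, to the corresponding term in $\calF_0(u,1)$, by dominated convergence based on the following ingredients: $\mathcal{T}_k$ is $1$-Lipschitz with $\|\nabla\mathcal{T}_k\|_{L^\infty}\le 1$ and acts as the identity on $\{|z|\le a_k\}$ with $a_k\uparrow\infty$; the finiteness of $\calF_0(u,1)$ combined with the coercivity bound $h^\qc(\xi)\ge(|\xi|/c-c)\vee 0$ from \eqref{e:h gc} and the induced bound $h^{\qcinfty}(\xi)\ge|\xi|/c$ from \eqref{eqdefqcinfty} implies $\nabla u\in L^1(\Omega;\R^{m\times n})$ and $|D^c u|(\Omega)<\infty$; the approximate limit $\tilde u$ is finite $\calL^n$- and $|D^c u|$-a.e., and by \eqref{e:Juinfty} the traces $u^\pm$ are finite $\calH^{n-1}$-a.e.\ on $J_u$.

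For the bulk term, the $BV$ chain rule applied to the bounded function $\mathcal{T}_k(u)\in BV(\Omega;\R^m)$ yields $\nabla(\mathcal{T}_k\circ u)=\nabla\mathcal{T}_k(u)\nabla u$ almost everywhere, which equals $\nabla u$ on $\{|u|\le a_k\}$ and is pointwise dominated by $|\nabla u|$ on the complement. Combining with the upper bound $h^\qc(\xi)\le c(|\xi|+1)$ from \eqref{e:h gc} and the fact that $\calL^n(\{|u|>a_k\})\to 0$ (since $u$ is finite a.e.), dominated convergence with integrable dominant $c(|\nabla u|+1)$ gives
\[
\int_\Omega h^\qc(\nabla\mathcal{T}_k(u))\dx\longrightarrow\int_\Omega h^\qc(\nabla u)\dx.
\]

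For the Cantor term, the $BV$ chain rule together with the construction \eqref{e:D^cu GBV} of $D^c u$ as a limit of $D^c\mathcal{T}_k(u)$ yields $D^c\mathcal{T}_k(u)\res\{|\tilde u|\le a_k\}=D^c u\res\{|\tilde u|\le a_k\}$ and $|D^c\mathcal{T}_k(u)|\le|D^c u|$ as measures. Since $|D^c u|$ is a finite measure not charging $\{|\tilde u|=\infty\}$, one has $|D^c u|(\{|\tilde u|>a_k\})\to 0$; the linear growth $h^{\qcinfty}(\xi)\le c|\xi|$ then supplies an integrable dominant and dominated convergence yields $\int_\Omega h^{\qcinfty}(\dd D^c\mathcal{T}_k(u))\to\int_\Omega h^{\qcinfty}(\dd D^c u)$.

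For the surface term, $\mathcal{T}_k$ being $1$-Lipschitz implies $J_{\mathcal{T}_k(u)}\subseteq J_u$ and $|[\mathcal{T}_k(u)]|\le|[u]|$ $\calH^{n-1}$-a.e.\ on $J_u$; by \eqref{e:Juinfty}, for $\calH^{n-1}$-a.e.\ $x\in J_u$ the traces $u^\pm(x)$ are finite, so that $[\mathcal{T}_k(u)](x)=[u](x)$ as soon as $a_k\ge\max(|u^+(x)|,|u^-(x)|)$, and continuity of $g$ in its first argument (Lemma~\ref{lemmapropg2}(\ref{lemmapropgcont})) yields $g([\mathcal{T}_k(u)],\nu_u)\to g([u],\nu_u)$ pointwise. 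Lemma~\ref{lemmapropg} provides the integrable dominant
\[
g([\mathcal{T}_k(u)],\nu_u)\le c(|[\mathcal{T}_k(u)]|\wedge 1)\le c(|[u]|\wedge 1)\le c^2 g([u],\nu_u)\in L^1(\calH^{n-1}\res J_u),
\]
and dominated convergence concludes. The main delicate step is the Cantor term, where the $BV$ chain rule applies only to each $\mathcal{T}_k(u)\in BV$ and one must reconcile the resulting formula with the abstract definition of $D^c u$ for the $GBV$ function $u$, in particular verifying that $|D^c u|$ does not concentrate on $\{|\tilde u|=\infty\}$ and that the decomposition at level $a_k$ is consistent with the limiting construction in \eqref{e:D^cu GBV}.
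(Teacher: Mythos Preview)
Your proposal is correct and follows essentially the same approach as the paper's proof: both split $\calF_0$ into its three constituent terms and prove convergence of each separately via dominated convergence, using the $1$-Lipschitz property of $\mathcal{T}_k$ together with the growth bounds \eqref{e:h gc}, the continuity of $g$ from Lemma~\ref{lemmapropg2}\ref{lemmapropgcont} and the domination from Lemma~\ref{lemmapropg} for the surface term, and the structural properties of $D^c u$ from \eqref{e:D^cu GBV} (namely $D^c(\mathcal{T}_k(u))\res\{|u|\le a_k\}=D^c u\res\{|u|\le a_k\}$ and $|D^c(\mathcal{T}_k(u))|\le|D^c u|$) for the Cantor term. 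The only cosmetic difference is the order in which the terms are treated.
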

					\begin{proof}
						We prove the convergence of the volume, Cantor and surface terms separately. {It is useful to recall for the rest of the proof that 
							$\|\nabla \mathcal{T}_k\|_{L^\infty(\R^m)}\le 1$.} 
						% In this respect note that 
						% \begin{align*}
						% \calF_0(\mathcal{T}_k(u),1,\Omega)&=\int_\Omega h^\qc\big(\nabla(\mathcal{T}_k(u))\big)\dx\\&+
						%  \int_{S(\mathcal{T}_k(u))}g([u],\nu_u)d\calH^{n-1}+\int_\Omega h^{\qcinfty}\big(dD^c(\mathcal{T}_k(u))\big)\,.
						% \end{align*}
						
						For the volume part, we observe that \eqref{e:h gc} implies 
						$|\nabla u|\in L^1(\Omega)$. 
						We have $\nabla(\mathcal{T}_k(u))=\nabla u$ for $\calL^n$-a.e. $x\in\Omega_k:=\{|u|\leq a_k\}$, therefore in view of 
						\eqref{e:h gc}
						%item \ref{lemmahqcgrowth} in Lemma~\ref{lemmahqc} and \eqref{e:Psi gc} 
						we get
						\[
						\Big|\int_\Omega h^\qc\big(\nabla(\mathcal{T}_k(u))\big)\dx-
						\int_\Omega h^\qc(\nabla u)\dx\Big|\leq c\int_{\Omega\setminus \Omega_k}(1+|\nabla u|)\dx\,,
						\]
						so that, {as $a_k\to\infty$ as $k\uparrow\infty$, we conclude}
						\[
						\lim_{k\to\infty} \int_\Omega h^\qc\big(\nabla(\mathcal{T}_k(u))\big)\dx=\int_\Omega h^\qc(\nabla u)\dx\,.
						\]
						For the surface term we recall that $J_{\mathcal{T}_k(u)}\subseteq J_u$ for every $k\in\N$ with $\nu_{\mathcal{T}_k(u)}=\nu_u$ for $\mathcal H^{n-1}$-a.e. 
						$x\in J_{\mathcal{T}_k(u)}$. Then, thanks to \eqref{e:Juinfty} we infer that
						$(\mathcal{T}_k(u))^\pm\to u^\pm$, $\chi_{J_{\mathcal{T}_k(u)}}\to \chi_{J_u}$ 
						and $|[\mathcal{T}_k(u)]|\le |[u]|$ $\mathcal H^{n-1}$-a.e. in $J_u$, and then we conclude
						\begin{align*}
						\lim_{k\to\infty} \int_{J_{\mathcal{T}_k(u)}}g([\mathcal{T}_k(u)],\nu_{\mathcal{T}_k(u)})\dd\calH^{n-1}&=
						\lim_{k\to\infty} \int_{J_u}g([\mathcal{T}_k(u)],\nu_u)\chi_{J_{\mathcal{T}_k(u)}}\dd\calH^{n-1}\\
						&=\int_{J_u}g([u],\nu_u)\dd\calH^{n-1}
						\end{align*}
						thanks to Lemmata \ref{lemmapropg} and \ref{lemmapropg2} \ref{lemmapropgcont} and to the Dominated Convergence Theorem.
						
						For what the Cantor part of the energy is concerned, by \eqref{e:h gc} we have that $0\le h^{\qc,\infty}(\xi)\le c |\xi|$.
						Further, the definitions of $\mathcal{T}_k$ and of $D^cu$ outlined in \eqref{e:D^cu GBV} yield in particular
						\[D^c(\mathcal{T}_k(u))\res \Omega_k=D^c u\res \Omega_k\] 
						\[|D^c(\mathcal{T}_k(u))|\ll |D^c u|,  
						\hskip5mm 
						\frac{\dd|D^c(\mathcal{T}_k(u))|}{\dd|D^c u|}\le 1
						\,.
						\] 
						Thus, 
						\begin{multline*}
						\Big|\int_{\Omega}h^{\qcinfty}(\dd D^c(\mathcal{T}_k(u)))-\int_{\Omega_k}h^{\qcinfty}(\dd D^cu)\Big|\\
						\leq c \int_{\Omega\setminus\Omega_k}\dd |D^cu|
						= c|D^cu|(\Omega\setminus\Omega_k), 
						\end{multline*}
						and therefore
						\[\lim_{k\to\infty}\int_{\Omega}h^{\qcinfty}(\dd D^c(\mathcal{T}_k(u)))=\int_{\Omega}h^{\qcinfty}(\dd D^cu),\]
						which concludes the proof.
					\end{proof}
					We are ready to prove the lower bound for generalized functions of bounded variations.
					\begin{theorem}\label{t:lbcomp}
						Let $u\in (GBV(\Omega))^m$. Then
						\begin{equation}\label{e:lbcomp}
						\calF_0(u,1) \le \Gamma(L^1)\hbox{-}\liminf_{\eps\to0} \Functeps(u, 1),
						\end{equation}
						where $\calF_\eps$ and $\calF_0$ have been defined in \eqref{functeps} and \eqref{F0}. 
					\end{theorem}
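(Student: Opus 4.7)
The plan is to reduce the bound to the already-established $BV$ case (Theorem~\ref{t:lbcompBV}) via the truncation family $\mathcal{T}_k$ from \eqref{e:Tk}, and then pass to the limit $k\to\infty$ using the continuity of $\calF_0$ under truncation established in Proposition~\ref{contF0}.

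Fix $(u_\eps,v_\eps)\to(u,1)$ in $L^1(\Omega;\R^{m+1})$; we may assume $L:=\liminf_\eps\calF_\eps(u_\eps,v_\eps)<\infty$, otherwise the inequality is vacuous, and that the inferior limit is actually a limit along a suitable subsequence. The first step, a direct analogue of Step~1 in the proof of Proposition~\ref{periodicity}, will be an averaging/truncation estimate for $\calF_\eps$. Splitting $\calF_\eps(\mathcal{T}_k(u_\eps),v_\eps)$ as in \eqref{e:Feps* Tk} (with $\Psi$ and $f_\eps^2(v_\eps)$ in place of $\Psi_\infty$ and $\eps f^2(v_j)$), the quadratic growth of $\Psi$ together with $\|\nabla \mathcal{T}_k\|_{L^\infty(\R^m)}\le 1$ will yield
\[
\calF_\eps(\mathcal{T}_k(u_\eps),v_\eps)\le \calF_\eps(u_\eps,v_\eps)+c\!\int_{\{a_k<|u_\eps|<a_{k+1}\}}\!\!\! f_\eps^2(v_\eps)\Psi(\nabla u_\eps)\dx+c\calL^n(\{a_k<|u_\eps|<a_{k+1}\}).
\]
Since for $k\in\{M+1,\ldots,2M\}$ these sets are pairwise disjoint, averaging over $k$ will produce for every $M\in\N$ an index $k_M(\eps)\in\{M+1,\ldots,2M\}$ such that
\[
\calF_\eps(\mathcal{T}_{k_M(\eps)}(u_\eps),v_\eps)\le \Big(1+\frac{c}{M}\Big)\calF_\eps(u_\eps,v_\eps)+\frac{c\calL^n(\Omega)}{M}.
\]
Since $k_M(\eps)$ takes values in a finite set, a diagonal extraction in $\eps$ makes it independent of $\eps$, say $k_M(\eps)\equiv k_M$.

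Because $\mathcal{T}_{k_M}$ is bounded and Lipschitz and $u_\eps\to u$ in $L^1$, one has $\mathcal{T}_{k_M}(u_\eps)\to\mathcal{T}_{k_M}(u)$ in $L^1(\Omega;\R^m)$, and $\mathcal{T}_{k_M}(u)\in L^\infty\cap (GBV(\Omega))^m\subseteq BV(\Omega;\R^m)$. Applying Theorem~\ref{t:lbcompBV} to the truncated pair $(\mathcal{T}_{k_M}(u_\eps),v_\eps)$ will then give
\[
\calF_0(\mathcal{T}_{k_M}(u),1)\le\liminf_\eps \calF_\eps(\mathcal{T}_{k_M}(u_\eps),v_\eps)\le\Big(1+\frac{c}{M}\Big)L+\frac{c\calL^n(\Omega)}{M}.
\]

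Finally, we take $M\to\infty$. The previous display provides the uniform bound $\calF_0(\mathcal{T}_{k_M}(u),1)\le 2L+c\calL^n(\Omega)$; combined with the pointwise convergences $\nabla \mathcal{T}_{k_M}(u)\to\nabla u$ $\calL^n$-a.e., $[\mathcal{T}_{k_M}(u)]\to[u]$ and $\chi_{J_{\mathcal{T}_{k_M}(u)}}\to\chi_{J_u}$ $\calH^{n-1}$-a.e.\ on $J_u$ (thanks to \eqref{e:Juinfty}), and the convergence of the vector measures $D^c\mathcal{T}_{k_M}(u)\to D^cu$ built into the definition \eqref{e:D^cu GBV}, a term-by-term Fatou/lower-semicontinuity argument applied to \eqref{F0} will yield $\calF_0(u,1)<\infty$. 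Proposition~\ref{contF0} then applies and gives $\lim_k\calF_0(\mathcal{T}_k(u),1)=\calF_0(u,1)$; restricting to the subsequence $k=k_M\to\infty$ we conclude $\calF_0(u,1)\le L$. The main technical ingredient is the truncation estimate of the first step; the remainder is essentially bookkeeping.
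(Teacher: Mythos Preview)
Your proposal is correct and follows essentially the same approach as the paper: truncate via $\mathcal{T}_k$, use De Giorgi averaging to select $k_M$ with a controlled energy error, apply the $BV$ lower bound Theorem~\ref{t:lbcompBV} to the truncated sequence, and pass to the limit via Proposition~\ref{contF0}. The only cosmetic differences are that the paper establishes $\calF_0(u,1)<\infty$ directly from the scalar estimate \eqref{e:one-dim estimate} in Section~\ref{s:scalar case} (rather than via your Fatou argument on the truncations) and tracks the error term as $c\calL^n(\{|u_\eps|>a_M\})$ rather than $c\calL^n(\Omega)/M$.
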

					\begin{proof}
						{Let} $u\in (GBV(\Omega))^m$ and let $(u_\eps,v_\eps)\in {W^{1,2}}(\Omega; \R^{m+1})$ be such that $(u_\eps,v_\eps)\to (u,1)$ in $L^1(\Omega;\R^{m+1})$, with $v_\varepsilon\in[0,1]$ $\calL^n$-a.e. in $\Omega$. Without loss of generality we can suppose that $\liminf_{\eps\to0}\calF_{\eps}(u_\eps,v_\eps)<\infty$, {that the latter is actually a limit (up to a subsequence not relabeled), and that} $(u_\eps,v_\eps)\to(u,1)$ $\calL^n$-a.e. {in $\Omega$}. {In particular, from Section \ref{s:scalar case} we infer that $u\in(GBV(\Omega))^m$, with $|\nabla u|\in L^1(\Omega)$ and satisfying \eqref{e:Juinfty} {and \eqref{e:one-dim estimate}}, so that $\calF_0(u,1)<\infty$.}
						
						Recalling the definition of the truncation $\mathcal{T}_k$ in \eqref{e:Tk}, we have that $\mathcal{T}_k(u_\eps)\to \mathcal{T}_k(u)$ in $L^1(\Omega;\R^m)$ 
						{for any $k$} and that $\mathcal{T}_k(u)\in BV({\Omega};\R^m)$, {being $\mathcal{F}_0(u,1)<\infty$.}
						{Hence,} we can apply {Theorem~\ref{t:lbcompBV}} to say that 
% 						I{la versione localizzata di Theorem~\ref{t:lbcompBV} credo servisse solo qui.}
						\begin{equation}\label{estTkA}
						{\calF_0(\mathcal{T}_{k_{M}}(u),1)\leq \liminf_{\eps\to0}\calF_{\eps}(\mathcal{T}_{k_{M}}(u_\eps),v_\eps).}
						\end{equation}
						We claim that for all $M\in\N$ there is $k_{M}\in\{M+1,\dots,2M\}$ independent of $\eps$ such that {after extracting a further subsequence}
						\begin{equation}\label{estTk}
						{\calF_{\eps}(\mathcal{T}_{k_{M}}(u_\eps),v_\eps)\leq \Big(1+\frac{c}{M}\Big)\calF_{\eps}(u_\eps,v_\eps)
						{+c\calL^n(\{|u_\eps|>a_M\})},}
						\end{equation}
						for some $c>0$ independent of $\eps$ and of $M$.
						Given this for granted, we get by \eqref{estTkA}, \eqref{estTk}
						{and by the convergence $u_\eps\to u$ in measure}
						\[
						{\limsup_{M\to\infty}\calF_0(\mathcal{T}_{k_M}(u),1)}\leq\liminf_{\eps\to0}\calF_{\eps}(u_\eps,v_\eps).
						\]
						Finally, using the continuity under truncations for $\calF_0$ established in {Proposition~\ref{contF0}}, we obtain 
						\[
						\calF_0(u,1)\leq\liminf_{\eps\to0}\calF_{\eps}(u_\eps,v_\eps)
						\]
						and hence \eqref{e:lbcomp}.
						
						It remains to prove \eqref{estTk}. To this aim we argue as in Proposition~\ref{periodicity} using De~Giorgi's averaging-slicing method on the range.
						First, for all $k\in\N$ we split the energy contributions
						\begin{multline}\label{e:Feps* Tk2}
						{\calF_{\eps}(\mathcal{T}_k(u_\eps),v_\eps)=\calF_{\eps}(u_\eps,v_\eps;\{|u_\eps|\leq a_k\})}\\
						{+\calF_{\eps}(\mathcal{T}_k(u_\eps),v_\eps;\{a_k<|u_\eps|< a_{k+1}\})%\notag\\&
						+\calF_{\eps}(0,v_\eps;\{|u_\eps|\geq a_{k+1}\})\,.}
						\end{multline}
						By \eqref{e:Psi gc} and the definition of $\mathcal{T}_k$, the last but one term in the previous expression can be estimated as
						\begin{align}\label{e:Feps* Tk 22}
						\calF_{\eps}&{(\mathcal{T}_k(u_\eps),v_\eps;\{a_k<|u_\eps|<a_{k+1}\})
						\leq c\int_{\{a_k<|u_\eps|< a_{k+1}\}}f^2_\eps(v_\eps)\Psi(\nabla u_\eps)\dx}\notag\\
						&{+c\calL^n(\{a_k<|u_\eps|<a_{k+1}\})
						+{\calF_{\eps}(0,}v_\eps;\{a_k<|u_\eps|< a_{k+1}\})\,,}
						\end{align}
						% and  
						% \begin{align}\label{e:Feps* Tk 32}
						% \calF_{\eps}(0,v_\eps;\{|u_\eps|\geq a_{k+1}\}\cap A)={\calF_{\eps}(0,}v_\eps;\{|u_\eps|\geq a_{k+1}\}\cap A)\,,
						% \end{align}
						for some $c>0$.
						Summing \eqref{e:Feps* Tk2} and \eqref{e:Feps* Tk 22} and averaging, 
						we conclude that there exists $k_{M,\eps}\in\{M+1,\dots,2M\}$ such that
						\begin{align*}
						{\calF_{\eps}(\mathcal{T}_{k_{M,\eps}}(u_\eps),v_\eps)}&
						{\leq\frac 1M\sum_{k=M+1}^{2M}\calF_{\eps}(\mathcal{T}_k(u_\eps),v_\eps)}\\
						&{\leq \Big(1+\frac{c}{M}\Big)\calF_\eps(u_\eps,v_\eps){+c\calL^n(\{|u_\eps|>a_M\})}\,,}
						\end{align*}
						for some $c>0$. As $\eps\to 0$, there exists a subsequence of $\{k_{M,\eps}\}$ that is independent of $\eps$. 
						This yields \eqref{estTk} and concludes the proof.
					\end{proof}
				}

\clearpage

\section{Upper bound}\label{upperbound}

% We will prove that the $\overline{\Gamma}$-limit of $\calF_\eps$ satisfies the hypotheses of \cite[Theorem 3.12]{BouchitteFonsecaMascarenhas}, so that it can be represented as an integral functional. 
% Its diffuse and surface densities will be identified by a direct computation.
% In order to be able to obtain existence of minimizers, we shall also consider a perturbed version of the functional 
% which includes an additional uniformly coercive term. We prove the upper bound directly for the modified functional. 
% We fix a function $\eta:(0,1]\to[0,1]$ such that
% \begin{equation}\label{eqassetaeps}
% \lim_{\eps\to0} \frac{\eta_\eps}{\eps}=0
% \end{equation}
% and define
% \begin{equation}\label{eqdefcalfepseta}
%  \calF_\eps^\eta(u,v;A):=\calF_\eps(u,v;A)+\eta_\eps \int_A \Psi(\nabla u) \dx,
% \end{equation}
% where $\calF_\eps$ has been defined in \eqref{functeps}.

In this Section we prove the $\Gamma-\limsup$ inequality in Theorem~\ref{t:finale}.}
In order to be able to obtain existence of minimizers for the perturbed functionals (see Section~\ref{compandconv}), we consider a perturbed version of the functional 
which includes an additional uniformly coercive term, and prove the upper bound directly for the modified functional. 
We fix a function $\eta:(0,1]\to[0,1]$ such that
\begin{equation}\label{eqassetaeps}
\lim_{\eps\to0} \frac{\eta_\eps}{\eps}=0
\end{equation}
and define
\begin{equation}\label{eqdefcalfepseta}
 \calF_\eps^\eta(u,v;A):=\calF_\eps(u,v;A)+\eta_\eps \int_A \Psi(\nabla u) \dx,
\end{equation}
where $\calF_\eps$ has been defined in \eqref{functeps}.

One key ingredient in the proof of the upper bound is that
 the $\overline{\Gamma}$-limit of {$\calF_\eps^\eta$} satisfies the hypotheses of \cite[Theorem 3.12]{BouchitteFonsecaMascarenhas}, so that it can be represented as an integral functional. 
Its diffuse and surface densities will be identified by a direct computation.

In order to prove that $\overline{\Gamma}\text{-}\lim_{\eps\to0}\calF_\eps^{{\eta}}(u,1;\cdot)$ is a Borel measure, we first check the weak subadditivity of the $\Gamma$-upper limit of $\calF_\eps^{{\eta}}$. 
\begin{lemma}\label{l:wsub}
	Let $u\in L^1(\Omega;\R^m)$, let $A',A,B\in\mathcal{A}(\Omega)$ with $A'\subset\subset A$, then 
	\begin{multline}\label{e:subadd}
		{\Gamma(L^1)\hbox{-}\limsup_{{\eps\to0}}\calF_\eps^\eta(u,1;A'\cup B)\leq}\\ 	{\Gamma(L^1)\hbox{-}\limsup_{{\eps\to0}}\calF_\eps^\eta(u,1;A)+	\Gamma(L^1)\hbox{-}\limsup_{{\eps\to0}}\calF_\eps^\eta(u,1;B),}
	\end{multline}
	{where $\calF_\eps^\eta$ has been defined in \eqref{eqdefcalfepseta}.}
\end{lemma}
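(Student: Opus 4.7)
The plan is a standard cut-and-glue argument combining De~Giorgi slicing in physical space (to select a thin transition strip with small energy) with a range truncation to circumvent the quadratic-growth cross-term produced by the cutoff gradient. I may assume both $\Gamma$-limsups on the right-hand side of \eqref{e:subadd} are finite; fix then $\eps_k\downarrow 0$ and recovery sequences $(u_k^A,v_k^A)\to(u,1)$ and $(u_k^B,v_k^B)\to(u,1)$ in $L^1(\Omega;\R^{m+1})$ attaining these upper limits on $A$ and on $B$ respectively, with total energy bounded by a constant $M$.

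Given $\rho\in(0,\dist(\overline{A'},\partial A))$ and $N\in\N$, I consider the nested family $A_i:=\{x\in\Omega:\dist(x,A')<i\rho/N\}$ and the strips $S_i:=A_i\setminus\overline{A_{i-1}}$ for $i=1,\dots,N$. Since these are pairwise disjoint and contained in $A$, a pigeonhole argument produces $i_k\in\{1,\dots,N\}$ with $\calF_{\eps_k}^\eta(u_k^A,v_k^A;S_{i_k})+\calF_{\eps_k}^\eta(u_k^B,v_k^B;S_{i_k})\le 3M/N$; after passing to a subsequence, $i_k\equiv i$ is independent of $k$. I pick a cutoff $\varphi\in C_c^\infty(A_i)$ with $\varphi\equiv 1$ on $A_{i-1}$ and $|\nabla\varphi|\le cN/\rho$, and set
\[u_k:=\varphi u_k^A+(1-\varphi)u_k^B,\qquad v_k:=\varphi v_k^A+(1-\varphi)v_k^B\in[0,1].\]
Then $(u_k,v_k)\to(u,1)$ in $L^1(\Omega;\R^{m+1})$, and since $A'\subset A_{i-1}\subset A_i\subset\subset A$ the energy splits as
\[\calF_{\eps_k}^\eta(u_k,v_k;A'\cup B)\le \calF_{\eps_k}^\eta(u_k^A,v_k^A;A)+\calF_{\eps_k}^\eta(u_k^B,v_k^B;B)+\calF_{\eps_k}^\eta(u_k,v_k;S_i),\]
so the proof reduces to showing $\lim_N\limsup_k\calF_{\eps_k}^\eta(u_k,v_k;S_i)=0$.

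The phase-field contribution on $S_i$ is controlled by convexity together with the estimate $\|1-v_k^{A,B}\|_{L^2}^2\le c\eps_k$ arising from the potential term: the cross-term $c\eps_k(N/\rho)^2\int_{S_i}|v_k^A-v_k^B|^2$ is $O(\eps_k^2N^2/\rho^2)\to 0$ for fixed $N$ and $\rho$, while the remaining pieces are dominated by $\calF_{\eps_k}^\eta(u_k^{A,B},v_k^{A,B};S_i)\le cM/N$. For the elastic and perturbation contributions, the quadratic growth \eqref{e:Psi gc} together with the decomposition $\nabla u_k=\varphi\nabla u_k^A+(1-\varphi)\nabla u_k^B+\nabla\varphi(u_k^A-u_k^B)$ leave, as the single obstacle, the cross-term $(N/\rho)^2\int_{S_i}|u_k^A-u_k^B|^2$, which is not obviously small because $u_k^A,u_k^B$ converge only in $L^1$. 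I expect this to be the main technical step; I resolve it by a preliminary De~Giorgi averaging on the range, exactly as in Proposition~\ref{periodicity} and Theorem~\ref{t:lbcomp}, replacing $u_k^{A,B}$ by truncations $\mathcal{T}_{k_M}(u_k^{A,B})$ with $k_M\in\{M+1,\dots,2M\}$ chosen so that the energy grows by at most $c/M$, up to an error $\calL^n(\{|u_k^{A,B}|>a_M\})\to 0$ as $k\to\infty$. After this truncation the interpolated sequences are uniformly bounded and $\int_{S_i}|\mathcal{T}_{k_M}(u_k^A)-\mathcal{T}_{k_M}(u_k^B)|^2\le 2a_{k_M+1}\|\mathcal{T}_{k_M}(u_k^A)-\mathcal{T}_{k_M}(u_k^B)\|_{L^1(\Omega)}\to 0$, so the cross-term vanishes as $k\to\infty$ for fixed $M,N,\rho$. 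Taking the limits in the order $k\to\infty$, then $N\to\infty$, then $M\to\infty$ completes the proof.
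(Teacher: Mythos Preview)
Your overall strategy (range truncation via $\mathcal{T}_k$ to pass to bounded sequences, followed by De~Giorgi slicing in physical space) matches the paper's two-step argument. The gap is in your choice of the interpolated phase-field variable. Taking $v_k=\varphi v_k^A+(1-\varphi)v_k^B$ does \emph{not} let you control the elastic term on the strip $S_i$. After expanding $\Psi(\nabla u_k)$ you need, for instance,
\[
\int_{S_i} f_\eps^2(v_k)\,|\nabla u_k^A|^2\,\dx \le C\int_{S_i} f_\eps^2(v_k^A)\,|\nabla u_k^A|^2\,\dx,
\]
and this fails: at points where $v_k^A$ is small (so $f_\eps(v_k^A)$ is nearly $0$ and $|\nabla u_k^A|$ can be arbitrarily large at no cost) but $v_k^B$ is close to~$1$, the convex combination $v_k$ can sit well above $v_k^A$, and since $f_\eps$ is increasing you get $f_\eps(v_k)\gg f_\eps(v_k^A)$. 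The only a~priori bound available is $f_\eps^2(v_k)\le 1$, which leaves you with $\int_{S_i}|\nabla u_k^A|^2$, and the $\eta_\eps$-term only gives $\int_{S_i}|\nabla u_k^A|^2\le cM/(N\eta_\eps)\to\infty$. So ``the single obstacle'' is not the cross-term alone; the coupling between $f_\eps^2(v_k)$ and $\Psi(\nabla u_k^{A,B})$ is uncontrolled.

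The paper fixes this by taking $v_\eps^i=v_\eps^A\wedge v_\eps^B$ on the strip where $u$ is being interpolated; monotonicity of $f_\eps$ then gives $f_\eps(v_\eps^i)\le\min\{f_\eps(v_\eps^A),f_\eps(v_\eps^B)\}$ and the elastic term is dominated by both original energies. The price is that $v$ must then be interpolated back to $v_\eps^A$ and $v_\eps^B$ on the two neighboring strips (where $u$ already equals $u_\eps^A$ or $u_\eps^B$, so no new elastic issue arises); this is why the paper's definition \eqref{e:vki} uses three consecutive layers. Once you replace your $v_k$ by this minimum construction, the rest of your argument goes through essentially as written.
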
 
\begin{proof}
		{To simplify the notation let us set  $\calF'':=\Gamma(L^1)\hbox{-}\limsup_{{\eps\to0}}\calF_\eps^\eta$.}
	It is not restrictive to assume that the right-hand side of \eqref{e:subadd} is finite, so that 
	$u\in {(GBV\cap L^1(A\cup B))^m}$.
	%and $v=1$ $\calL^n$-a.e.\ in $A\cup B$.
%{\color{red}{Assume that $u\in BV\cap L^\infty(A\cup B,\R^n)$.}}
Let $(u_\eps^A,v_\eps^A),\ (u_\eps^B,v_\eps^B)\in {W^{1,2}}(\Omega;\R^{m+1})$ be such that
	\begin{equation}\label{e:rec1}
	(u_\eps^J,v_\eps^J)\to (u,1) \text{ in } L^1(\Omega;\R^m)\times L^1(\Omega)\,,
	\end{equation}
	and
	\begin{equation}\label{e:rec2}
	\limsup_{\eps\to0} \calF_\eps^\eta(u_\eps^J,v_\eps^J;J)=\calF''(u,1;J),
	\end{equation}
	for $J\in\{A,B\}$. 
%	{\color{red}{We also assume that 
%	\begin{equation}\label{e:rec3}
%	\{u_\eps^J\}_\eps\text{ is bounded in } L^\infty(\Omega,\R^n) \text{ for }J\in\{A,B\}.  
%	\end{equation}}}
\medskip
	
{{\bf{Step 1. Estimate {\eqref{e:subadd}} is valid if $u\in BV\cap L^2(A\cup B;\R^m)$ and 
{\eqref{e:rec2} holds for two sequences converging to $u$}
%$u_\eps^J\to u$ 
in $L^2(\Omega;\R^m)$.}}} 
	For $\delta:=\dist(A^\prime,\partial A)>0$ and some $M\in\N$, we set for all 
	$i\in\{1,\ldots,M\}$
	\[
	A_i:=\left\{x\in \Omega:\,\dist(x,A^\prime)<\frac{\delta}{M}i\right\},
	\]
	and $A_0:=A^\prime$, so that $A_{i-1}\subset\subset A_i\subset A$.	
	Let $\varphi_i\in C_c^1(\Omega)$ be a cut-off function between $A_{i-1}$ and $A_i$,
	i.e., $\varphi_i|_{A_{i-1}}=1$, $\varphi_i|_{A_{i}^c}=0$, and 
	$\|\nabla \varphi_i\|_{L^\infty(\Omega)}\leq \frac{2M}{\delta}$. Then, we define
	\begin{equation}\label{e:uki}
	u_\eps^i:=\varphi_i\,u_\eps^A+(1-\varphi_i)u_\eps^B,
	\end{equation}
	and
	\begin{equation}\label{e:vki}
	v_\eps^i:=\begin{cases}
	\varphi_{i-1}\,v_\eps^A+(1-\varphi_{i-1})(v_\eps^A\wedge v_\eps^B), & \text{ on } A_{i-1},\cr
	v_\eps^A\wedge v_\eps^B, & \text{ on } A_i\setminus {A}_{i-1}, \cr
	\varphi_{i+1}(v_\eps^A\wedge v_\eps^B)+(1-\varphi_{i+1})\,v_\eps^B, & \text{ on } \Omega\setminus {A}_i.
	\end{cases}
	\end{equation}
	For $i\in\{2,\ldots,M-1\}$,  $(u_\eps^i,v_\eps^i)\in {W^{1,2}}(\Omega;\R^{m+1})$. Arguing exactly as in \cite[Lemma 6.2]{ContiFocardiIurlano2016}, for all $\eps>0$ we can find an index	$i_\eps\in\{2,\dots,M-1\}$ such that 
	\begin{multline*}
	\calF_\eps^\eta(u_\eps^{i_\eps},v_\eps^{i_\eps};A'\cup B)\leq \calF_\eps^\eta(u_\eps^A,v_\eps^A;A)+\calF_\eps^\eta(u_\eps^B,v_\eps^B;B)\\
	+\frac{c}{M}\,\Big(\calF_\eps^\eta(u_\eps^A,v_\eps^A;B\cap(A\setminus A'))
	+\calF_\eps^\eta(u_\eps^B,v_\eps^B;B\cap(A\setminus A')){{+\calL^n(B\cap (A\setminus A'))}}\Big)\\
	+\frac{c\,M}{\delta^2}\int_{B\cap(A\setminus A')}|u_\eps^A-u_\eps^B|^2\,\dx
	+\frac{c\,M\eps}{\delta^2}\int_{B\cap(A\setminus A')}|v_\eps^A-v_\eps^B|^2\,\dx.
	\end{multline*}
Passing first to the limit as $\eps\to0$ and then as $M\to\infty$ we obtain \eqref{e:subadd} {having assumed that $u_\eps^J\to u$ 
in $L^2(\Omega;\R^m)$, $J\in\{A,B\}$.}
\medskip

{{\bf{Step 2. Estimate \eqref{e:subadd} is valid if $u\in (GBV\cap L^1(A\cup B))^m$.}} 
We use De Giorgi's slicing/averaging techniques on the co-domain by employing the truncation 
functions introduced in \eqref{e:Tk}. The argument is analogous to that developed in Step 1 of 
Proposition \ref{periodicity}.

Note that if $u\in (GBV(A\cup B))^m$ then $\mathcal{T}_k(u)\in BV\cap L^\infty(A\cup B;\R^m)$. 
In addition, for all $k\in\N$ {and $J\in\{A,B\}$} it is easy to check that $\mathcal{T}_k(u_\eps^J)\in W^{1,2}(\Omega;\R^m)$,
that $\mathcal{T}_k(u_\eps^J)\to \mathcal{T}_k(u)$ {as $\eps\to0$} in $L^2({\Omega};\R^m)$, and that 
\begin{align}\label{e:Feps Tk}
 \calF_\eps^\eta(\mathcal{T}_k(u_\eps^J),v_\eps^J;J)&=\calF_\eps^\eta(u_\eps^J,v_\eps^J;\{|u_\eps^J|\leq a_k\})\notag\\
&+\calF_\eps^\eta(\mathcal{T}_k(u_\eps^J),v_\eps^J;\{a_k<|u_\eps^J|< a_{k+1}\})\notag\\&
+\calF_\eps^\eta(0,v_\eps^J;\{|u_\eps^J|\geq a_{k+1}\})\,.
\end{align}
We estimate the last but one term. The growth conditions on $\Psi$ (cf. \eqref{e:Psi gc}) and
$\|\nabla \mathcal{T}_k\|_{L^\infty(\R^m)}\leq 1$ yield for a constant $c>0$
\begin{align}\label{e:Feps Tk 2}
 \calF_\eps^\eta&(\mathcal{T}_k(u_\eps^J),v_\eps^J;\{a_k<|u_\eps^J|<a_{k+1}\})
 \leq c\int_{\{a_k<|u_\eps^J|< a_{k+1}\}}(\eta_\eps+f^2_\eps(v_\eps^J))\Psi(\nabla u_\eps^J)\dx\notag\\
 &+c\calL^n(\{a_k<|u_\eps^J|<a_{k+1}\})+{\calF_{\eps}(0,}v_\eps^J;\{a_k<|u_\eps^J|< a_{k+1}\})\,.
\end{align}
% and moreover
% \begin{align}\label{e:Feps Tk 3}
%  \calF_\eps(0,v_\eps^J;\{|u_\eps^J|\geq a_{k+1}\})={\calF_{\eps}(0,}v_\eps^J;\{|u_\eps^J|\geq a_{k+1}\})\,.
% \end{align}
%where $G_\eps $ is the functional introduced in \eqref{e:Geps}.
Collecting \eqref{e:Feps Tk} and \eqref{e:Feps Tk 2} we conclude that 
\begin{align*}
  \calF_\eps^\eta&(\mathcal{T}_k(u_\eps^J),v_\eps^J;J)\leq \calF_\eps^\eta(u_\eps^J,v_\eps^J;J)\\
  &+ c\int_{\{a_k<|u_\eps^J|< a_{k+1}\}}(\eta_\eps+f^2_\eps(v_\eps^J))\Psi(\nabla u_\eps^J)\dx
  {+c\calL^n(\{|u_\eps^J|>a_k\})}\,.
\end{align*}
Let now $M\in\N$, by summing up the latter inequality for both $A$ and $B$ and by averaging, there exists 
$k_{\eps,M}\in\{M+1,\ldots,2M\}$ such that 
\begin{align}\label{e:truncation Psik}
\calF_\eps^\eta&(\mathcal{T}_{k_{\eps,M}}(u_\eps^A),v_\eps^A;A)+\calF_\eps^\eta(\mathcal{T}_{k_{\eps,M}}(u_\eps^B),v_\eps^B;B)\notag\\
&\leq\frac 1M\sum_{k=M+1}^{2M}\Big(\calF_\eps^\eta(\mathcal{T}_k(u_\eps^A),v_\eps^A;A) + \calF_\eps^\eta(\mathcal{T}_k(u_\eps^B),v_\eps^B;B)\Big)\notag\\
&\leq\Big(1+\frac cM\Big)\Big(\calF_\eps^\eta(u_\eps^A,v_\eps^A;A)+\calF_\eps^\eta(u_\eps^B,v_\eps^B;B)\Big)\notag\\
&+c\calL^n({\{|u_\eps^A|\ge a_{M+1}\}})+c\calL^n({\{|u_\eps^B|\ge a_{M+1}\}}).
\end{align}
Up to a subsequence, we may take the index $k_{\eps,M}=k_M$, i.e.~to be independent of $\eps$. Therefore, passing to the limit
as $\eps\to0$, {the convergence $u_\eps^J\to u$ in measure for $J\in\{A,B\}$}, \eqref{e:rec1}, \eqref{e:rec2}, \eqref{e:truncation Psik} and  Step 1 yield
\begin{align}\label{e:truncation Psik 2}
\calF''(\mathcal{T}_{k_M}(u),1;A'\cup B)\leq&\calF''(\mathcal{T}_{k_M}(u),1;A)+\calF''(\mathcal{T}_{k_M}(u),1;B)\notag\\
\leq&\Big(1+\frac cM\Big)\Big(\calF''(u,1;A)+\calF''(u,1;B)\Big)
\\
&{+c\calL^n({\{|u|\ge a_{M+1}\}})}
\end{align}
Eventually, since $\mathcal{T}_{k_M}(u)\to u$ in $L^1(\Omega;\R^m)$ as $M\uparrow\infty$, 
by the lower semicontinuity of $\calF''$ for the $L^1(\Omega;\R^m)$ convergence 
we conclude \eqref{e:subadd}.
}
\end{proof}

We are now ready to prove the upper bound inequality.
\begin{theorem}\label{t:limsupndim}
	Let $\calF_{\eps}^\eta$ and $\calF_0$ be defined in  \eqref{eqdefcalfepseta} and \eqref{F0}, respectively.
	For every $(u,v)\in L^1(\Omega;\R^{m+1})$ it holds
	\begin{equation}\label{e:gammalisup estimate}
		{\Gamma{(L^1)\hbox{-}}\limsup_{{\eps\to0}}\calF_{\eps}^\eta(u,v)}\leq \Functlim(u,v).
	\end{equation}
\end{theorem}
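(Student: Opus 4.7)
The strategy follows the outline in the introduction: first reduce to the nontrivial case $v=1$ $\calL^n$-a.e. and $u\in (GBV\cap L^1(\Omega))^m$ with $\calF_0(u,1)<\infty$, since otherwise \eqref{e:gammalisup estimate} is trivial. Write $\calF''(u,1;A):=\Gamma(L^1)\text{-}\limsup_{\eps\to0}\calF_\eps^\eta(u,1;A)$ and view $A\mapsto\calF''(u,1;A)$ as a set function on $\calA(\Omega)$ for each fixed admissible $u$. The plan is to (i) produce a crude linear upper bound; (ii) use the weak subadditivity of Lemma~\ref{l:wsub} together with this bound to apply \cite[Theorem~3.12]{BouchitteFonsecaMascarenhas}, obtaining an integral representation of $\calF''$; (iii) identify the three densities; and (iv) pass from $BV$ to $GBV$ by truncation using Proposition~\ref{contF0}.

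For the crude upper bound in $BV$ I would argue constructively. Given $u\in BV(\Omega;\R^m)$, away from $J_u$ I would take $u_\eps:=u\ast\varphi_{r_\eps}$ mollified at some scale $r_\eps\to0$ and $v_\eps=1$; this contributes at most $C\int_A\Psi^{1/2}(\nabla u_\eps)\dx+\eta_\eps\int_A\Psi(\nabla u_\eps)\dx$, which is controlled by $C\int_A(1+|\nabla u|)\dx+C|D^cu|(A)$ once $\eta_\eps$ is chosen compatibly with $r_\eps$ using \eqref{eqassetaeps}. Near $J_u$ I would use the one-dimensional profile construction of Proposition~\ref{propuvjump} (which explicitly absorbs the $\eta_\eps\!\int\Psi$ term), patched onto a neighborhood of $J_u$ via the boundary conditions \eqref{eqpropboundrybv}, giving a jump contribution bounded by $C\calH^{n-1}(J_u\cap A)$ thanks to Lemma~\ref{lemmapropg}. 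Altogether
\[
\calF''(u,1;A)\le C\int_A(1+|\nabla u|)\dx + C|D^cu|(A) + C\calH^{n-1}(J_u\cap A).
\]
Combined with the inner-regularity/subadditivity estimate of Lemma~\ref{l:wsub} and the standard lower semicontinuity of $\calF''$ in $A$, the De Giorgi–Letta criterion shows $\calF''(u,1;\cdot)$ extends to a finite Borel measure dominated by $|\nabla u|\calL^n+|D^cu|+\calH^{n-1}\res J_u+\calL^n$.

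The hypotheses of the Bouchitté--Fonseca--Mascarenhas representation theorem are then met, yielding Borel densities $\phi_b,\phi_c,\phi_s$ with
\[
\calF''(u,1;A)=\int_A\phi_b(\nabla u)\dx+\int_A\phi_c(\dd D^cu)+\int_{J_u\cap A}\phi_s([u],\nu_u)\dd\calH^{n-1}.
\]
To show $\phi_b(\xi)\le h^\qc(\xi)$ I would take affine $u(x)=\xi x$ and, for a quasi-minimizer $\varphi$ of \eqref{eq:hqc}, construct $u_\eps(x):=\xi x+\delta_\eps\varphi(x/\delta_\eps)$ at a slow scale $\delta_\eps\gg\eps$, pointwise choosing $v_\eps$ to realize the minimum in the definition \eqref{e:h} of $h$: set $v_\eps=1$ where $\Psi(\nabla u_\eps)\le\ell\Psi^{1/2}(\nabla u_\eps)$, and otherwise optimize $f_\eps^2(v)\Psi(\nabla u_\eps)+(1-v)^2/(4\eps)$ transverse to $J_u$ using the standard Modica--Mortola profile at scale $\eps$; uniform convergence \eqref{eqpsipsiinf} and $\eta_\eps/\eps\to0$ let us dispose of the $\eta_\eps\!\int\Psi$ term. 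Quasiconvexification together with the upper bound $h^\qc\le h$ yields $\phi_b\le h^\qc$. The Cantor density inequality $\phi_c(\xi)\le h^\qcinfty(\xi)$ then follows from the recession-function structure built into \cite[Theorem~3.12]{BouchitteFonsecaMascarenhas}, i.e.\ $\phi_c=\phi_b^\infty$.

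For the surface density I would blow up at a jump point $x_0$ with $[u](x_0)=z$, $\nu_u(x_0)=\nu$, reducing to the pure-jump target $z\chi_{\{y\cdot\nu>0\}}$ on $Q^\nu$. Invoking Proposition~\ref{propuvjump} yields a sequence $(u_j,v_j)$ with periodic boundary data \eqref{eqpropboundrybv}, realizing the infimum defining $g(z,\nu)$ and with the additional property $\eta_j\int|\nabla u_j|^2\dx\to0$. The periodicity in directions orthogonal to $\nu$ allows one to tile a neighborhood of $J_u$ in $\Omega$ with rescaled copies while matching the ambient $u$ outside, giving $\phi_s(z,\nu)\le g(z,\nu)$. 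Finally, for $u\in (GBV\cap L^1(\Omega))^m$ I would apply the $BV$ bound to the truncations $\mathcal{T}_k(u)\in BV\cap L^\infty$, use the $L^1$-lower semicontinuity of $\calF''$ in $u$, and invoke Proposition~\ref{contF0} to conclude $\calF''(u,1)\le\calF_0(u,1)$. The main obstacle is the bulk construction: one must simultaneously handle the two regimes underlying the definition of $h$ (small strains where $\Psi$ is active, large strains where the surrogate $\ell\Psi^{1/2}$ is optimal) while orchestrating three separate scales---the period of the quasiconvex oscillation, the damage length $\eps$, and the regularization $\eta_\eps$---so that they do not interfere and the $\eta_\eps\Psi$ perturbation indeed becomes negligible.
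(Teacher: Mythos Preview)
Your overall architecture---apply the Bouchitt\'e--Fonseca--Mascarenhas representation to $\calF''$, identify the three densities, then pass to $GBV$ via Proposition~\ref{contF0}---matches the paper, and the surface-density step via Proposition~\ref{propuvjump} is essentially what the paper does. Two points, however, deserve attention.

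First, your treatment of the diffuse part is both different from and shakier than the paper's. For the crude linear bound you propose to mollify $u$ and take $v_\eps=1$, but then the leading term is $\Psi(\nabla u_\eps)$, not $\Psi^{1/2}$; to get a linear bound you already need $v_\eps<1$, and the pointwise optimization of $v$ that you describe does not produce $v_\eps\in W^{1,2}$ without further smoothing. Your sharp identification of $\phi_b$ has the same issue, and the reference to a ``Modica--Mortola profile transverse to $J_u$'' is out of place when $u$ is affine. The paper bypasses all of this: it only proves $\calF''(u,1;A)\le\int_A h(\nabla u)\dx$ for $u\in W^{1,1}$. For affine $u(x)=\xi x$ one takes $v_\eps$ equal to the \emph{constant} $1-\sqrt{2\ell\eps}\,\Psi^{1/4}(\xi)$ (or $v_\eps\equiv1$, whichever is better), extends to piecewise affine by a partition of unity acting on the $v$-side only, and to $W^{1,1}$ by density. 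Quasiconvexification and the passage to $BV$ are then obtained \emph{abstractly}: lower semicontinuity of $\calF''$ plus Acerbi--Fusco relaxation gives $\calF''\le\int h^\qc(\nabla u)$ on $W^{1,1}$, and Theorem~\ref{theoambdm92} yields $\calF''(u,1;A)\le\int_A h^\qc(\nabla u)\dx+\int_A h^{\qc,\infty}(\dd D^su)$ on all of $BV$. This single inequality simultaneously furnishes the crude linear bound, the inner regularity needed for De~Giorgi--Letta, and the sharp identification of both $\phi_b\le h^\qc$ and $\phi_c\le h^{\qc,\infty}$; no direct oscillating-test-function construction for $h^\qc$ is ever needed.

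Second, \cite[Theorem~3.12]{BouchitteFonsecaMascarenhas} requires coercivity from below (their hypothesis~(2.3$'$)), which $\calF''$ does not enjoy on its own. The paper fixes this by applying the representation to $\widehat\calF_\lambda:=\widehat\calF+\lambda|Du|$ and letting $\lambda\downarrow0$ at the end; you should include this device.
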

\begin{proof}
	Given a subsequence $(\calF_{\eps_k}^\eta)$ of $(\calF_\eps^\eta)$, there exists a further subsequence, not relabeled, which $\overline{\Gamma}$-converges to some functional $\widehat{\calF}$, that is,
\begin{equation}\label{e:additivity}
\widehat \calF=(\calF')_-=(\calF'')_-,
\end{equation}
where $\calF'$ and $\calF''$ denote here the $\Gamma(L^1)$-lower and upper limits of $\calF_{\eps_k}^\eta$ and where the subscript $_-$ denotes the inner regular envelope of the relevant functional	
 (\cite[Definition~16.2 and Theorem~16.9]{DalMaso1993}). 
	
We remark that $\widehat{\calF}(u,v;\cdot)$ is the restriction of a Borel measure to open sets by \cite[Theorem~14.23]{DalMaso1993}. Indeed, $\widehat{\calF}(u,v;\cdot)$ is increasing and inner regular by definition; additivity follows {from \eqref{e:additivity},}
	once one checks that $(\calF')_-$ is superadditive and $(\calF'')_-$ is {subadditive.} 
	%{\color{red}{with respect to the topology $L^2(\Omega,\R^n)\times L^1(\Omega)$}} of the sequence $\calF_{\eps_k}$.
	The former condition is a direct consequence of the additivity of $\calF_{\eps}(u,v;\cdot)$ and \cite[{Proposition~16.12}]{DalMaso1993}. 
The latter follows from Lemma~\ref{l:wsub} along the lines of \cite[{Proposition~18.4}]{DalMaso1993}, 
using Lemma~\ref{l:wsub} instead of 
\cite[(18.6)]{DalMaso1993}.
    
We divide the proof of \eqref{e:gammalisup estimate} into several steps. First note that it is sufficient to prove 
it for $v=1$ $\calL^n$-a.e. on $\Omega$.

{\bf Step 1. Estimate on the diffuse part {for $u\in BV(\Omega;\R^m)$}.} 
We first prove a global rough estimate for $\calF''$ which actually turns out to be sharp for the diffuse part 
if $u\in BV(\Omega;\R^m)$. To this aim we set $H:L^1(\Omega;\R^m)\times \mathcal{A}(\Omega)\to [0,\infty]$ as
\begin{equation}\label{e:H}
 H(u;A):=\int_A h(\nabla u)\,\dx
\end{equation}
if $u\in W^{1,1}(\Omega;\R^m)$, and $\infty$ otherwise, where 
$h$ has been defined in \eqref{e:h}. We next prove the bound
\begin{equation}\label{linearest}
\calF''(u,1;A)\leq H(u;A)
%\int_A\ell\Big(\Psi^{\sfrac12}(\nabla u)\Big)^{qc}dx+\int_A\ell(\Psiinfty)^{\sfrac12}\Big(\frac{dD^su}{d|D^su|}\Big)d|D^su|,
\end{equation}
for $u\in W^{1,1}(\Omega;\R^m)$. Given this estimate for granted, 
on setting  $H^{*}:L^1(\Omega;\R^m)\times \mathcal{A}(\Omega)\to [0,\infty]$ 
\begin{equation}\label{e:Hqc}
 H^{*}(u;A):=\int_A h^{\qc}(\nabla u)\,\dx
\end{equation}
if $u\in W^{1,1}(\Omega;\R^m)$, and $\infty$ otherwise, the lower semicontinuity of 
$\calF''$ with respect to the $L^1(\Omega;\R^m)$ topology 
{and the relaxation result with respect to the {sequential} weak topology in $W^{1,1}(\Omega;\R^m)$ in}
\cite[Statement~III.7]{AcerbiFusco1984semicontinuity} {(or \cite[Theorem 9.1]{Dacorogna})} imply then that 
\begin{equation*}%\label{linearest qc}
\calF''(u,1;A)\leq H^{*}(u;A)\,.
\end{equation*}
In turn, from the estimate above, Theorem~\ref{theoambdm92} finally yields
\begin{equation}\label{e:lsc envelope H}
\mathcal H(u;A):=\mathrm{sc}^-(L^1)\hbox{-}H^{*}(u;A)
=\int_A h^{\qc}(\nabla u)\,\dx+\int_A h^\qcinfty\big(\dd D^su\big)\,,
\end{equation}
for every $u\in BV(\Omega;\R^m)$. Therefore, the bound 
\begin{equation}\label{e:stima limsup diffusa}
\calF''(u,1;A)\leq \mathcal H(u;A)%=\calF_0(u,1;A\setminus S_u)
\end{equation}
follows for every $u\in BV(\Omega;\R^m)$ and $A\in\mathcal{A}(\Omega)$.
% by taking into account the integral reresentation of $\calF''$ 

To prove \eqref{linearest}, assume first that $u$ is an affine function, say $u{(x)}=\xi x+b$, 
with $\xi\in \R^{m\times n}$, $b\in \R^m$.  
Then, the pair
\[
u_{k}:=u,\qquad v_{k}:= 1-\sqrt{2\ell\eps_k}{\Psi}^{\sfrac14}(\xi),\]
is such that $(u_k,v_k)\to (u,1)$ in $L^2(\Omega;\R^m)\times L^1(\Omega)$ and
recalling $\eta_{\eps_k}\to0$ 
\[\limsup_{k\to\infty}\calF_{\eps_k}^\eta(u_{k},v_{k};A)
\leq \calL^n(A)\ell{\Psi}^{\sfrac12}(\xi).
\]
Instead, if 
\[\bar u_{k}:=u,\qquad \bar v_{k}:= 1\]
we get 
\[\limsup_{k\to\infty}\calF_{\eps_k}^\eta(u_{k},v_{k};A)
{=} \calL^n(A)\Psi(\xi).\]
Therefore, we conclude \eqref{linearest} for every affine function $u$ in view of the last two estimates. 

{Assume now that $u\in C^0(\Omega;\R^m)$ is a piecewise affine function, say $u{(x)}=\sum_{i=1}^N(\xi_ix+b_i)\chi_{\Omega_i}(x)$, 
with $\xi_i\in \R^{m\times n}$, $b_i\in \R^m$, and $\Omega_i\in\calA(\Omega)$ 
{disjoint and}
with Lipschitz boundary, 
and such that $\calL^n(\Omega\setminus\cup_{i=1}^N\Omega_i)=0$. Then, set
\[
u_k:=u\;,\qquad v_k:=
\sum_{i=1}^N \varphi_iv_k^i
\]
where for each $i\in\{1,\ldots,N\}$ 
\[
v_{k}^i:= \begin{cases}
1-\sqrt{2\ell\eps_k}{\Psi}^{\sfrac14}(\xi_i), & 
\textrm{if ${\Psi}^{\sfrac12}(\xi_i)>\ell$},\cr
1, & \textrm{if ${\Psi}^{\sfrac12}(\xi_i)\leq \ell$},
\end{cases}
\]
and $\{\varphi_i\}_{1\leq i\leq N}$ is a partition of unity subordinated to the covering $\{\Omega_i^\delta\}_{1\leq i\leq N}$
of $\Omega$, $\Omega_i^\delta$ an open $\delta$-neighborhood of $\Omega_i$ for $\delta>0$, i.e.~
$\varphi_i\in C^\infty_c(\Omega_i^\delta)$, %$\rm{supp}\,\varphi_i\subset{\Omega_i^\delta}$, %in $\Omega$, 
$0\leq \varphi_i\leq 1$, $\varphi_i=1$ on {$\Omega_i^{-\delta}$}, 
$\sum_{i=1}^N\varphi_i=1$ in $\Omega$
(we write $\Omega_i^{-\delta}:=\{x: B_\delta(x)\subseteq \Omega_i\}$). %$\|\nabla \varphi_i\|_\infty\leq \delta^{-1}$, for some $\delta>0$.
Then, a straightforward computation shows that
\begin{align*}
\limsup_{k\to\infty}\,&\calF_{\eps_k}^\eta(u_{k},v_{k};A)\\&\leq
\sum_{i=1}^N \calL^n(\Omega_i^\delta\cap A)h(\xi_i)+
 c
%{\ell\max_{1\leq i\leq N}\Psi^{\sfrac12}(\xi_i)\,
\sum_{i=1}^N\calL^n(\Omega_i^\delta\setminus\Omega_i^{
{-\delta}}),
\end{align*}
{where $c$ depends on $\ell$, $\Psi$, and $\xi_1,\dots, \xi_N$.}
%for some constant $c>0$, 
Therefore we conclude \eqref{linearest} when $u$ is piecewise affine, namely as $\delta\to0$ in the latter inequality we have 
\[
 \calF''(u,1;A)\leq\sum_{i=1}^N \calL^n(\Omega_i\cap A)h(\xi_i)=H(u;A)\,.
\]
If $u\in W^{1,1}(\Omega;\R^m)$, we consider an extension of $u$ itself (still denoted by $u$ for convenience) to
$W^{1,1}_0(\Omega';\R^m)$, for some open and bounded
$\Omega'\supset\supset\Omega$ (recall that $\Omega$ is {assumed} to be Lipschitz regular). Then, we use a classical density result
\cite[Proposition~2.1 in Chapter X]{ekeland1999convex} to find 
$u_k\in W^{1,1}_0(\Omega';\R^m)$ piecewise affine such that 
$u_k\to u$ in $W^{1,1}(\Omega';\R^m)$.
The continuity of $H$ for the $W^{1,1}(\Omega;\R^m)$ convergence, and the lower semicontinuity of $\calF''$ 
for the $L^1(\Omega;\R^{m+1})$ convergence finally imply \eqref{linearest}.
}
\medskip 

{\bf Step 2. Inner regularity of $\calF''(u,1;\cdot)$ and {existence of the $\Gamma(L^1)$-limit in} $A\in\mathcal A(\Omega)$ {for} $u\in BV(\Omega;\R^m)$.} 
First we show that if $u\in BV(\Omega;\R^m)$ then
\begin{equation}\label{inner}
\calF''(u,1;\cdot)=(\calF'')_-(u,1;\cdot).
\end{equation}
Given an open set $A$ and $\delta>0$, we can find open sets $A'$, $A''$, and $C$, with 
$A'\subset\subset A''\subset\subset A$  
and $A\setminus A'\subset C$, such that $\mathcal H(u;C)\leq\delta$,
where $\mathcal H$ is defined in \eqref{e:lsc envelope H}.
%we have set
%\[\calF^\infty(u):=\int_\Omega h^{\qc}(\nabla u)dx+\int_{\Omega}h^\qcinfty(dD^su)\,,\]
%for $u\in BV(\Omega,\R^n)$.
Then, by Lemma~\ref{l:wsub} and \eqref{e:stima limsup diffusa} we get
\[\calF''(u,1;A)\leq \calF''(u,1;A'\cup C)\leq \calF''(u,1;A'')+\mathcal H(u;C)\leq \calF''(u,1;A'')+\delta.\]
 Hence, \eqref{inner} holds true and in turn by \eqref{e:additivity} we have
\[\widehat \calF(u,1;\cdot)\leq \calF'(u,1;\cdot)\leq \calF''(u,1;\cdot)=\widehat \calF(u,1;\cdot),\]
so that the $\Gamma$-limit of $\calF_{\eps_k}^{{\eta}}(u,1;\cdot)$ exists and coincides with $\widehat \calF(u,1;\cdot)$ 
for all $u\in BV(\Omega;\R^m)$.
\medskip 

{\bf Step 3. Integral representation of the $\Gamma(L^1)$-limit on $BV(\Omega;\R^m)\times\{1\}$.}
We now would like to represent $\widehat \calF$ as an integral functional through \cite[Theorem 3.12]{BouchitteFonsecaMascarenhas} and to estimate its diffuse and surface densities. In order to satisfy the coercivity hypothesis \cite[Eq. (2.3')]{BouchitteFonsecaMascarenhas}, we introduce an auxiliary functional
\[
\widehat{\calF}_\lambda(u,1):=\widehat{\calF}(u,1)+\lambda
{|Du|(\Omega)}%\int_{J_u}|[u]|\dd\calH^{n-1},
\]
for all $u\in BV(\Omega;\R^m)$, where $\lambda\in(0,1]$ is a small parameter. Indeed, 
{\eqref{eqfepsfeps1}, \eqref{e:one-dim estimate}, \eqref{e:hascal} and \eqref{e:stima limsup diffusa} yield}
\[
{\lambda|Du|(\Omega)-c\calL^n(\Omega)\leq
\widehat{\calF}_\lambda(u,1)\leq 
c(|Du|(\Omega)+\calL^n(\Omega))\,,}
\]
for all $u\in BV(\Omega;\R^m)$ and for some $c>0$. 
Note that $\widehat{\calF}_\lambda$ also satisfies the continuity hypothesis \cite[Eq. (2.4)]{BouchitteFonsecaMascarenhas}, since \[\calF_{\eps_k}^\eta(u(\cdot-z),v(\cdot-z);z+A)=\calF_{\eps_k}^\eta(u,v;A),\]  \[\calF_{\eps_k}^\eta(u+b,v;A)=\calF_{\eps_k}^\eta(u,v;A),\]
for all $(u,v)\in {W^{1,2}}(\Omega;\R^{m+1})$, $z,b\in\R^m$, $A\in\calA(\Omega)$, and analogous properties then hold for $\widehat{\calF}$.

The integral representation result 
\cite[Theorem 3.12]{BouchitteFonsecaMascarenhas} then applies to $\widehat{\calF}_{\lambda}+c\calL^n$ and gives, for $u\in BV(\Omega;\R^m)$ and $A\in\calA(\Omega)$, taking also into account the aforementioned translation invariance,
\[
\widehat{\calF}_{\lambda}(u,1;A)=
\int_Ah_\lambda(\nabla u)\dx+\int_{J_u\cap A} g_\lambda([u],\nu_u)\dd\calH^{n-1}+\int_{A}h_\lambda^\infty({\dd D^cu}),
\]
where 
\begin{multline}\label{e:flambdadff}
h_\lambda(\xi):=\limsup_{\delta\downarrow 0}\frac{1}{\delta^{n}}
\inf\left\{\widehat{\calF}_{\lambda}(w,1;\delta Q):\,w\in BV\big(\delta Q;\R^m\big),\right.
\\ w(x)=\xi x \text{ on }\partial(\delta Q)\Big\},
\end{multline}
for $\xi\in\R^{m\times n}$, $Q$ being a cube with side length $1$ centered in the origin;
\begin{multline}\label{e:flambda}
g_\lambda(z,\nu):=\limsup_{\delta\downarrow 0}\frac{1}{\delta^{n-1}}
\inf\left\{\widehat{\calF}_{\lambda}(w,1;\delta Q^{\nu}):\,w\in BV\big(\delta Q^{\nu};\R^m\big),\right.
\\ w=w_z \text{ on }\partial (\delta Q^{\nu})\Big\},
\end{multline}
for $z\in\R^m$, $\nu\in S^{n-1}$, $Q^{\nu}$ being a cube with side length $1$ and a face orthogonal to $\nu$ and $w_z:=z\chi_{\{x\cdot\nu>0\}}$;
\[
h^\infty_\lambda(\xi):=\limsup_{t\to\infty}\frac{h_\lambda(t\xi)}{t},
\]
for $\xi\in\R^{m\times n}$.
Let us estimate separately the three densities above. First, observe that by \eqref{e:stima limsup diffusa} we have
\begin{equation}\label{lambdalin}
h_\lambda(\xi)\leq \frac{1}{\delta^n}\widehat{\calF}_{\lambda}(\xi x,1;\delta Q)\leq h^{\qc}(\xi){+\lambda|\xi|},
\end{equation}
so that
\begin{equation}\label{recessione}
h_\lambda^\infty(\xi)\leq h^{\qcinfty}(\xi){+\lambda|\xi|},
\end{equation}
for all $\xi\in\R^{m\times n}$. 
%Moreover, taking $\bar u_k:=\xi x$ and $\bar v_k:=1$ as constant recovery sequence, we find
%\[h_\lambda(\xi)\leq\frac{1}{\delta^n}\widehat{\calF}_{\lambda}(\xi x,1,\delta Q)\leq \Psi(\xi),\]
%which combined with \eqref{lambdalin} gives
%\[h_\lambda(\xi)\leq h(\xi),\]
%and then by relaxation
%\begin{equation}\label{hqc} 
%h_\lambda(\xi)\leq h^{\qc}(\xi),
%\end{equation}
%for all $\xi\in\R^{m\times n}$.
%
We next show that
	\begin{equation}\label{e:upbvsalto}
	g_\lambda(z,\nu)\leq g(z,\nu)+\lambda|z|,
	\end{equation}
for $z\in \R^m$, $\nu\in S^{n-1}$.
From \eqref{e:flambda} we have
\begin{align}\label{sfc}
g_\lambda(z,\nu)&\leq \limsup_{\delta\downarrow 0}\frac{1}{\delta^{n-1}}\widehat{\calF}_{\lambda}(w_z,1;\delta\,Q^{\nu})
\notag\\
&=\limsup_{\delta\downarrow 0}\frac{1}{\delta^{n-1}}\widehat{\calF}(w_z,1;\delta\,Q^{\nu})+\lambda |z|.
\end{align}
{In turn, by definition of $\widehat \calF$ for every sequence  $(\tilde u_{{k}},\tilde v_{{k}})\to (w_z,1)$
in $L^1(\delta\,Q^{\nu};\R^{m+1})$  we have
\begin{equation}\label{recupp}
\widehat{\calF}(w_z,1;\delta\,Q^{\nu})\le
\limsup_{k\to\infty}
\calF_{\eps_k}^\eta(\tilde u_{k},\tilde v_{k};\delta\,Q^{\nu}).
\end{equation}
The proof of \eqref{e:upbvsalto} therefore reduces to the construction of a suitable sequence $(\tilde u_{{k}},\tilde v_{{k}})$, which depends implicitly on $\delta\in(0,1)$, $z$ and $\nu$.}
By {Proposition \ref{propuvjump}}
{applied with the sequences $\eps_k^*:=\eps_k/\delta$ and $\eta_k^*:=\eta_{\eps_k}$}, there are $(u_k^*,v_k^*)\to (w_z,1)$ in {$L^2(Q^\nu;\R^{m+1})$}, such that
\begin{equation}\label{estg}
\lim_{k\to\infty} 
\calF^{\infty}_{{\eps_k^*}}({u_k^*,v_k^*}; Q^{{\nu}})= g(z,{\nu})
\end{equation}
and
\begin{equation}\label{estg-2}
{\lim_{k\to\infty} 
\eta^*_{k}}\|\nabla u_k^*\|_{L^2(Q^{{\nu}})}^2=0.
\end{equation}
% and
% \begin{equation*}
% u_k^*(y)=(w_z\ast\varphi_{1})\Big(\frac{y}{{\eps_k^*}}\Big)\,,\hskip6mm
% v_k^*(y)=(\chi_{\{|{x\cdot\nu}|\ge2\}}\ast\varphi_{1})\Big(\frac{y}{{\eps^*_k}}\Big) \hskip6mm
% \text{ for } y\in\partial Q^{{\nu}}.
% \end{equation*}
{We define $(\tilde u_k,\tilde v_k)\in L^2(\delta \,Q^\nu;\R^{m+1})$ by}
	\[
	\tilde u_k(y):={u_k^*\left(\frac{y}{\delta}\right)}
	\,,\hskip1cm 
	\tilde v_k(y):={v_k^*\left(\frac{y}{\delta}\right)}.
	\]
{Obviously} $(\tilde u_k,\tilde v_k)\to(w_z,1)$ in $L^2({\delta\, Q^\nu};\R^{m+1})$. 
A change of variable and a straightforward computation 
	{using $\eps_k=\delta\eps_k^*$}
	yield 
\begin{equation}\label{eqdeltaresc}
 {\begin{split}\calF_{\eps_k}^\infty(\tilde u_k,\tilde v_k;\delta\,Q^\nu)
& =\delta^{n-1}
 \calF_{\eps_k^*}^\infty( u_k^*, v_k^*;\,Q^\nu),
 \\
 \|\nabla \tilde u_k\|_{L^2(\delta\, Q^\nu)}^2
 &=\delta^{n-2}
 \|\nabla u_k^*\|_{L^2(Q^\nu)}^2.
 \end{split}}
\end{equation}
Fixed $\rho>0$, by \eqref{eqpsipsiinf} we have
\[\Psi(\xi)\leq (1+\rho)\Psiinfty(\xi),\]
for $|\xi|$ large, and then
\[\Psi(\xi)\leq (1+\rho)\Psiinfty(\xi)+C(\rho),\]
for some $C(\rho)>0$ and all $\xi\in \R^{m\times n}$.
Then, {with \eqref{eqdeltaresc}} 
\begin{equation*}
{\begin{split} \calF_{\eps_k}(\tilde u_k,\tilde v_k;\delta\,Q^\nu)
& \le (1+\rho) \calF_{\eps_k}^\infty(\tilde u_k,\tilde v_k;\delta\,Q^\nu)
 +C(\rho)\calL^n(\delta\, Q^\nu)\\
& = (1+\rho) \delta^{n-1}\calF_{\eps_k^*}^\infty(u_k^*,v_k^*; Q^\nu)
 +C(\rho)\delta^n.
 \end{split}}
\end{equation*}
{Similarly, from
the growth conditions in \eqref{e:Psi gc}
and \eqref{eqdeltaresc}, 
\begin{equation*}
 \eta_{\eps_k}\int_{\delta \, Q^\nu}
 \psi(\nabla\tilde u_k) \dd x
 \le c \eta_{\eps_k} (\|\nabla \tilde u_k\|_{L^2(\delta\, Q^\nu)}^2+\delta^n)
 =
c  \eta_{\eps_k}\delta^{n-2} \|\nabla u_k^*\|_{L^2(Q^\nu)}^2
 +c\eta_{\eps_k}\delta^n.
\end{equation*}
}{Summing these two estimates,
\begin{equation*}
 \calF^\eta_{\eps_k}(\tilde u_k,\tilde v_k;\delta\,Q^\nu)
 \le (1+\rho) \delta^{n-1}\calF_{\eps_k^*}^\infty(u_k^*,v_k^*; Q^\nu)
 +C(\rho)\delta^n+
c  \eta_{\eps_k}\delta^{n-2} \|\nabla u_k^*\|_{L^2(Q^\nu)}^2
 +c\eta_{\eps_k}\delta^n,
\end{equation*}
and taking  the limit $k\to\infty$, by \eqref{recupp}, \eqref{estg} and \eqref{estg-2},}
\begin{equation}
{\begin{split} 
\widehat\calF(w_z,1;\delta \, Q^\nu)\le
\limsup_{k\to\infty} \calF^\eta_{\eps_k}(\tilde u_k,\tilde v_k;\delta\,Q^\nu)
& \le (1+\rho) \delta^{n-1}g(z,\nu)
 +C(\rho)\delta^n.
 \end{split}}
\end{equation}
{We divide by $\delta^{n-1}$ and take the limit $\delta\to0$. Comparing with \eqref{sfc},
\begin{equation}
 g_\lambda(z,\nu)\le (1+\rho) g(z,\nu)+\lambda|z|,
\end{equation}
and since $\rho$ was arbitrary
\eqref{e:upbvsalto} follows.}
% \begin{equation}
%  g_\lambda(z,\nu)\le g(z,\nu)+\lambda|z|.
% \end{equation}}
% As $\rho\to0$ we get \eqref{recupp}.

In conclusion, as $\lambda\to0$, estimates \eqref{lambdalin}, \eqref{recessione} and \eqref{e:upbvsalto} imply that for all 
$u\in BV(\Omega;\R^m)$ 
\[
\widehat \calF(u,1)\leq \calF_0(u,1).
\]
This, together with the lower bound {Theorem \ref{t:lbcompBV}} allows to identify uniquely the 
$\Gamma$-limit of the subsequence $\calF_{\eps_k}^{{\eta}}$. Finally, Urysohn's property (\cite[Proposition~8.3]{DalMaso1993}) 
extends the result to the whole family $\calF_{\eps}^\eta$.
\medskip

{{\bf Step 4. Representation of the $\Gamma(L^1)$-limit on $(GBV(\Omega))^m\times\{1\}$.} 
To extend the validity of \eqref{e:gammalisup estimate} to $u\in (GBV(\Omega))^m$ we argue by truncation.
Indeed, if $k\in\N$ and $\mathcal{T}_k$ is the truncation operator defined in \eqref{e:Tk}, then by Steps 1-3 we infer that 
\[
 \calF''(\mathcal{T}_k(u),1)\leq \calF_0(\mathcal{T}_k(u),1)\,.
\]
}
{The conclusion then follows by the $L^1${{-}}lower semicontinuity of $\calF''$ and by Proposition \ref{contF0}.} 
\end{proof}

We are ready to prove Theorem \ref{t:finale}.
\begin{proof}[Proof of Theorem \ref{t:finale}]
The lower bound 
has been proven in Theorem \ref{t:lbcomp}.
The upper bound follows by Theorem \ref{t:limsupndim} with $\eta_\eps=0$.
\end{proof}

{
\clearpage
\section{Compactness and convergence of minimizers}\label{compandconv}
Next theorem establishes the compactness of sequences {equibounded in energy and in $L^1$.} 
\begin{theorem}\label{t:comp}
	Let $\calF_{\eps}$ be defined in \eqref{functeps}. If $(u_\eps,v_\eps)\in {W^{1,2}}(\Omega;\R^{m+1})$ is such that
	$$\sup_\eps \left(\calF_{\eps}(u_\eps,v_\eps)+\|u_\eps\|_{L^1(\Omega)}\right)<\infty,$$
	then there exists a subsequence $(u_j,v_j)$ of $(u_\eps,v_\eps)$ and a function $u\in (GBV\cap L^1(\Omega))^m$ 
	such that $u_j\to u$ $\calL^n$-a.e. and $v_j\to 1$ in $L^1(\Omega)$.
\end{theorem}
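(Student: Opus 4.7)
The plan is to reduce to the scalar setting componentwise via the pointwise bound \eqref{eqfepsfeps1} and then combine the truncation construction of Lemma~\ref{lemmaabdelta} with Ambrosio's $SBV$-compactness theorem, along the lines of Remark~\ref{r:domain direct}.

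First I would handle the phase field. From $\int_\Omega (1-v_\eps)^2/(4\eps)\dx \le \calF_\eps(u_\eps,v_\eps) \le C$ and $0 \le v_\eps \le 1$ we obtain $\int_\Omega (1-v_\eps)^2\dx \le 4C\eps \to 0$, so $v_\eps \to 1$ in $L^2(\Omega)$ and hence in $L^1(\Omega)$. Up to a subsequence $v_\eps \to 1$ $\calL^n$-a.e.; in particular $\calL^n(\{v_\eps \le \delta\}) \to 0$ for every $\delta\in(0,1)$.

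For the deformation, fix $i \in \{1,\dots,m\}$ and $\delta \in (0,1)$. By \eqref{eqfepsfeps1} the scalar energy $\calF_\eps^\scal(u_{i,\eps},v_\eps)$ is uniformly bounded, so the scalar counterpart of Lemma~\ref{lemmaabdelta} produces $\tilde u_{i,\eps}^\delta \in GSBV(\Omega)$ that coincides with $u_{i,\eps}$ outside a set of measure at most $\calL^n(\{v_\eps \le \delta\}) \to 0$ and satisfies, using \eqref{e:h gc},
\[
\int_\Omega |\nabla \tilde u_{i,\eps}^\delta|\dx + \calH^{n-1}(J_{\tilde u_{i,\eps}^\delta}) \le C_\delta .
\]
Writing $\tau_k(s):=(s\wedge k)\vee(-k)$ for $k \in \N$, the truncations $\tau_k(\tilde u_{i,\eps}^\delta)$ belong to $SBV(\Omega)\cap L^\infty$ with $L^\infty$-norm at most $k$, uniformly bounded total variation, and uniformly bounded jump set; Ambrosio's $SBV$-compactness theorem \cite[Thm.~4.8]{AFP} then applies, and a standard diagonal argument over $k$ and $i$ delivers a single subsequence and functions $u_i\colon\Omega \to \R \cup \{\pm\infty\}$ for which $\tau_k(\tilde u_{i,\eps}^\delta) \to \tau_k(u_i)$ in $L^1(\Omega)$ and $\tau_k(u_i) \in SBV(\Omega)$ for every $k$. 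By construction $u:=(u_1,\dots,u_m) \in (GBV(\Omega))^m$, and Fatou applied to $\sup_\eps \|u_\eps\|_{L^1} < \infty$ gives $u \in L^1(\Omega;\R^m)$.

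The only delicate point is the passage from convergence of the surrogate $\tilde u_{i,\eps}^\delta$ back to $u_{i,\eps}$ itself. Since the two functions coincide off a set of vanishing Lebesgue measure and $\tau_k$ is bounded, $\tau_k(u_{i,\eps}) - \tau_k(\tilde u_{i,\eps}^\delta) \to 0$ in $L^1(\Omega)$, so $\tau_k(u_{i,\eps}) \to \tau_k(u_i)$ in $L^1(\Omega)$. Combining this with the elementary inclusion
\[
\calL^n(\{|u_{i,\eps}-u_i|>\gamma\}) \le \calL^n(\{|\tau_k(u_{i,\eps})-\tau_k(u_i)|>\gamma\}) + \calL^n(\{|u_{i,\eps}|>k\}) + \calL^n(\{|u_i|>k\})
\]
and Chebyshev's inequality (which bounds the last two terms by $C/k$ uniformly in $\eps$), letting first $\eps\to 0$ and then $k\to\infty$ yields $u_{i,\eps} \to u_i$ in measure; extracting a further subsequence produces $u_{i,\eps} \to u_i$ $\calL^n$-a.e., completing the proof.
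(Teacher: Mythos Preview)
Your argument is correct and follows precisely the route the paper indicates: reduce componentwise via \eqref{eqfepsfeps1} and then unpack the scalar compactness exactly as in Remark~\ref{r:domain direct} (truncation via Lemma~\ref{lemmaabdelta}, then Ambrosio's $SBV$-compactness on the $\tau_k$-truncations). The only cosmetic slip is that your appeal to Fatou for $u\in L^1$ invokes a.e.\ convergence of $u_\eps$ before it is established; simply move that step after the convergence-in-measure argument, or deduce $u_i\in L^1$ directly from the uniform bound $\|\tau_k(u_i)\|_{L^1}\le C$ by monotone convergence.
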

\begin{proof}
This follows arguing componentwise, that is, estimating $\calF_{\eps}$ with its one-dimensional counterpart 
evaluated in a component, and applying the one-dimensional compactness result obtained in \cite[Theorem 3.3]{ContiFocardiIurlano2016}
as done in subsection~\ref{s:scalar case} (see also the argument in Remark~\ref{r:domain direct}).
\end{proof}
% \begin{proof}
% 	Let us introduce the one-dimensional counterpart of $\calF_{\eps}$
% 	\[\calF_{\eps}^1(w,v):=\int_\Omega \left( f_\eps^2(v) |\nabla w|^2+ \frac{(1-v)^2}{4\eps} + \eps |\nabla v|^2 \right) \dx,
% 	\]
% 	for $(w,v)\in {W^{1,2}}(\Omega,\R^2)$. Then, we observe that
% 	\[
% 	\calF_{\eps}^1(u^i_\eps,v)+\|u^i_\eps\|_{L^1(\Omega)}\leq c\calF_{\eps}(u_\eps,v_\eps)+c\|u^i_\eps\|_{L^1(\Omega)},
% 	\]
% 	for $i=1,\dots,m$, where $u_\eps:=(u_\eps^1,\dots,u_\eps^m)$ and $c>0$. Using \cite[Theorem 3.3]{ContiFocardiIurlano2016} $m$ times and diagonalizing, we can extract a subsequence $(u_j,v_j)$ of $(u_\eps,v_\eps)$ and a function $u\in (GBV\cap L^1(\Omega))^m$ 
% 	such that $u_j\to u$ $\calL^n$-a.e. and $v_j\to 1$ in $L^1(\Omega)$.
% \end{proof}

Convergence of minimizers and of minimum values follow now 
in a standard way by {Theorems \ref{t:finale}} and \ref{t:comp}. 
Let $\eta_\eps>0$ be as in \eqref{eqassetaeps}, i.e.~such that 
$\sfrac{\eta_\eps}\eps\to0$ as $\eps\to0$, consider the corresponding family $\calF_\eps^\eta$ 
defined in \eqref{eqdefcalfepseta} and let $w\in L^q(\Omega;\R^m)$, with $q>1$. 
Let now $\calG_\eps,\,\calG_0:{L^q(\Omega;\R^{m})\times L^1(\Omega)}\to[0,\infty]$ be defined as 
\[
\calG_\eps(u,v):=
\begin{cases}
 \displaystyle
\calF_\eps^\eta(u,v)+\int_\Omega |u-w|^q\dx\,,
&\text{ if } (u,v)\in W^{1,2}(\Omega;\R^{m}\times[0,1]),\\
\infty, & \text{ otherwise}
\end{cases}
\]
and 
\[
\calG_0(u{,v}):=\calF_0(u,v)+\int_\Omega |u-w|^q\dx\,, 
\]
where $\calF_\eps$ and  $\calF_0$ have been defined in \eqref{functeps} and \eqref{F0}, respectively.

The assumption on the asymptotic ratio $\sfrac{\eta_\eps}\eps\to0$ as $\eps\to0$ is needed to avoid that the term 
$\eta_\eps\Psi(\nabla u)$ competes with the term $(1-v)^2/\eps$, overall influencing the limit behaviour. 
Indeed, if $\eta_\eps\sim \eps$, we expect to gain a control on $|[u]|$, so loosing the limit cohesive effect (compare with \cite{FocardiIurlano}).}

Instead, the {addition} of the term $\eta_\eps \Psi(\nabla w)$ is instrumental to guarantee the existence of a minimizer 
for $\calG_\eps$, provided that $\Psi$ is quasiconvex. In general, the coercivity {of $\calG_\eps$} only ensures existence 
of minimizing sequences $(u_\eps^j)_j$ converging weakly in $W^{1,{2}}(\Omega;\R^m)$ to some $\bar u_\eps$ {minimizing the 
relaxation of $\calG_\eps$}. 
Since existence at fixed $\eps$ does not interact with the {$\Gamma$-}convergence, we state our result for asymptotically 
minimizing sequences. 
\begin{corollary}\label{c:min}
	Let $(u_\eps,v_\eps)\in {W^{1,2}}(\Omega;\R^{m+1})$ be such that
	\[
	\limsup_{\eps\to0}\bigl(\calG_\eps(u_\eps,v_\eps)-m_\eps\bigr)=0,
	\]
	where $m_\eps:=\inf_{(u,v)\in {W^{1,2}}(\Omega;\R^{m+1})}\calG_\eps(u,v)$.
	Then $v_\eps\to 1$ in $L^1(\Omega)$ and a subsequence of $u_\eps$ converges in $L^q(\Omega;\R^m)$
	to a solution of
	\[
	\min_{u\in (GBV(\Omega))^m}{\calG_0(u,1)}.
	\]
	Moreover, $m_\eps$ tends to the minimum value of {$\calG_0$}.
\end{corollary}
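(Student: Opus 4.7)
The argument follows the standard scheme for convergence of minimizers under $\Gamma$-convergence, combined with Theorem~\ref{t:comp} and a Brezis--Lieb type argument to upgrade a.e.\ convergence to $L^q$ convergence.

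First I would establish that $m_\eps$ is uniformly bounded. Approximating $w$ in $L^q$ by a smooth function $\hat w \in C^\infty_c(\Omega;\R^m)$, one has $\calG_\eps(\hat w,1) = (1+\eta_\eps)\int_\Omega \Psi(\nabla \hat w)\dx + \int_\Omega |\hat w - w|^q\dx \le c$ because $f_\eps(1)=1$, hence $m_\eps \le c$ uniformly in $\eps$. From the assumption $\calG_\eps(u_\eps,v_\eps) \le m_\eps + o(1) \le C$ one deduces both $\|u_\eps - w\|_{L^q}^q \le C$ (so $u_\eps$ is bounded in $L^q$ and equi-integrable) and $\calF_\eps(u_\eps,v_\eps) \le \calF_\eps^\eta(u_\eps,v_\eps) \le C$. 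Theorem~\ref{t:comp} then gives, up to a non-relabelled subsequence, $v_\eps \to 1$ in $L^1(\Omega)$ and $u_\eps \to u$ $\calL^n$-a.e.\ in $\Omega$ for some $u \in (GBV\cap L^1(\Omega))^m$; Fatou's lemma gives $u \in L^q(\Omega;\R^m)$.

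Next, given any competitor $u^*$ with $\calG_0(u^*,1) < \infty$ (which forces $u^* \in (GBV\cap L^q(\Omega))^m$), Theorem~\ref{t:limsupndim} supplies a recovery sequence $(u_\eps^*,v_\eps^*) \to (u^*,1)$ in $L^1$ with $\limsup_\eps \calF_\eps^\eta(u_\eps^*,v_\eps^*) \le \calF_0(u^*,1)$. By the codomain truncation and diagonalisation employed in the proof of Theorem~\ref{t:lbcomp} and Proposition~\ref{contF0}, together with the fact that for targets in $BV\cap L^\infty$ the construction of Theorem~\ref{t:limsupndim} yields sequences uniformly bounded in $L^\infty$, one may additionally require $u_\eps^* \to u^*$ in $L^q(\Omega;\R^m)$. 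Combining this with the liminf inequality $\calF_0(u,1) \le \Gamma(L^1)\hbox{-}\liminf \calF_\eps^\eta(u,1)$, which follows from Theorem~\ref{t:lbcomp} and $\eta_\eps \ge 0$, and with Fatou's lemma applied to $|u_\eps - w|^q$, one obtains
\[
\calG_0(u,1) \le \liminf_{\eps\to 0} \calG_\eps(u_\eps,v_\eps) = \lim_{\eps\to 0} m_\eps \le \limsup_{\eps\to 0} \calG_\eps(u_\eps^*,v_\eps^*) \le \calG_0(u^*,1).
\]
Taking $u^* = u$ shows that $u$ minimises $\calG_0$ and that $m_\eps$ converges to the minimum of $\calG_0$; the squeeze also forces $\calF_\eps^\eta(u_\eps,v_\eps) \to \calF_0(u,1)$ and $\int_\Omega |u_\eps - w|^q\dx \to \int_\Omega |u - w|^q\dx$. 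Paired with the a.e.\ convergence of $u_\eps$, this last identity yields $u_\eps \to u$ strongly in $L^q(\Omega;\R^m)$ through the Brezis--Lieb lemma.

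The only genuinely non-routine point is ensuring that the recovery sequence from Theorem~\ref{t:limsupndim} can be arranged to converge in $L^q$ rather than merely in $L^1$: this is where $q>1$ is used, via the uniform $L^\infty$ bound preserved by the truncations $\mathcal{T}_k$ and the consequent dominated convergence in $L^q$ at each fixed truncation level, before one diagonalises in $k$.
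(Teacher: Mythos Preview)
Your proof is correct and follows essentially the same route as the paper's: both obtain $L^q$-convergent recovery sequences for $\calF_\eps^\eta$ via codomain truncation of the Theorem~\ref{t:limsupndim} sequences, use Theorem~\ref{t:comp} for compactness, squeeze to deduce $\int_\Omega|u_\eps-w|^q\dx\to\int_\Omega|u-w|^q\dx$, and then upgrade to strong $L^q$ convergence (the paper invokes generalized dominated convergence, you invoke Brezis--Lieb, which is equivalent here). One small step you leave implicit: before applying Theorem~\ref{t:lbcomp} you need $u_\eps\to u$ in $L^1(\Omega;\R^m)$, not merely a.e.; this follows from the equi-integrability you already noted (via Vitali), and the paper makes this explicit with a short H\"older-and-splitting argument.
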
 
\begin{proof}

The proof of the corollary will be divided in three steps.

{\bf{Step 1. $\Gamma$-limit of $\cal F_\eps^\eta$ in $L^q\times L^1$.}}
We check that passing from the $L^1\times L^1$ to the $L^q\times L^1$ topology, the expression of the $\Gamma$-limit 
of $\calF_\eps^\eta$ remains the same
$$\Gamma(L^q\times L^1)\text{-}\lim_{\eps\to0}\calF_\eps^\eta(u,v)=\calF_0(u,v).$$
The lower bound is an immediate consequence of that in $L^1\times L^1$ 
(Theorem~\ref{t:lbcomp}, being the $L^q$ convergence stronger than the $L^1$ convergence).

As for the upper bound, we argue by truncation. First take a subsequence of $\calF_\eps^\eta$ (not relabelled for convenience) and fix $u\in BV\cap L^\infty(\Omega;\R^m)$ with $\calF_0(u,1)<\infty$. Then Theorem \ref{t:limsupndim} yields the existence of a sequence $(u_\eps,v_\eps)\in W^{1,2}(\Omega;\R^{m+1})$,
such that $(u_\eps,v_\eps)\to (u,1)$ in $L^1{(\Omega;\R^{m+1})}$ and
$$\limsup_{\eps\to0}\calF_{\eps}^\eta(u_\eps,v_\eps)\leq \calF_0(u,1).$$
Fix $M\in\mathbb N$ large enough such that $a_M>\|u\|_\infty$ (see \eqref{e:Tk} for the definition of $a_M$) and, for every $\eps>0$, choose $k_{\eps,M}\in \{{M+1},\dots,2M\}$ such that
$$\int_{\{a_{k_{\eps,M}}<|u_\eps|<a_{k_{\eps,M}+1}\}}(\eta_\eps+f_\eps^2(v_\eps))\Psi(\nabla u_\eps)\dx\leq \frac 1M\int_{\Omega}(\eta_\eps+f_\eps^2(v_\eps))\Psi(\nabla u_\eps)\dx.$$
This implies
$$\calF_\eps^\eta(\mathcal{T}_{k_{\eps,M}}(u_\eps),v_\eps)\leq (1+\frac CM)\calF_\eps^\eta(u_\eps,v_\eps)%+\frac CM
{+C\calL^n(\{a_{M+1}<|u_\eps|\})}\,,$$
with $\mathcal{T}_{k_{\eps,M}}(u_\eps)$ uniformly bounded in $L^\infty$, $\mathcal{T}_{k_{\eps,M}}$ being defined in \eqref{e:Tk}. This argument has been used several times throughout the paper, see for example Theorem \ref{t:lbcomp}. Passing to a further subsequence in $\eps$, we can take $k_{\eps,M}=k_M$ independent of $\eps$. Since $(\mathcal{T}_{k_M}(u_\eps))_\eps$ is uniformly bounded in $L^\infty$ and $M$ is large, we get $\mathcal{T}_{k_M}(u_\eps)\to \mathcal{T}_{k_M}(u)=u$ in $L^q{(\Omega;\R^m)}$ 
and in particular $\calL^n(\{a_{M+1}<|u_\eps|\})\to0$ {as $\eps\to0$}, hence
$$
\limsup_{\eps\to0}\calF_{\eps}^\eta(\mathcal{T}_{k_M}(u_\eps),v_\eps)\leq \Big(1+\frac CM\Big)\calF_0(u,1)%+\frac CM
%{+C\calL^n(\{a_{M+1}<|u_\eps|\})}
\,.
$$
Diagonalizing with respect to $M$ and recalling the lower estimate, we conclude that every subsequence of $\{\calF_\eps^\eta\}_\eps$ has a subsequence that $\Gamma(L^q\times L^1)$-converges to $\calF_0$ in $L^\infty(\Omega;\R^m)\times L^1(\Omega)$. Finally Urysohn's lemma gives the convergence of the entire sequence in the same space.

Let us consider now the general case $u\in (GBV\cap L^q(\Omega))^m$. Then $\mathcal{T}_k(u)\in ({BV\cap L^\infty(\Omega)})^m$, 
with $\mathcal{T}_k$ again defined by \eqref{e:Tk}, and
$$\Gamma(L^q\times L^1)\text{-}\limsup_{\eps\to0} \calF_\eps^\eta(\mathcal{T}_k(u),1)\leq \calF_0(\mathcal{T}_k(u),1),$$
by the first part of the proof. As $k\to\infty$ we have $\mathcal{T}_k(u)\to u$ in $L^q{(\Omega;\R^m)}$ and we conclude by the lower semicontinuity of the $\Gamma$-limsup and the continuity of $\calF_0$ (see Proposition \ref{contF0}).

{\bf Step 2. $\Gamma$-limit of $\calG_\eps$ in $L^1\times L^1$.} 
We check now that
$$\Gamma(L^1\times L^1)\text{-}\lim_{\eps\to0}\calG_\eps(u,v)=\calG_0(u,v).$$
The lower bound simply follows by Theorem \ref{t:lbcomp} using $\eta_\eps\ge0$ and the lower semicontinuity of $\int_\Omega|w-u|^q\dx$ with respect to the convergence in $L^1$. {In particular, if $\Gamma(L^1\times L^1)\text{-}\liminf_{\eps\to0}\calG_\eps(u,v)<\infty$, 
then $u\in (GBV(\Omega)\cap L^q)^m$ and $v=1$ $\calL^n$-a.e. on $\Omega$.}

As for the upper bound, from Step 1 we know that for all $u\in (GBV(\Omega)\cap L^q)^m$ there exists a recovery sequence for $\calF_\eps^\eta$ in $L^q\times L^1$. This is in particular a recovery sequence for $\calG_\eps$ in $L^1\times L^1$, which gives the conclusion.

{{\bf Step 3. Convergence of minimizers.}
Let now $(u_\eps,v_\eps)\in W^{1,2}\cap  L^q(\Omega;\R^{m+1})$ be a minimizing sequence for $\calG_\eps$. Being 
$$\sup_{\eps>0}{(}\calF_\eps(u_\eps,v_\eps)+\|u_\eps\|_{L^q(\Omega)}{)}<\infty,$$ Theorem \ref{t:comp} gives the existence of a function $u\in (GBV(\Omega)\cap L^q)^m$ and of a subsequence, not relabelled, such that $u_\eps\to u$ $\calL^n$-a.e. on {$\Omega$} 
and $v_\eps\to1$ in $L^1{(\Omega;\R^m)}$. In addition, by H\"older inequality
\begin{align*}
\int_{\{|u_\eps-u|>1\}}|u_\eps-u|\dx&\leq \|u_\eps-u\|_{L^q(\Omega)}{\big(\calL^n(}\{|u_\eps-u|>1\})\big)^{1-\sfrac 1q}\\
&\leq c{\big(\calL^n\big(}\{|u_\eps-u|>1\}\big)^{1-\sfrac 1q}\,,
\end{align*}
and the right-hand side tends to $0$ since $u_\eps\to u$ in measure {on $\Omega$}. Also, $(u_\eps-u)\chi_{\{|u_\eps-u|\leq1\}}\to0$ 
in $L^1{(\Omega;\R^m)}$ by dominated convergence, hence we conclude that $u_\eps\to u$ in $L^1{(\Omega;\R^m)}$.

By Step 2 and a general property of $\Gamma$-convergence \cite[Corollary 7.20]{DalMaso1993}, we conclude that $(u,1)$ is a minimizer of $\calG_0$ and that $\calG_\eps(u_\eps,v_\eps)\to\calG_0(u,1)$. Finally, we check that in fact $u_\eps\to u$ in $L^q{(\Omega;\R^m)}$. From the previous steps we have
\begin{eqnarray*}
&\calG_\eps(u_\eps,v_\eps)\to\calG_0(u,1),\\
&\displaystyle\int_\Omega|u-w|^q\dx\leq\liminf_{\eps\to0}\int_\Omega|u_\eps-w|^q\dx,\\
&\displaystyle \calF_0(u,1)\leq \liminf_{\eps\to0}\calF_\eps^\eta(u_\eps,v_\eps),
\end{eqnarray*}
so that
$$\int_\Omega|u_\eps-w|^q\dx\to\int_\Omega|u-w|^q\dx.$$
Together with the pointwise convergence, this implies $u_\eps\to u$ in $L^q{(\Omega;\R^m)}$ by generalized dominated convergence.}
\end{proof}	
\clearpage

\section*{Acknowledgements}

{This work was partially supported
by the Deutsche Forschungsgemeinschaft through project 211504053/SFB1060 and
project 441211072/SPP2256. M.F. is a member of GNAMPA of INdAM.}

\section*{Data availability statement}

Data sharing not applicable to this article as no datasets were generated or analysed during the current study

\bibliographystyle{alpha-noname}
\bibliography{cfi}

\end{document}